\newtheorem{theorem}{Theorem}[section]
\newtheorem{lemma}[theorem]{Lemma}
\newtheorem{proposition}{Proposition}
\theoremstyle{definition}
\theoremstyle{remark}
\numberwithin{equation}{section}
\begin{document}

\title[Improved breakdown criterion for Einstein vacuum
equations]
{Improved breakdown criterion for Einstein vacuum
equations in CMC gauge}

\author{Qian Wang}
\address{Department of Mathematics, Stony Brook University, Stony Brook, NY 11794}
\email{qwang@math.sunysb.edu}
\curraddr{} \email{}

\date{March 15, 2010}

\subjclass[2000]{Primary 54C40, 14E20; Secondary 46E25, 20C20}





\maketitle

\def\pib{\underline{\pi}}
\def\tt{{t'}}
\def\ee{{\mathcal E}_0}
\def\tM{{t_M}}

\def\cp{\stackrel{\circ}\pi_0}
\def\er{\mbox{err}}
\def\tu{{\tilde{u}}}
\def\sn{{\slashed{\nabla}}}

\def\Q{\mathcal{Q}}
\def\fP{{\mathfrak{P}}}
\def\ff{{\mathfrak{F}}}
\def\tx{{\tilde{x}}}
\def\fg{{\mathfrak{g}}}
\def\spi{{\stackrel{\circ}{\pi}}}
\def\tV{{\tilde{V}}}
\def\ta{{{\tau}}}
\def\nub{{\underline{\nu}}}
\def\ab{{\underline{\a}}}
\def\eh{\hat{\eta}}
\def\tF{{\tilde F}}
\def\zb{{\underline{\zeta}}}
\def\bpi{{\bar{\pi}}}
\def\lt{{\tilde{\vartriangle}}}
\def\nt{{\tilde{\triangledown}}}
\def\X{\mathcal{X}}
\def\ft{{\tilde {f}}}
\def\L{{\mathcal{L}}}
\def\J{{\mathcal{J}}}
\def\dG{{\dot{\Gamma}}}
\def\Rh{{\hat{R}}}
\def\Ab{{\underline{A}}}
\def\bM{\textbf{M}}
\def\M{{\mathcal{M}}}
\def\bT{{\textbf{T}}}
\def\bQ{{\textbf{Q}}}
\def\bR{{\textbf{R}}}
\def\eb{{\bar{\epsilon}}}
\def\bd{{\textbf{D}}}
\def\ti{\tilde}
\def\cs{{C_s}}
\def\ip{{i'}}
\def\jp{{j'}}
\def\dg{{\dot{g}}}
\def\bg{{\textbf{g}}}
\def\phd{{\dot{\phi}}}
\def\hk{{\hat{k}}}
\def\I{{\mathcal I}}
\def\Im{{{\mathcal I}_{lnm}}}
\def\pn{{\Pi^{\la}_n}}
\def\pno{{\Pi^{\la}}}
\def\beaa{\begin{eqnarray*}}
\def\eeaa{\end{eqnarray*}}
\def\stuk{S_{t_k,u}}
\def\ba{\begin{array}}
\def\ea{\end{array}}
\def\d{\delta}
\def\be#1{\begin{equation} \label{#1}}
\def \eeq{\end{equation}}
\newcommand{\eql}{\eqlabel}
\newcommand{\nn}{\nonumber}
\def\medn{\medskip\noindent}
\def\l{\langle}
\def\r{\rangle}
\def\rrrr{{\Bbb R}}
\def\rr{{\bf R}}
\def\PP{{\mathcal P}}
\def\pih{\hat{\pi}}
\def\cir{\overset\circ}
\def\nn{\nonumber}
\def\S{{\mathcal S}}

\def\err{\mbox{Err}}
\def\MM{{\mathcal M}}
\def\Ml{{\mathcal M}_{\ell}}
\def\cga{\overset\circ{\ga}}
\def\AA{{\mathcal A}}
\def\ck{{\check K}}
\def\aa{{\underline{a}}}

\def\dnt{\dot{\nab}}
\def\as{{}^\ast}
\def\mm{{\bf m}}
\def\ns{{\bf n}}
\def\vs{{\bf v}}
\def\S2{{\Bbb S}^2}
\def\Stu{S_{t,u}}

\def\ck{\check}
\def\A{\mathcal {A}}
\def\bopi{{\bold{\pi}}}
\def\V{{\mathcal V}}
\def\bq{{\textbf{Q}}}
\def\ta{{\tilde a}}
\def\tA{{\tilde A}}
\def\k{{\mathcal K}_0}
\def\E{{\mathcal E}}
\def\K{{\underline{K}}}
\def\U{{\mathcal U}}
\def\V{{\mathcal V}}
\def\W{{\mathcal W}}
\def\ze{{\zeta}}
\def\esb{{\underline{\epsilon}}}

\def\ub{\underline{u}}
\def\udb{\underline{\b}}
\def\Lb{\underline{L}}
\def\Lie{{\mathcal L}}
\def\tr{\mbox{tr}}
\def\err{\mbox{Err}}
\def\bA{{\textbf{A}}}
\def\D{{\mathcal D}}
\def\H{{\mathcal H}}
\def\MM{{\mathcal M}}
\def\Ml{{\mathcal M}_{\ell}}
\def\N{{\mathcal N}}
\def\cga{\overset\circ{\ga}}
\def\AA{{\mathcal A}}
\def\La{{\Lambda}}
\def\rrrr{{\Bbb R}}
\def\rr{{\bf R}}
\def\B{{\mathcal B}}
\def\F{{\mathcal F}}
\def\P{{\mathcal P}}
\def\R{{\mathcal R}}
\def\c{\cdot}
\def\hot{\widehat{\otimes}}
\def\sig{\sigma}
\def\s{\sigma}
\def\a{\alpha}
\def\b{\beta}
\def\e{\eta}
\def\g{\gamma}
\def\ep{\epsilon}
\def\l{\langle}
\def\r{\rangle}
\def\ga{\gamma}
\def\Ga{\Gamma}
\def\la{\lambda}
\def\crho{\check\rho}
\def\csig{\check\sigma}
\def\cudb{\check{\underline{\b}}}
\def\OG{\overline{\Gamma}}
\def\opsi{\overline{\psi}}
\def\p{\partial}
\def\P{{\mathcal P}}
\def\th{\theta}
\def\nab{\nabla}
\def\F{{\mathcal{F}}}
\def\varep{\varepsilon}
\def\C#1{{\mathcal C}_{[#1]}}
\def\A#1{{\mathcal A}_{[#1]}}
\def\B#1{{\mathcal B}_{[#1]}}
\def\bb{{\underline{\b}}}
\def\omb{{\underline{\om}}}
\def\Lb{{\underline{L}}}
\def\aaa{{\mathbf a}}
\def\ggg{{\mathbf g}}
\def\div{\mbox{\,div\,}}
\def\curl{\mbox{\,curl\,}}
\def\ddiv{\mbox{\,\bf{div}\,}}
\def\ccurl{\mbox{\,\bf{curl}\,}}
\def\tr{\mbox{tr}}
\def\Tr{\mbox{Tr}}
\def\db{\mbox{\bf{b}}}

\def\tir{{\tilde r}}
\def\tg{{\tilde g}}
\def\sp{{\sigma_p}}
\def\itt{{\mbox{Int}}}
\def\tlN{{\tilde N}}
\def\dual{{\,^\star \mkern-4mu}}
\def\fE{\mathfrak{E}}
\def\sA{\slashed{A}}
\def\wt{\widetilde}
\def\f14{\frac{1}{4}}
\def\f12{{\frac{1}{2}}}
\def\dd{{\bf D}}
\def\t1a{t^{-\frac{1}{a}}}
\def\kp{{\kappa}}
\def\bm{{\bf m}}
\def\sk{{\slashed{k}}}
\def\sl{\slashed}
\def\sD{\slashed{\Delta}}
\def\cff{\stackrel{\circ}\ff}

\def\ee{{\mathcal E}_0}
\def\tM{{t_M}}

\def\cp{\stackrel{\circ}\pi_0}
\def\er{\mbox{err}}
\def\tu{{\tilde{u}}}
\def\sn{{\slashed{\nabla}}}
\def\cc{C(Q_0,\k)}

\def\fP{{\mathfrak{P}}}
\def\ff{{\mathfrak{F}}}
\def\tx{{\tilde{x}}}
\def\fg{{\mathfrak{g}}}
\def\spi{{\stackrel{\circ}{\pi}}}
\def\tV{{\tilde{V}}}
\def\ta{{{\tau}}}
\def\nub{{\underline{\nu}}}
\def\ab{{\underline{\a}}}
\def\eh{\hat{\eta}}
\def\tF{{\tilde F}}
\def\zb{{\underline{\zeta}}}
\def\bpi{{\bar{\pi}}}
\def\lt{{\tilde{\vartriangle}}}
\def\nt{{\tilde{\triangledown}}}
\def\X{\mathcal{X}}
\def\ft{{\tilde {f}}}
\def\L{{\mathcal{L}}}
\def\J{{\mathcal{J}}}
\def\dG{{\dot{\Gamma}}}
\def\Rh{{\hat{R}}}
\def\Ab{{\underline{A}}}
\def\bM{\textbf{M}}
\def\M{{\mathcal{M}}}
\def\bT{{\textbf{T}}}
\def\bQ{{\textbf{Q}}}
\def\bR{{\textbf{R}}}
\def\eb{{\bar{\epsilon}}}
\def\bd{{\textbf{D}}}
\def\ti{\tilde}
\def\cs{{C_s}}
\def\ip{{i'}}
\def\jp{{j'}}
\def\dg{{\dot{g}}}
\def\bg{{\textbf{g}}}
\def\phd{{\dot{\phi}}}
\def\hk{{\hat{k}}}
\def\I{{\mathcal I}}
\def\Im{{{\mathcal I}_{lnm}}}
\def\pn{{\Pi^{\la}_n}}
\def\pno{{\Pi^{\la}}}
\def\beaa{\begin{eqnarray*}}
\def\eeaa{\end{eqnarray*}}
\def\stuk{S_{t_k,u}}
\def\ba{\begin{array}}
\def\ea{\end{array}}
\def\be#1{\begin{equation} \label{#1}}
\def \eeq{\end{equation}}
\def\nn{\nonumber}
\def\medn{\medskip\noindent}
\def\l{\langle}
\def\r{\rangle}
\def\rrrr{{\Bbb R}}
\def\rr{{\bf R}}
\def\PP{{\mathcal P}}
\def\pih{\hat{\pi}}
\def\cir{\overset\circ}
\def\nn{\nonumber}
\def\S{{\mathcal S}}
\def\vr{\varrho}

\def\err{\mbox{Err}}
\def\MM{{\mathcal M}}
\def\Ml{{\mathcal M}_{\ell}}
\def\cga{\overset\circ{\ga}}
\def\AA{{\mathcal A}}
\def\ck{{\check K}}
\def\aa{{\underline{a}}}

\def\dnt{\dot{\nab}}
\def\as{{}^\ast}
\def\mm{{\bf m}}
\def\ns{{\bf n}}
\def\vs{{\bf v}}
\def\S2{{\Bbb S}^2}
\def\Stu{S_{t,u}}

\def\ck{\check}
\def\A{\mathcal {A}}
\def\bopi{{\bold{\pi}}}
\def\V{{\mathcal V}}
\def\bq{{\textbf{Q}}}
\def\ta{{\tilde a}}
\def\tA{{\tilde A}}
\def\k{{\mathcal K}_0}
\def\E{{\mathcal E}}
\def\K{{\underline{K}}}
\def\U{{\mathcal U}}
\def\V{{\mathcal V}}
\def\W{{\mathcal W}}
\def\ze{{\zeta}}
\def\esb{{\underline{\epsilon}}}

\def\ub{\underline{u}}
\def\udb{\underline{\b}}
\def\Lb{\underline{L}}
\def\Lie{{\mathcal L}}
\def\tr{\mbox{tr}}
\def\err{\mbox{Err}}
\def\bA{{\textbf{A}}}
\def\D{{\mathcal D}}
\def\H{{\mathcal H}}
\def\MM{{\mathcal M}}
\def\Ml{{\mathcal M}_{\ell}}
\def\N{{\mathcal N}}
\def\cga{\overset\circ{\ga}}
\def\AA{{\mathcal A}}
\def\La{{\Lambda}}
\def\rrrr{{\Bbb R}}
\def\rr{{\bf R}}
\def\B{{\mathcal B}}
\def\F{{\mathcal F}}
\def\P{{\mathcal P}}
\def\R{{\mathcal R}}
\def\c{\cdot}
\def\hot{\widehat{\otimes}}
\def\sig{\sigma}
\def\s{\sigma}
\def\a{\alpha}
\def\b{\beta}
\def\e{\eta}
\def\g{\gamma}
\def\ep{\epsilon}
\def\l{\langle}
\def\r{\rangle}
\def\ga{\gamma}
\def\Ga{\Gamma}
\def\la{\lambda}
\def\crho{\check\rho}
\def\csig{\check\sigma}
\def\cudb{\check{\underline{\b}}}
\def\OG{\overline{\Gamma}}
\def\opsi{\overline{\psi}}
\def\p{\partial}
\def\P{{\mathcal P}}
\def\th{\theta}
\def\nab{\nabla}
\def\F{{\mathcal{F}}}
\def\varep{\varepsilon}
\def\C#1{{\mathcal C}_{[#1]}}
\def\A#1{{\mathcal A}_{[#1]}}
\def\B#1{{\mathcal B}_{[#1]}}
\def\bb{{\underline{\b}}}
\def\omb{{\underline{\om}}}
\def\Lb{{\underline{L}}}
\def\aaa{{\mathbf a}}
\def\ggg{{\mathbf g}}
\def\div{\mbox{\,div\,}}
\def\curl{\mbox{\,curl\,}}
\def\ddiv{\mbox{\,\bf{div}\,}}
\def\ccurl{\mbox{\,\bf{curl}\,}}
\def\tr{\mbox{tr}}
\def\Tr{\mbox{Tr}}
\def\db{\mbox{\bf{b}}}

\def\tir{{\tilde r}}
\def\tg{{\tilde g}}
\def\sp{{\sigma_p}}
\def\itt{{\mbox{Int}}}
\def\tlN{{\tilde N}}
\def\dual{{\,^\star \mkern-4mu}}
\def\fE{\mathfrak{E}}
\def\sA{\slashed{A}}
\def\wt{\widetilde}
\def\f14{\frac{1}{4}}
\def\f12{{\frac{1}{2}}}
\def\dd{{\bf D}}
\def\t1a{t^{-\frac{1}{a}}}
\def\kp{{\kappa}}
\def\bm{{\bf m}}
\def\sk{{\slashed{k}}}
\def\sl{\slashed}
\def\cc{C(Q_0, \k)}
\def\sD{\slashed{\Delta}}
\def\cff{\stackrel{\circ}\ff}

\def\cc{C(Q_0, \k)}
\def\TC{{TRC}}
\def\crt{{crt}}
\def\cru{{cru}}
\def\crv{{crv}}
\def\crs{{crs}}
\def\nd{{\dot{n}}}
\def\RR{{\mathcal R}_0}
\def\os#1{{\mathcal Osc}(#1)}
\newcommand{\bea}{\begin{eqnarray}}
\newcommand{\eea}{\end{eqnarray}}
\def\eql{\eqlabel}
\def\nn{\nonumber}
\newcommand{\lapp}{\mbox{$\bigtriangleup  \mkern-13mu / \,$}}
\newcommand{\CC}{\mbox{$C  \mkern-13mu / \,$}}
\newcommand{\trchb}{\tr \chib}
\newcommand{\chih}{\hat{\chi}}
\newcommand{\chib}{\underline{\chi}}
\newcommand{\xib}{\underline{\xi}}
\newcommand{\etab}{\underline{\eta}}
\newcommand{\chibh}{\underline{\hat{\chi}}\,}
\newcommand{\les}{\lesssim}
\newcommand{\ges}{\gtrsim}
\newcommand\twonorm[2]{\norm{#1}_{L^2#2}}
\newcommand\ovl{\overline}
\newcommand\und{\underline}
\newcommand{\half}{{\frac 12}undefined }
\newcommand{\piK}{\,^{(K)}\pi}
\newcommand{\piX}{\,^{(X)}\pi}
\newcommand{\Pa}{\,^{(a)}P}
\newcommand{\Pb}{\,^{(b)}P}
\newcommand{\Pm}{\,^{(m)}P}

\def\gac{\stackrel{\circ}\ga}

\newtheorem{assumption}{Assumption}


\section{\bf Introduction} \label{intr}
\setcounter{equation}{0}

Let $(\bM, \bg)$ be a (3+1)-dimensional vacuum globally hyperbolic
space-time, i.e. $\bg$ is a Lorentz metric of signature $(-, +, +,
+)$ satisfying the Einstein vacuum equations
$$
{\bf Ric}(\bg)=0
$$
and every causal curve intersects a Cauchy surface at precisely one
point. If $(\bM, \bg)$ has a compact, constant mean curvature (CMC)
Cauchy surface $\Sigma_0$ with mean curvature $t_0<0$, then there
exists a foliation of a neighborhood of $\Sigma_0$ by compact CMC
surfaces, and the mean curvature varies monotonically from slice to
slice. The CMC conjecture states that there is a foliation in $\bM$
of CMC Cauchy surfaces with mean curvatures taking on all allowable
values, i.e. the mean curvatures take all values in $(-\infty, 0)$
if $\Sigma_0$ is of Yamabe type $-1$ or $0$, while the mean
curvatures take on all values in $(-\infty, \infty)$ if $\Sigma_0$
is of Yamabe type $+1$. Certain progress has been made
(\cite{An02}), the CMC conjecture however remains open. One of the
important step to attack the CMC conjecture is to provide a
reasonable breakdown criterion to detect what may happen when the
 CMC foliation can not be extended.

In order to set up the framework, in this paper we assume that $\M_*$ is a part
of the space-time $(\bM, \bg)$ foliated by CMC hypersurfaces $\Sigma_t$ with mean
curvature $t$ satisfying $t_0\le t<t_*$ for some $t_0<t_*<0$. We
shall refer to $\Sigma_0:=\Sigma_{t_0}$ as the initial slice. Thus,
$\M_*=\bigcup_{t\in [t_0, t_*)} \Sigma_t$ with $t_*<0$ and there is a
time function $t$ defined on $\M_*$, monotonically increasing toward
the future, such that each $\Sigma_t$ is a level hypersurface of $t$
with the lapse function $n$ and the second fundamental form $k$
defined by
$$
n:=\left(-\bg(\bd t, \bd t)\right)^{1/2} \quad \mbox{and}\quad k(X,
Y):=-\bg (\bd_X \bT, Y),
$$
 where $\bT$ denotes the future directed unit normal to
$\Sigma_t$, $\bd$ denotes the space-time covariant differentiation
associated with $\bg$, and $X, Y$ are vector fields tangent to
$\Sigma_t$. Let $g$ be the induced Riemannian metric on $\Sigma_t$
and let $\nabla$ be the corresponding covariant differentiation. For
any coordinate chart ${\mathcal O}\subset \Sigma_0$ with coordinates
$x=(x^1, x^2, x^3)$, let $x^0=t, x^1, x^2, x^3$ be the transported
coordinates on $[t_0, t_*)\times {\mathcal O}$ obtained by following
the integral curves of $\bT$. Under these coordinates the metric
$\bg$ takes the form
\begin{equation}\label{3.1.0}
\bg=-n^2 dt^2 +g_{ij} dx^i dx^j.
\end{equation}
Moreover, relative to these coordinates $t, x^1, x^2, x^3$ there hold the evolution equations
\begin{align}
\p_t g_{ij}&=-2n k_{ij}, \label{bg}\\
\p_t k_{ij}&=-\nab_i \nab_j n+n(R_{ij}+\Tr k \, k_{ij}-2
k_{ia}k^a_j)\label{intro04}
\end{align}
and the constraint equations
\begin{align}
&R-|k|^2+(\Tr k)^2=0,\label{intr.1}\\
&\nab^j k_{ji}-\nab_i \Tr k=0\label{intr.2}
\end{align}
on each $\Sigma_t$, where $R_{ij}$ and $R$ denote the Ricci
curvature and the scalar curvature of the induced metric $g$ on
$\Sigma_t$, and $\Tr k$ denotes the trace of $k$, i.e. $\Tr k=g^{ij}
k_{ij}$. Since $\Tr k=t$ on $\Sigma_t$, it follows from the above
equations that
\begin{equation}\label{3.1.1}
\mbox{div} k=0
\end{equation}
and
\begin{equation}\label{3.1.2}
-\Delta n+|k|^2 n=1
\end{equation}
on each $\Sigma_t$.

The first important breakdown criterion was given by M. Anderson in
\cite{And}, who showed that when a breakdown occurs at $t_*$ there
holds
$$
\limsup_{t\rightarrow t_*^{-}} \|{\bf
R}\|_{L^\infty(\Sigma_t)}=\infty,
$$
where ${\bf R}$ denotes the Riemannian curvature tensor of the
space-time $(\bM, \bg)$. Here the pointwise norm $|{\bf R}|$ is
defined with respect to the Riemannian metric ${\bf g}_{\bT}$ on
$\bM$, where $\bg_{\bT}$ is defined as follows: for any $X, Y\in
T\M_*$ write
$$
X=X^0\bT+ \underline{X} \quad \mbox{and}\quad Y=Y^0 \bT+
\underline{Y}
$$
with $\underline{X}, \underline{Y}\in T \Sigma_t$, then
$$
\bg_{\bT}(X, Y)=X^0Y^0+ g(\underline{X}, \underline{Y}).
$$
The result of Anderson implies that if
\begin{equation}\label{An1}
\sup_{t\in [t_0, t_*)} \|\bR\|_{L^\infty(\Sigma_t)}=
\Lambda_0<\infty
\end{equation}
for all $t_*<0$, then the CMC foliation exists for all values in
$[t_0, 0)$.

Recently, Klainerman and Rodnianski \cite{KR2} provided a new
breakdown criterion which shows that if a breakdown happens at
$t_*<0$ then
$$
\limsup_{t\rightarrow t_*^{-}}
\left(\|k\|_{L^\infty(\Sigma_t)}+\|\nabla \log
n\|_{L^\infty(\Sigma_t)} \right)=\infty,
$$
or, in other words, the CMC foliation can be extended beyond any
value $t_*<0$ for which
\begin{equation}\label{tn1}
\sup_{t\in[t_0, t_*)}\left(\|k\|_{L^\infty(\Sigma_t)}+\|\nab \log
n\|_{L^\infty(\Sigma_t)}\right)=\Lambda_0<\infty.
\end{equation}
In contrast to the breakdown criterion of Anderson, the condition
(\ref{tn1}) of Klainerman and Rodnianski is formally weaker as it
refers only to the second fundamental form $k$ and the lapse
function $n$ which requires one degree less of differentiability.
Moreover, by purely elliptic estimates, one can see that (\ref{An1})
implies immediately (\ref{tn1}), since the boundedness of $\|{\bf
R}\|_{L^\infty}$ exhausts all the dynamical degrees of freedom of
the equations. Therefore, the result in \cite{KR2} is a significant
improvement. We remark that the result of Klainerman and Rodnianski
can not be established by purely elliptic estimates. Instead, the
proof relies heavily on the tools from the theory of hyperbolic
equations. The analogous result has been extended to non-vacuum
space-time in \cite{Shao}.

If we consider the Einstein equation expressed relative to the wave
coordinates, by energy estimates one can see that the breakdown
does not occur unless
\begin{equation}\label{pg}
\int_{t_0}^{t_*} \|\p \bg\|_{L^\infty} dt=\infty.
\end{equation}
This condition however is not geometric since it depends on the
choice of a full coordinate system. Observe that the components of
the second fundamental form $k$ and $\nabla n$ can be viewed as part
of the components of $\p\bg$. It is natural to ask if we have an
integral form of breakdown criterion involving $k$ and $n$ only. The
first main result of the present paper confirms this and provides a
geometric counterpart of (\ref{pg}), which can be viewed as an
improved version of the breakdown criterion of Klainerman and
Rodnianski.

\begin{theorem}[Main theorem I]\begin{footnote}{
Our method applies equally well to the case that $\Sigma_t$
are asymptotically flat and maximal, i.e $\Tr k=0$ and
 can also be extended to Einstein space-time with matter.}
\end{footnote}\label{thm3}
Let $(\M_*, \bg)$ be a globally hyperbolic development of $\Sigma_0$
foliated by the CMC level hypersurfaces of a time function $t<0$.
Then the space-time together with the foliation $\Sigma_t$ can be
extended beyond any value $t_*<0$ for which,
\begin{equation}\label{add}
\int_{t_0}^{t_*} \left(\|k\|_{L^\infty(\Sigma_t)}+\|\nab \log n\|_{
L^\infty(\Sigma_t)}\right) d t=\k<\infty.
\end{equation}
\end{theorem}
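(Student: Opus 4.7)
The plan is to reduce Theorem \ref{thm3} to Anderson's $L^\infty$-curvature criterion \eqref{An1} by deducing $\sup_{t\in[t_0,t_*)}\|\bR\|_{L^\infty(\Sigma_t)}<\infty$ from the integral hypothesis \eqref{add}. The overall scaffolding is the bootstrap of Klainerman-Rodnianski \cite{KR2}, but every quantitative estimate will be rebuilt so that only the time-integrated quantity $\kp(t):=\int_{t_0}^{t}(\|k\|_{L^\infty(\Sigma_s)}+\|\nab\log n\|_{L^\infty(\Sigma_s)})\,ds\le\k$ enters. Gronwall's inequality in $t$ will replace the trivial factor $(t_*-t_0)\Lambda_0$ available in \cite{KR2}.

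I would begin with background comparison estimates. Integrating \eqref{bg} and using $\int\|k\|_{L^\infty}\,dt\le\k$ gives uniform equivalence of $g(t)$ with $g(t_0)$, with ratio bounded by $e^{C\k}$. A maximum principle applied to the lapse equation \eqref{3.1.2}, together with the integral control on $\|\nab\log n\|_{L^\infty}$, confines $n$ to a compact subinterval of $(0,\infty)$ independent of $t$. These comparisons permit the interchange of Sobolev, elliptic and $L^p$ norms on different slices with multiplicative constants depending only on $\k$.

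The central analytic step is the construction of a Bel-Robinson energy hierarchy. Setting $\Q_0(t)=\int_{\Sigma_t}\bQ(\bR)(\bT,\bT,\bT,\bT)\,d\mu_g$, the vacuum identity $\ddiv\bQ(\bR)=0$ and the expression of the deformation tensor ${}^{(\bT)}\pi$ in terms of $k$ and $n^{-1}\nab n$ produce, schematically,
\begin{equation*}
\Q_0(t)\;\le\;\Q_0(t_0)+C\int_{t_0}^{t}\bigl(\|k\|_{L^\infty(\Sigma_s)}+\|\nab\log n\|_{L^\infty(\Sigma_s)}\bigr)\Q_0(s)\,ds,
\end{equation*}
so Gronwall yields $\Q_0(t)\le \Q_0(t_0)\,e^{C\k}$. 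I would then iterate by commuting the Bianchi equations with $\hat\L_{\bT}$ and with a suitable frame of approximate rotations, producing higher-order Weyl fields whose energies $\Q_m(t)$ obey analogous Gronwall inequalities, provided the resulting error terms, which involve $\nab^{j}k$ and $\nab^{j+1}n$, are absorbed by $\Q_m(t)$ together with $\kp(t)$ via Bochner and elliptic estimates extracted from \eqref{3.1.1}-\eqref{3.1.2}.

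The principal obstacle, and the heaviest technical content, is the passage from the $L^2$ curvature flux provided by the $\Q_m$ to a pointwise bound on $\bR$. Following \cite{KR2}, I would represent $\bR(p)$ for each $p\in\M_*$ via a Kirchhoff-Sobolev parametrix along the past null cone $\mathcal{C}^-(p)$, which bounds $|\bR(p)|$ by the flux of $\Q_m$ through $\mathcal{C}^-(p)$ plus error terms controlled by the null connection coefficients $\tr\chi$, $\hat\chi$, $\trchb$, $\chibh$, $\zeta$, the null lapse, and the Gauss curvature $K$ of the spheres $S_{t,p}$. In \cite{KR2} these coefficients are estimated along each null generator using transport equations whose forcing is pointwise bounded by $\Lambda_0$; under the weaker hypothesis \eqref{add} the same forcings are only $L^1$ in $t$ along a generator, so the required pointwise bounds must be extracted by Gronwall applied generator-by-generator, producing constants of the form $C(\k)$. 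Re-establishing in this integrable regime the trace inequality on $S_{t,p}$ and the $L^2$ bound on $\sn K$ needed to close the parametrix is the main difficulty. Once this is accomplished, a standard continuity argument on the maximal subinterval of $[t_0,t_*)$ on which $\sum_{m\le m_0}\Q_m(t)+\|\bR\|_{L^\infty(\Sigma_t)}$ is finite shows that this subinterval equals $[t_0,t_*)$, and \eqref{An1} then yields the extension claimed in Theorem \ref{thm3}.
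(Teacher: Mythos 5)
Your proposal correctly identifies the Bel--Robinson/Gronwall step and the eventual appeal to a Kirchhoff--Sobolev parametrix for $\bR$, both of which appear in the paper. However, it misses the structural and technical core of the argument, and the step you describe as ``Gronwall applied generator-by-generator'' does not actually close under the hypothesis \eqref{add}.

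The parametrix representation of $\bR(p)$ along $\N^-(p,\tau)$ presupposes that $\N^-(p)$ is a smooth hypersurface for $t\in[t(p)-\tau,t(p)]$ with $\tau$ a \emph{universal} constant, i.e. a uniform lower bound on the past null radius of injectivity $i_*(p,t)$. This is the content of Theorem~\ref{thm4}, to which the paper devotes Sections~\ref{sect4}--9, and it is not addressed anywhere in your proposal. Without it, the domains over which you integrate the parametrix kernel are not defined, and the continuity argument at the end cannot even begin.

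The deeper problem is that the $L^1_tL^\infty_x$ hypothesis \eqref{add}, unlike the $L^\infty_tL^\infty_x$ hypothesis \eqref{tn1} of \cite{KR2}, does \emph{not} become small when restricted to a short time interval $[t(p)-\delta_*,t(p)]$: the total mass $\k$ could concentrate on an arbitrarily short slab. So even though $\int_\gamma|\nu|\,dt\le\k$ along a null generator $\gamma$, one cannot conclude $|a-1|\le\frac12$ on a slab of universal width $\delta_*$, and the generator-by-generator Gronwall you invoke yields only the useless constant $e^{C\k}$, not a bound that improves as $\delta_*\to0$. The paper resolves this by establishing the qualitatively stronger estimate $\|\nu\|_{L^\infty_\omega L^2_t(\N^-(p,\tau))}\le C$ (inequality \eqref{nu1}), which gives $|a-1|\lesssim(t(p)-t)^{1/2}$, and, for condition {\bf C3}, the pointwise-in-$x$ estimate $\int_t^{t(p)}|k(x,t')|^2\,dt'\le C$ (inequality \eqref{2.7.1}) to show $|g_{ij}-\delta_{ij}|<\ep$ on short slabs. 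Neither of these upgrades from $L^1_t$ to $L^2_t$ follows from the elliptic estimates on each slice (one loses half a derivative on restriction to a null cone); the paper obtains them from the tensorial wave equation $\Box k=\cdots$ derived in Section~\ref{wave}, the $k$-flux estimate of Section~\ref{flux1}, and a Kirchhoff--Sobolev representation of the tensor $k$ itself (not $\bR$) in Section~9. This wave-equation-for-$k$ mechanism is the paper's main new idea and is entirely absent from your sketch; without some replacement for it, the proof does not close.

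A more minor remark: the paper obtains the higher-order curvature energies directly from $\Box_{\bg}\bR=\bR\star\bR$ rather than by commuting the Bianchi equations with $\hat\L_{\bT}$ and approximate rotations; both routes are plausible, but your commutation approach would generate additional lower-order terms involving the deformation tensor and its derivatives that must themselves be controlled under \eqref{add}, and you have not verified this.
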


 Let us fix the convention for the deformation tensor of $\bT$,  expressed relative
to an orthonormal frame $\{e_0=\bT, e_1, e_2, e_3\}$, as follows,
$$\pi_{\a\b}=-\bg( \bd_{e_{\a}} \bT, e_\b),\mbox{ with } \a, \b=0,1,2,3.$$
It is easy to check
$$\pi_{00}=0, \, \pi_{0i}=-\nab_i \log n,\, \pi_{i0}=0,\,
\pi_{ij}=k_{ij}, \mbox{ with } i,j=1,2,3.$$
 Consequently, the condition (\ref{tn1}) can be formulated as
$$
\sup_{t\in [t_0, t_*)} \|\pi\|_{L^\infty(\Sigma_t)}= \Lambda_0<\infty,
$$
while the weaker condition (\ref{add}) can be formulated as
\begin{equation*}
\|\pi\|_{L_t^1 L_x^\infty(\M_*)}:=\int_{t_0}^{t_*} \|\pi\|_{L^\infty(\Sigma_t)} dt
=\k<\infty. \tag{{\bf A1}}
\end{equation*}

We basically follow the framework in \cite{KR2} to prove Theorem \ref{thm3}; however,
a sequence of difficulties occur due to the weaker condition (\ref{add}).
In order to continue the foliation, according to the local existence
theorem given in \cite[Theorem 10.2.1]{KC}, one must establish a global uniform bound for
the curvature tensor ${\bf R}$ and $L^2$-bounds for its first two covariant derivatives.
Since $(\bM, \bg)$ is a vacuum space-time, by virtue of the Bianchi identity ${\bf R}$
verifies a wave equation of the form
\begin{equation}\label{3.1.3}
\Box_{\bg} {\bf R}={\bf R} \star {\bf R},
\end{equation}
where $\Box$ denotes the covariant wave operator $\Box =\bd^\a
\bd_\a$. Based on higher energy estimates it is standard to show
that the $L^2$ bounds for $\bd {\bf R}$ and $\bd^2 {\bf R}$ can be
bounded in terms of the $L^\infty$ norm of ${\bf R}$. Thus, the
derivation of the $L^\infty$ bound of ${\bf R}$ is a crucial step.
In order to achieve this goal, Klainerman and Rodnianski \cite{Ksob}
succeeded in representing ${\bf R}(p)$, for each $p\in \M_*$, by a
Kirchoff-Sobolev formula of the form
$$
{\bf R}(p)=-\int_{\N^{-}(p, \tau)} {\bf A} \cdot ({\bf R}\star {\bf
R}) +\mbox{other terms}
$$
where ${\bf A}$ is a $4$-covariant tensor defined as a solution of a
transport equation along $\N^{-}(p, \tau)$ with appropriate initial
data at the vertex $p$, $\N^{-}(p, \tau)$ denotes the portion of the
null boundary $\N^{-}(p)$ in the time interval $[t(p)-\tau, t(p)]$.
The past null cone $\N^{-}(p)$ is in general an achronal Lipschitz
hypersurface ruled by the set of past null geodesics from $p$. In
order to derive all necessary estimates, one must show that
$\N^{-}(p)$ remains a smooth hypersurface in the time slab
$[t(p)-\tau, t(p))$ for some universal constant $\tau>0$. Therefore,
it is necessary to provide a uniform lower bound for the past null
radius of injectivity at all $p\in \M_*$.

Let us recall briefly the definition of the past null radius of injectivity at $p$, one may consult
\cite{KRradius} for more details. We
parametrize the set of past null vectors in $T_p \bM$ in terms of
$\omega\in {\Bbb S}^2$, the standard sphere in ${\Bbb R}^3$.
Then, for each $\omega\in {\Bbb S}^2$, let $l_{\omega}$ be the null vector
in $T_p\bM$ normalized with respect to the future, unit, time-like vector $\bT_p$ by
$$
\bg(l_\omega, \bT_p)=1
$$
and let $\Gamma_\omega(s)$ be the past null geodesic with initial data
$\Gamma_\omega(0)=p$ and $\frac{d}{ds}\Gamma_{\omega}(0)=l_\omega$.
We define the null vector field $L$ on $\N^{-}(p)$ by
$$
L(\Ga_\omega(s))=\frac{d}{ds}\Ga_\omega(s)
$$
which may only be smooth almost everywhere on $\N^-(p)$ and can be multi-valued on a set of
exceptional points. We can choose the parameter $s$ with $s(p)=0$ so that
$$
\bd_L L=0\quad  \mbox{and}\quad L(s)=1.
$$
This $s$ is called the affine parameter.

The past null radius of injectivity $i_*(p)$ at $p$ is then defined to be the supremum
over all the values $s_0>0$ for which the exponential map
$$
\fg_p: (s, \omega)\rightarrow
\Gamma_\omega (s)
$$
is a global diffeomorphism from $(0, s_0)\times {\Bbb S}^2$ to its
image in $\N^-(p)$. It is known that $i_*(p)>0$ for each $p$,
$\N^{-}(p)$ is smooth within the null radius of injectivity, and
$$
i_*(p)=\min\{s_*(p), l_*(p)\},
$$
where $s_*(p)$, the past null radius of conjugacy at $p$, is defined to be
the supremum over all values $s_0>0$ such that the exponential
map $\fg_p$ is a local diffeomorphism from $(0, s_0)\times {\Bbb S}^2$ to its image in
$\N^{-}(p)$, and $l_*(p)$, the past cut locus radius at $p$, is defined to be
the smallest value of $s_0$ for which there exist two distinct null geodesics
$\Gamma_1$ and $\Gamma_2$ from $p$ with $\Gamma_1(s_0)=
\Gamma_2(s_0)$. Thus, for a past null geodesic $\Gamma_{\omega}$ from $p$,
a point $q=\Gamma_\omega(s_*)$ is called  a conjugate
point of $p$ if $\fg_p$ is singular at $(s_*, \omega)$, while it is called a
null cut point of $p$ if $\fg_p$ is nonsingular at
$(s_*, \omega)$ and through $q$ there exists another null
geodesic emanating from $p$.

Since we are working on the CMC foliation, it is convenient to
introduce the past null radius of injectivity $i_*(p, t)$ at each
$p$ with respect to the global time function $t$. We define
$i_*(p,t)$ to be the supremum over all the values $\tau>0$ for which
the exponential map
\begin{equation}\label{gp}
{\mathcal G}_p: (t, \omega)\rightarrow \Ga_\omega(s(t))
\end{equation}
is a global diffeomorphism from $(t(p)-\tau, t(p))\times {\Bbb S}^2$
to its image in $\N^{-}(p)$. We remark that $s$ is a function not
only depending on $t$ but also on $\omega$, we suppress $\omega$
just for convenience. It is known that
$$
i_*(p,t)=\min\{s_*(p,t), l_*(p,t)\},
$$
where $s_*(p, t)$ is defined to be the supremum over all values
$\tau>0$ such that the map ${\mathcal G}_p$ is a local
diffeomorphism from $(t(p)-\tau, t(p))\times {\Bbb S}^2$ to its
image, and $l_*(p,t)$ is defined to be the smallest value of
$\tau>0$ for which there exist two distinct null geodesics
$\Ga_1(s(t))$ and $\Ga_2(s(t))$ from $p$ which intersect at a point
with $t=t(p)-\tau.$

In \cite{KRradius} Klainerman and Rodnianski provided a uniform
lower bound on the null radius of injectivity under the assumption
(\ref{tn1}). In order to complete the proof of Theorem \ref{thm3}, one must
provide a uniform lower bound on the null radius of injectivity under
the weaker condition (\ref{add}). This is contained in the second main result
of the present paper.

\begin{theorem}[Main theorem II] \label{thm4}
Assume that $\M_*$ is a globally hyperbolic development of $\Sigma_0$
verifying the condition (\ref{add}). Then for all $p\in \M_*$ there holds
\begin{equation}
i_*(p, t)> \min\{\delta_*, t(p)-t_0\},
\end{equation}
where $\delta_*>0$ is a constant depending only on $Q_0$, $\k$, $|\Sigma_0|$ and $t_*$.
\begin{footnote}
{$Q_0$ denotes the Bel-Robinson energy on the initial slice $\Sigma_0$
which will be defined in Section 2.}
\end{footnote}
\end{theorem}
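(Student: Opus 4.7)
The plan is to follow the framework of \cite{KRradius}, which established the analogous bound under the stronger pointwise assumption (\ref{tn1}), and rework its key estimates so that the only quantity integrated in time is $\|\pi\|_{L^\infty(\Sigma_t)}$. The starting point is the decomposition $i_*(p,t)=\min\{s_*(p,t),l_*(p,t)\}$, so it suffices to produce a uniform positive lower bound $\delta_*=\delta_*(Q_0,\k,|\Sigma_0|,t_*)$ for each of $s_*(p,t)$ and $l_*(p,t)$. A preliminary but essential observation is that the affine parameter $s$ along a past null geodesic $\Ga_\omega$ from $p$ remains comparable to the time parameter $t$: setting $a:=L(t)$ and using the transport identity $\tfrac{d}{ds}\log a=O(|\pi|)$, a change of variables from $s$ to $t$ yields
\[
\int_0^{s(t)}\bigl|\pi(\Ga_\omega(s'))\bigr|\,ds' \;\lesssim\; \int_{t}^{t(p)} \|\pi\|_{L^\infty(\Sigma_{t'})} \,dt' \;\leq\; \k,
\]
so that $a\in[e^{-C\k},e^{C\k}]$ and all Gr\"onwall estimates along $\N^-(p)$ are driven by a weight that is integrable in $t$ rather than pointwise bounded.

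To bound $s_*(p,t)$ from below, I would establish uniform estimates for the null Ricci coefficients $\tr\chi,\chih,\ze$ on the truncated past null cone $\N^-(p,\tau)$, $\tau\le\delta_*$. These satisfy null structure equations along $L$ of Riccati or transport type, with null curvature components $\a,\b,\rho,\sig$ as sources and lower-order terms linear in $\pi$. The necessary curvature flux through $\N^-(p,\tau)$ is controlled via Bel--Robinson energy identities on the spacetime region bounded by $\Sigma_{t(p)-\tau}$, $\Sigma_{t(p)}$ and $\N^-(p,\tau)$; the corresponding deformation-tensor error terms are quadratic in $\bR$ and linear in $\pi$, and are absorbed by Gr\"onwall in view of (\ref{add}). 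Starting from the vertex asymptotics $\tr\chi\sim -2/s$ and propagating the transport equations forward in $s$, one shows that the null expansion does not focus on $s\in[0,\delta_*]$, which gives $s_*(p,t)>\delta_*$.

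The bound on the cut locus radius $l_*(p,t)$ is then obtained by contradiction. If $l_*(p,t)$ could be arbitrarily small, two distinct past null geodesics from $p$ would meet at a point $q$ with $t(q)=t(p)-l_*$. However, the bounds from the previous step ensure that the exponential map ${\mathcal G}_p$ is an immersion on $(t(p)-\delta_*,t(p))\times{\Bbb S}^2$, and the area--volume comparison argument of \cite{KRradius}---propagated along the cone using the transport bounds on $\tr\chi$ and the comparability of $s$ with $t$---rules out such self-intersections provided $\delta_*$ is chosen small depending only on $Q_0,\k,|\Sigma_0|,t_*$.

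The principal obstacle, and the heart of the argument, is the reworking of the curvature flux and Ricci coefficient estimates under the weaker hypothesis (\ref{add}). In \cite{KRradius} many error terms were estimated by placing $\pi$ in $L^\infty$; here every such term must be rearranged so that $\|\pi\|_{L^\infty(\Sigma_t)}$ appears integrated in $t$ and is controlled by $\k$. This forces a simultaneous bootstrap for the $L^2$ bounds on $\bR,\bd\bR,\bd^2\bR$ (needed independently for the continuation argument behind Theorem \ref{thm3}) and for the geometric bounds on $\N^-(p)$, with all Gr\"onwall inequalities driven by the integrable weight $t\mapsto\|\pi\|_{L^\infty(\Sigma_t)}$. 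The subtlest point is the low-regularity control of $\tr\chi+2/s$ and $\chih$ near the vertex, where the transport data degenerate and the accumulated $\pi$-errors along the cone must be tracked uniformly in $\omega\in{\Bbb S}^2$.
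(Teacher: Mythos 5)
Your proposal has two genuine gaps that the paper identifies as the central obstructions under the hypothesis (\ref{add}), and both are missed.

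\textbf{Control of the null lapse $a$ cannot be closed by the naive integral bound.} You claim that reparametrising $\int_0^{s(t)}|\pi|\,ds'$ to $t$ and invoking (\ref{add}) gives $a\in[e^{-C\k},e^{C\k}]$, and that this suffices. But $\k$ is merely finite, not small, so $e^{C\k}$ can be arbitrarily far from $1$; the bootstrap requirement is $|a-1|\le 1/2$. Under the stronger pointwise assumption (\ref{tn1}), $|\nu|\le\Lambda_0$ gives $|a-1|\le C\Lambda_0(t(p)-t)$, which is made small by shrinking $\delta_*$; that gain in $(t(p)-t)$ is precisely what your bound lacks. The paper's replacement (explained around (\ref{nu1})) is the much stronger estimate $\|\nu\|_{L_\omega^\infty L_t^2(\N^-(p,\tau))}\le C$, after which Cauchy--Schwarz in $t$ produces $|a-1|\le C(t(p)-t)^{1/2}$. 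Obtaining that trace estimate uniformly in $\omega$ is not a rearrangement of Gr\"onwall weights: it requires a decomposition $\sn\nu=\nab_L P+Q$, a sharp trace inequality on the null cone, and crucially the $k$-flux bound $\F[k](p,\tau)\le C$ (Theorem \ref{flux.01}), whose proof rests on the tensorial wave equation $\Box k$ derived in Section 5. None of this appears in your proposal.

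\textbf{Condition {\bf C3} (coordinates in which $\bg$ is close to Minkowski) is not addressed.} The argument you cite from \cite{KRradius} ruling out conjugate/cut points needs, besides {\bf C1}--{\bf C2}, the existence of transported coordinates with $|g_{ij}-\delta_{ij}|<\ep$ on $I_p\times B_{\delta_*}(p)$. The time-evolution of $g_{ij}$ is controlled by $\int_t^{t(p)}|k(x,t')|\,dt'$, and under (\ref{tn1}) this is $\le\Lambda_0(t(p)-t)$. Under (\ref{add}) the best one can say from $\|\pi\|_{L_t^1L_x^\infty}=\k$ is that the full integral is $\le\k$, which is useless for smallness, and this is exactly where the paper says ``the above argument fails.'' The fix is the pointwise-in-space, $L^2$-in-time estimate $\sup_x\int_t^{t(p)}|k(x,t')|^2\,dt'\le C$ (equation (\ref{2.7.1}), i.e.\ Theorem \ref{fina3}), which gains a factor $(t(p)-t)^{1/2}$. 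The paper flags this as ``the major part of the present paper,'' and it is established by representing $k$ along null cones via a Kirchoff--Sobolev parametrix applied to the wave equation for $k$ (Theorem \ref{T2.6.1}), together with the flux and trace estimates of Sections 7--8. Your proposal does not mention the wave equation for $k$, the parametrix, or any mechanism for converting (\ref{add}) into an $L^2_t$ control of $k$ along integral curves of $\bT$, so the C3 step is simply missing.

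In short: the idea of following the \cite{KRradius} framework with a bootstrap on the null cone Ricci coefficients is correct as a skeleton, but the claim that the difference from the pointwise case is merely a bookkeeping of Gr\"onwall weights in $\|\pi\|_{L^\infty}$ is wrong; both {\bf BA1} (via $\nu$) and {\bf C3} (via $k$) require $L^2$-in-time trace estimates that do not follow from (\ref{add}) alone and are the paper's main new ingredients.
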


In order to prove this result, it is useful to review the essential steps in the work
of Klainerman and Rodnianski in \cite{KRradius}. The first step is
to show that
\begin{equation}\label{3.1.4}
s_*(p, t)>\min\{l_*(p, t), \delta_*\}
\end{equation}
for some universal constant\begin{footnote} {A universal constant
always means a constant depending only on $Q_0$, $\k$, $|\Sigma_0|$,
$t_*$ and the number $I_0>0$ such that $I_0^{-1}\le (g_{ij})\le I_0$
on the initial slice $\Sigma_0$. Throughout this paper $C$ always denotes
a universal constant.}
\end{footnote}$\delta_*>0$.
This can be achieved by showing that
\begin{equation}\label{3.1.5}
\sup_{\N^{-}(p, \tau)}\left|\tr\chi -\frac{2}{s(t)}\right|\le C
\end{equation}
with $\tau:=\min\{l_*(p,t), \delta_*\}$, where $\chi$ is the null
second fundamental form $\chi_{AB}=\bg(\bd_A L, e_B)$ of the
2-dimensional space-like surface $S_t:=\N^{-}(p)\cap \Sigma_t$ with
$(e_A)_{A=1,2}$ being a frame field tangent to $S_t$. The analog has
been carried out in \cite{KR1,KR4,KR3,Qwang} for geodesic foliations
under the boundedness assumption of the curvature flux. In order to
adapt those arguments to prove (\ref{3.1.5}) for the time
foliations, one needs to show  that $t(p)-t$ and $s$ are comparable
and the geodesic curvature flux (see  \cite{KRradius}) is bounded,
both of which rely on the relation
\begin{equation}\label{a1}
 |a-1|\le \f12 \quad \mbox{ on } \N^-(p,\tau),
\end{equation}
where $a$, the null lapse function, is defined by $a^{-1}:=\bg(\bT,
L)$  with $a(p)=1$. Note that along a null geodesic
$$
 \frac{dt}{ds}=-(an)^{-1},\quad \frac{d a}{d s}=\nu,\quad
\nu:=k_{NN}-\nab_N\log n,
$$
where $N$ is the unit inward normal of $S_t$ in $\Sigma_t$.  If
(\ref{tn1}) is satisfied, one can see that (\ref{a1}) holds for
$t(p)-\delta_*\le t\le t(p)$ for some universal $\delta_*>0$, and
consequently  $s$ and $t(p)-t$ are comparable. However, under the
weaker condition (\ref{add}) only, it is highly nontrivial to obtain
(\ref{a1}). We observe that (\ref{a1}) can be achieved by
establishing
\begin{equation}\label{nu1}
\|\nu\|_{L_\omega^\infty L_t^2(\N^{-}(p, \tau))}^2 =\sup_{\omega\in
{\Bbb S}^2} \int_{\Gamma_\omega} an |\nu|^2 dt\le C
\end{equation}
where $\Gamma_\omega$ is the portion of a past null geodesic
initiating from $p$ contained in $\N^-(p,\tau)$, for some universal
constant $\delta_*>0$. How to obtain such an estimate on $\nu$ is
the first difficulty we encounter. The idea to derive the trace
estimate (\ref{nu1}) is to employ the techniques in the proof of the
sharp trace inequality in \cite{KR1,KR4,KR3}. Under the assumption
(\ref{add}) only, suppose the sharp trace inequality holds true on
null cone in time foliation, in order to prove (\ref{nu1}),
schematically, we need to prove

i)  there holds for $\sn \nu$ the decomposition
\begin{equation}\label{eq}
\sn\nu=\nab_L P+Q
\end{equation}
 with $P$ and $Q$ appropriate $S_t$ tangent tensors.
 \begin{footnote}{$\sn$ denotes the connection with respect to the
induced metric $\ga$ on $S_t$. }\end{footnote}

ii) there holds
\begin{equation}\label{eq3}
 \|\sn (\nu, P)\|_{L^2(\N^-(p, \tau))}+\|\nab_L (\nu,
P)\|_{L^2(\N^-(p,\tau))}\le C.
\end{equation}

 The decomposition of the form
(\ref{eq}) will be derived in \cite{tqwang}.
 To prove the sharp trace inequality in time foliation and to
control $P$ and $Q$ must be coupled with the proof of a series of
estimates for the Ricci coefficients on null hypersurface
$\N^-(p,\tau)$ including (\ref{3.1.5}) by a delicate bootstrap
argument. Hence, under the condition (\ref{add}) only,
(\ref{3.1.5}), (\ref{a1}) and (\ref{nu1}) should be proved
simultaneously. The proof is rather involved and close to the spirit
of the works \cite{KR1,KR4,KR3,Qwang}. We will present it in
\cite{tqwang} with full details.

 Now we simply consider how to obtain the estimate for $\nu$ in
 (\ref{eq3}). The  estimate for $\nab_N \log n $ of the form  (\ref{eq3}) can be
obtained by elliptic estimates and trace inequality. By elliptic
estimate, in view of
\begin{equation}\label{divcurl}
\div k=0, \curl k=H,
\end{equation}
where $H$ denotes the magnetic part of $\bR$,
 we can only derive $\|k\|_{H_x^1(\Sigma)}\le C$, which, by classic
 trace theorem, loses $1/2$ derivative if
restricted to null cone. However, (\ref{eq3}) requires  the $L^2$
control of one derivative of $k_{NN}$ on null cones. Hence, we must
adopt a different approach, which significantly surpasses the one
via elliptic estimate and trace inequality. This inspires us to use
the tensorial wave equation for $k$, which symbolically is given by
\begin{equation}\label{hkk}
\Box k= k\cdot Ric+ n^{-2} \nabla^2 \dot n+\pi \cdot \nabla k-n^{-3}
\dot n \nabla^2 n + \pi \cdot \pi \cdot \pi +k\cdot \nabla^2 n
-n^{-1} k.
\end{equation}
 We then prove by energy method, the $k$-flux satisfies
\begin{equation}\label{kk}
 \|\sn
k\|_{L^2(\N^-(p,\tau))}+\|\nab_L k\|_{L^2(\N^-(p,\tau))}\le C,
\end{equation}
 which schematically gives the desired control on $k_{NN}$.

The next step is to find  a system of good local space-time
coordinates under which $\bg$ is comparable with the Minkowski
metric. More precisely, for a sufficiently small constant $\ep>0$,
one needs to show that there exists a constant $\delta_*>0$,
depending only on $\ep$ and some universal constants, for which each
geodesic ball $B_{\delta_*}(p)$ with $p\in \Sigma_t$ admits local
coordinates $x=(x^1, x^2, x^3)$ such that under the corresponding
transport coordinates $x^0=t, x^1, x^2, x^3$ the metric $\bg$ has
the expression (\ref{3.1.0}) with
\begin{equation}\label{3.1.6}
|n-n(p)|\le \ep\qquad \mbox{and} \qquad |g_{ij}-\delta_{ij}|\le \ep
\end{equation}
on $B_{\delta_*}(p)\times [t(p)-\delta_*, t(p)]$. The existence of
such local coordinates together with (\ref{3.1.5}) will enable us to
show that $\N^{-}(p, \delta_*)$ is close to the flat cone and
consequently $l_*(p, t)\ge \delta_*$.

The part on $n$ in (\ref{3.1.6}) can be established by elliptic
estimates on $n$ and $\p_t n$. The derivation of the  result for $g$
 under the weaker condition (\ref{add}), however, presents one of
the core difficulties, which invokes new methods and a second
application of (\ref{hkk}).

By the Bel-Robinson energy bound $\Q(t)\le C$ and a result of
Anderson \cite{An97}, one can control the lower bound of harmonic
radius  on $\Sigma_t$, such that with the coordinates $x=(x^1, x^2,
x^3)$ on $B_{\delta_*}(p)\subset \Sigma_t$,
$$
|g_{ij}(x, t(p))-\delta_{ij}|\le \f12\ep.
$$
The challenge is to control time evolution of $g$. Using (\ref{bg}),
one has \begin{footnote}
{We use $\Phi_1\les \Phi_2$
to mean that $\Phi_1\le C\Phi_2$ for some universal constant $C$.}
\end{footnote}
\begin{equation}\label{gev}
|g_{ij}(x, t(p))-g_{ij}(x, t)| \les \int_t^{t(p)} |k(x, \tt)| d\tt.
\end{equation}
If (\ref{tn1}) holds, or more generally, if
$$
\int_{t_0}^{t_*} \|k(\tt)\|_{L^\infty(\Sigma_{\tt})}^q d\tt\le
\Lambda_0<\infty
$$
for some $q>1$, then with $\delta_*$ sufficiently small
\begin{equation}\label{gev2}
|g_{ij}(x,t(p))-g_{ij}(x,t)|\le \Lambda_0^{1/q}
(t(p)-t)^{1-1/q}<\f12 \ep.
\end{equation}

The above argument fails if $k$ verifies (\ref{add}) only. Under the
assumption (\ref{add}), our strategy is to prove directly the
integral on the right of (\ref{gev}) can be small, i.e.
$$\int_t^{t(p)} |k(x,t')| dt'<\frac{1}{2}\ep,\qquad \forall x\in \Sigma $$
by establishing
\begin{equation}\label{2.7.1}
 \sup_{x\in \Sigma} \int_t^{t(p)} |k(x, \tt)|^2 d\tt \le C,
\end{equation}
since
$$
|g_{ij}(x, t(p))-g_{ij}(x, t)|\les \left(\int_t^{t(p)} |k(x, \tt)|^2
d\tt\right)^{1/2} (t(p)-t)^{1/2}\les (t(p)-t)^{1/2}
$$
which implies $|g_{ij}(x, t(p))-g_{ij}(x, t)|<\f12 \ep$  as long as
$\delta_*$ is appropriately chosen.

The major part of the present paper is therefore to establish
(\ref{2.7.1}) under the weaker condition (\ref{add}). To this end,
we will use the Kirchoff parametrix to represent $k$ as
$$
-4\pi n(p) k(p)\cdot J=\int_{\N^-(p, \tau)} \Box k\cdot {\bf A} +
\mbox{other terms},
$$
for any $\delta<i_*(p,t)$, where $J$ is any 2-covariant tensor at
$p$ tangent to $\Sigma_{t(p)}$ and ${\bf A}$ is the $\Sigma$-tangent
tensor defined by
$$
{\bf D}_L {\bf A}_{ij} +\frac{1}{2} \tr \chi {\bf A}_{ij} =0 \,\, \,
\mbox{ on } \N^-(p, \tau), \qquad \lim_{t\rightarrow t(p)} (t(p)-t)
{\bf A}_{ij} =J.
$$
It can be shown that $\|r{\bf A}\|_{L^\infty(\N^-(p, \tau))}\les 1$
together with other estimates on ${\bf A}$, where
$r=\sqrt{(4\pi)^{-1} |S_t|}$ and $|S_t|$ denotes the area of $S_t$.
Thus
$$
n(p)|k(p)|\les \int_{\N^-(p, \tau)} r^{-1} |\Box k| +\mbox{other
terms}.
$$
Next we let $p$ move along an integral curve $\Phi(t)$ of ${\bf
T}$ to get the representations of $k$ at all points on this curve.
Then we can reduce the proof of (\ref{2.7.1}) to showing that
$$
\int_{t(p)-\tau}^{t(p)} \left|\int_{\N^-(\Gamma(t), t-t(p)+\tau)}
r^{-1}|\Box k|+\cdots\right|^2 d t\les 1.
$$
 In view of (\ref{hkk}), we have to employ various estimates of $k$
and $n$ on the null cones, which will be established by delicate
analysis.

This paper is organized as follows. In  Section \ref{3d}, we collect
some preliminary results related  to the CMC foliation, which will
be used frequently in the later sections. In Section \ref{3D}, we
establish various elliptic estimates on the lapse function $n$, in
particular, we show that $n$ can be bounded from below and above by
positive universal constants. In Section \ref{sect4}, we provide the
sketch of the proof of Theorem \ref{thm4}. We will explain how to
use the bootstrap argument to establish (\ref{3.1.5}) and other
related estimates on the null cones. We then show how to use the
estimate (\ref{2.7.1}) to obtain  a system of good local space-time
coordinates which is crucial for completing the proof of Theorem
\ref{thm4}. In order to establish (\ref{2.7.1}), we derive a
tensorial wave equation for $k$ in Section \ref{wave} and provide
the estimate for the so called $k$-flux in Section \ref{flux1} which
will be defined later. In Section 7 we provide some trace estimates
on the surfaces $S_t$. We then use these results in Section 8 to
establish various estimates for $k$, $n$ and $\chi$ on the null
cones. In section 9 we adapt the Kirchoff-Sobolev formula in
\cite{Ksob} to represent the second fundamental form $k$ along the
null cones, through which we give the proof of (\ref{3.1.5}) under
the condition (\ref{add}) and thus complete the proof of
Theorem \ref{thm4}. Finally in Section 9 we complete the proof of Theorem \ref{thm3}.\\

\noindent{\bf Acknowledgement.}  The author would like to thank
Professors Sergiu Klainerman, Michael Anderson and Richard Schoen
for their constant encouragement and support. The author would like
to thank Qinian Jin and Arick Shao for interesting discussions. The
author in particular would like to thank Qinian Jin for improving
the exposition.

\section{\bf Preliminaries}\label{3d}
\setcounter{equation}{0}

For the lapse function $n$, by using the elliptic equation
$-\Delta n+|k|^2 n=1$, it follows easily from the maximum
principle that
\begin{equation}\label{2.5.1}
\frac{1}{\|k\|_{L^\infty(\Sigma_t)}}\le n\le \frac{3}{t^2} \quad
\mbox{on } \Sigma_t.
\end{equation}
Thus, if we knew  that $\|k\|_{L^\infty(\Sigma_t)}$ is uniformly
bounded with respect to $t\in [t_0, t_*)$, then we could get a
positive uniform lower bound on $n$. Unfortunately, we only have the
weaker assumption ({\bf A1}) on $k$, which does not allow
(\ref{2.5.1}) to give a positive uniform lower bound on $n$
directly. In the next section, we will show under the assumption
({\bf A1}) that $C^{-1}\le n\le C$ on $\M_I$ for some universal
constant $C>0$.

For each slice $\Sigma_t$, we use $|\Sigma_t|$ to denote its volume.
Then, by using $\p_t g_{ij}=-2n k_{ij}$ and $\Tr k=t$ on $\Sigma_t$
we have
$$
\frac{d}{d t} \left(|t|^3 |\Sigma_t|\right) =\int_{\Sigma_t}  t^2
\left(t^2 n-3\right) d\mu_g\le 0.
$$
This implies that $|t|^3 |\Sigma_t|$ is decreasing with respect to
$t$. Consequently
\begin{equation}\label{2.5.2}
|\Sigma_t|\le \frac{|t_0|^3}{|t|^3} |\Sigma_{t_0}|\le
\frac{|t_0|^3}{|t_*|^3} |\Sigma_{t_0}|, \quad \forall t_0\le  t\le
t_*.
\end{equation}

\subsection{\bf Bel-Robinson Energy}

We start with a brief review of Bel-Robinson energy, one may consult
\cite{KC} for more details.  Associated to the Weyl tensor $\bR$,
the Bel-Robinson tensor is the full symmetric, traceless tensor
defined by
\begin{equation}\label{emt}
\bold Q[\bR]_{\a\b\ga\delta}=\bR_{\a \lambda
\ga\mu}\bR_{\b}{}^{\lambda}{}_{\delta}{}^\mu+{}^\star
\bR_{\a\lambda\ga\mu}{}^\star \bR_{\b}{}^\lambda{}_\delta{}^\mu.
\end{equation}
Then $\bold Q[\bR](X,Y,X, Y)\ge 0$ whenever $X, Y$ are timelike vectors,
with equality only if $\bR=0$.
Let ${\bf P}_\alpha=\bold Q[\bR]_{\a\b\gamma\delta}\bT^\b \bT^\gamma \bT^\delta$. Since
${\bR}_{\a\b}=0$, a straightforward calculation shows that
\begin{equation}\label{3.2.1}
\bd^\a {\bf P}_\a=-3\pi^{\a\b} \bold Q[\bR]_{\a\b\gamma\delta}
\bT^\gamma \bT^\delta.
\end{equation}
If we introduce the Bel-Robinson energy ${\mathcal Q}(t)$ by
$$
{\mathcal Q}(t):=\int_{\Sigma_t}{\bold Q}[\bR](\bT, \bT, \bT,
\bT) d\mu_{\Sigma_t},
$$
then, by integrating (\ref{3.2.1}) in a slab $\M_J=\cup_{t\in J} \Sigma_t$
with $J=[t_0, t]\subset [t_0, t_*)$, we obtain
\begin{align*}
{\mathcal Q}(t)={\mathcal Q}(t_0)-3\int_{t_0}^t \int_{\Sigma_{t'}} n
\bQ[\bR]_{\a\b00} \pi^{\a\b} d\mu_{\Sigma_{t'}} dt'.
\end{align*}

Let $E$ and $H$ denote the electric and magnetic parts of the curvature tensor $\bR$ defined by
\begin{equation}\label{em1}
E(X, Y)=\bg(\bR(X,\bT)\bT, Y), \qquad
H(X,Y)=\bg({}^\star\bR(X,\bT)\bT,Y)
\end{equation}
with ${}^\star\bR$ the Hodge dual of $\bR$. It is well known that
$E$ and $H$ are traceless symmetric 2-tensors tangent to $\Sigma_t$
with
$$
|\bR|^2=|E|^2+|H|^2,
$$
$$
|\bQ|\le 4(|E|^2+|H|^2)
$$
and
$$
\bQ(\bT,\bT,\bT,\bT)=|E|^2+|H|^2.
$$
Therefore
$$
{\mathcal Q}(t)\le {\mathcal Q}(t_0)+12 \int_{t_0}^t
\|n\pi\|_{L^\infty(\Sigma_{t'})} {\mathcal Q}(t') d t'.
$$
By the Gronwall inequality it follows that
\begin{equation*}
{\mathcal Q}(t) \le {\mathcal Q}(t_0) \exp \left(12\int_{t_0}^t \|n
\pi\|_{L^\infty(\Sigma_{t'})} dt'\right)
\end{equation*}
for all $t\in [t_0, t_*)$. Therefore, in view of the condition ({\bf A1}) we obtain the uniform boundedness
of the Bel-Robinson energy.

\begin{lemma}\label{es1}
Under the condition $({\bf A1})$, there exists a constant $C$
depending only on $\k$ and $t_*$ such that
\begin{equation*}
{\mathcal Q}(t)\le C Q_0^2
\end{equation*}
for all $t\in [t_0, t_*)$, where $Q_0^2:={\mathcal Q}(t_0)$.
\end{lemma}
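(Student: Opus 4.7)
The computation preceding the statement already does most of the work, so the plan is to close the final step. Starting from the divergence identity $\bd^\a {\bf P}_\a = -3\pi^{\a\b}\bQ[\bR]_{\a\b\g\d}\bT^\g\bT^\d$ and integrating over the slab $\M_J$ with $J=[t_0,t]$, the pointwise estimate $|\bQ_{\a\b 0 0}|\le 4\bQ(\bT,\bT,\bT,\bT)$ (which follows from $|\bQ|\le 4(|E|^2+|H|^2)$ together with $\bQ(\bT,\bT,\bT,\bT)=|E|^2+|H|^2$) gives
$$
\mathcal{Q}(t) \le Q_0^2 + 12\int_{t_0}^t \|n\pi\|_{L^\infty(\Sigma_{t'})}\,\mathcal{Q}(t')\,dt',
$$
and Gronwall's inequality then yields $\mathcal{Q}(t)\le Q_0^2 \exp\bigl(12\int_{t_0}^t \|n\pi\|_{L^\infty(\Sigma_{t'})}\,dt'\bigr)$. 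What remains is to bound this exponent purely in terms of $\k$ and $t_*$.

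For that final step I would invoke the upper bound on $n$ already recorded in (2.5.1): applying the maximum principle to the lapse equation $-\Delta n + |k|^2 n = 1$ on the compact slice $\Sigma_t$ gives $|k|^2 n \le 1$ at an interior maximum of $n$; combined with the pointwise inequality $|k|^2 \ge (\Tr k)^2/3 = t^2/3$ (Cauchy--Schwarz on the eigenvalues of $k$, using $\Tr k = t$), this forces $n\le 3/t^2 \le 3/t_*^2$ uniformly on $\M_*$. Consequently
$$
\int_{t_0}^t \|n\pi\|_{L^\infty(\Sigma_{t'})}\,dt' \le \frac{3}{t_*^2} \int_{t_0}^t \|\pi\|_{L^\infty(\Sigma_{t'})}\,dt' \le \frac{3\k}{t_*^2}
$$
by the hypothesis (A1), so $\mathcal{Q}(t) \le Q_0^2 \exp(36\k/t_*^2)$, which is the claimed estimate with $C = \exp(36\k/t_*^2)$ depending only on $\k$ and $t_*$.

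There is no genuine obstacle to carrying this out: the Bel-Robinson divergence identity and the ensuing energy inequality are entirely standard (cf.\ \cite{KC}), and the hypothesis (A1) enters only through the very last integral. The observation that makes the argument work with the weaker assumption (A1) rather than (\ref{tn1}) is simply that the extra factor $n$ inside $\|n\pi\|_{L^\infty}$ costs nothing, thanks to the maximum-principle ceiling $n\lesssim t_*^{-2}$ on the lapse.
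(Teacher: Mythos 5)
Your proof is correct and follows the same route as the paper: the divergence identity for $\bf P$, integration over the slab, the pointwise bound on $\bQ$, Gronwall, and then the lapse ceiling $n\le 3/t_*^2$ from the maximum principle applied to $-\Delta n+|k|^2 n=1$ to convert $\|n\pi\|_{L^\infty}$ into $\|\pi\|_{L^\infty}$ so that (A1) closes the estimate. The only difference is that you spell out explicitly the last step that the paper leaves as ``in view of the condition (A1),'' and your computation of the exponent is accurate.
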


Consequently we have

\begin{lemma}\label{first}
Let the condition $({\bf A1})$ hold. Then on any CMC
leaf $\Sigma_t\subset \M_*$ there holds
\begin{equation}\label{recal3}
\int_{\Sigma_t} \left(|\nabla k|^2+\frac{1}{4}| k|^4\right)+
\int_{\Sigma_t} |Ric|^2 \les  Q_0^2.
\end{equation}
\end{lemma}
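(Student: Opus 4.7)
The idea is to convert $|Ric|^{2}$ and $|\nabla k|^{2}$ into expressions controlled by the Bel-Robinson quantities $|E|^{2}+|H|^{2}$ from Lemma~\ref{es1}, plus lower-order terms in $k$ that can be absorbed via Sobolev embedding on $\Sigma_t$, using the constraint and Gauss--Codazzi equations.

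\emph{Step 1: Gauss equation for $Ric$.} In vacuum on the CMC slice, the Gauss equation together with the scalar constraint gives
\begin{equation*}
R_{ij}=E_{ij}+k_{ia}k^{a}{}_{j}-t\,k_{ij},\qquad R=|k|^{2}-t^{2},
\end{equation*}
so that $|Ric|^{2}\les |E|^{2}+|k|^{4}+t_{*}^{2}|k|^{2}$. By Lemma~\ref{es1} we have $\int_{\Sigma_t}|E|^{2}\les Q_0^{2}$, and the bound on $\int|Ric|^{2}$ reduces to controlling $\|k\|_{L^{4}}^{4}$ and $\|k\|_{L^{2}}^{2}$.

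\emph{Step 2: Div--curl identity for $\int|\nabla k|^{2}$.} Since $\Tr k=t$ is constant, (\ref{3.1.1}) gives $\div k=0$; the Codazzi identity then identifies $\curl k$ with (minus) the magnetic part $H$ of $\bR$. A Bochner-type integration by parts on $\Sigma_t$, in which the commutator of covariant derivatives produces the $3$-dimensional Riemann tensor (algebraically determined by $R_{ij}$ alone), yields the schematic identity
\begin{equation*}
\int_{\Sigma_t}|\nabla k|^{2}\les \int_{\Sigma_t}|\curl k|^{2}+\int_{\Sigma_t}\bigl(|Ric|+|R|\bigr)|k|^{2}.
\end{equation*}
Inserting the formulas from Step 1 and invoking Lemma~\ref{es1} for $\|H\|_{L^{2}}$ gives
\begin{equation*}
\int_{\Sigma_t}|\nabla k|^{2}\les Q_0^{2}+\|E\|_{L^{2}}\|k\|_{L^{4}}^{2}+\int_{\Sigma_t}|k|^{4}+t_{*}^{2}\int_{\Sigma_t}|k|^{2}.
\end{equation*}

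\emph{Step 3: Sobolev absorption.} By (\ref{2.5.2}) the volume $|\Sigma_t|$ is uniformly bounded, and the Bel-Robinson bound yields a uniform lower bound on the harmonic radius of $\Sigma_t$ via the Anderson-type result cited later in the paper; hence the Sobolev and Gagliardo--Nirenberg constants on $\Sigma_t$ are universal. In particular,
\begin{equation*}
\|k\|_{L^{4}}^{2}\les \|k\|_{L^{2}}^{1/2}\bigl(\|\nabla k\|_{L^{2}}+\|k\|_{L^{2}}\bigr)^{3/2}.
\end{equation*}
The pointwise identity $|k|^{2}=R+t^{2}$ combined with $|R|\les |Ric|$ gives $\|k\|_{L^{2}}^{2}\les |\Sigma_t|^{1/2}\|Ric\|_{L^{2}}+t_{*}^{2}|\Sigma_t|$. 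Feeding these Sobolev estimates into Steps~1--2 and applying Young's inequality to absorb a small fraction of $\|\nabla k\|_{L^{2}}^{2}$ (from the cubic Sobolev factor) and $\|Ric\|_{L^{2}}^{2}$ into the left-hand sides closes the bootstrap and produces the claimed bound with constants depending only on $Q_0$, $t_*$, and $|\Sigma_{t_0}|$.

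\emph{Main obstacle.} The delicate point is the nonlinear absorption in Step~3: the quartic term $\int|k|^{4}$ brings in $\|\nabla k\|_{L^{2}}$ to the power three through Sobolev, so the interpolation must be tuned finely in order for the Young-inequality absorption to go through with universal constants. This is where the uniform volume bound from (\ref{2.5.2}) and the universal lower bound on the harmonic radius of $\Sigma_t$ (itself a consequence of the Bel-Robinson energy control) are both essential: without them the Sobolev constants would drift with $t$ and the bootstrap would not close.
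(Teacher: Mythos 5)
The paper's proof of this lemma is a one-line citation: the $k$ estimate is \cite[Proposition~8.4]{KR2} combined with Lemma~\ref{es1}, and the Ricci estimate then follows from $R_{ij}=E_{ij}+k_{ia}k^{a}{}_{j}-t\,k_{ij}$. Your Step~1 reproduces the paper's argument for the $Ric$ bound, and your Step~2 is indeed the right starting point for the $k$ bound (the div--curl/Bochner identity). The gap is in Step~3, and it is fatal as written.

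You try to control the quartic term $\int|k|^4$ by Gagliardo--Nirenberg and then absorb it by Young. The interpolation gives, schematically, $\|k\|_{L^4}^4\les \|k\|_{L^2}\|\nabla k\|_{L^2}^3+\|k\|_{L^2}^4$. This is \emph{cubic} in $\|\nabla k\|_{L^2}$, while the left-hand side carries only $\|\nabla k\|_{L^2}^{2}$. Young's inequality cannot absorb a cubic term into a quadratic one with universal constants unless $\|k\|_{L^2}$ (or $\|k\|_{L^2}\|\nabla k\|_{L^2}$) is small; and here $\|k\|_{L^2}$ is not small — on a CMC slice $\Tr k=t$, so $\|k\|_{L^2}^2\ge\frac{t^2}{3}|\Sigma_t|$ is bounded below. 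The "fine tuning" you invoke does not exist: the exponent mismatch is structural, not a matter of choosing $\epsilon$. The same obstruction also blocks the naive circular route through the constraint $|k|^2=R+t^2$: plugging $R^2\les|Ric|^2$ and the Gauss equation back in reproduces $\int|k|^4$ on the right with a constant $\geq 1$.

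What actually closes the argument in \cite[Prop.~8.4]{KR2} is that the div--curl Bochner identity, after substituting the 3D curvature decomposition, the Gauss equation $R_{ij}=E_{ij}+k_{ia}k^{a}{}_{j}-tk_{ij}$, and the Hamiltonian constraint $R=|k|^2-t^2$, produces the quartic-in-$k$ contribution with a \emph{favorable sign}: one obtains $\int|\nabla k|^2+\frac14\int|k|^4$ directly on the good side, bounded by $\int(|E|^2+|H|^2)+|\Sigma_t|\les Q_0^2$. No interpolation or absorption of the quartic term is involved. So the right move after your Step~2 is to keep track of the exact coefficients in the curvature commutator term and insert the constraint, rather than to estimate the quartic term crudely.

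A secondary issue: you justify uniform Sobolev constants on $\Sigma_t$ via "the Anderson-type result cited later in the paper" (the harmonic-radius bound). That result, Proposition~\ref{harcor}, requires $\|Ric\|_{L^2(\Sigma_t)}\le C$ as an input — i.e., precisely what you are trying to prove — so invoking it here is circular. The uniform Sobolev constants used at this stage of the paper come from the volume-radius lower bound (Proposition~\ref{comp.01} and \cite[Prop.~4.4]{KRradius}), which depends on (\textbf{A1}) alone; that is the non-circular ingredient behind Lemma~\ref{sob.02}.
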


\begin{proof} The inequality on $k$ follows from
\cite[Proposition 8.4]{KR2} and Lemma \ref{es1}. The inequality on
$Ric$ then follows from the identity $R_{ij}-k_{ia}k^{aj}+ \Tr k\,
k_{ij}=E_{ij}$.
\end{proof}

\subsection{\bf Harmonic coordinates}

For any coordinate chart ${\mathcal O}\subset \Sigma_0$ with local coordinates
$x=(x^1, x^2, x^3)$, we denote by $x^0=t, x^1, x^2, x^3$ the
transported coordinates on $I\times {\mathcal O}$ obtained by
transporting along the integral curves of ${\bf T}$.
The following is an immediate consequence of ({\bf
A1}) and (\ref{bg}).

\begin{proposition}\label{comp.01}
Let the assumption $({\bf A1})$ hold. There exists a positive
constant $C_0$ depending only on $\k$ such that, relative to the
induced transported coordinates $x^0=t$, $x^1$, $x^2$, $x^3$ in
$I\times {\mathcal O}$ we have
\begin{equation}\label{comp.02}
C_0^{-1} |\xi|^2\le g_{ij}(t, x)\xi^i \xi^j\le C_0 |\xi|^2.
\end{equation}
\end{proposition}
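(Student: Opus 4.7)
The plan is to convert the evolution equation \eqref{bg} into a scalar ODE along integral curves of $\bT$ and apply a Gronwall-type argument, using \eqref{2.5.1} to control the lapse $n$ and the integral assumption (\textbf{A1}) to control the time integral of $|k|$.

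First I would fix a point $x \in {\mathcal O}$ and a vector $\xi = (\xi^1,\xi^2,\xi^3) \in \rrrr^3$, and consider the scalar function $f(t) := g_{ij}(t,x)\xi^i \xi^j$ along the transported coordinate curve $\tt \mapsto (\tt, x)$. By \eqref{bg} we have
\begin{equation*}
\frac{d f}{d t} = -2 n(t,x)\, k_{ij}(t,x)\xi^i \xi^j.
\end{equation*}
Since $k$ is a symmetric $2$-tensor on $\Sigma_t$, diagonalizing $k$ with respect to $g$ at the point $(t,x)$ yields the pointwise bound $|k_{ij}\xi^i\xi^j| \le |k|_g \, g_{ij}\xi^i\xi^j$, where $|k|_g$ is the tensorial norm. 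Hence $|df/dt| \le 2 n\, |k|_g\, f$.

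Next I would bound the coefficient $2n|k|_g$ in the $L^1_t$ sense. From \eqref{2.5.1}, $n \le 3/t^2 \le 3/t_*^2$ uniformly on $\M_*$, while $|k|_g(t,x) \le \|k\|_{L^\infty(\Sigma_t)}$ pointwise. Therefore, for any $t \in [t_0, t_*)$,
\begin{equation*}
\int_{t_0}^t 2 n\, |k|_g\, d\tt \;\le\; \frac{6}{t_*^2}\int_{t_0}^{t_*} \|k\|_{L^\infty(\Sigma_{\tt})}\, d\tt \;\le\; \frac{6\kp}{t_*^2},
\end{equation*}
where the last inequality uses (\textbf{A1}). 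Gronwall's inequality, applied both forward and backward in $t$, then yields
\begin{equation*}
e^{-6\kp/t_*^2} \, f(t_0) \;\le\; f(t) \;\le\; e^{6\kp/t_*^2}\, f(t_0).
\end{equation*}

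Finally, the initial slice bound $I_0^{-1}|\xi|^2 \le g_{ij}(t_0,x)\xi^i\xi^j \le I_0|\xi|^2$ (which is built into the definition of a universal constant via $I_0$) converts the above into $C_0^{-1}|\xi|^2 \le f(t) \le C_0|\xi|^2$ with $C_0 := I_0\, e^{6\kp/t_*^2}$, proving \eqref{comp.02}. No serious obstacle is anticipated: the argument is essentially a one-line ODE comparison, and the only care needed is to recognize that (\textbf{A1}) together with the a priori upper bound on $n$ from the lapse equation \eqref{3.1.2} is exactly what converts the $L^1_t L^\infty_x$ hypothesis on $k$ into a multiplicative Gronwall factor.
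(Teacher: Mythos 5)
Your proof is correct, and it is essentially the argument the paper has in mind: the paper's ``proof'' of this proposition is a one-line reference to \cite[Proposition 2.4]{KR2} together with the remark that the argument works under the weaker assumption $({\bf A1})$, so you have simply spelled out the adaptation. The three ingredients you use are exactly right: the ODE $\partial_t(g_{ij}\xi^i\xi^j) = -2n\,k_{ij}\xi^i\xi^j$ from \eqref{bg}, the pointwise diagonalization bound $|k_{ij}\xi^i\xi^j|\le |k|_g\, g_{ij}\xi^i\xi^j$, and the maximum-principle bound $n\le 3/t^2\le 3/t_*^2$ from \eqref{2.5.1} to turn $({\bf A1})$ into an $L^1_t$ bound on the Gronwall coefficient.

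One small point worth noting: the constant $C_0$ you produce depends not only on $\kappa$ but also on $t_*$ (through the upper bound on $n$) and on $I_0$ (through the normalization of the initial coordinates on $\Sigma_0$). The proposition's wording ``depending only on $\kappa$'' is therefore a slight imprecision in the paper's statement rather than in your proof; it is consistent with the paper's footnote convention that a ``universal constant'' may depend on $Q_0$, $\kappa$, $|\Sigma_0|$, $t_*$, and $I_0$, and your explicit $C_0 = I_0\,e^{6\kappa/t_*^2}$ makes that dependence transparent.
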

\begin{proof}
This is \cite[Proposition 2.4]{KR2} which was stated under the
stronger condition (\ref{tn1}), the proof however requires only the
weaker assumption ({\bf A1}).
\end{proof}

This proposition enables us to derive a uniform lower bound on the
volume radius for all the slices $\Sigma_t$. Here, for a
3-dimensional Riemannian manifold $(M, g)$, the volume radius
$r_{vol}(p, \rho)$ at a point $p\in M$ and scales $\le \rho$ is
defined by
\begin{equation*}
r_{vol}(p,\rho)=\inf_{r\le \rho}\frac{|B_r(p)|}{r^3}
\end{equation*}
with $|B_r(p)|$ the volume of $B_r(p)$ relative to metric $g$. The
volume radius $r_{vol}(M,\rho)$ of  $M$ on scales $\le \rho$ is the
infimum of $r_{vol}(p,\rho)$ over all $p\in M$.  Using Proposition
\ref{comp.01}, it has been show in \cite[Proposition 4.4]{KRradius}
that the volume radius $r_{vol}(\Sigma_t, 1)$ of each $\Sigma_t$ on
scales $\le 1$ verifies
$$
r_{vol}(\Sigma_t, 1)\ge v_0
$$
for some constant $v_0>0$ depending only on $\k$.

From the previous subsection we have already obtained, under ({\bf
A1}), that
$$
\|Ric\|_{L^2(\Sigma_t)}\le C \quad \mbox{and}\quad |\Sigma_t|\le
\frac{|t_0|^3}{|t_*|^3}|\Sigma_{t_0}|.
$$
Therefore, Theorem 3.5 in \cite{An97} applies and provides  the
following results on the existence of harmonic coordinates.

\begin{proposition}\label{harcor}
Let the assumption $({\bf A1})$ hold. For any $\ep>0$,
there exists $r_0>0$ depending on $\ep$, $Q_0$,
$\k$, $|\Sigma_0|$ and $t_*$ such that every geodesic ball $B_r(p)\subset
\Sigma_t$ with $r\le r_0$ admits a system of harmonic coordinates
$x=(x^1, x^2, x^3)$ under which
\begin{align}
(1+\ep)^{-1}\delta_{ij}\le  g_{ij} &\le(1+\ep)\delta_{ij}\label{harcor1}\\
r\int_{B_{r}(p)} |\p^2 g_{ij}|^2 d \mu_g & \le \ep.\label{harcor2}
\end{align}
\end{proposition}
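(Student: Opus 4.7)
The plan is to apply Anderson's Theorem 3.5 from \cite{An97}, which produces harmonic coordinate charts of a definite size on a $3$-manifold once one has (i) a uniform $L^2$ bound on the Ricci curvature, (ii) a uniform upper bound on the total volume, and (iii) a uniform lower bound on the volume radius at scale $\le 1$. All three required inputs have in fact already been assembled in this section, so what remains is to feed them into Anderson's statement and to track the dependence of the constants.

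For (i), I would invoke Lemma \ref{first}: under $({\bf A1})$ we have $\|Ric\|_{L^2(\Sigma_t)}\les Q_0$, with implicit constant depending only on $\k$ and $t_*$. For (ii), the volume estimate (\ref{2.5.2}) gives $|\Sigma_t|\le (|t_0|/|t_*|)^3|\Sigma_0|$, a uniform bound in terms of $|\Sigma_0|$ and $t_*$. For (iii), I would quote the conclusion $r_{vol}(\Sigma_t,1)\ge v_0$ from \cite[Proposition 4.4]{KRradius} recorded above, where $v_0>0$ depends only on $\k$; this in turn is powered by the pointwise metric comparison (\ref{comp.02}) supplied by Proposition \ref{comp.01} under $({\bf A1})$.

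With (i)--(iii) in hand, Anderson's theorem yields for each prescribed $\ep>0$ a scale $r_0>0$, depending only on $\ep$ together with the three constants just listed, such that every geodesic ball $B_r(p)\subset\Sigma_t$ with $r\le r_0$ admits harmonic coordinates $(x^1,x^2,x^3)$ in which (\ref{harcor1}) and (\ref{harcor2}) hold. Tracing through the dependences above, $r_0$ depends only on $\ep$, $Q_0$, $\k$, $|\Sigma_0|$ and $t_*$, exactly as claimed. I do not expect a genuine obstacle here: the real content of the proposition lies in the preparatory work already done (the Bel-Robinson estimate of Lemma \ref{es1}, the elliptic bound of Lemma \ref{first}, and the volume radius estimate from \cite{KRradius}); the step at hand is a clean invocation of Anderson's theorem together with careful bookkeeping of the constants, in particular verifying that $v_0$ and the Ricci constant both collapse into dependence on the listed parameters and do not secretly require the stronger hypothesis (\ref{tn1}).
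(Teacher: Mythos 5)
Your proof matches the paper's argument: the paper likewise invokes Anderson's Theorem 3.5 after recording the $L^2$ Ricci bound from Lemma \ref{first}, the volume upper bound (\ref{2.5.2}), and the volume radius lower bound $r_{vol}(\Sigma_t,1)\ge v_0$ obtained via Proposition \ref{comp.01} and \cite[Proposition 4.4]{KRradius}. The bookkeeping of constant dependences is also the same, so this is a faithful reconstruction of the paper's proof.
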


We will not use the full strength of this result. The crucial part
in our applications is the existence of a local coordinates $x=(x^1,
x^2, x^3)$ on each $B_{r_0}(p)\subset \Sigma_t$ satisfying
({\ref{harcor1}) with $r_0>0$ depending only on $\ep$, $Q_0$, $\k$,
$|\Sigma_0|$ and $t_*$.

\subsection{\bf Sobolev-type inequalities}

We will give several Sobolev type inequalities under the assumption
({\bf A1}). These inequalities are useful in establishing various
estimates.

\begin{lemma}\label{sob.02}
Let the assumption $({\bf A1})$
hold on $\M_*$. Then for any smooth tensor field $F$ on
$\Sigma_t\subset \M_*$ and any $2\le p\le 6$ there holds
\begin{equation}\label{sob.03}
\|F\|_{L^p(\Sigma_t)}\le C \left(\|\nab
F\|_{L^2(\Sigma_t)}^{3/2-3/p}\|F\|_{L^2(\Sigma_t)}^{3/p-1/2}
+\|F\|_{L^2(\Sigma_t)} \right),
\end{equation}
where $C$ is a constant depending only on $\k$ and
$p$.
\end{lemma}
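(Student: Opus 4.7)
My approach is to first establish the borderline Sobolev embedding
\[
\|F\|_{L^6(\Sigma_t)}\le C\big(\|\nab F\|_{L^2(\Sigma_t)}+\|F\|_{L^2(\Sigma_t)}\big),
\]
and then deduce the full range $2\le p\le 6$ by H\"older interpolation. With $\theta=3/p-1/2\in[0,1]$ one has $1/p=\theta/2+(1-\theta)/6$, so H\"older gives $\|F\|_{L^p}\le \|F\|_{L^2}^{\theta}\|F\|_{L^6}^{1-\theta}$; inserting the $L^6$ bound and using the subadditivity $(a+b)^{1-\theta}\le a^{1-\theta}+b^{1-\theta}$ produces the stated inequality, since $\|F\|_{L^2}^{\theta}\cdot\|F\|_{L^2}^{1-\theta}=\|F\|_{L^2}$.

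To prove the $L^6$ endpoint, I would apply Proposition \ref{harcor} with $\ep=1$ to produce a universal scale $r_0>0$ such that every geodesic ball $B_{r_0}(p)\subset\Sigma_t$ carries harmonic coordinates with $\f12\delta_{ij}\le g_{ij}\le 2\delta_{ij}$ and $r_0\int_{B_{r_0}(p)}|\p^2 g|^2\,d\mu_g\le 1$. Combined with the uniform volume radius bound $r_{vol}(\Sigma_t,1)\ge v_0$ and the volume bound $|\Sigma_t|\le C$ from the previous subsection, a standard Vitali argument produces a finite cover $\{B_{r_0/2}(p_\a)\}_{\a=1}^N$ of $\Sigma_t$, with $N\les 1$ and with the enlarged balls $\{B_{r_0}(p_\a)\}$ of bounded overlap. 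Pick a subordinate partition of unity $\{\chi_\a\}$ with $|\nab\chi_\a|\les 1$. In the harmonic coordinates on $B_{r_0}(p_\a)$ the components of $\chi_\a F$ are compactly supported scalar functions on a nearly flat domain, so the classical Euclidean Sobolev embedding $W^{1,2}\hookrightarrow L^6$ gives componentwise
\[
\|\chi_\a F\|_{L^6(B_{r_0}(p_\a))}\le C\|\p(\chi_\a F)\|_{L^2(B_{r_0}(p_\a))}.
\]

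The main technical obstacle is converting coordinate derivatives $\p$ to covariant derivatives $\nab$ for a tensor field, which introduces an error $|\Ga||F|$ where $\Ga$ denotes the Christoffel symbols in these coordinates. In harmonic coordinates the bound $\int_{B_{r_0}(p_\a)}|\p^2 g|^2\,d\mu_g\les r_0^{-1}$ of Proposition \ref{harcor}, combined with the Euclidean Sobolev embedding $W^{1,2}\hookrightarrow L^6$ applied to $\p g$ (on a domain where the metric is close to flat), yields $\|\Ga\|_{L^6(B_{r_0}(p_\a))}\le C$. By H\"older then $\||\Ga|(\chi_\a F)\|_{L^2}\le \|\Ga\|_{L^6}\|\chi_\a F\|_{L^3}$, and interpolating $\|\chi_\a F\|_{L^3}\le \|\chi_\a F\|_{L^2}^{1/2}\|\chi_\a F\|_{L^6}^{1/2}$ allows the resulting $\|\chi_\a F\|_{L^6}$ factor to be absorbed on the left via Young's inequality, at the cost of a lower-order $\|F\|_{L^2(B_{r_0}(p_\a))}$ term on the right. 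The derivative of $\chi_\a$ contributes a similar $\|F\|_{L^2}$ term. Summing over $\a$ and invoking the bounded overlap of the cover then yields the $L^6$ embedding, and hence the full inequality via the interpolation step above.
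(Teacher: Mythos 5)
Your proof is correct, but it takes a genuinely different and heavier route than the source the paper cites. The paper's ``proof'' of Lemma \ref{sob.02} is a citation of \cite[Corollary 2.7]{KR2}; the natural argument there, and the one the present paper uses explicitly for the companion Lemma \ref{sobinf}, is lighter: one first proves the inequality for \emph{scalar} functions $f$ via a partition of unity adapted to the transported coordinate charts, where Proposition \ref{comp.01} gives uniform two-sided comparability of $g$ with $\delta$; since $\nab f=\p f$ for scalars, the flat Sobolev inequality applies with no Christoffel symbols at all. One then passes to tensors by applying the scalar result to $f=|F|$ and invoking Kato's inequality $|\nab|F||\le |\nab F|$ a.e. This uses only $C^0$ comparability of the metric and completely sidesteps the coordinate/covariant-derivative conversion. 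Your route instead invokes the stronger Proposition \ref{harcor} (harmonic coordinates with $L^2$ Hessian control) to handle the Christoffel error head-on, which works but costs more and glosses over one step: to get $\|\Ga\|_{L^6(B_{r_0})}\le C$ you apply $W^{1,2}\hookrightarrow L^6$ to $\p g$, which needs an $L^2$ bound $\|\p g\|_{L^2(B_{r_0})}\les 1$; Proposition \ref{harcor} as stated controls only $|g-\delta|$ in $C^0$ and $\p^2 g$ in $L^2$. The needed $L^2$ control of $\p g$ does follow from those two by a cutoff/integration-by-parts argument, but you should spell it out. Your interpolation from the $L^6$ endpoint to the full range $2\le p\le 6$ is fine, and the Vitali covering step with $N\les 1$ balls is justified by the volume radius lower bound together with $|\Sigma_t|\les 1$.
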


\begin{proof} This is \cite[Corollary 2.7]{KR2}.
\end{proof}

The following calculus inequality is useful in deriving $L^\infty$
bounds of certain quantities.

\begin{lemma}\label{sobinf}
Let the assumption $({\bf A1})$ hold on $\M_*$. Then for any smooth
tensor field $F$ on $\Sigma_t\subset \M_*$ and $3<p\le 6$ there
holds
\begin{equation*}
\|F\|_{L^\infty(\Sigma_t)}\le C \left(\|\nab^2
F\|_{L^2(\Sigma_t)}^{3/2-3/p} \|\nab F\|_{L^2(\Sigma_t)}^{3/p-1/2}
+\|\nab F\|_{L^2(\Sigma_t)}+\|F\|_{L^2(\Sigma_t)}\right),
\end{equation*}
where $C$ is a constant depending only on $\k$ and $p$.
\end{lemma}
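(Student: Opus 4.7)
The plan is to derive Lemma \ref{sobinf} by combining Lemma \ref{sob.02} with a Morrey-type Sobolev embedding $W^{1,p}(\Sigma_t)\hookrightarrow L^\infty(\Sigma_t)$ valid for $p>3$. First I would establish the Morrey embedding. Fix some small $\epsilon>0$ (say $\epsilon=\tfrac12$) and invoke Proposition \ref{harcor} to obtain a uniform radius $r_0>0$ such that every geodesic ball $B_{r_0}(q)\subset\Sigma_t$ admits harmonic coordinates in which $g_{ij}$ is pointwise comparable to $\delta_{ij}$. In these coordinates the Christoffel symbols carry at most one coordinate derivative of $g$, the Riemannian gradient and volume form are equivalent to their Euclidean counterparts, and the classical Euclidean Morrey inequality on $B_{r_0}(q)\subset \mathbb{R}^3$ (valid for $p>3$) yields componentwise
$$
|F(q)|\le C\bigl(\|\nab F\|_{L^p(B_{r_0}(q))}+r_0^{-3/p}\|F\|_{L^p(B_{r_0}(q))}\bigr),
$$
so that, taking the supremum over $q\in\Sigma_t$,
$$
\|F\|_{L^\infty(\Sigma_t)}\le C\bigl(\|\nab F\|_{L^p(\Sigma_t)}+\|F\|_{L^p(\Sigma_t)}\bigr).
$$

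The next step is to apply Lemma \ref{sob.02} twice, once to the tensor $\nab F$ and once to $F$ itself, obtaining
\begin{align*}
\|\nab F\|_{L^p(\Sigma_t)}&\le C\bigl(\|\nab^2 F\|_{L^2(\Sigma_t)}^{3/2-3/p}\|\nab F\|_{L^2(\Sigma_t)}^{3/p-1/2}+\|\nab F\|_{L^2(\Sigma_t)}\bigr),\\
\|F\|_{L^p(\Sigma_t)}&\le C\bigl(\|\nab F\|_{L^2(\Sigma_t)}^{3/2-3/p}\|F\|_{L^2(\Sigma_t)}^{3/p-1/2}+\|F\|_{L^2(\Sigma_t)}\bigr).
\end{align*}
Since the exponents $3/2-3/p$ and $3/p-1/2$ sum to $1$, the weighted AM--GM inequality gives
$$
\|\nab F\|_{L^2(\Sigma_t)}^{3/2-3/p}\|F\|_{L^2(\Sigma_t)}^{3/p-1/2}\le \|\nab F\|_{L^2(\Sigma_t)}+\|F\|_{L^2(\Sigma_t)},
$$
which absorbs the spurious interpolation term appearing in the bound for $\|F\|_{L^p(\Sigma_t)}$ into the lower-order tail. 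Substituting the resulting $L^p$ estimates into the Morrey bound produces exactly the inequality claimed in Lemma \ref{sobinf}.

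No deep obstacle is expected: the argument is a standard two-step Gagliardo--Nirenberg plus Morrey combination. The only real bookkeeping point is to verify that the constant emerging from the Morrey step depends only on $\k$ and $p$; this is guaranteed by the $\k$-dependence of the harmonic radius supplied by Proposition \ref{harcor} (after $\epsilon$ is fixed once and for all), together with the $\k$-dependent constants already built into Lemma \ref{sob.02}.
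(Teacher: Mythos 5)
Your overall strategy (a Morrey-type $W^{1,p}\hookrightarrow L^\infty$ embedding for $p>3$, followed by Lemma \ref{sob.02} to interpolate the $L^p$ norms back down to $L^2$ data) is the same architecture as the paper's, but two of your steps diverge from the paper's and one of them has a real gap. First, you claim the Euclidean Morrey inequality applied componentwise directly yields $|F(q)|\le C(\|\nab F\|_{L^p}+\cdots)$ for a \emph{tensor} $F$, with $\nab$ the covariant derivative. This conflates coordinate derivatives $\p F$ with covariant derivatives $\nab F$; passing from one to the other introduces Christoffel terms, and Proposition \ref{harcor} only gives $L^2$ control of $\p^2 g$ (hence at best $L^6$ control of $\p g$), not the pointwise or $L^p$ ($p>3$) control of $\Gamma$ you would need to absorb $\Gamma\cdot F$ into the right-hand side without circularity. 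The paper sidesteps this entirely: it first proves the scalar inequality $\|f\|_{L^\infty}\le C(\|\nab f\|_{L^p}+\|f\|_{L^p})$ — where for scalars $\nab f=\p f$, so no Christoffels appear — using only the metric-comparability of Proposition \ref{comp.01}, and then applies it to the \emph{scalar} $f=|F|^2$, using $|\nab|F|^2|\le 2|F||\nab F|$ to divide by $\|F\|_{L^\infty}$. You should adopt that $|F|^2$ device.

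Second, you invoke Proposition \ref{harcor} for the harmonic radius, whose lower bound $r_0$ depends on $Q_0$, $|\Sigma_0|$ and $t_*$ in addition to $\k$; but the lemma asserts a constant depending \emph{only} on $\k$ and $p$. The paper uses Proposition \ref{comp.01} (uniform comparability of $g_{ij}$ to $\delta_{ij}$ in transported coordinates, with constant depending only on $\k$), which is both sufficient for the scalar Morrey step and consistent with the claimed dependence. Once these two points are repaired, your use of Lemma \ref{sob.02} twice together with the weighted AM--GM step to clean up the cross-term is fine and matches what the paper does implicitly.
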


\begin{proof}
By using a partition of unity, the Sobolev embedding $W^{1,
p}({\mathbb R}^3)\hookrightarrow L^\infty({\mathbb R}^3)$ with
$p>3$, and (\ref{comp.02}) in Proposition \ref{comp.01}, it is easy
to derive for any scalar function $f$ on $\Sigma_t$ that
\begin{equation*}
\|f\|_{L^\infty(\Sigma_t)}\le C \left(\|\nab
f\|_{L^p(\Sigma_t)}+\|f\|_{L^p(\Sigma_t)}\right).
\end{equation*}

Now we take $f=|F|^2$ in the above inequality. It yields
\begin{align*}
\|F\|_{L^\infty(\Sigma_t)}^2 &\le
C \left(\|\nab|F|^2\|_{L^p(\Sigma_t)}+\||F|^2\|_{L^p(\Sigma_t)}\right)\\
&\le C\left( \|\nab
F\|_{L^p(\Sigma_t)}+\|F\|_{L^p(\Sigma_t)}\right)\|F\|_{L^\infty(\Sigma_t)}.
\end{align*}
This implies for $p>3$ that
\begin{align*}
\|F\|_{L^\infty(\Sigma_t)}\le C\left( \|\nab F\|_{L^p(\Sigma_t)}
+\|F\|_{L^p(\Sigma_t)}\right).
\end{align*}
The desired inequality follows by applying Lemma \ref{sob.02}
to the term $\|\nab F\|_{L^p(\Sigma_t)}$.
\end{proof}

\section{\bf Elliptic estimates for the lapse function $n$}\label{3D}
\setcounter{equation}{0}

In this section, we establish a series of elliptic estimates on the
lapse function $n$ together with $n^{-1}$ and $\dot n:=\p_t n$ under
the assumption ({\bf A1}). These results will be repeatedly used in
later sections. Throughout this paper we will use $C$ to denote a
universal constant.

\subsection{\bf Estimates on $n$}

\begin{proposition}\label{lapse1}
Let the assumption $({\bf A1})$ hold. Then on every $\Sigma_t\subset
\M_*$ there holds
\begin{align*}
\|\nab^2 n\|_{L^2(\Sigma_t)}+\|\nabla n\|_{L^2(\Sigma_t)}\le C.
\end{align*}
\end{proposition}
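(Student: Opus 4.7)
The plan is to bound $\|\nab n\|_{L^2(\Sigma_t)}$ by testing the elliptic equation $-\Delta n+|k|^2 n=1$ against $n$, and then to bound $\|\nab^2 n\|_{L^2(\Sigma_t)}$ via the scalar Bochner identity
\[
 \int_{\Sigma_t} |\nab^2 n|^2 = \int_{\Sigma_t}(\Delta n)^2 + \int_{\Sigma_t} R_{ij}\nab^i n\,\nab^j n,
\]
valid on the closed $3$-manifold $\Sigma_t$, controlling the right-hand side entirely by quantities already bounded in Lemma \ref{first} and the preceding subsection.

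The first step is a pointwise upper bound $n\le 3/t_*^2$. At a maximum point of $n$ on the closed slice $\Sigma_t$ one has $-\Delta n\ge 0$, hence $|k|^2 n_{\max}\le 1$; combined with the algebraic pointwise inequality $|k|^2\ge (\Tr k)^2/3=t^2/3$ (a symmetric $2$-tensor on a $3$-manifold with $\Tr k=t$), this yields $n_{\max}\le 3/t^2\le 3/t_*^2$. Together with the volume bound (\ref{2.5.2}) this gives $\int_{\Sigma_t}n\le C$. Multiplying the elliptic equation by $n$ and integrating by parts over $\Sigma_t$ then produces
\[
 \int_{\Sigma_t}|\nab n|^2 + \int_{\Sigma_t}|k|^2 n^2 = \int_{\Sigma_t} n\le C,
\]
so in particular $\|\nab n\|_{L^2(\Sigma_t)}\le C$.

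For the second-derivative estimate I plug into the Bochner identity. Since $\Delta n=|k|^2 n-1$, the pointwise bound $(\Delta n)^2\le 2\|n\|_{L^\infty}^2|k|^4+2$ together with $\int_{\Sigma_t}|k|^4\les Q_0^2$ from Lemma \ref{first} and the volume bound (\ref{2.5.2}) give $\int_{\Sigma_t}(\Delta n)^2\le C$. For the Ricci term, H\"older combined with Lemma \ref{sob.02} at $p=4$ gives
\[
 \left|\int_{\Sigma_t} R_{ij}\nab^i n\,\nab^j n\right|\le \|Ric\|_{L^2(\Sigma_t)}\|\nab n\|_{L^4(\Sigma_t)}^2 \le C\bigl(\|\nab^2 n\|_{L^2}^{3/2}\|\nab n\|_{L^2}^{1/2}+\|\nab n\|_{L^2}^2\bigr),
\]
where the crucial input $\|Ric\|_{L^2(\Sigma_t)}\le C$ is again supplied by Lemma \ref{first}. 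Young's inequality absorbs a small constant multiple of $\|\nab^2 n\|_{L^2}^2$ into the left-hand side, and the $L^2$ bound on $\nab n$ established in the previous step then closes the estimate.

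No individual step is a serious obstacle; the real content of the argument is the efficient use of Lemma \ref{first}, which packages the $L^4$ control of $k$ and the $L^2$ control of $Ric$ from the Bel-Robinson energy into a single fact. This is why a purely elliptic treatment of $-\Delta n+|k|^2 n=1$ would not suffice under the weak assumption ({\bf A1}): the coefficient $|k|^2$ is only $L^2$, not $L^\infty$, and the Bel-Robinson input is precisely what compensates for the missing pointwise control of $k$.
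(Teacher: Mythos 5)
Your argument matches the paper's proof essentially step by step: test $-\Delta n+|k|^2 n=1$ against $n$ for the $L^2$ gradient bound, then use the scalar Bochner identity, $\|Ric\|_{L^2}$ and $\|k\|_{L^4}$ from Lemma \ref{first}, the interpolation of Lemma \ref{sob.02} at $p=4$, and Young's inequality to close the $\|\nab^2 n\|_{L^2}$ estimate. The only blemish is a sign error in your Bochner formula — it should read $\int|\nab^2 n|^2=\int(\Delta n)^2-\int R_{ij}\nab^i n\nab^j n$ — but this is immaterial since you bound the absolute value of the Ricci term in any case.
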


\begin{proof}
We multiply the equation $-\Delta n +|k|^2 n=1$ by $n$ and integrate
over $\Sigma_t$ to obtain
$$
\int_{\Sigma_t}\left(|\nabla n|^2 +|k|^2 n^2 \right)
=\int_{\Sigma_t} n.
$$
Since $0<n\le 3/t^2\le 3/t_*^2$ and $|\Sigma_t|\le
|\Sigma_{t_0}||t_0|^3/|t_*|^3$, this immediately gives the desired
bound on $\|\nabla n\|_{L^2(\Sigma_t)}$.

In order to obtain the bound on $\|\nabla^2 n\|_{L^2(\Sigma_t)}$, we
use the B\"ochner identity
\begin{align*}
\int_{\Sigma_t} |\nab^2 n|^2&=\int_{\Sigma_t} \left(|\Delta n|^2-
R_{ij}\nab^i n \nab^j n\right),
\end{align*}
the equation $\Delta n=|k|^2 n-1$, Lemma \ref{first} and the
H\"{o}lder inequality to infer that
\begin{align*}
\|\nabla^2 n\|_{L^2}\les
\|k\|_{L^4}^2+|\Sigma_t|^\f12+\|Ric\|_{L^2}^\f12 \|\nabla
n\|_{L^4}\les 1+\|\nabla n\|_{L^4}.
\end{align*}
With the help of Lemma \ref{first}, we have
$$
\|\nabla^2 n\|_{L^2}\les 1+ \|\nabla^2 n\|_{L^2}^{3/4} \|\nabla
n\|_{L^2}^{1/4}+\|\nabla n\|_{L^2}.
$$
Therefore
$$
\|\nabla^2 n\|_{L^2}\les 1+\|\nabla n\|_{L^2}\les 1
$$
and the proof is complete.
\end{proof}

\begin{proposition}\label{Ba22}
Let the assumption $({\bf A1})$ hold. Then there hold
\begin{align}
\|\nab^3 n\|_{L_t^1 L_x^2(\M_*)}&\le  C \label{lap.sec} \\
\|\nab n\|_{L_t^b L_x^\infty(\M_*)}&\le C \label{lap.inf}
\end{align}
where $1\le b<2$.
\end{proposition}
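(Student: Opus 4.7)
The plan is to differentiate the lapse equation $-\Delta n + |k|^2 n = 1$ once, apply a standard $H^3$-type elliptic estimate on each CMC slice $\Sigma_t$ to bound $\|\nab^3 n\|_{L^2}$ by $\|\Delta n\|_{H^1}$, and verify that only a single factor of $\|k\|_{L^\infty(\Sigma_t)}$ appears on the right-hand side. That single factor is exactly what \textbf{(A1)} controls in $L^1_t$, so the first bound follows by integration in $t$. The second bound is then obtained from the first by Gagliardo--Nirenberg interpolation (Lemma~\ref{sobinf}) with a careful choice of the Sobolev exponent $p$.

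Concretely, differentiating the equation yields
\[
\nab \Delta n \;=\; 2n\, k^{ab}\,\nab k_{ab} + |k|^2\,\nab n.
\]
A standard $H^3$ elliptic estimate on the compact slice $\Sigma_t$, whose constant depends only on the geometry controlled by Propositions~\ref{comp.01} and~\ref{harcor} together with the $\|Ric\|_{L^2}$-bound from Lemma~\ref{first}, gives
\[
\|\nab^3 n\|_{L^2(\Sigma_t)} \;\les\; \|\nab \Delta n\|_{L^2(\Sigma_t)} + \|\Delta n\|_{L^2(\Sigma_t)} + \|n\|_{L^2(\Sigma_t)}.
\]
Using $n\le 3/t_*^2$ from \eqref{2.5.1}, the bounds $\|\nab k\|_{L^2}, \|k\|_{L^4}\les 1$ from Lemma~\ref{first}, the bounds $\|\nab n\|_{L^2}, \|\nab^2 n\|_{L^2}\les 1$ from Proposition~\ref{lapse1}, and the Sobolev embedding of Lemma~\ref{sob.02} to pass from $L^2$ to $L^6$, the two source terms are estimated as
\[
\|\nab \Delta n\|_{L^2} \;\le\; 2\|n\|_{L^\infty}\|k\|_{L^\infty}\|\nab k\|_{L^2} + \|k\|_{L^6}^2\,\|\nab n\|_{L^6} \;\les\; 1 + \|k\|_{L^\infty(\Sigma_t)},
\]
while $\|\Delta n\|_{L^2}\le \|k\|_{L^4}^2\|n\|_{L^\infty} + |\Sigma_t|^{1/2}\les 1$. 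Hence $\|\nab^3 n\|_{L^2(\Sigma_t)}\les 1+\|k\|_{L^\infty(\Sigma_t)}$, and integrating in $t$ together with \textbf{(A1)} proves \eqref{lap.sec}.

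For \eqref{lap.inf}, I would apply Lemma~\ref{sobinf} to the tensor $\nab n$ with some $p\in(3,6]$; absorbing the already-bounded factors $\|\nab^2 n\|_{L^2},\|\nab n\|_{L^2}\les 1$ into the implicit constant yields
\[
\|\nab n\|_{L^\infty(\Sigma_t)} \;\les\; \bigl(1+\|k\|_{L^\infty(\Sigma_t)}\bigr)^{\theta} + 1, \qquad \theta := \tfrac{3}{2} - \tfrac{3}{p}.
\]
Given $b\in[1,2)$, choose $p>3$ close enough to $3$ that $b\theta\le 1$; this is possible precisely in the range $b<2$, because $1/\theta = 2p/(3(p-2)) \to 2$ as $p\to 3^+$. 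Raising to the $b$-th power, integrating over $[t_0,t_*]$, and applying H\"older on this finite interval together with \textbf{(A1)},
\[
\int_{t_0}^{t_*}\|\nab n\|_{L^\infty}^b\,dt \;\les\; (t_*-t_0) + \Bigl(\int_{t_0}^{t_*}\|k\|_{L^\infty}\,dt\Bigr)^{b\theta}(t_*-t_0)^{1-b\theta} \;\les\; 1,
\]
establishing \eqref{lap.inf}.

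The main structural point --- and the reason $b$ must stay strictly below $2$ --- is that the quadratic term $|k|^2\nab n$ in the differentiated equation is tamed by Sobolev ($\|k\|_{L^6}^2\les 1$) rather than by $L^\infty$, so only a \emph{single} power of $\|k\|_{L^\infty}$ ever enters the estimate of $\|\nab^3 n\|_{L^2}$; this matches the single-power structure of \textbf{(A1)} exactly. Pushing $b$ to the endpoint $b=2$ would force $\|k\|_{L^\infty}^{\alpha}$ with $\alpha>1$ into the $t$-integral (either via a Sobolev exponent $p\le 3$, forbidden by the $L^\infty$-embedding, or by redistributing powers of $k$), which the weaker hypothesis \textbf{(A1)} cannot control.
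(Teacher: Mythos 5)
Your overall strategy coincides with the paper's: bound $\|\nabla^3 n\|_{L^2(\Sigma_t)}$ by $1+\|k\|_{L^\infty(\Sigma_t)}$, interpolate with Lemma~\ref{sobinf} to get $\|\nabla n\|_{L^\infty(\Sigma_t)}\les 1+\|k\|_{L^\infty(\Sigma_t)}^{3/2-3/p}$, then integrate in $t$ and use H\"older on $[t_0,t_*]$ together with \textbf{(A1)}. Your endgame analysis of the admissible range $1\le b<2$ and the choice $p\to 3^+$ is correct and matches the paper's conclusion.

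The gap is in the step you label a ``standard $H^3$ elliptic estimate,''
\[
\|\nab^3 n\|_{L^2(\Sigma_t)} \les \|\nab \Delta n\|_{L^2(\Sigma_t)} + \|\Delta n\|_{L^2(\Sigma_t)} + \|n\|_{L^2(\Sigma_t)}.
\]
This is not standard when $Ric$ is controlled only in $L^2(\Sigma_t)$, and establishing it is precisely the technical content of the paper's Lemma~\ref{Lnew} and Proposition~\ref{P1.2.1}. Concretely, $\|\nabla^3 n\|_{L^2}=\|\nabla^2(\nabla n)\|_{L^2}$ is controlled via the B\"ochner identity for the $1$-form $\nabla n$, which produces curvature integrals quadratic in $Ric$ and in $\nabla n$; after using $R_{idac}=g_{ia}R_{dc}+\cdots$ and H\"older these contribute $\|Ric\|_{L^2}\|\nabla n\|_{L^\infty}$, and the commutation $\Delta\nabla_i n=\nabla_i\Delta n+R_{ij}\nabla_j n$ produces the same term again. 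Since $Ric$ is only $L^2$ and $\nabla n$ is a priori only $H^1$, the term $\|Ric\|_{L^2}\|\nabla n\|_{L^\infty}$ cannot be absorbed into the ``constant''; the paper closes the loop by applying Lemma~\ref{sobinf} with $p=4$ to write $\|\nabla n\|_{L^\infty}\les\|\nabla^3 n\|_{L^2}^{3/4}\|\nabla^2 n\|_{L^2}^{1/4}+\|\nabla n\|_{H^1}$ and then absorbs the $\|\nabla^3 n\|_{L^2}^{3/4}$ piece by Young, using the already-established bounds $\|\nabla^2 n\|_{L^2},\|\nabla n\|_{L^2},\|Ric\|_{L^2}\les 1$. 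Your claimed inequality is true under \textbf{(A1)}, but only because of this bootstrap; you should either cite Lemma~\ref{Lnew} and carry out the absorption explicitly, or acknowledge that the ``$H^3$ estimate'' is being proven, not invoked. Once that is done, your estimate $\|\nabla\Delta n\|_{L^2}\les \|n\|_{L^\infty}\|k\|_{L^\infty}\|\nabla k\|_{L^2}+\|k\|_{L^6}^2\|\nabla n\|_{L^6}\les 1+\|k\|_{L^\infty}$ and the rest of the argument go through exactly as in the paper.
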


We will give the proof with the help of the following lemma.

\begin{lemma}\label{Lnew}
Let the assumption $({\bf A1})$ hold. Then for any $1$-form $F$ on
$\Sigma_t\subset \M_*$ we have
\begin{equation}\label{secd}
\|\nab^2 F\|_{L^2(\Sigma_t)}\le C\left(\|\Delta F\|_{L^2(\Sigma_t)}+\|\nab
F\|_{L^2(\Sigma_t)}+\|F\|_{L^2(\Sigma_t)}\right).
\end{equation}
\end{lemma}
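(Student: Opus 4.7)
The approach is a standard Bochner-type argument, adapted to the limited regularity of the background. I integrate by parts twice to move from $\|\nab^2 F\|_{L^2(\Sigma_t)}^2$ to $\|\Delta F\|_{L^2(\Sigma_t)}^2$, commuting covariant derivatives along the way via the identity $[\nab_a,\nab_b]F_c = -R_{abc}{}^d F_d$ and its second-order analogue on $\nab F$. Schematically this produces
\begin{equation*}
\int_{\Sigma_t}|\nab^2 F|^2 = \int_{\Sigma_t}|\Delta F|^2 + \int_{\Sigma_t} Rm * (\nab F)^2 + \int_{\Sigma_t} \nab Rm * F * \nab F,
\end{equation*}
where $*$ denotes an arbitrary metric contraction. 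Because $\Sigma_t$ is $3$-dimensional, $Rm$ is determined algebraically by $Ric$ (and the scalar curvature), so every curvature factor can be replaced by $Ric$. Since I only have $L^2$ control on $Ric$ and not on $\nab Ric$, I would integrate by parts once more in the $\nab Rm$ term to shift the derivative onto $F$ or $\nab F$; this rewrites that piece as a combination of $\int Ric * (\nab F)^2$ and $\int Ric * F * \nab^2 F$.

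The two resulting error types are controlled using the estimates already assembled in Section 2. By Lemma \ref{first} one has $\|Ric\|_{L^2(\Sigma_t)}\le C$, and by Lemma \ref{sob.02} with $p=4$,
\begin{equation*}
\|\nab F\|_{L^4(\Sigma_t)}^2 \les \|\nab^2 F\|_{L^2(\Sigma_t)}^{3/2}\|\nab F\|_{L^2(\Sigma_t)}^{1/2} + \|\nab F\|_{L^2(\Sigma_t)}^2.
\end{equation*}
Hence by H\"older and Young,
\begin{equation*}
\int |Ric|\,|\nab F|^2 \le \|Ric\|_{L^2}\|\nab F\|_{L^4}^2 \le \eta\|\nab^2 F\|_{L^2}^2 + C_\eta\|\nab F\|_{L^2}^2.
\end{equation*}
For the second term, H\"older and Lemma \ref{sobinf} with $p=4$ yield
\begin{equation*}
\int |Ric|\,|F|\,|\nab^2 F| \le \|Ric\|_{L^2}\|F\|_{L^\infty}\|\nab^2 F\|_{L^2} \les \|\nab^2 F\|_{L^2}^{7/4}\|\nab F\|_{L^2}^{1/4} + \|\nab^2 F\|_{L^2}(\|\nab F\|_{L^2}+\|F\|_{L^2}),
\end{equation*}
and another application of Young's inequality absorbs the top-order piece.

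Choosing $\eta$ small enough that every $\eta\|\nab^2 F\|_{L^2}^2$ term can be moved to the left-hand side then yields the claimed estimate (\ref{secd}). The main technical point is the handling of the $\int Ric * F * \nab^2 F$ contribution: it is precisely the lack of an a priori $L^3$ (or $W^{1,2}$) bound on $Ric$ that forces the use of the $L^\infty$ interpolation from Lemma \ref{sobinf} together with a Young's inequality at exponents close to the critical absorption threshold. All other steps — the commutator computation, the reduction of $Rm$ to $Ric$ in three dimensions, and the integration by parts off of $\nab Rm$ — are essentially algebraic.
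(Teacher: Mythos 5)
Your argument is correct and is essentially the proof in the paper: both rest on the Bochner identity for the rough Laplacian acting on a $1$-form, the three-dimensional reduction of $Rm$ to $Ric$, the bound $\|Ric\|_{L^2(\Sigma_t)}\le C$ from Lemma \ref{first}, the interpolation of $\|\nab F\|_{L^4}$ and $\|F\|_{L^\infty}$ through Lemmas \ref{sob.02} and \ref{sobinf}, and a final absorption of the top-order pieces by Young's inequality. The only cosmetic difference is that the paper's displayed identity (\ref{b1}) already carries the zeroth-order curvature error in the form $Rm*Rm*F*F$ — obtained by performing the integration by parts on $\nab Rm\cdot F\cdot\nab F$ exactly as you do and then contracting the resulting $Rm\cdot F\cdot\nab^2 F$ against the antisymmetric pair of indices in $R_{abc}{}^d$ to expose another commutator $[\nab_a,\nab_b]F_c$ — so they estimate $\|Ric\|_{L^2}\|F\|_{L^\infty}$ in place of your $\|Ric\|_{L^2}\|F\|_{L^\infty}\|\nab^2 F\|_{L^2}$, and both close by the same Young absorption.
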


\begin{proof}
It is well known that for any $1$-form $F$ on $\Sigma_t$ there holds
the B\"ochner identity
\begin{align}
\int_{\Sigma_t}|\Delta F|^2&=\int_{\Sigma_t} |\nab^2
F|^2-\frac{1}{2}\int_{\Sigma_t} R_{diac} R_{miac} F_d
F_m\nn\\
&\quad\, +\int_{\Sigma_t} R_{ad} \nab_d F_i \nab_a F_i
-\int_{\Sigma_t} R_{idac}\nab_c F_d \nab_a F_i.\label{b1}
\end{align}
Since $\Sigma_t$ is $3$-dimensional, the Riemannian curvature tensor
is completely determined by its Ricci curvature, i.e.
\begin{equation*}
R_{idac}=g_{ia}R_{dc}+g_{dc}R_{ia}-R_{ic}
g_{da}-R_{da}g_{ic}-\frac{1}{2}(g_{ia}g_{dc}-g_{ic}g_{da})R.
\end{equation*}
Thus, we may use (\ref{b1}), the H\"older inequality, Lemma \ref{first},
Lemma \ref{sob.02} and Lemma \ref{sobinf} to obtain the estimate
\begin{align*}
\|\nab^2 F\|_{L^2} &\les \|\Delta F\|_{L^2} +\|Ric\|_{L^2}^{1/2}
\|\nab F\|_{L^4} +\|F\|_{L^\infty}\|Ric\|_{L^2}\\
&\les \|\Delta F\|_{L^2}+\left( \|\nab^2 F\|_{L^2}^{3/4}\|\nab
F\|_{L^2}^{1/4}+\|\nab F\|_{L^2}\right).
\end{align*}
With the help of Young's inequality, the inequality (\ref{secd})
follows immediately.
\end{proof}

\begin{proposition}\label{P1.2.1}
Let the assumption $({\bf A1})$ hold. Then on every $\Sigma_t\subset
\M_*$ there hold
\begin{align}
&\|\nab^3 n\|_{L^2(\Sigma_t)} \le
C\left(\|\nab n\|_{H^1(\Sigma_t)}+\|k\|_{L^\infty(\Sigma_t)}\right),  \label{sob2}\\
&\|\nab n\|_{L^\infty(\Sigma_t)}\le C\left(\|\nab n\|_{H^1(\Sigma_t)}+
\|k\|_{L^\infty(\Sigma_t)}^{3/2-3/p} \|\nab^2 n\|_{L^2(\Sigma_t)}^{3/p-1/2}\right),
\label{sobinf.1}
\end{align}
where $3<p\le 6$.
\end{proposition}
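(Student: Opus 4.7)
The plan is to apply Lemma \ref{Lnew} to the 1-form $F=\nab n$ together with the elliptic equation $\Delta n = |k|^2 n - 1$, and then derive (\ref{sobinf.1}) by plugging (\ref{sob2}) into Lemma \ref{sobinf}.

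First, Lemma \ref{Lnew} with $F=\nab n$ yields
$$
\|\nab^3 n\|_{L^2(\Sigma_t)}\le C\bigl(\|\Delta \nab n\|_{L^2(\Sigma_t)}+\|\nab^2 n\|_{L^2(\Sigma_t)}+\|\nab n\|_{L^2(\Sigma_t)}\bigr).
$$
I would then commute $\Delta$ past $\nab$ using the standard Weitzenb\"ock identity on the gradient of a scalar,
$$
\Delta \nab_i n = \nab_i \Delta n + R_{ij}\nab^j n,
$$
to split $\|\Delta \nab n\|_{L^2}$ into $\|\nab\Delta n\|_{L^2}$ plus $\|Ric\cdot \nab n\|_{L^2}$. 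Differentiating $\Delta n=|k|^2 n-1$ gives $\nab \Delta n = 2n\,k\cdot\nab k + |k|^2\nab n$, and applying H\"older together with $n\le 3/t_*^2$, Lemma \ref{first} (controlling $\|\nab k\|_{L^2}$) and Proposition \ref{lapse1} (controlling $\|\nab n\|_{H^1}$) bounds $\|\nab\Delta n\|_{L^2}$ by a polynomial in $\|k\|_{L^\infty}$ and universal constants, of the form required in (\ref{sob2}).

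For the Ricci term I would use the 3-D identity $R_{ij}=E_{ij}+k_{ia}k^a{}_j-\mbox{tr}k\,k_{ij}$ from the proof of Lemma \ref{first}; since $|t|\le|t_0|$, the $k\cdot k$ and $tk$ pieces are pointwise bounded by $C(|k|^2+|k|)$ and so contribute $\les (\|k\|_{L^\infty}^2+\|k\|_{L^\infty})\|\nab n\|_{L^2}$. The remaining piece $\|E\cdot\nab n\|_{L^2}\le \|E\|_{L^2}\|\nab n\|_{L^\infty}$ is controlled using $\|E\|_{L^2}\le CQ_0$ from Lemma \ref{es1}; this is where $\|\nab n\|_{L^\infty}$ appears and the two estimates become coupled. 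Assembling these pieces yields (\ref{sob2}).

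For (\ref{sobinf.1}), I would apply Lemma \ref{sobinf} with $F=\nab n$ and $p\in(3,6]$ to get
$$
\|\nab n\|_{L^\infty}\le C\bigl(\|\nab^3 n\|_{L^2}^{3/2-3/p}\|\nab^2 n\|_{L^2}^{3/p-1/2}+\|\nab^2 n\|_{L^2}+\|\nab n\|_{L^2}\bigr),
$$
substitute (\ref{sob2}) into the first factor, and then apply Young's inequality $(a+b)^\theta\le a^\theta+b^\theta$ to separate the $\|\nab n\|_{H^1}^{3/2-3/p}\|\nab^2 n\|_{L^2}^{3/p-1/2}$ contribution (which is absorbed into $\|\nab n\|_{H^1}$ via another Young step since $(3/2-3/p)+(3/p-1/2)=1$) from the $\|k\|_{L^\infty}^{3/2-3/p}\|\nab^2 n\|_{L^2}^{3/p-1/2}$ term, producing exactly the form in (\ref{sobinf.1}).

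The main obstacle is the Ricci-curvature contribution in the commutator: since only the $L^2$ norm of $Ric$ (equivalently of the electric part $E$) is available, we cannot avoid the $\|\nab n\|_{L^\infty}$ factor in bounding $\|E\cdot\nab n\|_{L^2}$. To close the argument I would run a bootstrap, absorbing $\|\nab n\|_{L^\infty}$ back through Lemma \ref{sobinf} with $p<6$ so that the exponent $3/2-3/p<1$ makes Young's inequality applicable, yielding both estimates simultaneously.
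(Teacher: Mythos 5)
Your overall strategy—apply Lemma \ref{Lnew} to $F=\nab n$, commute $\Delta$ past $\nab$, bound $\|\nab\Delta n\|_{L^2}$ from the lapse equation, and close the resulting $\|\nab n\|_{L^\infty}$ dependence via Lemma \ref{sobinf} with $p<6$ followed by Young—is exactly the paper's route, and (\ref{sobinf.1}) then follows from (\ref{sob2}) and Lemma \ref{sobinf} precisely as you describe.

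The one place you go astray is the commutator term $R_{ij}\nab^j n$. The paper does not decompose $R_{ij}$ here; it bounds the whole term by $\|Ric\|_{L^2}\|\nab n\|_{L^\infty}\les\|\nab n\|_{L^\infty}$, the $L^2$ bound on $Ric$ being already part of Lemma \ref{first}. You instead decompose $R_{ij}=E_{ij}+k_{ia}k^{a}{}_{j}-\Tr k\,k_{ij}$ and bound the $k\cdot k$ piece pointwise, which produces the term $\|k\|_{L^\infty}^2\|\nab n\|_{L^2}$. This is quadratic in $\|k\|_{L^\infty}$ and therefore does not reduce to the linear form $\|\nab n\|_{H^1}+\|k\|_{L^\infty}$ required by (\ref{sob2}). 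That linearity is not cosmetic: (\ref{sob2}) is subsequently integrated in $t$ against $({\bf A1})$ to prove Proposition \ref{Ba22}, and $({\bf A1})$ controls $\int\|k\|_{L^\infty}\,dt$ but not $\int\|k\|_{L^\infty}^2\,dt$. The fix is immediate and costs nothing: either skip the decomposition and use $\|Ric\|_{L^2}\les 1$ directly, or bound the $k\cdot k$ piece the same way you bounded the $E$ piece, for instance $\|k\cdot k\cdot\nab n\|_{L^2}\le\|k\|_{L^4}^2\|\nab n\|_{L^\infty}\les\|\nab n\|_{L^\infty}$ by Lemma \ref{first}, so that it is absorbed in the same sobinf/Young step as the $E$ contribution. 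With that modification your argument is correct and coincides with the paper's.
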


\begin{proof} A simple application of Lemma \ref{Lnew} to $F=\nabla
n$ gives
\begin{equation}\label{new7}
\|\nabla^3 n\|_{L^2}\les \|\Delta \nabla
n\|_{L^2} +\|\nabla^2 n\|_{L^2}+\|\nabla n\|_{L^2}.
\end{equation}
Recall the commutation formula $ \Delta \nabla_i n=\nabla_i \Delta n
+R_{ij} \nabla_j n$ and the equation $-\Delta n+|k|^2 n=1$, we can
estimate
\begin{align*}
\|\Delta \nabla n\|_{L^2} &\les \|k\|_{L^6}^2\|\nabla
n\|_{L^6}+\|k\|_{L^\infty}\|\nabla k\|_{L^2} +\|Ric\|_{L^2} \|\nabla
n\|_{L^\infty}.
\end{align*}
Plugging this into (\ref{new7}), using Lemma \ref{first} and Lemma
\ref{sob.02} gives
$$
\|\nabla^3 n\|_{L^2}\les \|\nabla n\|_{L^\infty} +\|\nabla n\|_{H^1}
+\|k\|_{L^\infty}.
$$
Using Lemma \ref{sobinf} for the term $\|\nab n\|_{L^\infty}$ with
$p=4$, we then obtain
\begin{align*}
\|\nabla^3 n\|_{L^2} &\les  \|\nab^3 n\|_{L^2}^{3/4}\|\nab^2
n\|_{L^2}^{1/4}+ \|\nabla n\|_{H^1}+ \|k\|_{L^\infty}.
\end{align*}
This clearly implies (\ref{sob2}). The inequality (\ref{sobinf.1})
is an immediate consequence of (\ref{sob2}) and Lemma \ref{sobinf}.
\end{proof}

Proposition \ref{Ba22} follows by integrating (\ref{sob2}) and
(\ref{sobinf.1}) in time with the help of $({\bf A1})$ and
Proposition \ref{lapse1}.

\subsection{\bf Estimates on $n^{-1}$}

\begin{proposition}\label{N}
Let the assumption $({\bf A1})$ hold. Then on each $\Sigma_t\subset
\M_*$ there hold
\begin{equation*}
\|\nab^2(n^{-1})\|_{L^2(\Sigma_t)}+\|n^{-1}\|_{L^\infty(\Sigma_t)}\le
C.
\end{equation*}
\end{proposition}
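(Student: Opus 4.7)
The plan is to establish the two bounds in order: first the pointwise bound $\|n^{-1}\|_{L^\infty(\Sigma_t)}\le C$ by a Moser iteration on the scalar equation $-\Delta n+|k|^2n=1$, and then the $L^2$ bound on $\nab^2(n^{-1})$ from an elementary chain-rule computation combined with the previously established estimates on $n$.

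For the $L^\infty$ step I would test the equation $-\Delta n+|k|^2 n=1$ against $n^{-p}$ for $p\ge 1$; this is a legitimate test function because at each fixed $t$, (\ref{2.5.1}) already ensures $n$ is pointwise positive (even though the lower bound $1/\|k\|_{L^\infty(\Sigma_t)}$ it gives is useless under $({\bf A1})$). Integrating by parts on $\Sigma_t$ produces the identity
\begin{equation*}
\int_{\Sigma_t} n^{-p} + p\int_{\Sigma_t} n^{-p-1}|\nab n|^2 = \int_{\Sigma_t} |k|^2 n^{1-p}.
\end{equation*}
The case $p=1$ gives the base estimate $\|n^{-1}\|_{L^1(\Sigma_t)}\le \|k\|_{L^2(\Sigma_t)}^2\le C$ using Lemma \ref{first}. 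For $p>1$, setting $v=n^{-(p-1)/2}$, so that $v^2=n^{1-p}$ and $|\nab v|^2=\frac{(p-1)^2}{4}\,n^{-p-1}|\nab n|^2$, rewrites the identity as
\begin{equation*}
\frac{4p}{(p-1)^2}\int_{\Sigma_t}|\nab v|^2 + \int_{\Sigma_t} v^{2p/(p-1)} = \int_{\Sigma_t}|k|^2 v^2.
\end{equation*}
Since $\|k\|_{L^6(\Sigma_t)}\le C$ (apply Lemma \ref{sob.02} to $k$ with $p=6$, using Lemma \ref{first}), H\"older and the interpolation $\|v\|_{L^3}\le \|v\|_{L^2}^{1/2}\|v\|_{L^6}^{1/2}$ yield $\int|k|^2v^2\le C\|v\|_{L^2}\|v\|_{L^6}$. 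Combined with $\|v\|_{L^6}^2\le C(\|\nab v\|_{L^2}^2+\|v\|_{L^2}^2)$ (again Lemma \ref{sob.02}) and an absorption via Young's inequality, this produces the reverse-H\"older estimate
\begin{equation*}
\|n^{-1}\|_{L^{3(p-1)}(\Sigma_t)}\le (Cp^2)^{1/(p-1)}\|n^{-1}\|_{L^{p-1}(\Sigma_t)}.
\end{equation*}
Iterating along $q_k=3^k$ from the $L^1$ base, the infinite product $\prod_k(Cq_k^2)^{1/q_k}$ converges geometrically (reducing to summability of $\sum_k 3^{-k}(\log C+2k\log 3)$), so passing $k\to\infty$ yields $\|n^{-1}\|_{L^\infty(\Sigma_t)}\le C$.

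For the second bound I would simply expand
\begin{equation*}
\nab^2(n^{-1}) = 2n^{-3}\nab n\otimes\nab n - n^{-2}\nab^2 n.
\end{equation*}
Together with the $L^\infty$ bound on $n^{-1}$ just proved, Proposition \ref{lapse1}, and Lemma \ref{sob.02} applied to $\nab n$ with $p=4$ (which gives $\|\nab n\|_{L^4(\Sigma_t)}\le C$), this yields
\begin{equation*}
\|\nab^2(n^{-1})\|_{L^2(\Sigma_t)}\le C\|\nab n\|_{L^4(\Sigma_t)}^2 + C\|\nab^2 n\|_{L^2(\Sigma_t)}\le C,
\end{equation*}
completing the proposition.

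The main obstacle is the $L^\infty$ step: under $({\bf A1})$ the pointwise maximum principle is too weak to close on its own, so the universal lower bound on $n$ has to be extracted entirely from integrated quantities ($\|k\|_{L^2}$, $\|k\|_{L^6}$, and so on, all of which are universal by Lemma \ref{first}). The delicate arithmetic point in the scheme is verifying that the sequence of Moser constants $(Cp^2)^{1/(p-1)}$ produces a convergent infinite product when $p-1$ is allowed to grow geometrically; everything else is routine chain-rule bookkeeping built on the $n$-estimates already in hand.
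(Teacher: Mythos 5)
Your proof is correct, and it follows a genuinely different route from the paper's. The paper works ``from the top down'': it first establishes the $H^2(\Sigma_t)$ bound on $n^{-1}$ by combining the B\"ochner identity for $\nab^2(n^{-1})$ with the identity $\Delta(n^{-1}) = 2n^{-3}|\nab n|^2 + n^{-2} - |k|^2 n^{-1}$, controlling the dangerous weighted quantities $\|n^{-1}\nab n\|_{L^4}$, $\|n^{-2}\|_{L^6}$, $\|\nab(n^{-1})\|_{L^2}$, $\|n^{-1}\|_{L^2}$ by testing the lapse equation against $n^{-l}$ for the specific integers $l=7,5,3,2$; only then does it extract the $L^\infty$ bound as a corollary of the $H^2$ bound via Lemma \ref{sobinf}. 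You instead run Moser iteration on the same lapse equation with the full continuum of exponents $n^{-p}$, getting $\|n^{-1}\|_{L^\infty}\le C$ first; once that pointwise bound is in hand, the $H^2$ estimate becomes an almost trivial chain-rule computation requiring only $\|\nab n\|_{L^4}\le C$ and $\|\nab^2 n\|_{L^2}\le C$, both already available from Proposition \ref{lapse1} and Lemma \ref{sob.02}. The trade-off is roughly this: the paper's finite-multiplier scheme avoids an infinite iteration and the attendant bookkeeping of constants $(Cp^2)^{1/(p-1)}$, but at the cost of a delicate cascade of elliptic estimates in which the two bounds are proved simultaneously and cannot be decoupled; your version separates the two conclusions cleanly, makes the role of the Sobolev constant (from Lemma \ref{sob.02}, hence universal under $({\bf A1})$) and of $\|k\|_{L^6}\le C$ (Lemma \ref{first}) transparent, and exhibits $\|n^{-1}\|_{L^\infty}\le C$ as the genuinely nontrivial content of the proposition, after which $\|\nab^2(n^{-1})\|_{L^2}\le C$ is free. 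One small point worth stating explicitly in a write-up: the test function $n^{-p}$ with non-integer $p$ is legitimate because at each fixed $t$ the lapse is a smooth positive function on the compact slice $\Sigma_t$ (even though the pointwise lower bound furnished by (\ref{2.5.1}) is not itself universal), and the quantitative content of the Moser step lies entirely in the fact that the constants depend only on $\|k\|_{L^6(\Sigma_t)}$ and the Sobolev constant of Lemma \ref{sob.02}, which are universal.
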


\begin{proof}
We first have from the Bochner identity that
\begin{align}\label{new1}
\int_{\Sigma} |\nabla^2(n^{-1})|^2 &=\int_{\Sigma}
|\Delta(n^{-1})|^2 -\int_{\Sigma} R_{ij} \nabla_i(n^{-1}) \nabla_j
(n^{-1})\nn\\
&\le \|\Delta(n^{-1})\|_{L^2}^2 +\|Ric\|_{L^2}
\|\nabla(n^{-1})\|_{L^4}^2.
\end{align}
Since $-\Delta n+|k|^2 n=1$, we have
\begin{equation}\label{new4}
\Delta(n^{-1})=2 n^{-3} |\nabla n|^2 +n^{-2} -|k|^2 n^{-1}.
\end{equation}
Consequently, it follows from the H\"{o}lder inequality that
\begin{align*}
\|\Delta(n^{-1})\|_{L^2} &\les \|n^{-1} \nabla n\|_{L^4}
\|\nabla(n^{-1})\|_{L^4} +\|k\|_{L^6}^2 \|n^{-1}\|_{L^6}
+\|n^{-1}\|_{L^4}^2.
\end{align*}
Combining this inequality with (\ref{new1}) and using the Sobolev
embedding $H^1(\Sigma)\hookrightarrow L^p(\Sigma)$ with $2\le p\le
6$, which is a consequence of Lemma \ref{sob.02}, we obtain
\begin{align}\label{new3}
\|\nabla^2(n^{-1})\|_{L^2} &\les \|n^{-1} \nabla n\|_{L^4}
\|\nabla(n^{-1})\|_{L^4}
+\left(\|n^{-1}\|_{H^1}+\|k\|_{L^6}^2\right)
\|n^{-1}\|_{H^1} \nn \\
&\quad \, + \|Ric\|_{L^2}^\f12 \|\nabla(n^{-1})\|_{L^4}
\end{align}

We need to estimate $\|n^{-1} \nabla n\|_{L^4}$. To this end, we
multiply the equation $-\Delta n+|k|^2 n=1$ by $n^{-l}$ for some
positive integer $l$ and then integrate by parts over $\Sigma_t$ to
obtain
\begin{equation}\label{new2}
\int_{\Sigma_t} \left(l n^{-l-1} |\nabla n|^2 +n^{-l}\right)
=\int_{\Sigma_t} n^{-l+1} |k|^2.
\end{equation}
Taking $l=7$ gives
$$
\int_{\Sigma_t} n^{-8} |\nabla n|^2 \les \int_{\Sigma_t} n^{-6}
|k|^2\les \|k\|_{L^4}^2 \|n^{-2}\|_{L^6}^3.
$$
Therefore
\begin{align*}
\|n^{-1} \nabla n\|_{L^4} &\le \left(\int_{\Sigma_t} n^{-8} |\nabla
n|^2\right)^{1/8} \left(\int_{\Sigma_t} |\nabla n|^6\right)^{1/8} \les
\|k\|_{L^4}^{1/4} \|n^{-2}\|_{L^6}^{3/8} \|\nabla n\|_{L^6}^{3/4}.
\end{align*}
By Lemma \ref{sob.02} and Proposition \ref{lapse1}, we have
$\|\nabla n\|_{L^6}\le C\left( \|\nabla^2 n\|_{L^2} +\|\nabla
n\|_{L^2}\right) \le C$. By using Lemma \ref{sob.02} and
(\ref{new2}) with $l=5$ we also have
\begin{align*}
\|n^{-2}\|_{L^6} &\les \|n^{-2}\|_{H^1} \les \left(\int_{\Sigma_t}
n^{-4} |k|^2\right)^{1/2} +\|n^{-1}\|_{L^4}^2 \\
&\les \|k\|_{L^6}\|n^{-1}\|_{L^6}^2+\|n^{-1}\|_{L^4}^2\\
&\les \left(1+\|k\|_{L^6}\right) \|n^{-1}\|_{H^1}^2.
\end{align*}
Therefore
$$
\|n^{-1}\nabla n\|_{L^4} \les \left(1+\|k\|_{L^6}^{3/8}\right)
\|k\|_{L^4}^{1/4} \|n^{-1} \|_{H^1}^{3/4}.
$$
Combining this inequality with (\ref{new3}) and using Lemma
\ref{first} to bound $\|k\|_{L^4}$, $\|k\|_{L^6}$ and
$\|Ric\|_{L^2}$, it yields
$$
\|\nabla^2(n^{-1})\|_{L^2}\les \|n^{-1}\|_{H^1}^{3/4}
\|\nabla(n^{-1})\|_{L^4} +(\|n^{-1}\|_{H^1}+1)\|n^{-1}\|_{H^1}
+\|\nabla(n^{-1})\|_{L^4}.
$$
Applying Lemma \ref{sob.02} to the term $\|\nabla(n^{-1})\|_{L^4}$ gives
\begin{align*}
\|\nabla^2(n^{-1})\|_{L^2}&\les \|n^{-1}\|_{H^1}^{3/4}
\left(\|\nabla^2(n^{-1})\|_{L^2}^{3/4}
\|\nabla(n^{-1})\|_{L^2}^{1/4}+\|\nabla(n^{-1})\|_{L^2}\right)
+\|n^{-1}\|_{H^1}^2\\
& \quad\, + \|\nabla^2(n^{-1})\|_{L^2}^{3/4}
\|\nabla(n^{-1})\|_{L^2}^{1/4}+\|n^{-1}\|_{H^1}.
\end{align*}
With the help of Young's inequality, we obtain
\begin{equation}\label{new5}
\|\nabla^2(n^{-1})\|_{L^2}\les \|n^{-1}\|_{H^1}^4 +\|n^{-1}\|_{H^1}.
\end{equation}

In order to estimate $\|n^{-1}\|_{H^1}$,  we use (\ref{new2}) with $l=3$ to obtain
$$
\int_{\Sigma_t}\left(3 |\nabla(n^{-1})|^2 +n^{-3}\right)
=\int_{\Sigma_t} |k|^2 n^{-2}.
$$
It then follows from the H\"{o}lder inequality and Lemma
\ref{sob.02} that
$$
\|\nabla(n^{-1})\|_{L^2}\les \|k\|_{L^4} \|n^{-1}\|_{L^4}\les
\|k\|_{L^4} \left(\|\nabla(n^{-1})\|_{L^2}^{3/4}
\|n^{-1}\|_{L^2}^{1/4}+\|n^{-1}\|_{L^2}\right).
$$
This clearly implies
\begin{equation}\label{new6}
\|\nabla(n^{-1})\|_{L^2}\les \left(\|k\|_{L^4}+\|k\|_{L^4}^4\right)
\|n^{-1}\|_{L^2}\les \|n^{-1}\|_{L^2}.
\end{equation}
The combination of (\ref{new5}) and (\ref{new6}) gives
$$
\|\nabla^2(n^{-1})\|_{L^2}+\|\nabla (n^{-1})\|_{L^2}\les
\|n^{-1}\|_{L^2}^{4}+\|n^{-1}\|_{L^2}.
$$
Note that (\ref{new2}) with $l=2$ gives
$$
\|n^{-1}\|_{L^2}^2\le \int_{\Sigma_t} n^{-1} |k|^2\le \|k\|_{L^4}^2
\|n^{-1}\|_{L^2}.
$$
This implies $\|n^{-1}\|_{L^2}\le \|k\|_{L^4}^2\le C$. We therefore
obtain $\|n^{-1}\|_{H^2}\le C$. With the help of Lemma \ref{sobinf}
the estimate $\|n^{-1}\|_{L^\infty}\le C$ follows immediately.
\end{proof}

\subsection{\bf Derivative estimates about $\dot{n}$}

In this subsection we will give various estimates on the
derivative $\dot n:=\p_t n$. We start with deriving an elliptic
equation for $\dot{n}$. By straightforward calculation we
have
\begin{equation}\label{1.28.1}
 \Delta \dot{n}= -\dg^{ij} \nab_i\nab_j n + \p_t (\Delta n)
+ g^{ij} \dot{\Gamma}_{ij}^a \nab_a n
\end{equation}
Recall that
$$
\dot{\Gamma}_{ij}^a=\frac{1}{2} g^{ab}(\nab_i \dg_{jb}+\nab_j
\dg_{ib}-\nab_b \dg_{ij})\nn.
$$
From (\ref{bg}), (\ref{3.1.1}) and the fact $\Tr k=t$ it then
follows
$$
g^{ij} \dot{\Gamma}_{ij}^a \nab_a n=-2 k_i^a \nabla^i n \nabla_a n +
\Tr k |\nabla n|^2.
$$
Plugging this identity into (\ref{1.28.1}) and using
$\dot{g}^{ij}=2n k^{ij}$ and $\Delta n=|k|^2 n-1$ we obtain
$$
\Delta \dot{n}=-2n k^{ij} \nabla_i\nabla_j n+|k|^2 \dot{n}
+\p_t(|k|^2) n -2k_i^a\nabla^i n\nabla_a n +\Tr k|\nabla n|^2
$$
We may use the equations (\ref{bg}) and (\ref{intro04}) to derive
\begin{align*}
\p_t(|k|^2)&=-2k^{ij} \nab_i\nab_j n + 2n R_{ij} k^{ij} +2n |k|^2
\Tr k.
\end{align*}
Consequently, we obtain
\begin{align}\label{1.28.2}
\Delta\dot{n}&=-4n k^{ij} \nabla_i\nabla_j n+|k|^2 \dot{n}-2
k_i^a\nabla^i n \nabla_a n +\Tr k |\nabla n|^2\nn\\
&\quad\,  +2n R_{ij} k^{ij} +2n |k|^2 \Tr k.
\end{align}

Now we multiply the equation (\ref{1.28.2}) by $\dot{n}$ and
integrate over $\Sigma_t$, by using the boundedness of $n$ and the
H\"older inequality we obtain
\begin{align*}
\int_{\Sigma_t}& \left(|\nabla \dot{n}|^2 +|k|^2
|\dot{n}|^2\right)\\
&\les \int_{\Sigma_t} \left(|\dot{n}| |k||\nabla^2 n| +|\dot{n}|
|k||\nabla n|^2 +|\dot n||Ric||k| +|\dot n||k|^3\right)\\
&\le \left(\|\nabla^2 n \|_{L^2} +\|\nabla n\|_{L^4}^2
+\|Ric\|_{L^2}\right)\|k\|_{L^4} \|\dot{n}\|_{L^4}
+\|k\|_{L^6}^3\|\dot{n}\|_{L^2}.
\end{align*}
Using the bounds derived in Lemma \ref{first} and Proposition
\ref{lapse1} together with the Sobolev embedding we have
\begin{align*}
\int_{\Sigma_t} \left(|\nabla \dot n|^2 +|k|^2 |\dot n|^2 \right)
\les \|\dot n\|_{L^4}+\|\dot n\|_{L^2}\les \|\nabla \dot n\|_{L^2}
+\|\dot n\|_{L^2}.
\end{align*}
Recall that $|k|^2=|\hk|^2 + t^2/3$ and $|t|\ge |t_*|>0$. Therefore
$$
\|\nabla \dot n\|_{L^2}^2+\|\dot n\|_{L^2}^2\les \|\nabla \dot
n\|_{L^2}+\|\dot n\|_{L^2}.
$$
We therefore obtain

\begin{lemma}\label{L2.1.1}
Let the assumption $({\bf A1})$ hold. Then for each $\Sigma_t\subset
\M_*$, there holds
\begin{equation}\label{phdin}
\|\nab \dot n\|_{L^2(\Sigma_t)}+\|\dot n\|_{L^2(\Sigma_t)}\le C.
\end{equation}
\end{lemma}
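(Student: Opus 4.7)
The plan is to produce an elliptic equation for $\dot n$ and run a standard energy estimate against it, using the previously established bounds on $n$, $k$, and $Ric$. First I would time-differentiate the lapse equation $-\Delta n + |k|^2 n = 1$. The commutator $[\p_t, \Delta]$ produces terms involving $\p_t g^{ij} = 2n k^{ij}$ and the time derivative of the Christoffel symbols $\dot\Gamma^a_{ij}$, which by (\ref{bg}) and the constraint $\div k=0$ (together with $\Tr k = t$) collapse to combinations of $k \cdot \nabla^2 n$, $k \cdot \nabla n \cdot \nabla n$, and $\Tr k |\nabla n|^2$. For $\p_t(|k|^2)$ I would use the evolution equation (\ref{intro04}), which introduces $n R_{ij} k^{ij}$ and $n |k|^2 \Tr k$ terms. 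The resulting equation has the schematic form $\Delta \dot n - |k|^2 \dot n = F$ with $F$ cubic/quartic in $(k,\nabla n, Ric)$ and linear in $\nabla^2 n$.

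Next I would multiply this equation by $\dot n$ and integrate by parts on $\Sigma_t$, producing $\int_{\Sigma_t}(|\nabla \dot n|^2 + |k|^2 |\dot n|^2)$ on the left. The right-hand side consists of integrals like $\int |\dot n||k||\nabla^2 n|$, $\int |\dot n||k||\nabla n|^2$, $\int|\dot n||Ric||k|$, and $\int|\dot n||k|^3$, which I would control using H\"older together with $\|n\|_{L^\infty} \le C$ (from (\ref{2.5.1}) and Proposition \ref{N}), the bounds $\|k\|_{L^4}$, $\|k\|_{L^6}$, $\|Ric\|_{L^2} \lesssim Q_0$ from Lemma \ref{first}, and $\|\nabla n\|_{L^2}$, $\|\nabla^2 n\|_{L^2} \le C$ from Proposition \ref{lapse1}. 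The norms $\|\dot n\|_{L^4}$ appearing on the right would be handled by the Sobolev embedding $H^1(\Sigma_t) \hookrightarrow L^4(\Sigma_t)$ from Lemma \ref{sob.02}, yielding a bound of the schematic form $\|\nabla \dot n\|_{L^2}^2 + \int |k|^2 |\dot n|^2 \lesssim \|\nabla \dot n\|_{L^2} + \|\dot n\|_{L^2}$.

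The final closing observation is that $|k|^2 = |\hat k|^2 + (\Tr k)^2/3 \ge t^2/3 \ge t_*^2/3 > 0$ pointwise, so the coercive term $\int |k|^2 |\dot n|^2$ gives a genuine $L^2$ bound on $\dot n$ with constant depending only on $t_*$. Young's inequality then absorbs both $\|\nabla \dot n\|_{L^2}$ and $\|\dot n\|_{L^2}$ from the right-hand side into the left, producing the desired estimate.

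The main obstacle is bookkeeping in the derivation of the elliptic equation for $\dot n$: one must be careful to express $\p_t(\Delta n)$ and $\p_t(|k|^2)$ in a form where the second-order piece $-\Delta \dot n$ is isolated and the coefficient of $|k|^2 \dot n$ appears with the sign that provides coercivity when paired with $\dot n$. Once that equation is correctly assembled, the energy estimate is routine and every term on the right is handled by the bounds already collected in Section \ref{3d} and the earlier parts of Section \ref{3D}.
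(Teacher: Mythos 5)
Your proposal matches the paper's proof essentially step for step: both time-differentiate the lapse equation to obtain the elliptic equation $\Delta\dot n = |k|^2\dot n + F$ with $F$ built from $k\cdot\nabla^2 n$, $k\cdot\nabla n\cdot\nabla n$, $n\,Ric\cdot k$, and $n|k|^3$ terms, then multiply by $\dot n$, integrate by parts, estimate the right-hand side via H\"older, Lemma \ref{first}, Proposition \ref{lapse1}, and Sobolev embedding, and close by noting $|k|^2 = |\hat k|^2 + t^2/3 \ge t_*^2/3$ for coercivity on $\|\dot n\|_{L^2}^2$. No discrepancies.
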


Now we are ready to give some estimates on the mixed norms of
$\dot n$.

\begin{proposition}\label{phit01}
Let the assumption $({\bf A1})$ hold. Let $\dot{n}=\p_t n$. Then
there hold
\begin{align*}
\|\nab^2\dot{n}\|_{L_t^1 L_x^2(\M_*)}\le C \qquad \mbox{and}\qquad
\|\dot{n}\|_{L_t^{b} L_x^\infty(\M_*)}\le C
\end{align*}
for any $1\le b<2$.
\end{proposition}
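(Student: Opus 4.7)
The plan is to follow the same strategy used for Proposition \ref{Ba22}, but with the elliptic equation (\ref{1.28.2}) for $\dot n$ in place of $-\Delta n + |k|^2 n = 1$. All the necessary ingredients are in place: Lemma \ref{L2.1.1} gives $\|\dot n\|_{L^2(\Sigma_t)}+\|\nabla\dot n\|_{L^2(\Sigma_t)}\le C$, Proposition \ref{lapse1} gives control of $\|\nabla n\|_{H^1}$, Proposition \ref{N} gives $\|n^{-1}\|_{L^\infty}\le C$, and Lemma \ref{first} gives the Ricci and $k$ bounds.

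First I would apply the scalar B\"ochner identity on $\Sigma_t$ to $\dot n$, which gives $\|\nabla^2 \dot n\|_{L^2}^2 \le \|\Delta \dot n\|_{L^2}^2 + \int_{\Sigma_t} |Ric|\,|\nabla \dot n|^2$. Estimating the last term via H\"older, Lemma \ref{first} and the interpolation of $\|\nabla\dot n\|_{L^4}$ from Lemma \ref{sob.02}, then absorbing with Young's inequality, gives
\begin{equation*}
\|\nabla^2 \dot n\|_{L^2(\Sigma_t)} \lesssim \|\Delta \dot n\|_{L^2(\Sigma_t)} + \|\nabla \dot n\|_{L^2(\Sigma_t)}.
\end{equation*}

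Next I would bound $\|\Delta \dot n\|_{L^2(\Sigma_t)}$ term-by-term using the six expressions on the right of (\ref{1.28.2}). The essential point is to distribute factors of $k$ so that each term contains at most one $L^\infty$ factor of $k$, with everything else controlled by universal constants. Concretely, $\|nk\nabla^2 n\|_{L^2}\lesssim\|k\|_{L^\infty}\|\nabla^2 n\|_{L^2}$ by Proposition \ref{lapse1}; $\||k|^2\dot n\|_{L^2}\le \|k\|_{L^\infty}\|k\|_{L^4}\|\dot n\|_{L^4}\lesssim\|k\|_{L^\infty}$ by Lemma \ref{first}, Lemma \ref{L2.1.1} and Lemma \ref{sob.02}; the two quadratic-in-$\nabla n$ terms are handled using $\|\nabla n\|_{L^4}^2\lesssim 1$ together with $|\Tr k|\le |t_*|$; and the $nRic\cdot k$ and $n|k|^2\Tr k$ terms yield at most $\|k\|_{L^\infty}+1$. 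Combining everything,
\begin{equation*}
\|\nabla^2 \dot n\|_{L^2(\Sigma_t)}\lesssim 1+\|k\|_{L^\infty(\Sigma_t)},
\end{equation*}
so integrating in $t$ and invoking $(\textbf{A1})$ together with $t_*-t_0\le |t_*|$ yields $\|\nabla^2\dot n\|_{L_t^1 L_x^2(\M_*)}\le C$.

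For the $L_t^b L_x^\infty$ bound, I would apply Lemma \ref{sobinf} to $\dot n$ with $p\in(3,6]$ to be chosen:
\begin{equation*}
\|\dot n\|_{L^\infty(\Sigma_t)} \lesssim \|\nabla^2 \dot n\|_{L^2(\Sigma_t)}^{3/2-3/p} \|\nabla \dot n\|_{L^2(\Sigma_t)}^{3/p-1/2} + \|\nabla \dot n\|_{L^2(\Sigma_t)} + \|\dot n\|_{L^2(\Sigma_t)}.
\end{equation*}
By Lemma \ref{L2.1.1} the last two terms and the second factor in the product are uniformly bounded, so the $L_t^b$ norm is controlled by $\int_{t_0}^{t_*}\|\nabla^2\dot n\|_{L^2}^{b(3/2-3/p)}dt$ plus a harmless constant. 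Since $3/2-3/p\to 1/2$ as $p\to 3^+$, for any $b<2$ one can choose $p>3$ close enough to $3$ that $b(3/2-3/p)\le 1$, and then H\"older together with the $L_t^1 L_x^2$ bound from the previous step closes the argument. The main technical obstacle is the distribution of $k$-factors in the $L^2$ estimate of $\Delta \dot n$: the equation (\ref{1.28.2}) is cubic in $k$, so one must exploit the $L^4$ and $L^6$ curvature-flux bound in Lemma \ref{first} to avoid picking up a power $\|k\|_{L^\infty}^2$ that would not be integrable in time under $(\textbf{A1})$.
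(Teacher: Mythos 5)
Your proposal is correct and follows essentially the same route as the paper: apply the B\"ochner identity, absorb via Lemma \ref{sob.02} and Young's inequality to reduce to $\|\Delta\dot n\|_{L^2}$, bound that term-by-term from (\ref{1.28.2}) using Lemma \ref{first}, Proposition \ref{lapse1}, and Lemma \ref{L2.1.1} to obtain $\|\nabla^2\dot n\|_{L^2}\lesssim 1+\|k\|_{L^\infty}$, then deduce the $L_t^b L_x^\infty$ bound from Lemma \ref{sobinf} and H\"older in $t$. (A minor cosmetic difference: the paper bounds $\||k|^2\dot n\|_{L^2}\le\|k\|_{L^6}^2\|\dot n\|_{L^6}\lesssim1$ and $\|k\nabla n\nabla n\|_{L^2}\le\|k\|_{L^6}\|\nabla n\|_{L^6}^2\lesssim1$ without extracting an $L^\infty$ factor of $k$, but your version still lands on the same $1+\|k\|_{L^\infty}$ bound.)
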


\begin{proof}
In view of the assumption ({\bf A1}), it suffice to establish on
every $\Sigma_t$ the inequalities
\begin{equation}\label{phd2}
\|\nab^2 \dot n \|_{L^2(\Sigma_t)}\les
C\left(\|k\|_{L^\infty(\Sigma_t)}+1\right),
\end{equation}
and
\begin{equation}\label{phd3}
\|\dot n\|_{L^\infty(\Sigma_t)}\le
C\left(\|k\|_{L^\infty(\Sigma_t)}^{3/2-3/p}+1\right)
\end{equation}
for any $3<p\le 6$.

By the B\"ochner identity, we have
\begin{equation*}
\|\nab^2 \dot n\|_{L^2}^2\le\|\Delta \dot n \|_{L^2}^2 +\|Ric
\|_{L^2}\|\nabla \dot n\|_{L^4}^2.
\end{equation*}
By using $\|Ric\|_{L_x^2}\les 1$ and applying Lemma \ref{sob.02} to
$\|\nabla \dot n\|_{L^4}$ we obtain
$$
\|\nabla^2 \dot n\|_{L^2}\les \|\Delta \dot n\|_{L^2} +\|\nabla \dot
n\|_{L^2}^{3/4} \|\nabla \dot n\|_{L^2}^{1/4}+\|\nabla \dot
n\|_{L^2}.
$$
In view of Young's inequality and (\ref{phdin}), it follows
\begin{equation}\label{phit2d}
\|\nab^2 \dot n\|_{L^2}\les \|\Delta\dot n\|_{L^2}+ 1.
\end{equation}
From the equation (\ref{1.28.2}) it follows that
$$
\|\Delta \dot n\|_{L^2}\les\|k\|_{L^\infty}\|\left(\|\nab^2
n\|_{L^2} +\|Ric\|_{L^2}\right) +\|k\|_{L^6}^2 \|\dot n\|_{L^6}+
\|\nabla n\|_{L^6}^2 \|k\|_{L^6} +\|k\|_{L^6}^3.
$$
With the help of the estimates derived in Lemma \ref{first},
Proposition \ref{lapse1} and (\ref{phdin}) together with the Sobolev
embedding we have $\|\Delta \dot n\|_{L^2}\les \|k\|_{L^\infty}
+1$. Therefore $\|\nabla^2 \dot n\|_{L^2}\les
\|k\|_{L^\infty}+1$ which is exactly (\ref{phd2}). The inequality
(\ref{phd3}) immediately follows from Lemma \ref{sobinf},
(\ref{phd2}) and (\ref{phdin}).
\end{proof}

\section{\bf Null radius of injectivity: proof of main theorem II} \label{sect4}
\setcounter{equation}{0}

In this section we will give the sketch of the proof of Theorem
\ref{thm4}. The complete proof is rather involved and requires a
delicate bootstrap argument. For any $t_0<t_1<t_*$ we consider the slab
$\M_I=\cup_{t\in I}\Sigma_t$ with $I=[t_0, t_1]$. We set, for each $p\in \M_I$,
$$
\tilde{i}_*(p,t)=\left\{\begin{array}{lll}
+\infty, & \mbox{~~~ if } i_*(p,t)> t(p)-t_0,\\
i_*(p,t), &\mbox{~~~ otherwise}
\end{array}\right.
$$
and define
\begin{equation}\label{3.8.4}
i_*:=\min\{\tilde{i}_*(p,t): p\in \M_I\}.
\end{equation}
Due to the compactness of $\M_I$, we have $i_*>0$. In order to complete
the proof of Theorem \ref{thm4}, it suffices to show that $i_* > \delta_*$
for some universal constant $\delta_*>0$.

We will use the following result concerning the lower bound on the null radius of
injectivity of a globally hyperbolic space-time which has
essentially been proved in \cite{KRradius}.

\begin{theorem}\label{KR08}
Let $C^{-1}\le n\le C$ on $\M_I$ for some constant $C>0$. Then there
exists a small constant $\ep>0$ depending only on $C$ such that if,
for some constant $\delta_*>0$, the following three conditions hold
for all $p\in \M_I$:
\begin{enumerate}
 \item[{\bf C1.}] the null radius of conjugacy satisfies
 $$
 s_*(p, t)> \min\{i_*, \delta_*\};
 $$

 \item[{\bf C2.}] for each $t$ satisfying
 $$
 0\le t(p)-t\le
 \min\{i_*,  \delta_*\},
 $$
 the metric $\gamma_t$ on ${\mathbb
 S}^2$, obtained by restricting the metric $g$ on $\Sigma_t$ to
 $S_t:=\N^{-}(p)\cap \Sigma_t$ and then pulling it back to ${\mathbb S}^2$ by the
 exponential map ${\mathcal G}(t, \cdot)$, verifies
$$
|\gamma_t(X, X)-\stackrel{\circ}\ga (X, X)|< \epsilon \stackrel{\circ}\ga(X, X), \quad
\forall X\in T{\mathbb S}^2,
$$
where $\stackrel{\circ}\ga$ is the standard metric on ${\mathbb
S}^2$;

\item[{\bf C3.}] On ${\mathcal U}_p:=I_p\times
B_{\delta_*}(p)$ with $I_p:=[t(p)-\min\{i_*, \delta_*\}, t(p)]$
and $B_{\delta_*}(p)\subset \Sigma_{t(p)}$ a geodesic ball,
there is a system of coordinates $x^\alpha$ with $x^0=t$ relative to
which the metric ${\bf g}$ is close to the Minkowski metric ${\bf
m}_{\alpha\beta}=-n(p) dt^2+\delta_{ij} dx^i dx^j$ in the sense
that
$$
|n-n(p)|+ |g_{ij}-\delta_{ij}|<\epsilon \quad
\mbox{on } {\mathcal U}_p,
$$
\end{enumerate}
then there holds $i_*> \delta_*$, i.e. the null radius of
injectivity verifies
$$
i_*(p,t)> \min\{\delta_*, t(p)-t_0\}
$$
for all $p\in \M_I$.
\end{theorem}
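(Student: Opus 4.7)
I would argue by contradiction; assume $i_*\le \delta_*$. Since $i_*(p,t)=\min\{s_*(p,t),l_*(p,t)\}$ and hypothesis {\bf C1} forces $s_*(p,t)>i_*$ for every $p\in\M_I$, the infimum in (\ref{3.8.4}) must be realised by the cut locus radius rather than the conjugate radius. By compactness of $\M_I$ together with lower semicontinuity of $l_*$, one can therefore produce a point $p\in\M_I$ at which $l_*(p,t)=i_*$. By definition of $l_*$, there then exist two distinct directions $\omega_1,\omega_2\in{\Bbb S}^2$ and a level $t_0:=t(p)-i_*\ge t(p)-\delta_*$ such that
\[
{\mathcal G}_p(t_0,\omega_1)={\mathcal G}_p(t_0,\omega_2)=:q.
\]
Because $t_0\ge t(p)-\delta_*$, the point $q$ lies inside the coordinate patch ${\mathcal U}_p$ furnished by {\bf C3}, so the whole task is to derive a contradiction from the existence of such a pair of past null geodesics from $p$ that merge at $q$.

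In the coordinates $(t,x^1,x^2,x^3)$ of {\bf C3}, $\bg$ differs from the fixed Minkowski metric $\bm=-n(p)^2dt^2+\delta_{ij}dx^idx^j$ by at most $\ep$ in $C^0$. I would first compare each null geodesic $\Gamma_{\omega}$ of $\bg$ from $p$ with the Minkowski null ray $\gamma_\omega$ from $p$ having the same initial direction. Writing the geodesic equation in these coordinates, converting the affine parameter $s$ to $t$ via $dt/ds=-(an)^{-1}$ so that the bound $|a-1|\le 1/2$ from (\ref{a1}) applies, and using a Gronwall argument driven by the integrated control of $\p\bg$ that comes from $({\bf A1})$ together with Proposition \ref{Ba22} and Proposition \ref{phit01}, one obtains the comparison
\[
|\Gamma_\omega(s(t))-\gamma_\omega(t)|\les \ep\,(t(p)-t)\qquad \text{in }{\mathcal U}_p,
\]
while the flat rays satisfy the elementary lower bound $|\gamma_{\omega_1}(t)-\gamma_{\omega_2}(t)|\ges n(p)(t(p)-t)|\omega_1-\omega_2|$.

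To close the contradiction I would couple this with {\bf C2}. For $\eta>0$ small set $q_i^\eta:={\mathcal G}_p(t_0+\eta,\omega_i)$. By {\bf C1} the exponential map ${\mathcal G}_p(t_0+\eta,\cdot)$ is a local diffeomorphism on ${\Bbb S}^2$, so {\bf C2} applies and the pulled-back metric $\gamma_{t_0+\eta}$ is $\ep$-close to $\cga$; pushing forward, the induced distance in $(S_{t_0+\eta},g)$ between $q_1^\eta$ and $q_2^\eta$ is bounded below by a positive multiple of $r(t_0+\eta)\,d_{\cga}(\omega_1,\omega_2)$, where $r(\tau)=\sqrt{|S_\tau|/(4\pi)}$ is the area radius. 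The comparison of the previous paragraph combined with {\bf C3} forces $r(t_0+\eta)\ges n(p)(t(p)-t_0-\eta)$, which tends to $n(p)\,i_*>0$ as $\eta\to 0^+$. Hence the Riemannian distance between $q_1^\eta$ and $q_2^\eta$ stays uniformly bounded below, contradicting $q_1^\eta,q_2^\eta\to q$ as $\eta\to 0^+$, and giving $i_*>\delta_*$ as desired.

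The main obstacle is the quantitative stability of the null geodesic flow under a $C^0$-small, but not $C^1$-small, perturbation of the metric: {\bf C3} provides only pointwise closeness of $g_{ij}$ and $n$ to their Minkowski values, whereas the geodesic equation involves $\p\bg$. One must therefore propagate closeness using the \emph{integrated} control of $\p\bg$ furnished by $({\bf A1})$ and the elliptic estimates of Section \ref{3D}, in contrast to the pointwise bound on $\pi$ available under the stronger hypothesis (\ref{tn1}) used in \cite{KRradius}. The three conditions are used in a coupled way: {\bf C3} fixes the coordinate comparison with $\bm$, the integrated $\p\bg$ estimates control the Christoffel symbols along the geodesics in the Gronwall step, and {\bf C2} converts $C^0$-closeness of the geodesics into the uniform lower bound on intra-$S_t$ distance that survives the limit $\eta\to 0^+$.
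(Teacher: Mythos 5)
Your high‑level strategy (argue by contradiction, extract a point $p$ at which $l_*(p,t)=i_*$, use {\bf C1} to rule out the conjugate radius, and then show the two merging null geodesics cannot in fact merge under {\bf C2}, {\bf C3}) matches the paper's, but the argument as you have written it has a genuine gap where the paper leans on the structural lemmas of \cite{KRradius}. After producing two distinct directions $\omega_1\neq\omega_2$ with ${\mathcal G}_p(t_0,\omega_1)={\mathcal G}_p(t_0,\omega_2)=q$, the paper immediately invokes \cite[Lemma~3.1]{KRradius} to conclude that the two null geodesics are in fact \emph{opposite} at both $p$ and $q$ (so in particular $d_{\cga}(\omega_1,\omega_2)=\pi$). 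You never establish, cite, or use this. Without it, nothing prevents $\omega_1$ and $\omega_2$ from being arbitrarily close, in which case your lower bound $c\,r(t_0+\eta)\,d_{\cga}(\omega_1,\omega_2)$ on the intrinsic distance in $S_{t_0+\eta}$ degenerates and the argument yields no contradiction. The opposite‑geodesics input is not cosmetic — it is the pivot of the whole proof, and it is precisely what the paper is importing when it cites Lemma 3.1 and then Lemmas 3.2–3.3.

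Two further points would also need repair even with the opposite‑geodesic fact in hand. First, your Gronwall comparison of $\Gamma_\omega$ with a Minkowski null ray requires control of the spatial Christoffel symbols $\Gamma^i_{jk}$, i.e.\ of $\partial_i g_{jk}$; none of $({\bf A1})$, Proposition~\ref{Ba22} or Proposition~\ref{phit01} control these, since $\pi$ captures only $k=-\tfrac{1}{2n}\partial_t g$ and $\nabla\log n$, and {\bf C3} gives only $C^0$ closeness of $g$ to $\delta$. The paper avoids this by replacing the naive ODE comparison with the geometric Lemmas 3.2–3.3 of \cite{KRradius}, which are designed exactly to exploit the opposite‑direction structure and $C^0$ metric control without needing $\partial_i g_{jk}$ pointwise. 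Second, your final step is not a contradiction as stated: the intrinsic $S_{t_0+\eta}$‑distance between $q_1^\eta$ and $q_2^\eta$ dominates the ambient distance, so a positive lower bound on the former is perfectly consistent with $q_1^\eta,q_2^\eta\to q$; one needs the stronger (and opposite‑direction‑dependent) geometric argument of \cite{KRradius} to get an actual impossibility. In short, the proposal identifies the right obstacle but does not resolve it, and it omits the single most important geometric ingredient — the antipodality of the two geodesics at the cut point.
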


Let us briefly outline the idea of proof. Assume that $i_*\le
\delta_*$. Let $p_0\in \M_I$ be a point such that $i_*(p_0,t)=i_*\le
t(p)-t_0$. By {\bf C1} we have $s_*(p_0,t)>i_*(p_0,t)=l_*(p_0,t)$.
Thus there exist two distinct past null geodesics $\gamma_1$ and
$\gamma_2$ initiating at $p_0$ intersect at a point $q_0$ with
$t(q_0)=t(p_0)-i_*$. According to the definition of $i_*$ and
\cite[Lemma 3.1]{KRradius} $\gamma_1$ and $\gamma_2$ are opposite at
both $p_0$ and $q_0$. On the other hand, under the conditions {\bf
C2} and {\bf C3}, Lemma 3.2 and Lemma 3.3 in \cite{KRradius} imply that
such two null geodesics can not intersect in the time slab
$[t(p_0)-i_*, t(p_0)]$.

Theorem \ref{KR08} provides a general framework to estimate the null
radius of injectivity from below. Under the condition (\ref{tn1}),
in \cite{KRradius} Klainerman and Rodnianski showed that the
conditions {\bf C1}--{\bf C3} hold with a universal constant
$\delta_*>0$; thus they derived a universal lower
bound on the null radius of injectivity.

In the following we will describe how to verify the conditions
{\bf C1}--{\bf C3} under the assumption ({\bf A1}). To this end, for each $p\in \M_I$
consider the past null cone $\N^{-}(p)$, let $s$ be its affine parameter and
let $S_t=\N^{-}(p)\cap \Sigma_t$. Then $S_t$ is diffeomorphic to
${\Bbb S}^2$ for each $t$ satisfying $t(p)-i_*(p, t)<t<t(p)$.
Let $\gamma$ be the restriction of $\bg$ to $S_t$ and
let $|S_t|$ be the corresponding area. The radius of $S_t$ is defined to be
\begin{equation}\label{3.9.1}
r:=\sqrt{(4\pi)^{-1} |S_t|}
\end{equation}
which is a function of $t$ only.

On $\N^{-}(p, \tau)\setminus\{p\}$ with $\tau<i_*(p,
t)$ we can define a conjugate null vector $\underline{L}$ with
$\bg(L, \underline{L})=-2$ and such that $\underline{L}$ is
orthogonal to the leafs $S_t$. In addition we can choose
$(e_A)_{A=1,2}$ tangent to $S_t$ such that $(e_A)_{A=1,2}$,
$e_3=\Lb$, $e_4=L$ form a null frame, i.e.
$$
\bg(L,\Lb)=-2, \quad \bg(L,L)=\bg(\Lb,\Lb)=\bg(L, e_A)=\bg(\Lb, e_A)=0, \quad \bg(e_A, e_B)=\delta_{AB}.
$$
The null second fundamental forms $\chi$, $\underline{\chi}$,
the torsion $\zeta$ and the Ricci coefficient $\underline{\zeta}$ of the foliation $S_t$
are then defined as follows
$$
\chi_{AB}=\bg(\bd_A L, e_B), \qquad \underline{\chi}_{AB}=\bg(\bd_A
\underline{L}, e_B),
$$
$$
\zeta_A=\frac{1}{2} \bg(\bd_A L, \underline{L}), \qquad
\underline{\zeta}_A=\frac{1}{2} \bg(e_A, \bd_L \underline{L}).
$$
In addition we define
$$
\tr \chi=\gamma^{AB} \chi_{AB}, \qquad
\hat{\chi}_{AB}=\chi_{AB}-\frac{1}{2} \tr \chi \, \gamma_{AB}.
$$
We can define $\tr \chib$ and $\hat{\chib}$ similarly.

We introduce the null lapse function
$$
a^{-1}:=\bg(L, \bT).
$$
Then $a>0$ and $a(p)=1$. It is easy to see that
$$
L=-a^{-1} (\bT +N), \qquad \Lb=-a (\bT-N),
$$
where $N$ denotes the unit inward normal to $S_t$ in $\Sigma_t$. We
also introduce the function
$$
\nu:=-n^{-1}\nabla_N n +k_{NN}
$$
which is relevant to the estimate on $a$.

For any $S_t$-tangent tensor field $F$ we define the norm
$\|F\|_{L_\omega^\infty L_t^2(\N^-(p,\tau))}$ by
\begin{equation*}
\|F\|_{L_\omega^\infty L_t^2(\N^-(p,\tau))}^2:=\sup_{\omega\in {\Bbb
S}^2} \int_{t(p)-\tau}^{t(p)} |F|^2 na dt:=\sup_{\omega\in {\Bbb
S}^2} \int_{\Gamma_\omega} |F|^2 na dt,
\end{equation*}
where $\Gamma_\omega$ denotes the portion of a past null geodesic
initiating from $p$ contained in $\N^-(p,\tau)$.

 The following result is sufficient to prove the conditions
{\bf C1}--{\bf C3} in Theorem \ref{KR08}.

\begin{theorem}\label{bts.1}
Let the assumption $({\bf A1})$ hold. Then there exist universal
constants $\delta_*>0$ and $C_*>0$ such that for any $p\in \M_I$
there hold
\begin{equation}\label{MB}
\int_{t(p)-\tau}^{t(p)} |k(\Phi(t))|^2 dt \le C_*
\end{equation}
with $\Phi$ the integral curve of ${\bf T}$ through $p$, and
\begin{align}\label{MB1}
|a-1|\le \frac{1}{2}, \quad \left|\mbox{tr} \chi
-\frac{2}{s}\right|\le C_*, \quad \|\hat{\chi}\|_{L_\omega^\infty
L_t^2(\N^{-}(p, \tau))}^2 \le C_*
\end{align}
on any null cones $\N^{-}(p, \tau)$, where $\tau:=\min\{i_*,
\delta_*\}$.
\end{theorem}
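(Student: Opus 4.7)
The plan is a continuity/bootstrap argument. For each $p \in \M_I$ let $\tau^\sharp(p)$ be the supremum of $\tau \in [0, \min\{i_*, \delta_*\}]$ for which (\ref{MB})--(\ref{MB1}) hold on $\N^-(p,\tau)$ and along the integral curve $\Phi$ with the constant $C_*$ replaced by $2 C_*$, and set $\tau = \inf_{p \in \M_I} \tau^\sharp(p)$; the goal is to prove $\tau = \min\{i_*, \delta_*\}$ by strictly improving $2 C_*$ back to $C_*$ under this bootstrap assumption (BA). Throughout, $\delta_*$ is a small universal constant fixed at the very end.

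Under (BA) the geometry of each $\N^-(p,\tau)$ is tame. Since $|a-1|\le 1/2$ and $dt/ds = -(an)^{-1}$ with $n$ controlled above and below by Propositions \ref{lapse1} and \ref{N}, the affine parameter $s$ and $t(p)-t$ are comparable; together with $|\tr\chi - 2/s|\le 2 C_*$ this gives comparability of $r$, $s$ and $t(p)-t$, yields Sobolev inequalities on the $S_t$, and together with the Bel--Robinson bound of Lemma \ref{es1} bounds the curvature flux through $\N^-(p,\tau)$. Along each $\Gamma_\omega$,
\begin{equation*}
|a - 1| \le \int_{t(p)-\tau}^{t(p)} |\nu|\, (an)\, dt \les \tau^{1/2}\, \|\nu\|_{L_\omega^\infty L_t^2(\N^-(p,\tau))},
\end{equation*}
so a universal trace bound on $\nu$ strictly improves $|a-1|\le 1/2$ once $\delta_*$ is small. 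Similarly, the Raychaudhuri-type transport equations for $\tr\chi - 2/s$ and for $\hat\chi$ reduce the remaining parts of (\ref{MB1}) to the same trace-type control on $\nu$ and to the curvature flux bound, both via Gronwall.

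The trace bound $\|\nu\|_{L_\omega^\infty L_t^2}^2\le C$ is obtained by adapting the sharp trace inequality on null cones of \cite{KR1, KR4, KR3, Qwang} to the time foliation. One writes $\nu = k_{NN} - \nab_N \log n$: the $\nab_N \log n$ part is handled by the elliptic estimates of Section \ref{3D}, while the $k_{NN}$ part, for which ordinary trace theory loses half a derivative, is controlled via the decomposition (\ref{eq}) and the $L^2(\N^-)$ bound (\ref{eq3}) on $\sn(\nu, P)$ and $\nab_L(\nu, P)$, whose $k$-part reduces exactly to the $k$-flux estimate (\ref{kk}). The latter is proved by the energy method applied to the tensorial wave equation (\ref{hkk}): contract with $\bd_{\bT} k$, integrate over the past domain bounded by $\N^-(p,\tau)$ and $\Sigma_{t(p)-\tau}$, and absorb the right-hand side of (\ref{hkk}) using $({\bf A1})$, Lemma \ref{first}, and Propositions \ref{Ba22} and \ref{phit01}.

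For (\ref{MB}) we use the Kirchoff--Sobolev parametrix of \cite{Ksob} for (\ref{hkk}) at each $q = \Phi(t)$, representing $n(q)\, k(q) \cdot J$ as $\int_{\N^-(q, t - t(p) + \tau)} \Box k \cdot \bA$ plus boundary and lower-order terms, where $\bA$ satisfies $\bd_L \bA + \tfrac12 \tr\chi\, \bA = 0$ with $(t(q) - t')\bA \to J$ at $q$. The transport equation for $\bA$ combined with the bound on $\tr\chi - 2/s$ yields $\|r \bA\|_{L^\infty(\N^-)} \les 1$, hence
\begin{equation*}
n(q)\, |k(q)| \les \int_{\N^-(q,\cdot)} r^{-1} |\Box k| + \mbox{(lower order)}.
\end{equation*}
Squaring, integrating in $t$ along $\Phi$, swapping the order of integration by Fubini, and using the null-cone estimates on $k$, $\nab k$, $\nab^2 n$, $\nab \dot n$ available from $({\bf A1})$, Lemma \ref{first}, and Propositions \ref{Ba22}, \ref{phit01} gives $\int |k(\Phi(t))|^2\, dt \le C + C\delta_*^{1/2}$, which beats $2 C_*$ for $\delta_*$ small and closes the bootstrap. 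The main obstacle is that the sharp trace inequality in the time foliation, the decomposition (\ref{eq}) with its control (\ref{eq3}), the $k$-flux (\ref{kk}), the null-cone estimates (\ref{MB1}), and the Kirchoff representation (\ref{MB}) are mutually entangled and must be closed simultaneously within the bootstrap; unlike in \cite{KRradius}, under $({\bf A1})$ the natural $L^\infty_t$ control of $\pi$ is replaced by $L^1_t$-integrals, so the smallness used to absorb nonlinear terms can only be harvested from the shortness of the slab, forcing a delicate choice of $\delta_*$.
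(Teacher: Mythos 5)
Your proposal follows essentially the same route as the paper: a bootstrap on the null-cone quantities $(a, \tr\chi-2/s, \hat\chi, \nu)$, the $k$-flux obtained by contracting the tensorial wave equation (\ref{hkk}) with $\bd_0 k$ and integrating over $\J^-(p,\tau)$, and the $L^2_t$ bound (\ref{MB}) via the Kirchoff--Sobolev parametrix with $\bA$ transported along $\N^-$; like the paper, you defer the sharp trace inequality and the decomposition $\sn\nu=\nab_L P+Q$ to the companion work \cite{tqwang}. Two small structural differences are worth flagging. First, you place (\ref{MB}) inside the bootstrap and improve $2C_*\mapsto C_*$; the paper instead closes the bootstrap only for ({\bf BA1})--({\bf BA4}) — giving (\ref{MB1}) — and then proves (\ref{MB}) as a corollary (Theorem \ref{fina3}), which is the cleaner logic since (\ref{MB}) never feeds back into any of the improvements (\ref{3.7.1})--(\ref{3.7.4}). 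Second, your remark that the improvements of $\tr\chi-2/s$ and $\hat\chi$ follow ``via Gronwall'' understates what the paper describes as ``highly nontrivial and requiring lengthy calculation'': those bounds require the full apparatus of sharp trace and Besov-type estimates on the cones, not merely integrating Raychaudhuri; you do however correct this in your closing paragraph by acknowledging the entanglement with the sharp trace machinery, so this reads as imprecise phrasing rather than a gap in understanding.
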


In fact, the estimate on $\tr \chi$ in (\ref{MB1}) implies the
condition {\bf C1}, see \cite{HE,CH09}.
Next we will show that the
estimates in (\ref{MB1}) imply the condition {\bf C2}. To see this,
we recall that $\frac{d s}{d t}=-na$ and $\frac{d}{d s} \ga_{AB}=2
\chi_{AB}$. Then
\begin{align*}
\frac{d}{dt}(s^{-2} \ga_{AB})&=-na \left(-2s^{-3} \ga_{AB}+2 s^{-2}
\chi_{AB}\right)
\end{align*}
Let $X\in T {\mathbb S}^2$ be any vector field.
We integrate the above equation along any null geodesic and note that
$\lim_{t\rightarrow t(p)^{-}} s(t)^{-2}
\ga(t)=\stackrel{\circ}\ga$,( see \cite{Qwang}), it follows that
\begin{align*}
\left|s(t)^{-2} \ga(X,X)-\stackrel{\circ}\ga(X,X)\right|
&\le \int_t^{t(p)} \left(2|\chih| +\left|\tr \chi -\frac{2}{s(t')}\right|\right)
s(t')^{-2} \gamma(X,X) n a d t'
\end{align*}
Let
$$
\Theta:=2|\chih| +\left|\tr \chi -\frac{2}{s}\right|.
$$
We then have
\begin{align*}
\left|s(t)^{-2} \ga(X,X)-\stackrel{\circ}\ga(X,X)\right| &\le \int_t^{t(p)} \Theta
\left|s(t')^{-2} \ga(X,X)-\stackrel{\circ}\ga(X,X)\right| na dt'\\
&\quad\, + \stackrel{\circ}\ga(X,X) \int_t^{t(p)} \Theta(t') na d t'.
\end{align*}
Therefore, it follows from the Gronwall inequality that
\begin{align*}
\left|s(t)^{-2} \ga(X,X)-\stackrel{\circ}\ga(X,X)\right|
&\le  \stackrel{\circ}\ga(X,X)  \int_t^{t(p)} \Theta na dt' \exp\left(\int_t^{t(p)}
\Theta(t') n a dt'\right).
\end{align*}
Since $0<n\le 3/t_*^2$, the estimate (\ref{MB1}) in Theorem \ref{bts.1} implies
$$
\int_t^{t(p)} \Theta n a dt'\le C\left((t(p)-t)^{1/2}+(t(p)-t)\right)
\le C(t(p)-t)^{1/2}
$$
and consequently
\begin{equation}\label{3.10.1}
\left|s^{-2} \ga(X,X)-\stackrel{\circ}\ga(X,X)\right| \le
C(t(p)-t)^{1/2}  \stackrel{\circ}\ga(X,X)
\end{equation}
for all $t(p)-\min\{i_*, \delta_*\}\le t< t(p)$, where $C$ is a universal constant.
The condition {\bf C2} is thus verified.

The verification of the condition {\bf C3},
using the estimate (\ref{MB}), is given in the following result.

\begin{lemma}\label{cltmin1}
Let the assumption $({\bf A1})$ hold. For any $\ep>0$, there exists
a constant $\delta_*>0$ depending only on $Q_0$, $\k$, $t_*$ and
$\ep$ such that for every point $p\in \M_I$ there exists on
${\mathcal U}_p:=I_p \times B_{\delta_*}(p)$ with
$I_p=[t(p)-\min\{i_*,\delta_*\}, t(p)]$ a system of
transported coordinates $t, x=(x^1, x^2, x^3)$ relative to which
${\bf g}$ is close to the Minkowski metric $\bm(p)=-n(p)^2 d
t^2+\delta_{ij} d x^i d x^j$, in the sense that
\begin{align}\label{3.8.1}
|g_{ij}-\delta_{ij}|<\ep\qquad \mbox{and}\qquad |n-n(p)|<\ep.
\end{align}
\end{lemma}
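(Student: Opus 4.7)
The plan is to combine harmonic coordinates on the fixed slice $\Sigma_{t(p)}$, provided by Proposition \ref{harcor}, with time-evolution control coming from the estimate (\ref{MB}) of Theorem \ref{bts.1} and the lapse estimates of Section \ref{3D}. First I would apply Proposition \ref{harcor} with threshold $\ep/2$ to obtain a radius $r_0 > 0$, depending only on $\ep$ and universal constants, such that the geodesic ball $B_{r_0}(p) \subset \Sigma_{t(p)}$ carries harmonic coordinates $x = (x^1, x^2, x^3)$ with
\begin{equation*}
|g_{ij}(x, t(p)) - \delta_{ij}| \le \tfrac{\ep}{2}, \qquad x \in B_{r_0}(p).
\end{equation*}
Transport these coordinates along the integral curves of $\bT$ to produce transported coordinates $(x^0 = t, x^1, x^2, x^3)$ on $I_p \times B_{r_0}(p)$, and let $\delta_* \le r_0$ be chosen below.

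For the metric part, the key observation is that (\ref{MB}) applies not only to $p$ itself but to every starting point $p' \in \M_I$. Invoking it at each $x \in B_{r_0}(p) \subset \Sigma_{t(p)}$, whose integral curve under $\bT$ is precisely $\tt \mapsto (x, \tt)$ in the transported coordinates, gives
\begin{equation*}
\int_t^{t(p)} |k(x, \tt)|^2 \, d\tt \le C_*, \qquad \forall \, x \in B_{r_0}(p), \ t \in I_p.
\end{equation*}
Integrating $\p_t g_{ij} = -2n k_{ij}$, using the universal upper bound $n \le 3/t_*^2$, and applying Cauchy--Schwarz then yields
\begin{equation*}
|g_{ij}(x, t) - g_{ij}(x, t(p))| \le 2 \|n\|_{L^\infty} C_*^{1/2} (t(p) - t)^{1/2} \le C (t(p) - t)^{1/2}.
\end{equation*}
Choosing $\delta_* \le \min\{r_0, \tau\}$ so small that $C \delta_*^{1/2} < \ep/2$ and combining with the harmonic-coordinate estimate at $t(p)$ gives $|g_{ij} - \delta_{ij}| < \ep$ throughout $\mathcal{U}_p$.

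For the lapse, split $|n(x, t) - n(p)| \le |n(x, t) - n(x, t(p))| + |n(x, t(p)) - n(p)|$. The spatial term at fixed time $t(p)$ is handled using the uniform bound $\|n\|_{H^2(\Sigma_{t(p)})} \le C$, coming from Proposition \ref{lapse1} together with $\|n\|_{L^\infty} \le 3/t_*^2$ and the volume bound (\ref{2.5.2}). Passing to the harmonic coordinates, under which the intrinsic and Euclidean $W^{2,2}$ norms are equivalent, the Morrey embedding $W^{2,2}(B^3) \hookrightarrow C^{0,\alpha}(B^3)$ (any $\alpha < 1/2$) produces
\begin{equation*}
|n(x, t(p)) - n(p)| \le C d_{t(p)}(x, p)^{\alpha} \le C \delta_*^{\alpha}.
\end{equation*}
For the temporal term, Proposition \ref{phit01} supplies $\|\dot n\|_{L^b_t L^\infty_x(\M_*)} \le C$ for any $b \in [1, 2)$, so H\"older's inequality gives
\begin{equation*}
|n(x, t) - n(x, t(p))| \le \int_t^{t(p)} \|\dot n(\cdot, \tt)\|_{L^\infty(\Sigma_\tt)} \, d\tt \le C (t(p) - t)^{1 - 1/b}.
\end{equation*}
Both contributions are $< \ep/2$ provided $\delta_*$ is taken sufficiently small in terms of $\ep$, $\alpha$, $b$, and the universal constants.

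The main analytic input, and the only genuinely nontrivial ingredient beyond the elliptic estimates of Section \ref{3D}, is the uniformity in $x$ of the $L_t^2$ bound on $k(x, \cdot)$: once Theorem \ref{bts.1} is invoked pointwise over $B_{r_0}(p)$, the harmonic coordinate picture at time $t(p)$ propagates via the evolution equation for $g$ with only a mild $(t(p)-t)^{1/2}$ loss, which is exactly what is needed to close the argument. The lapse estimate is then a straightforward consequence of the uniform elliptic control on $n$ and $\dot n$ already established.
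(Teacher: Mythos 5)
Your proposal is correct and follows essentially the same route as the paper's proof: harmonic coordinates from Proposition \ref{harcor} on $\Sigma_{t(p)}$ give the spatial comparison at the top slice, the estimate (\ref{MB}) of Theorem \ref{bts.1} applied pointwise along integral curves of $\bT$ through each $x \in B_{r_0}(p)$ controls the time evolution of $g_{ij}$ with a $(t(p)-t)^{1/2}$ loss, and the lapse bound splits into a temporal term controlled via Proposition \ref{phit01} and a spatial term controlled by the uniform $H^2$ bound on $n$ together with a Morrey-type inequality. The paper packages the spatial lapse step via Morrey's estimate with $\|\nabla n\|_{L^4}$ rather than $W^{2,2}\hookrightarrow C^{0,\alpha}$, but these are the same ingredient.
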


\begin{proof}
It follows from Proposition \ref{harcor} that there exists a
constant $\delta_0>0$ depending only $\k$, $Q_0$, $t_*$ and $\ep$
such that every geodesic ball $B_{\delta_0}(p)\subset \Sigma_{t(p)}$
admits a system of harmonic coordinates $x=(x^1, x^2, x^3)$ under
which
\begin{equation}\label{ep0}
(1+\ep/2)^{-1}\delta_{ij}\le g_{ij} \le (1+\ep/2)\delta_{ij}.
\end{equation}
Under the transported coordinates $t, x=(x^1, x^2, x^3)$, let
$p=(t(p), 0)$ and let $q=(t, x)$ be an arbitrary point in
$I_p\times B_{\delta_*}(p)$ with $I_p=[t(p)-\min\{i_*,
\delta_*\}, t(p)]$, where $0<\delta_*\le \delta_0$ is a constant to
be determined. By using the equation $\p_t g_{ij}=-2n k_{ij}$ we
have
\begin{align*}
|g_{ij}(t,x)-g_{ij}(t(p), x)|=\left|\int_t^{t(p)} \p_t g_{ij}(t', x)
d t' \right|=2\int_t^{t(p)} n |k| d\tt.
\end{align*}
Using the bound $0<n\le 3/t_*$, the H\"older inequality and
the estimate (\ref{MB}) in Theorem \ref{bts.1}, it follows for some
universal constant $C_1>0$ that
\begin{align*}
|g_{ij}(t,x)-g_{ij}(t(p),x)|\le C_1 (t(p)-t)^{1/2} \le C_1\delta_*^{1/2}.
\end{align*}
In view of (\ref{ep0}), we thus obtain
\begin{equation}\label{comp3}
|g_{ij}(t,x)-\delta_{ij}|\le |g_{ij}(t,x)-g_{ij}(t(p),x)|+
|g_{ij}(t(p),x)-\delta_{ij}| \le C_1 \delta_*^{1/2}
+\frac{\epsilon}{2},
\end{equation}
which gives the first inequality in (\ref{3.8.1}) by letting
$C_1\delta_*^{1/2}<\epsilon/2$.

Next we prove the second inequality in (\ref{3.8.1}). From
Proposition \ref{phit01} we have
$$
|n(t,x)-n(t(p),x)|\le \int_t^{t(p)} |\dot{n}(t', x)| d t'\le
(t(p)-t)^{1/4} \|\dot{n}\|_{L_t^{4/3} L_x^\infty}\le C_2
\delta_*^{1/4},
$$
while by employing  Morrey's estimate, Lemma \ref{sob.02}
and Proposition \ref{lapse1} we have
\begin{align*}
|n(t(p),x)-n(t(p), 0)|&\le C_2 \delta_*^{1/4} \|\nab
n\|_{L^4(\Sigma_{t(p)})} \\
&\le C_2 \delta_*^{1/4} \left(\|\nab^2 n\|_{L^2}^{3/4}\|\nab
n\|_{L^2}^{1/4}+\|\nab n\|_{L_x^2}\right)\\
& \le   C_2 \delta_*^{1/4},
\end{align*}
where $C_2>0$ is a universal constant. Therefore
$$
|n(t,x)-n(p)|\le 2 C_2 \delta_*^{1/4}
$$
which implies the second inequality in (\ref{MB}) by further
letting $2 C_2 \delta_*^{1/4}<\epsilon$.
\end{proof}

The proof of Theorem \ref{bts.1} is based on a delicate bootstrap
argument. We first fix some notations and terminology. Related to
the deformation tensor $\pi_{\a\b}$ of $\bT$, we introduce the
$\Sigma_t$-tangent tensor $h_\a^\mu h_\b^\nu \pi_{\mu\nu}$, where
$$
h_\a^\b=\delta_\a^\b +{\bf T}_\a {\bf T}^\b
$$
denotes the projection tensor. It is easy to see that
$k_{ij}=h_i^\mu h_j^\nu \pi_{\mu\nu}$ and thus this
tensor is an extension of $k$. We will denote it by the same
notation $k$, i.e.
\begin{equation}\label{defk}
k_{\a\b}=h_\a^\mu h_\b^\nu \pi_{\mu\nu}
\end{equation}
Note that $k_{0\a}=k_{\a0}=0$.

Corresponding to the null vector $L$, let $\nabla_L k$  be the
$\Sigma_t$-tangent tensor defined by
$$
\nabla_L k_{ij}:=h_i^\a h_j^\b \bd_L k_{\a\b}
$$
and let $$
|\nabla_L k|^2=g^{ii'}g^{jj'} \nabla_L k_{ij} \nabla_L k_{i'j'}.
$$
We also introduce $\sn k$ by $\sn_A k_{ij}:=\nabla_A k_{ij}$ and set
$$
|\sn k|^2= \gamma^{AB} g^{ii'}g^{jj'} \nabla_A k_{ij} \nabla_B k_{i'j'}.
$$
Corresponding to the second fundamental form $k$,  then, for each
$p\in \M_I$, we introduce on the null cone $\N^{-}(p, \tau)$ the
$k$-flux
\begin{equation}\label{dfk}
\F[k](p,\tau)=\int_{\N^-(p,\tau)} \left(|\sn k|^2 +|\nab_L
k|^2\right),
\end{equation}
where, for each function $f$ and $\tau <i_*(p,t)$,
\begin{equation*}
\int_{\N^-(p,\tau)} f:=\int_{t(p)-\tau}^{t(p)} \int_{S_t} f na
d\mu_\gamma dt.
\end{equation*}

Corresponding to the time foliation, we recall the null components of the
Riemannian curvature tensor ${\bR}$ as follows
\begin{eqnarray}
\alpha_{AB}=\bR(L, e_A, L, e_B),&&\qquad  \beta_A=\frac{1}{2} \bR(e_A, L,
\underline{L}, L),\nonumber\\
\rho=\frac{1}{4} \bR(\underline{L},L, \underline{L},L),&&\qquad
\sigma=\frac{1}{4}{{}^\star \bR}(\underline{L}, L, \underline{L}, L),\label{f14}\\
\underline{\beta}_A=\frac{1}{2} \bR(e_A, \underline{L},
\underline{L},L),&& \qquad \underline{\a}_{A B}=\bR(\underline{L}, e_A,
\underline{L}, e_B).\nonumber
\end{eqnarray}
The corresponding curvature flux $\R(p,\tau)$ on the null cone
$\N^{-}(p, \tau)$ is given by
\begin{equation*}
\R(p,\tau)=\int_{t(p)-\tau}^{t(p)} \int_{S_t}
\left(|\a|^2+|\b|^2+|\rho|^2+|\sigma|^2+|\udb|^2\right) n a d
\mu_\gamma dt.
\end{equation*}

The following result says that once the null lapse $a$ is well controlled, then
the $k$-flux and the curvature flux can be bounded by a universal constant.

\begin{theorem}\label{flux.01}
Let the condition $({\bf A1})$ hold. Then there exists a universal
constant $C_*\ge 1$ such that for all $p\in \M_I$ if $|a-1|\le 1/2$
on $\N^{-}(p, \tau)$ for some $0<\tau\le i_*$ then there holds
$$
\R(p, \tau)+\F[k](p, \tau)\le C_*.
$$
\end{theorem}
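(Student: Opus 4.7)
The plan is to run two vector-field energy identities on the past solid cone $\V(p,\tau):=J^-(p)\cap\{t(p)-\tau\le t\le t(p)\}$, bounded by $\N^-(p,\tau)$ and by the portion of $\Sigma_{t(p)-\tau}$ it encloses. The first, used for $\R(p,\tau)$, is built from the Bel-Robinson current ${\bf P}_\alpha=\bQ[\bR]_{\alpha\beta\gamma\delta}\bT^\beta\bT^\gamma\bT^\delta$; the second, used for $\F[k](p,\tau)$, is built from the stress-energy tensor $T_{\alpha\beta}[k]=\langle\bd_\alpha k,\bd_\beta k\rangle-\tfrac12\bg_{\alpha\beta}\langle\bd^\mu k,\bd_\mu k\rangle$ associated with the tensorial wave equation (\ref{hkk}) for the extended tensor of (\ref{defk}). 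In both identities the assumption $|a-1|\le\tfrac12$ is used to keep the powers of $a$ arising from the null-frame decomposition $L=-a^{-1}(\bT+N)$ bounded above and below, so that the pointwise integrand on $\N^-$ (against the natural measure $na\,d\mu_\gamma dt$ appearing in $\R$ and $\F[k]$) is comparable, with universal constants, to a positive combination of the null components of $\bR$ and of $\bd k$ respectively.

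For the curvature flux I would apply the divergence theorem to ${\bf P}_\alpha$ on $\V(p,\tau)$. The slice term on $\Sigma_{t(p)-\tau}\cap\V(p,\tau)$ is bounded by $\Q(t(p)-\tau)\le CQ_0^2$ from Lemma \ref{es1}; the null-cone term dominates $c\,\R(p,\tau)$ for a universal $c>0$ under $|a-1|\le\tfrac12$; and the bulk contribution is controlled by (\ref{3.2.1}) combined with $\|n\pi\|_{L^1_tL^\infty_x}\le C\k$ from $({\bf A1})$ and $n\le 3/t_*^2$. Lemma \ref{es1} then closes the inequality and yields $\R(p,\tau)\le CQ_0^2$ with a universal constant.

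For the $k$-flux I would repeat the divergence procedure with multiplier $\bT^\beta$ applied to $T_{\alpha\beta}[k]$. The initial-slice contribution is a universal constant, using $\|\nab k\|_{L^2(\Sigma)}\les Q_0$ from Lemma \ref{first}, the identity (\ref{intro04}) for $\nab_\bT k$, and the bounds $\|\nab^2 n\|_{L^2}\les 1$, $\|n^{-1}\|_{L^\infty}\les 1$, $\|Ric\|_{L^2}\les Q_0$ from Propositions \ref{lapse1}, \ref{N} and Lemma \ref{first}, together with Lemma \ref{sob.02} to handle the cubic-in-$k$ term. The bulk error splits into a deformation piece $\int T\cdot\pi$ of schematic form $\int\|\pi\|_{L^\infty_x}\E(t')\,dt'$ with $\E(t')$ the instantaneous energy, and a source piece $\int\langle\Box k,\nab_\bT k\rangle$ plus a curvature commutator of type $\bR\cdot k\cdot\bd k$. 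Using (\ref{hkk}), each of the seven terms of $\Box k$ is estimated in $L^1_tL^2_x(\V)$ by combining $({\bf A1})$ with the earlier propositions: $k\cdot Ric$ via $\|k\|_{L^1_tL^\infty_x}\|R\|_{L^\infty_tL^2_x}$; the lapse-derivative terms $n^{-2}\nab^2\dot n$ and $n^{-3}\dot n\nab^2 n$ via Propositions \ref{Ba22}, \ref{phit01}, \ref{N}; $\pi\cdot\nab k$ and $k\cdot\nab^2 n$ via $({\bf A1})$ with Lemma \ref{first} and Proposition \ref{lapse1}; $\pi^3$ via $({\bf A1})$ and Lemmas \ref{first}, \ref{sob.02}; and $n^{-1}k$ trivially. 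The curvature commutator is treated similarly. A Gronwall argument on the resulting inequality, with the $L^1$-in-time weight supplied by $({\bf A1})$, closes the estimate and yields $\F[k](p,\tau)\le C_*$.

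The main obstacle will be closing this Gronwall argument for the $k$-flux: the terms $\pi\cdot\nab k$ in $\Box k$ together with $T\cdot\pi$ in the divergence identity produce quantities of the form $\|\pi\|_{L^\infty_x}\cdot(\mbox{energy})$ that must be absorbed through the integral-in-time $({\bf A1})$ rather than pointwise in $t$, so the slice energy, the null-cone flux, and the various mixed time-space norms must be balanced with some care. A secondary subtlety is checking that the stress-energy integrand on $\N^-(p,\tau)$ really bounds $c\,(|\sn k|^2+|\nab_L k|^2)$ from below, which is precisely where $|a-1|\le\tfrac12$ is used to keep the induced quadratic form away from degeneracy.
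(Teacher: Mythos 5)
Your overall plan matches the paper's proof: both fluxes are controlled by applying the divergence identity of Lemma \ref{recal.1} on the solid past cone $\J^-(p,\tau)$, using the Bel-Robinson current for $\R$ and the energy-momentum tensor $Q[k]_{\a\b}$ built from $\bd k$ for $\F[k]$; and $|a-1|\le\tfrac12$ is used exactly where you say, to make the null-cone integrand of the stress current comparable to $|\sn k|^2+|\nab_L k|^2$ (resp.\ the curvature flux density). The catalogue of auxiliary bounds you invoke is also the same.

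The one place where you depart from the paper, and where your proposal as written would run into trouble, is the closing step for $\F[k]$. You anticipate a Gronwall argument and flag it as the main obstacle, since the bulk term produces quantities of the form $\|\pi\|_{L^\infty_x}\cdot(\text{energy})$. But the quantity that appears in the bulk is a \emph{slice} energy $\int_{\Sigma_t\cap\J^-(p)}(|\bd_0 k|^2+|\nab k|^2)$, not the cone flux you are trying to control, so a direct Gronwall on the flux does not close. The paper sidesteps this entirely by observing that the slice energy is already uniformly bounded \emph{a priori}: the evolution identity (\ref{1.29.2}), namely $({\bf D}_0 k)_{ij}=-n^{-1}\nab_i\nab_j n + R_{ij}+\Tr k\, k_{ij}$, together with Lemma \ref{first}, Proposition \ref{lapse1} and Proposition \ref{N}, gives
$$
\int_{\Sigma_t}\left(|\bd_0 k|^2+|\nab k|^2\right)\les \|\nab^2 n\|_{L^2}^2+\|Ric\|_{L^2}^2+\|k\|_{L^4}^4+\|\nab k\|_{L^2}^2\le C
$$
on every $\Sigma_t$, which is (\ref{1.30.1}). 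With this in hand, the deformation bulk term is bounded by $\int\|\pi\|_{L^\infty}\cdot C\,dt\le C\k$ using $({\bf A1})$ pointwise, the curvature commutator term likewise, and the source term by $\sup_t\|\bd_0 k\|_{L^2}\cdot\|\Box k\|_{L^1_tL^2_x}\le C$ once $\|\Box k\|_{L^1_tL^2_x}\le C$ is established from (\ref{hkk}) and the elliptic estimates of Section 3 — a step you have set up correctly. So the estimate is static, not iterative; adopt the a priori slice-energy bound and the Gronwall worry disappears.
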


We will prove Theorem \ref{flux.01} in Section \ref{flux1}. This
result requires $1/2\le a\le 3/2$ on $\N^{-}(p, \tau)$ which is
obvious for small $\tau>0$ since $a(p)=1$. In order for the above
result to be applicable, we must show that there is a universal
constant $\delta_*>0$ such that the same bound on $a$ holds with
$\tau:=\min\{i_*, \delta_*\}$, and so does the same bound on
$\R(p,\tau)+\F[k](p,\tau)$. We will use a bootstrap argument to
achieve this together with various estimates on $\tr \chi$, $\chih$
and $\nu$. That is, we will make the following bootstrap assumptions
\begin{align*}
|a-1|&\le \frac{1}{2}, \tag {\bf BA1}\\
\left|\tr\chi-\frac{2}{s}\right| &\le \ee, \tag {\bf BA2}\\
\|\chih\|_{L_\omega^\infty L_t^2(\N^{-}(p,\tau))}^2 &\le \ee, \tag {\bf BA3}\\
\|\nu\|_{L_\omega^\infty L_t^2(\N^{-}(p, \tau))}^2  &\le \ee, \tag {\bf BA4}
\end{align*}
on the null cone $\N^{-}(p, \tau)$ for all $p\in \M_I$, where
$0<\tau\le i_*$ and $\ee\ge 1$ are two numbers satisfying $\ee \tau\le 1$.
Due to the continuity of the quantities involved
and the compactness of $\M_I$, the bootstrap assumptions ({\bf
BA1})--({\bf BA4}) hold automatically for sufficiently small
$\tau>0$. Our goal is to show that we can choose universal constants
$\ee\ge 1$ and $\delta_*>0$ such that ({\bf BA1})--({\bf BA4}) hold with
$\tau=\min\{i_*, \delta_*\}$. We will achieve this by
showing that the estimates in ({\bf BA1})--({\bf BA4}) can be
improved.

We will first derive various intermediate consequences of the
bootstrap assumptions. In particular, we will derive the estimate on
the important quantity $\N_1[\slashed{\pi}]$ which is defined as
follows. For any $S_t$ tangent tensor field $F$ defined on the null
cone $\N^{-}(p, \tau)$, the Sobolev norm $\N_1[F](p,\tau)$ is
defined by
\begin{equation}\label{n1sob}
\N_1[F](p, \tau):=\|r^{-1}F\|_{L^2(\N^{-}(p,\tau))}+\|\nab_L
F\|_{L^2(\N^{-}(p,\tau))} +\|\sn F\|_{L^2(\N^{-}(p,\tau))}.
\end{equation}
Recall that the components of the deformation tensor $\pi$ of $\bT$
under transported coordinates are given by $\pi_{00}=0$,
$\pi_{0i}=-n^{-1} \nabla_i n$ and $\pi_{ij}=k_{ij}$. Let us denote
by $\lambda=-\frac{1}{3} \Tr k=-\frac{1}{3} t$ and $\hk$ the
traceless part of $k$. We decompose $\hk$ on each $S_t$ by
introducing components
\begin{equation}\label{compo}
\e_{AB}=\hk_{AB},\quad \quad \ep_A=\hk_{AN},\quad\quad
{\mathfrak{\delta}}= \hk_{NN}
\end{equation}
where $(e_A)_{A=1,2}$ is an  orthonormal frame on $S_t$ and $N$ is the inward unit normal of
$S_t$ in $\Sigma_t$. Let $\eh_{AB}$ be the traceless part of $\e$. Since
$\delta^{AB} \e_{AB}=-\delta$, we have
\begin{equation*}
\eh_{AB}=\e_{AB}+\frac{1}{2}\delta_{AB}\delta.
\end{equation*}
We will denote by  $\slashed{\hk}$, $\sn\slashed{\hk}$ and
$\sl{\pi_0}$ the collections
$$
\slashed{\hk}=(\delta, \ep, \eh), \qquad \sn\slashed{\hk}=(\sn
\delta, \sn \ep, \sn \eh),\qquad \sl{\pi_0}=(\sn\log n, \nab_N \log
n)
$$
respectively. We then define $\sl{\pi}$ to be the collection
\begin{equation}\label{psl}
\sl{\pi}=(\sl{\hk}, \sl{\pi_0}, \lambda).
\end{equation}
We define $\N_1[\sl{\pi}](p, \tau)$ according to (\ref{n1sob}) with
$F$ replaced by $\sl{\pi}$.

With the help of the bound on $k$-flux given in Theorem
\ref{flux.01} and various estimates on the lapse $n$ given in
Section 3, we will show that $\N_1[\slashed \pi](p,\tau)$ can be
bounded in a suitable way under ({\bf A1}) and the bootstrap
assumptions.

\begin{theorem}\label{d2laps}
Let $({\bf A1})$ hold. Then there exists a universal constant
$C$ such that under the bootstrap assumptions $({\bf BA1})$--$({\bf
BA3})$ with $\ee \tau\le 1$ there holds
\begin{equation}\label{piflux}
\N_1[\slashed{\pi}](p,\tau)\le C
\end{equation}
for all $p\in \M_I$.
\end{theorem}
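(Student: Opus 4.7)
The plan is to estimate each piece of the decomposition $\sl{\pi}=(\sl{\hk},\sl{\pi_0},\lambda)$ separately on the null cone $\N^-(p,\tau)$, combining the $k$-flux bound of Theorem \ref{flux.01} with the elliptic estimates on $n$, $n^{-1}$, and $\dot n$ from Section \ref{3D}, and a standard trace inequality of the form $\|F\|_{L^2(S_t)}^2\lesssim \|F\|_{L^2(\Sigma_t)}\|F\|_{H^1(\Sigma_t)}$ to pass from $\Sigma_t$ to $S_t=\N^-(p)\cap\Sigma_t$. The scalar piece $\lambda=-t/3$ is immediate: $\sn\lambda=0$, $\nabla_L\lambda=(3na)^{-1}$, and $\|r^{-1}\lambda\|_{L^2(\N^-)}^2$ is handled by the comparability $r(t)\sim t(p)-t$, which under the bootstrap assumptions (BA1)--(BA3) follows by the argument that produced (\ref{3.10.1}).

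For $\sl{\hk}=(\delta,\ep,\eh)$, since these are algebraic contractions of $\hk=k+(t/3)g$ against the frame $(L,\Lb,e_A)$, one has the pointwise bounds
\begin{equation*}
|\sn\sl{\hk}|\lesssim|\sn k|+|\chi||k|,\qquad |\nabla_L\sl{\hk}|\lesssim|\nabla_L k|+(|\chi|+|\chib|)|k|.
\end{equation*}
The $\sn k$ and $\nabla_L k$ contributions are bounded directly by $\F[k](p,\tau)^{1/2}\lesssim 1$ from Theorem \ref{flux.01}. The remainders, along with the $\|r^{-1}\sl{\hk}\|_{L^2(\N^-)}$ piece, are controlled using the trace inequality together with the $\Sigma_t$-bound $\|k\|_{H^1(\Sigma_t)}\lesssim 1$ from Lemma \ref{first}, the bootstrap smallness (BA2)--(BA3) on $\tr\chi-2/s$ and $\hat\chi$, and the area comparability $d\mu_\gamma\sim r^2 d\omega$ which neutralizes the $r^{-2}$ weight.

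For $\sl{\pi_0}=(\sn\log n,\nabla_N\log n)$, the estimates from Section \ref{3D} give
\begin{equation*}
\|\nabla\log n\|_{H^1(\Sigma_t)}\lesssim 1,\qquad \|\nabla^2\log n\|_{L^2(\Sigma_t)}+\|\nabla^2\dot n\|_{L^2(\Sigma_t)}\lesssim 1+\|k\|_{L^\infty(\Sigma_t)},
\end{equation*}
using in addition $\|n^{-1}\|_{L^\infty}\lesssim 1$ from Proposition \ref{N}. Writing $\nabla_L$ in terms of $\p_t$ via $L=-(na)^{-1}(\p_t+\text{spatial tangential})$ and applying the trace inequality, the contribution of $\sl{\pi_0}$ to $\N_1[\sl\pi](p,\tau)$ is bounded by a time integral of $\Sigma_t$-norms that enter linearly in $\|k\|_{L^\infty(\Sigma_t)}$; assumption $({\bf A1})$ then yields the required universal bound after Cauchy--Schwarz, using $\ee\tau\le 1$ to absorb any quadratic remainders.

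The main obstacle is to make every constant universal in $(Q_0,\k,|\Sigma_0|,t_*)$ without circularity with the bootstrap parameter $\ee$: the $\|k\|_{L^\infty(\Sigma_t)}$ factors must enter only linearly so that integration in $t$ combined with $({\bf A1})$ absorbs them, and quadratic nonlinearities (for instance from the $|k|^2 n$ term in the elliptic equation $-\Delta n+|k|^2 n=1$) must be tamed via Sobolev embedding and Lemma \ref{first} rather than via pointwise bounds. The $r^{-1}$-weight near the vertex is not itself an obstacle since $d\mu_\gamma$ carries a compensating $r^2$ factor; what matters is that the resulting $L^2$-on-spheres integrand is controlled by the trace, and for this the $\Sigma_t$-elliptic estimates of Section \ref{3D} are exactly the input needed.
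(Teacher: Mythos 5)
There is a genuine gap in the treatment of the $r^{-1}$--weighted term $\|r^{-1}\slashed\pi\|_{L^2(\N^{-}(p,\tau))}$, which is the hard part of $\N_1[\slashed\pi]$. You propose to control it by the trace inequality plus the area comparability $d\mu_\gamma\sim r^2 d\omega$, but this does not close: Lemma~\ref{tr1} gives, at each fixed $t$,
\begin{equation*}
\int_{S_t} r^{-1}|F|^2\, d\mu_\gamma \les \|F\|_{H^1(\Sigma_t)}^2,
\end{equation*}
so that
\begin{equation*}
\|r^{-1}F\|_{L^2(\N^{-}(p,\tau))}^2
=\int_{t(p)-\tau}^{t(p)} r^{-1}\Bigl(\int_{S_t} r^{-1}|F|^2 d\mu_\gamma\Bigr) na\, dt
\les \int_{t(p)-\tau}^{t(p)}(t(p)-t)^{-1}\,dt,
\end{equation*}
and the last integral diverges logarithmically. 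No version of the trace inequality alone will remove this divergence, because the trace estimate is sharp (it cannot produce a better power of $r$). The paper handles this term by an entirely different mechanism (Proposition~\ref{1r}, inequality~(\ref{1r1})): one differentiates $s^{-1}|\bar\pi|^2$ along $L$, uses the transport identity for $\frac{d}{ds}(\sqrt{|\gamma|})$, integrates by parts over the cone, and closes a Hardy-type inequality using the flux bound $\|\nab_L\bar\pi\|_{L^2(\N^-)}\le C$ coming from Theorem~\ref{flux.01} and~(\ref{dlpi}); the boundary term at $t=t(p)-\tau$ is exactly what the trace estimate $\|r^{-1/2}\bar\pi\|_{L^2(S_t)}\le C$ controls, but the bulk term requires the $L$-derivative flux.

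This gap then propagates into the rest of your argument: the bound $\|\tr\chi\cdot\slashed\pi\|_{L^2(\N^-)}\le C$ (the paper's~(\ref{hk4})) is precisely a byproduct of~(\ref{1r1}) since $\tr\chi\approx 2/s$, and this is what you implicitly need when you dispose of the $\tr\theta\cdot\slashed\pi$ contribution to $\sn\slashed\pi$ via~(\ref{kprj4}). Your treatment of the $\nab_L$ and $\sn$ derivative pieces via the $k$-flux plus the trace estimates on $n^{-1}\nab^2 n$ and $n^{-2}\nab\dot n$ is in the spirit of Proposition~\ref{P1}(b)(c) and is essentially sound, as is the observation that the $\|k\|_{L^\infty(\Sigma_t)}$ factors must enter linearly so that~({\bf A1}) can absorb them; but without the transport/Hardy estimate for $\|r^{-1}\bar\pi\|_{L^2(\N^-)}$, the proof of~(\ref{piflux}) does not close.
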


We will prove Theorem \ref{d2laps} in Section 8. From Theorem \ref{flux.01} and
Theorem \ref{d2laps} it follows that
\begin{equation}\label{3.10.5}
\R(p,\tau) +\N_1[\sl{\pi}](p,\tau)\le C_0,
\end{equation}
where $C_0\ge 1$ is a universal constant.

With the help of (\ref{3.10.5}), we can establish the following result which
enables us to improve the estimates in the bootstrap assumptions.

\begin{theorem}\label{cltmin6}
There exist two universal constants $\delta_0>0$ and $C_1\ge 1$ such that,
under the bootstrap assumptions $({\bf BA1})$--$({\bf BA4})$ with $\ee \tau\le 1$,
if $\tau< \min\{i_*, \delta_0\}$ then there hold
\begin{align}
|a-1|&\le C_1  \tau^{1/2},\label{3.7.1}\\
\left|\tr \chi -\frac{2}{s}\right|&\le C_1,\label{3.7.2}\\
\|\hat{\chi}\|_{L_\omega^\infty L_t^2(\N^{-}(p, \tau))}^2 &\le C_1, \label{3.7.3}\\
\|\nu\|_{L_\omega^\infty L_t^2(\N^{-}(p,\tau))}^2 &\le C_1
\label{3.7.4}
\end{align}
on the null cones $\N^{-}(p, \tau)$ for all $p\in \M_I$.
\end{theorem}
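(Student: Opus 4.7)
The plan is to use the uniform bound (\ref{3.10.5}), namely $\R(p,\tau) + \N_1[\sl{\pi}](p,\tau) \leq C_0$, as the driving input, since $C_0$ is universal and independent of the bootstrap parameter $\ee$. The task then reduces to translating $\R$-control and $\N_1[\sl\pi]$-control into the desired pointwise and trace bounds. I would treat the four estimates in the order (\ref{3.7.4}), (\ref{3.7.1}), (\ref{3.7.2}), (\ref{3.7.3}), since the improved control of $\nu$ drives both the improvement of $a$ and, through the sharp trace inequality, the improvements of $\tr\chi$ and $\chih$.

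The main obstacle is (\ref{3.7.4}). The plan is to invoke a sharp trace inequality on null cones in the time foliation, schematically of the form
\begin{equation*}
\|F\|_{L^\infty_\omega L^2_t(\N^-(p,\tau))}^2 \lesssim \N_1[F](p,\tau)^2 + \N_1[P](p,\tau)^2 + \|Q\|_{L^2(\N^-(p,\tau))}^2,
\end{equation*}
valid whenever $\sn F$ admits a decomposition $\sn F = \nab_L P + Q$. Applied to $F = \nu = k_{NN} - \nab_N \log n$, which via (\ref{compo}) writes as $\delta + \lambda - \nab_N \log n$, the first term is controlled by $\N_1[\sl{\pi}]\le C_0$ through (\ref{3.10.5}). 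The required decomposition of $\sn\nu$ with $L^2$-control of $(P,Q)$ will rely on the tensorial wave equation (\ref{hkk}) for $k$, on the $k$-flux part of (\ref{3.10.5}), and on the elliptic estimates of Section \ref{3D} for $\nab_N \log n$. The decomposition itself is precisely the content of the companion work \cite{tqwang}, and is the genuinely hard technical input of this theorem.

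Once (\ref{3.7.4}) is in hand, (\ref{3.7.1}) follows in one line. Along any past null geodesic $\Gamma_\omega$ from $p$, one has $da/dt = -\nu a n$ with $a(p)=1$, so Cauchy--Schwarz together with $n\le 3/t_*^2$ and (BA1) yields
\begin{equation*}
|a(t)-1| \leq \Big(\int_t^{t(p)} na\, dt'\Big)^{1/2} \|\nu\|_{L^\infty_\omega L^2_t(\N^-(p,\tau))} \leq C\,\tau^{1/2},
\end{equation*}
where $C$ depends only on $C_1$ from (\ref{3.7.4}) and on universal constants.

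For (\ref{3.7.2}) and (\ref{3.7.3}) I would integrate the null structure equations along the geodesic generators. Setting $\psi = \tr\chi - 2/s$, the Raychaudhuri equation $L(\tr\chi) + \tfrac12 (\tr\chi)^2 + |\chih|^2 + \bR_{44} = 0$, together with the analogous identity satisfied by $2/s$ on the flat cone, produces a Riccati-type equation for $\psi$ whose forcing involves $|\chih|^2$, $\bR_{44}$ and lower-order quadratic terms in $\sl\pi$; the first is controlled by (BA3), the second by the curvature flux, and the last by (\ref{3.10.5}), so direct integration delivers (\ref{3.7.2}). For $\chih$, the transport equation $\nab_L \chih + \tr\chi\,\chih = -\alpha + \text{lower order}$ features the curvature component $\alpha$ whose $L^2(\N^-(p,\tau))$ norm is part of $\R(p,\tau)$; combined with the improved control of $\tr\chi$ and another application of the sharp trace inequality one obtains (\ref{3.7.3}). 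Throughout, $\delta_0$ is chosen small enough that all terms proportional to $\tau$ or $\tau^{1/2}$ are absorbed harmlessly into the final universal constant $C_1$.
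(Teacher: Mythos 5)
Your proposal follows essentially the same route as the paper: the explicit Cauchy--Schwarz derivation of (\ref{3.7.1}) from (\ref{3.7.4}) is exactly the argument given in Section~\ref{sect4}, and the remaining estimates (\ref{3.7.2})--(\ref{3.7.4}) are, in both your sketch and the paper, reduced to the sharp trace inequality, the bound (\ref{3.10.5}) on $\N_1[\slashed{\pi}]$ and $\R$, Proposition~\ref{P4}, and the companion analysis in \cite{tqwang}. Two small inaccuracies worth flagging: in vacuum $\mathbf{Ric}(L,L)=0$, so the Raychaudhuri equation (\ref{s1}) carries no curvature forcing term, and by (\ref{c5}) the decomposition is $\nu=\delta-\lambda-\nab_N\log n$, not $\delta+\lambda-\nab_N\log n$.
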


The significance of Theorem \ref{cltmin6} lies in that it allows us
to choose $\ee\ge 1$ and $\delta_*>0$ universal such that ({\bf
BA1})--({\bf BA4}) hold on $\N^{-}(p,\tau)$ with $\tau=\min\{i_*,
\delta_*\}$. To see this, we choose $\ee$ and $\delta_*$ in the way
that
\begin{equation}\label{3.10.10}
\ee:=2 C_1  \quad \mbox{and} \quad
\delta_*=\min\{(4 C_1)^{-2}, \delta_0\}.
\end{equation}
With such $\ee$ and $\delta_*$, the estimates (\ref{3.7.1})--(\ref{3.7.4}) imply that the estimates
({\bf BA1})--({\bf BA4}) can be improved as
$$
|a-1|\le \frac{1}{4}, \,\, \left|\tr \chi -\frac{2}{s}\right|\le
\frac{1}{2} \ee,\,\, \|\hat{\chi}\|_{L_\omega^\infty L_t^2(\N^{-}(p, \tau))} ^2
\le \frac{1}{2}\ee, \,\, \|\nu\|_{L_\omega^\infty L_t^2(\N^{-}(p, \tau))}^2 \le
\frac{1}{2}\ee
$$
on $\N^{-}(p, \tau)$ if $\tau\le \min\{i_*,\delta_*\}$. By repeated
use of Theorem \ref{flux.01}, Theorem \ref{d2laps} and Theorem
\ref{cltmin6}, the bootstrap principle implies that the estimates in
the bootstrap assumptions ({\bf BA1})--({\bf BA4}) hold with
$\tau=\min\{i_*, \delta_*\}$, where $\ee$ and $\delta_*$ are
determined by (\ref{3.10.10}) which are positive universal
constants. Consequently, we obtain (\ref{MB1}) in Theorem
\ref{bts.1}.

We remark that the analogous results to Theorem \ref{cltmin6} have been
proved in \cite{KR1,Qwang} for the geodesic foliations where only the bound
of the curvature flux is used. In time foliations, however,
the proof of Theorem \ref{cltmin6} relies not only on the curvature flux but also on
$\N_1[\sl{\pi}]$.

Assuming (\ref{3.7.4}), the following simple argument shows how to
derive (\ref{3.7.1}) with the help of ({\bf BA1}).
 Recall that $a^{-1}=\bg (L, \bT)$ and
$L=-a^{-1}(N+\bT)$. We have
$$
\frac{d}{d s} a^{-1}=\bg (L, {\bf D}_L \bT)
= a^{-2} \bg ( N, {\bf D}_{\bT} \bT ) +a^{-2} \bg ( N,
{\bf D}_N \bT).
$$
Since ${\bf D}_{\bT} \bT=n^{-1} \nabla n $ and $k_{NN}=-\l N, {\bf
D}_N \bT\r$ we obtain $\frac{d}{ds} a^{-1} =- a^{-2}\left(\pi_{0N}+k_{NN}\right)$.
Consequently
\begin{equation}\label{la}
L(a)=\frac{d}{ds} a=\pi_{0N}+k_{NN}.
\end{equation}
Since $\frac{ds}{dt}=-na$, we have
$$
\frac{d}{d t} a= -na \left(\pi_{0N}+k_{NN}\right).
$$
Integrating the above equation along null geodesics initiating from
$p$ and using $a(p)=1$ yields
$$
a-1=\int_t^{t(p)} \left(\pi_{0N} +k_{NN}\right) na dt'=\int_t^{t(p)}
\nu na dt'.
$$
Since $0< n \le 3/t_*^2$,  ({\bf BA1}) and (\ref{3.7.4}}) imply
$$
|a-1|\le C_1 (t(p)-t)^{1/2}\le C_1 \tau^{1/2}
$$
for all $t(p)-\tau\le t \le t(p)$, where $C_1$ could be a different
but universal constant.

 The derivation of (\ref{3.7.2})--(\ref{3.7.4})
however is highly nontrivial and requires lengthy calculation.
The complete proof is contained in \cite{tqwang} where
other related estimates for Ricci coefficients are proved simultaneously.

In order to complete the proof of Theorem \ref{bts.1}, it remains to prove (\ref{MB}) which is
restated in the following result.

\begin{theorem}\label{fina3}
Assume that the condition $({\bf A1})$ holds. Then there exist universal constants
$\delta_*>0$ and $C>0$ such that
$$
\int^{t(p)}_{t(p)-\min\{i_*, \delta_*\} } |k(\Phi(t))|^2 n d t\le C
$$
for all $p\in \M_I$, where $\Phi$ denotes the integral curve of ${\bf T}$ through $p$.
\end{theorem}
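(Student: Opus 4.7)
The plan is to execute the Kirchhoff--Sobolev strategy sketched in the introduction. Fix $p \in \M_I$ and let $\Phi$ be the integral curve of $\bT$ through $p$; write $p_t := \Phi(t)$ for $t \in [t(p)-\tau,\, t(p)]$ with $\tau = \min\{i_*, \delta_*\}$. For each such $p_t$, the Kirchhoff--Sobolev parametrix centred at the vertex $p_t$ with depth $\tau_t := t - t(p) + \tau$ represents $k(p_t)$ as
\begin{equation*}
-4\pi n(p_t)\, k(p_t)\cdot J = \int_{\N^-(p_t,\tau_t)} \Box k \cdot \bA + \text{(boundary and lower-order terms)},
\end{equation*}
for any symmetric $2$-tensor $J$ at $p_t$ tangent to $\Sigma_t$, where $\bA$ is the $\Sigma$-tangent $2$-tensor solving $\bd_L \bA + \tfrac{1}{2}\tr\chi\,\bA = 0$ on $\N^-(p_t,\tau_t)$ with vertex data $J$. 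The transport equation for $\bA$, combined with the bootstrap bounds on $\tr\chi$ and $a$ given by Theorem \ref{cltmin6} and the area estimate (\ref{3.10.1}), yields the weighted $L^\infty$ control $\|r\bA\|_{L^\infty(\N^-(p_t,\tau_t))} \les |J|$. Choosing $J$ dual to $k(p_t)$ with $|J|\les|k(p_t)|$, and using the lower bound $n^{-1}\le C$ of Proposition \ref{N}, I obtain the pointwise inequality
\begin{equation*}
|k(p_t)| \les \int_{\N^-(p_t,\tau_t)} r^{-1}|\Box k| + \text{(other terms)}.
\end{equation*}

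Next, I substitute the tensorial wave equation (\ref{hkk}) for $\Box k$ and square both sides. The theorem reduces to showing that, for $\Psi(t)$ equal to the right-hand side above, $\int_{t(p)-\tau}^{t(p)}\Psi(t)^2\,dt \le C$ uniformly for $p\in\M_I$. For each schematic factor $F$ appearing in (\ref{hkk}), a Cauchy--Schwarz on the cone against the weight $r^{-1}$ (whose $L^2(\N^-(p_t,\tau_t))$-norm is universally bounded via $|S_{t'}|\sim 4\pi s^2$ from (\ref{3.10.1})) dominates the corresponding contribution to $\Psi(t)$ by a constant multiple of $\|F\|_{L^2(\N^-(p_t,\tau_t))}$. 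I then integrate in $t$ along $\Phi$ and swap the $t$-integration with the cone integration; the swap is licit because $\bigcup_t \N^-(p_t,\tau_t)$ lies in a slab of thickness $\tau$ and each point of that slab meets only a uniformly bounded family of these cones. The factor bounds I invoke are: $\|\nabla k\|_{L^2(\Sigma)}$, $\|k\|_{L^4(\Sigma)}$, $\|Ric\|_{L^2(\Sigma)}$ from Lemma \ref{first}; the lapse estimates of Propositions \ref{lapse1}, \ref{Ba22}, \ref{N}, \ref{phit01} and Lemma \ref{L2.1.1}; the flux bounds $\F[k](p_t,\tau_t)\le C_*$ and $\N_1[\sl{\pi}](p_t,\tau_t)\le C$ from Theorems \ref{flux.01} and \ref{d2laps}; and the $L_t^1 L_x^\infty$ control on $\pi$ from the hypothesis (\textbf{A1}).

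The main technical obstacle lies in the top-order factors $n^{-2}\nabla^2\dot n$ and $k\cdot\nabla^2 n$ appearing on the right of (\ref{hkk}). For $\nabla^2\dot n$, no pointwise-in-$t$ spatial Sobolev bound is available, only the integrated bound $\|\nabla^2\dot n\|_{L_t^1 L_x^2}\le C$ from Proposition \ref{phit01}, so a naive Cauchy--Schwarz on a single cone fails. The remedy is precisely the order of operations above: I perform the outer $t$-integration along $\Phi$ first, use Fubini to convert the resulting double integral into an integral of $r^{-1}|\nabla^2\dot n|$ over the whole slab, and then absorb the $r^{-1}$ weight using the area factor together with the universal comparability $r\sim s\sim t(p_t)-t'$ to reduce to $\|\nabla^2\dot n\|_{L_t^1 L_x^2}$. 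The $k\cdot\nabla^2 n$ term is handled analogously, pairing the $L_t^1 L_x^\infty$ bound on $k$ from (\textbf{A1}) with the pointwise-in-$t$ $H^2$ bound on $n$ from Proposition \ref{lapse1}. Assembling all contributions gives (\ref{MB}) with a universal constant, completing the proof of Theorem \ref{fina3}.
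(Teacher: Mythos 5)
Your high-level strategy---Kirchhoff--Sobolev representation of $k(p_t)$ along $\Phi$, followed by a Fubini/Minkowski swap of the outer $t$-integral with the cone integral to reach the mixed-norm bounds of Section 3---is the paper's, and your identification of the $\nabla^2\dot n$ term (for which only an $L^1_t L^2_x$ bound is available, Proposition \ref{phit01}) as the reason the swap is unavoidable is correct. Your Minkowski-swap treatment of $k\cdot\nabla^2 n$ and, implicitly, $n^{-3}\dot n\,\nabla^2 n$---pair the $L^1_t L^\infty_x$ control on $\pi$ and $\dot n$ against the $L^\infty_t L^2_x$ control on $\nabla^2 n$---is a valid, slightly leaner route than the paper's, which bounds these via the cone trace estimates of Proposition \ref{P1}(b) and Proposition \ref{1r}.

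The gap lies in what you bundle as ``other terms.'' The tensorial Kirchhoff--Sobolev identity for a $\Sigma_t$-tangent $2$-tensor yields, besides $\int\bA\cdot\Box k$, the bulk terms $\int\bA\cdot\bR(\cdot,\cdot,\Lb,L)\cdot k$, $\int(\sn\bA\cdot\sn k-2\zb\cdot\sn k\cdot\bA)$, $\tfrac12\int\underline\mu\,\bA\cdot k$, the tensorial correction $\Omega_1(k)$, and---most delicately---the boundary term $\mathfrak{E}(p_t)=-\int_{S_{t_m,u}}(\bd_3 k\cdot\bA+\tfrac12\tr\chib\,k\cdot\bA)$ at the bottom sphere $S_{t_m,u}$. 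Each needs its own estimate (Proposition \ref{P2.6.1} for $\bA$, the fluxes, Lemma \ref{mub} for $\underline\mu$). Crucially, after the $t\mapsto u$ reparametrization, $\int_{t_m}^{t(p)}|\mathfrak{E}(p_t)|^2\,n\,dt$ reduces to $\|r^{-1}k\|^2_{L^2(\itt(S_{t_m,u(t(p))}))}$, the $L^2_u L^2_\omega$-norm of $k$ on the interior disc. For this quantity ``absorb the $r^{-1}$ weight by the area factor'' does \emph{not} help: the $\gamma$-area element $\sim r^2\,d\mu_{{\Bbb S}^2}$ cancels $r^{-2}$ exactly, leaving a genuine angular-and-radial $L^2$ integral of $k$ with no further gain, and the $t$-integration along $\Phi$ has already been spent on the reparametrization. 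This is precisely the content of Proposition \ref{P2.6.10}, a nontrivial auxiliary bound proved by pairing the Riccati-type identity (\ref{ntrx}) for $\nab_N\tr\chi'$ with $|\pib|^2$, integrating over $\itt(S_{t,u})$ via Lemma \ref{sint}, and closing the resulting bulk and boundary contributions using the trace estimates of Section 7 and the Ricci-coefficient estimates of Propositions \ref{P3} and \ref{P4}. Without this (or an equivalent $L^2_u L^2_\omega$ bound for $r^{-1}\pib$), your argument does not close.
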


The proof of Theorem \ref{fina3} forms the core part of the present paper. It is based
on the formula of $\Box k$ given in Section \ref{wave} and
a Kirchoff-Sobolev representation for $k$ given in Section 9 together with
various estimates on null cones derived in Section 8.

\section{\bf Tensorial wave equation for the second fundamental form}\label{wave}
\setcounter{equation}{0}

In this section we will derive the formula for $\Box k$, where $k$ is
defined in (\ref{defk}) whose projection to $\Sigma_t$ is exactly
the second fundamental form.

\begin{proposition}\label{wavep}
The tensor $k$ defined by (\ref{defk}) verifies the tensorial
wave equation
\begin{align}\label{boxkh}
\Box k_{ij}&=  -n^{-3} \dot{n} \nabla_i\nabla_j n+ n^{-2} \nabla_i
\nabla_j \dot n +2\pi_{0a} \left(\nabla^a k_{ij} -\nabla_i
k_j^a-\nabla_j k_i^a\right) \nn\\
&\quad \, -2 \Tr k \, R_{ij} -R k_{ij} +R \Tr k \, g_{ij}
+2(k_i^a R_{aj}+ k_j^a R_{ai})-2 R_{ab} k^{ab} g_{ij} \nn\\
&\quad\, +n^{-1}( 2k_i^a\nab_a \nab_j n + 2 k_j^a \nab_a \nab_i n
-\Delta n k_{ij} -\Tr k\, \nabla_i\nabla_j n)\nn\\
&\quad\,  +2k_{ia} k^{ab} k_{bj}-\pi_{0a}\pi_0^a k_{ij} -n^{-1}
k_{ij}.
\end{align}
\end{proposition}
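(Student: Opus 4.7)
The plan is to compute $\Box k_{ij} = \bg^{\alpha\beta} \bd_\alpha \bd_\beta k_{ij}$ by splitting the spacetime metric orthogonally with respect to the CMC foliation as $\bg^{\alpha\beta} = -\bT^\alpha \bT^\beta + h^{\alpha\beta}$, so that
\[
\Box k_{ij} = -\bd_\bT \bd_\bT k_{ij} + (\bd_\bT \bT)^\alpha \bd_\alpha k_{ij} + h^{\alpha\beta} \bd_\alpha \bd_\beta k_{ij},
\]
and then treating the timelike second derivative and the spatial Laplacian separately. The first input I will use is the CMC evolution equation (\ref{intro04}), namely $\p_t k_{ij} = -\nab_i\nab_j n + n(R_{ij} + \Tr k\, k_{ij} - 2 k_{ia} k_j^a)$. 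Since $\bT = n^{-1}\p_t$, one has $n\bd_\bT k_{ij} = \p_t k_{ij}$ up to connection corrections on the spacetime side; these corrections are of the form $k \cdot k$ and $\pi_{0\cdot}\cdot k$, stemming from the fact that $\bd_\bT$ acting on a tensor that is spatial only after projection produces the extra term $k_i^a\bT_j + k_j^a \bT_i$ in its components. I will need to carry these terms through carefully.

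For the timelike part, I would apply $\bd_\bT$ once more to $n \bd_\bT k_{ij} = \p_t k_{ij}$, using $\bd_\bT \bT = n^{-1}\nab n$, the identity $\bT(n) = n^{-1}\dot n$ and the evolution equation again, to obtain a formula for $\bd_\bT^2 k_{ij}$ of the form
\[
\bd_\bT^2 k_{ij} = n^{-2}\p_t^2 k_{ij} + (\text{lower order in }\pi,\nab n, k).
\]
Then I would differentiate the evolution equation in $t$, writing $\p_t^2 k_{ij} = -\p_t \nab_i\nab_j n + \p_t[n(R_{ij} + \Tr k\, k_{ij} - 2 k_{ia}k_j^a)]$. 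The first term produces exactly $-\nab_i\nab_j \dot n$ plus commutator terms of the form $\dot\Gamma \cdot \nab n$, which by $\p_t g_{ij} = -2n k_{ij}$ and $\Tr k = t$ reduce to expressions symbolic of $k \cdot \nab^2 n$ and $n k_i^a \nab_a\nab_j n$ (matching the terms on the third line of (\ref{boxkh})). The second term, upon using the evolution of $g$ and $k$ and the identity $\p_t(\Tr k) = 1$, produces the cubic $k\cdot k\cdot k$ and the $-n^{-1}k_{ij}$ term, and various $R_{ij}\cdot k$ pieces.

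For the spatial Laplacian part, I would use the constraint $\div k = 0$ together with the Codazzi-Mainardi identity in the form $\nab_i k_{jk} - \nab_j k_{ik} = \bR_{ijk0}$ to write
\[
h^{\alpha\beta} \bd_\alpha \bd_\beta k_{ij} = \nab^a \nab_a k_{ij} + (\text{terms from projecting }\bd),
\]
and then rewrite $\nab^a \nab_a k_{ij}$ by commuting derivatives: $\nab^a \nab_a k_{ij} = \nab_i \nab^a k_{aj} + [\nab^a,\nab_i]k_{aj} + \text{symm.}$ By $\div k = 0$ the first term vanishes, and the commutator contributes curvature-times-$k$ terms. Since $\Sigma_t$ is three-dimensional, the Riemann tensor is fully determined by Ricci (this identity is already used in Lemma \ref{Lnew}), which collapses those contributions into exactly the linear combination $-2\Tr k\, R_{ij} - R k_{ij} + R \Tr k\, g_{ij} + 2(k_i^a R_{aj} + k_j^a R_{ai}) - 2 R_{ab} k^{ab} g_{ij}$ on the second line of (\ref{boxkh}). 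The projection corrections from $h^{\alpha\beta}\bd_\alpha \bd_\beta$ versus $\nab^a \nab_a$, combined with the $\bd_\bT\bT$ contribution to the decomposition of $\Box$, will yield exactly the tangential transport terms $2\pi_{0a}(\nab^a k_{ij} - \nab_i k_j^a - \nab_j k_i^a)$.

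The main obstacle is bookkeeping rather than conceptual: since $\bd$ acting on the spatial tensor $k$ reintroduces $\bT$-components via $\bd_\alpha \bT^\mu = -k_\alpha^\mu - \bT_\alpha \pi_0^\mu$, every commutator and every conversion between $\p_t$ and $n\bd_\bT$ generates $\pi_{0\cdot}$ and $k$ corrections that must be tracked and then combined with the principal expressions above. In particular, I would have to verify that after all cancellations the coefficient of $\pi_{0a}$ in the transport term is exactly $2$ with the signs shown, and that the $\pi_{0a}\pi_0^a k_{ij}$, $n^{-1}k_{ij}$, and $2 k_{ia} k^{ab} k_{bj}$ residues emerge with the correct signs; this is the step most prone to arithmetic error and would be organized schematically (grouping cubic-in-$\pi$, quadratic-in-$\pi$-with-one-curvature, and linear-in-$\nab^2 n$ terms separately) before final comparison with the stated identity (\ref{boxkh}).
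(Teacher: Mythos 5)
Your overall strategy is the same as the paper's: split $\Box = -\bd_\bT\bd_\bT + \mbox{(spatial)}$ with respect to the CMC foliation, use the second variation equation $\p_t k_{ij} = -\nab_i\nab_j n + n(R_{ij} + \Tr k\, k_{ij} - 2k_{ia}k^a_j)$ to handle the time derivatives, and track the $\pi_{0\cdot}$ and $k$ corrections from $\bd$ acting on a $\Sigma_t$-tangent tensor. That part is sound. However, your plan for the spatial Laplacian contains a genuine gap. The identity $\nab^a\nab_a k_{ij} = \nab_i\nab^a k_{aj} + [\nab^a,\nab_i]k_{aj} + \mbox{symm.}$ is false: swapping which slot of $k$ is differentiated (going from $\nab_a k_{\cdot j}$ to $\nab_{\cdot} k_{aj}$) is a Codazzi--Mainardi operation, not a commutator, so the correct statement picks up $\nab^a \bR_{\cdots 0}$ (i.e.\ $\nab H$) terms. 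Those do not appear in (\ref{boxkh}), and you offer no mechanism by which they would cancel. More to the point, $\triangle k_{ij}$ should not be eliminated from the spatial part at all: in the paper it cancels against a $\triangle k_{ij}$ that emerges from $n^{-1}\p_t R_{ij}$ when one differentiates the evolution equation in $t$ (the term $\p_t[n R_{ij}]$, which you describe as contributing only ``various $R_{ij}\cdot k$ pieces,'' in fact contains $n\triangle k_{ij}$ via $\nab_a\dot\Gamma^a_{ij}$). Your description of what $\p_t[n R_{ij}]$ produces therefore misses a principal term, and so your accounting for the Laplacian is incomplete.

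The second line of (\ref{boxkh}) also has a different source than you claim. It does not come from commutators in the spatial Laplacian. After the two copies of $\triangle k_{ij}$ cancel, the Ricci-times-$k$ contributions come from the remaining pieces $-n(\nab_a\nab_i k^a_j + \nab_a\nab_j k^a_i)$ inside $\p_t R_{ij}$. The relevant commutation there is $\nab_a\nab_i k^a_j = [\nab_a,\nab_i]k^a_j$, which is legitimate precisely because $\div k = 0$ kills $\nab_i\nab_a k^a_j$, and then the three-dimensional decomposition of $R_{idac}$ in terms of Ricci gives exactly the second line. That step requires no Codazzi--Mainardi at all. So while your general approach is the right one, the specific plan for the spatial part and the attribution of the curvature terms both need correction: keep $\triangle k_{ij}$ as is, compute $\p_t R_{ij}$ in full (it contains $n\triangle k_{ij}$), and perform the commutation on $\nab_a\nab_i k^a_j$ inside that computation rather than on the spatial Laplacian directly.
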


\begin{proof}
We first recall that
$$
\Box k_{ij}=-{\bf D}_0{\bf D}_0 k_{ij} +g^{pq} {\bf
D}_p {\bf D}_q k_{ij}.
$$
By using $k_{0\a}=k_{\a0}=0$ and ${\bf
D}_i e_j =\nabla_i e_j-k_{ij} {\bf T}$, we can obtain through a
straightforward calculation that
$$
g^{pq} {\bf D}_p{\bf D}_q k_{ij} =\triangle k_{ij} +\Tr k \, {\bf D}_0
k_{ij}+2k_{ia}k^{ab} k_{bj}.
$$
By using ${\bf D}_{\bf T} {\bf T}=n^{-1} \nabla^i n e_i=-\pi_0^i
e_i$ and $k_{0\a}=k_{\a0}=0$, we can obtain
\begin{align*}
{\bf D}_0 {\bf D}_0 k_{ij} &=e_0({\bf D}_0 k_{ij})+k_i^a {\bf D}_0
k_{aj}+k_j^a {\bf D}_0 k_{ia}+\pi_{0a} \nabla^a
k_{ij}\nn\\
&\quad \, +\pi_{0i} {\bf D}_0 k_{0j}+\pi_{0j} {\bf D}_0 k_{i0}.
\end{align*}
It is easy to see ${\bf D}_0 k_{0j}=\pi_{0a} k^a_j$. From the
equation (\ref{intro04}) it also follows that
\begin{equation}\label{1.29.2}
{\bf D}_0 k_{ij} =e_0(k_{ij}) +2k_{ia} k^a_j=-n^{-1}
\nabla_i\nabla_j n +R_{ij} +\Tr k\, k_{ij}.
\end{equation}
Consequently
\begin{align*}
{\bf D}_0{\bf D}_0 k_{ij} &=e_0({\bf D}_0 k_{ij}) +\pi_{0a}
\nabla^a k_{ij}-n^{-1}\left( k_i^a  \nabla_a\nabla_j n +k_j^a
\nabla_a\nabla_i n\right) \nn \\
&\quad\, +\left( k_i^a R_{aj}+k_j^a R_{ai}\right) +2 \Tr k \, k_{ia}
k^a_j +\pi_{0i} \pi_{0a} k^a_j + \pi_{0j} \pi_{0a} k^a_i.
\end{align*}
Therefore
\begin{align}\label{1.29.4}
\Box k_{ij} &=-e_0({\bf D}_0 k_{ij}) -\pi_{0a} \nabla^a k_{ij}
-\pi_{0i} \pi_{0a} k^a_j
- \pi_{0j} \pi_{0a} k^a_i\nn\\
&\quad\, +n^{-1}\left( k_i^a \nabla_a\nabla_j n +k_j^a
\nabla_a\nabla_i
n\right) -\left(k_i^a R_{aj} +k_j^a  R_{ai}\right)\nn\\
& \quad \, -2 \Tr k \, k_{ia} k^a_j + \triangle k_{ij} +\Tr k \, {\bf
D}_0 k_{ij}+2k_{ia}k^{ab} k_{bj}.
\end{align}

We need to compute $e_0({\bf D}_0 k_{ij})$. It follows from
(\ref{1.29.2}) and $\Tr k=t$, we have
\begin{align}\label{1.29.5}
e_0({\bf D}_0 k_{ij})&=n^{-3} \dot{n} \nabla_i\nabla_j n -n^{-2}
\p_t(\nabla_i\nabla_j n) +n^{-1} \p_t R_{ij} \nn\\
&\quad \, +n^{-1} k_{ij}+\Tr k \,{\bf D}_0 k_{ij}- 2 \Tr k \, k_{ia}
k^a_j.
\end{align}
In order to compute $\p_t(\nabla_i\nabla_j n)$ and $\p_t R_{ij}$,
let $\Gamma_{ij}^a$ denote the Christoffel symbol of $\Sigma_t$.
Then it follows from the equation $\p_t g_{ij}=-2n k_{ij}$ that
$$
\dot{\Gamma}_{ij}^a = -n \left(\nabla_i k_j^a+\nabla_j k_i^a
-\nabla^a k_{ij}\right)-\nabla_i n k_j^a -\nabla_j n k_i^a
+\nabla^a n k_{ij}.
$$
Using $\mbox{div} k=0$ and $\Tr k=t$, this in particular implies
$\dot{\Gamma}_{aj}^a=-\Tr k \nabla_j n$. Therefore, noting that
$\p_t(\nabla_i\nabla_j n)=\nabla_i \nabla_j \dot
n-\dot{\Gamma}_{ij}^a \nabla_a n$, we can obtain
\begin{align}\label{1.29.6}
\p_t(\nabla_i\nabla_j n) &=\nabla_i\nabla_j \dot n +n \nabla_a n
\left(\nabla_i k_j^a +\nabla_j k_i^a -\nabla^a k_{ij}\right)\nn \\
&\quad\,  +\left(\nabla_i n k_j^a+\nabla_j n k_i^a\right) \nabla_a
n -|\nabla n|^2 k_{ij}.
\end{align}
Noting also that $\p_t R_{ij} =\nab_a \dot \Gamma_{ij}^a-\nab_i
\dot \Gamma_{aj}^a\nn$ and $\mbox{div} k=0$, we have
\begin{align*}
\p_t R_{ij}&=\nab_a n \left(2 \nab^a k_{ij}-\nab_i k_j^a -\nab_j
k_i^a\right)-n \left(\nab_a \nab_i k_j^a +\nab_a \nab_j
k_i^a-\triangle k_{ij}\right)\\
& \quad \, +\Delta n k_{ij} -\left(\nab_a \nab_i n \c k_j^a+\nab_a
\nab_j n\c k_i^a\right) +\Tr k \, \nabla_i\nabla_j n.
\end{align*}
With the help of the commutation formula
$$
\nabla_a \nabla_i k_j^a=[\nab_a, \nab_i] k_j^a=R_j{}^a {}_{bi}
k^b_a + R_{ai} k^{a}_j
$$
and the curvature decomposition formula
\begin{equation*}
R_j{}^a {}_{b i}=g_{jb}R^a_i+R_{jb} \delta^{a
}_i-R_{ij}\delta^{a}_ b-R^a_b g_{ji}-\frac{1}{2}(g_{jb}
\delta^{a}_i- g_{ij}\delta_b^a)R,
\end{equation*}
we obtain
\begin{align*}
\nab_a \nabla_i k_j^a&= 2 R_{ia} k^a_j +R_{ja} k^a_i - \Tr k \,
R_{ij} -R_{ab} k^{ab} g_{ij} -\frac{1}{2} R k_{ij} +\frac{1}{2} R
\,\Tr k\, g_{ij}.
\end{align*}
Consequently
\begin{align}\label{1.29.7}
\p_t R_{ij}&=\nab_a n \left(2 \nab^a k_{ij}-\nab_i k_j^a -\nab_j
k_i^a\right)-\left(\nab_a \nab_i n  k_j^a+\nab_a \nab_j
n k_i^a\right)\nn \\
& \quad\, +n \triangle k_{ij} +\triangle n k_{ij} -3n \left(R_{ia} k^a_j
+R_{ja} k^a_i\right)+2n \Tr k \, R_{ij} \nn \\
&\quad \, +2n R_{ab} k^{ab} g_{ij} + n R k_{ij} -n R \Tr k \, g_{ij}
+\Tr k \,\nabla_i\nabla_j n.
\end{align}
Plugging (\ref{1.29.6}) and (\ref{1.29.7}) into (\ref{1.29.5}), and
using $\pi_{0i}=-n^{-1}\nabla_i n$,  it yields
\begin{align*}
e_0({\bf D}_0 k_{ij}) &=n^{-3} \dot n \nabla_i\nabla_j n -n^{-2}
\nabla_i \nabla_j \dot n -\pi_{0a} \left(3\nabla^a k_{ij} -2
\nabla_i k_j^a-2 \nabla_j k_i^a\right) -\pi_{0i} \pi_{0a} k_j^a\\
&\quad\, -\pi_{0j} \pi_{0a} k_i^a +
\pi_{0a} \pi_0^a k_{ij}-n^{-1} \left(\nabla_a\nabla_i n
k_j^a+\nabla_a\nabla_j n k_i^a -\Tr k\, \nabla_i\nabla_j n\right)\\
&\quad \, +\triangle k_{ij} +n^{-1} \triangle n \, k_{ij} -3 \left(R_{ia}
k_j^a+R_{ja} k_i^a\right)+2 \Tr k\, R_{ij} +2R_{ab} k^{ab}
g_{ij}\\
&\quad\, +R k_{ij} -R \Tr k \, g_{ij} +n^{-1} k_{ij} +\Tr k \,{\bf
D}_0 k_{ij} -2\Tr k\, k_i^a k_{aj}.
\end{align*}
Plugging the above equation into (\ref{1.29.4}) gives the desired
equation.
\end{proof}

\section{\bf Proof of Theorem \ref{flux.01}}\label{flux1}
\setcounter{equation}{0}

In this section we will complete the proof of Theorem \ref{flux.01},
i.e. we will show that if $|a-1|\le 1/2$ on $\N^{-}(p, \tau)$ for
some $0<\tau\le i_*$ then
$$
\R(p,\tau)+\F[k](p,\tau)\le C_*,
$$
where $C_*$ is a universal constant.

We will use the following result (see \cite[Lemma 8.1.1]{KC}).

\begin{lemma}\label{recal.1}
Let $P$ be a vector field defined on the domain $\J^-(p,\tau)$. Then
\begin{equation*}
\int_{\N^-(p,\tau)} {\bf g}(P, L)=\int_{\J^-(p, \tau)} {\bf D}^{\mu}
P_\mu-\int_{\Sigma_{t(p)-\tau} \cap \J^-(p)} {\bf g}(P, {\bf T})
d\mu_g,
\end{equation*}
where $\J^{-}(p)$ denotes the causal past of $p$, $\J^{-}(p, \tau)$
denotes the portion of $\J^{-}(p)$ in the slab $[t(p)-\tau, t(p)]$,
and
$$
\int_{\J^-(p,\tau)}  f=\int_{t(p)-\tau}^{t(p)}
dt\left(\int_{\Sigma_{t}\cap \J^-(p)} f n d\mu_g\right).
$$
\end{lemma}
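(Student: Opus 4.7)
The plan is to prove Lemma \ref{recal.1} as a Lorentzian divergence theorem applied to the vector field $P$ on the region $\J^{-}(p,\tau)$. The boundary of $\J^{-}(p,\tau)$ consists of two pieces (up to a set of measure zero at the vertex $p$): the null hypersurface $\N^{-}(p,\tau)$, and the spacelike base $\Sigma_{t(p)-\tau}\cap \J^{-}(p)$. Since $\tau \le i_*$, the cone $\N^{-}(p,\tau)$ is a smooth Lipschitz hypersurface away from the vertex, so one can excise a small past null neighborhood of $p$, apply Stokes' theorem on the regular region, and then pass to the limit. This limit is harmless because $P$ is smooth, the excised surface has vanishing area in the limit, and the integrand contributes zero flux in the limit.

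The main step is to apply Stokes to the $3$-form $\iota_P\, dV_{\bg}$, using $d(\iota_P\, dV_{\bg}) = ({\bf D}^\mu P_\mu)\, dV_{\bg}$. On the spacelike base the future-directed unit normal is ${\bf T}$ (so the outward normal to $\J^{-}(p,\tau)$ is $-{\bf T}$); the standard sign convention for a timelike normal in signature $(-,+,+,+)$ yields the boundary contribution $-\int_{\Sigma_{t(p)-\tau}\cap \J^{-}(p)} \bg(P,{\bf T})\, d\mu_g$, which matches the sign in the statement. On the null boundary, using the adapted parametrization $(t,\omega)$ via the exponential map ${\mathcal G}_p$, the relation $\frac{ds}{dt} = -na$ together with the null generator $L$ identifies the induced measure on $\N^{-}(p,\tau)$ so that $\iota_P\, dV_{\bg}\big|_{\N^{-}(p,\tau)} = \bg(P,L)\, na\, d\mu_\gamma\, dt$. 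This gives precisely the integral $\int_{\N^{-}(p,\tau)} \bg(P,L)$ under the paper's convention.

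For the volume integral on $\J^{-}(p,\tau)$, the CMC foliation provides the orthogonal decomposition $dV_{\bg} = n\, dt\, d\mu_g$, and Fubini then gives the stated expression
\[
\int_{\J^{-}(p,\tau)} f = \int_{t(p)-\tau}^{t(p)} \left(\int_{\Sigma_t \cap \J^{-}(p)} f\, n\, d\mu_g\right) dt.
\]
Combining these three ingredients yields the identity.

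The only delicate point is verifying the null-boundary measure, i.e.\ checking that $\iota_L\, dV_{\bg}$ restricts to the desired $na\, d\mu_\gamma\, dt$ on $\N^{-}(p,\tau)$. This follows by completing $\{L\}$ to a null frame $\{L,\underline{L},e_1,e_2\}$ on the null cone with $\bg(L,\underline{L})=-2$ and $(e_A)$ tangent to $S_t$, noting that $dt(L) = -(na)^{-1}$, and matching volume forms. Since the argument is standard (and the statement is directly quoted from \cite[Lemma 8.1.1]{KC}), I would simply reference their derivation and rely on the identifications above rather than redoing the form calculation.
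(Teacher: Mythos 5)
The paper does not prove this lemma; it simply quotes it with the citation \cite[Lemma 8.1.1]{KC}, and since you likewise defer to \cite{KC} for the detailed form calculation, you are effectively following the paper's approach. Your sketch of the underlying divergence-theorem argument --- Stokes applied to $\iota_P\, dV_{\bg}$ on $\J^{-}(p,\tau)$ with the vertex excised and a limiting argument, the spacelike boundary density identified via $\iota_{\bT}\, dV_{\bg}\big|_{\Sigma_t} = d\mu_g$, and the null boundary density via $\p_t = -na\,L$ along the generators together with the definition $\int_{\N^-(p,\tau)} f = \int f\, na\, d\mu_\gamma\, dt$ --- is a correct outline of the standard proof.
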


We first show the boundedness of the curvature flux $\R(p,\tau)$.
With the Bel-Robinson tensor $\bQ[\bR]$ defined in Section 2, we
introduce $P_\mu=Q[\bR]_{\mu \b\ga\delta}\bT^\b \bT^\ga \bT^\delta$.
We may apply Lemma \ref{recal.1} to obtain
\begin{align*}
\int_{\N^-(p,\tau)} \bg(P, L)&=\int_{\J^-(p,\tau)} \bd^\mu
P_\mu-\int_{\Sigma_{t(p)-\tau} \cap \J^-(p)} \bQ[\bR](\bT, \bT, \bT.
\bT)d\mu_g,
\end{align*}
With the help of the calculations in subsection 2.1, ({\bf A1}) and
Lemma \ref{es1}, it then yields
\begin{align}\label{cvf}
\left|\int_{\N^-(p,\tau)} \bg(P, L)\right|&\les C.
\end{align}
Note that $\bg(P, L)=\bQ[\bR](\bT,\bT,\bT,L)$ and $\bT=-\f12(a
L+a^{-1}\Lb)$. Since $|a-1|\le 1/2$ on $\N^{-}(p,\tau)$, it follows
from \cite[Lemma 7.3.1]{KC} that $-\bg(P,L)$ is equivalent to
\begin{equation*}
|\a|^2+|\b|^2+|\udb|^2+|\rho|^2+|\sigma|^2.
\end{equation*}
Thus, there holds, for some universal constant $C>0$,
\begin{equation*}
C^{-1} \R(p,\tau)\le\left|\int_{\N^-(p,\tau)} \bg(P,  L)\right| \le
C \R(p,\tau).
\end{equation*}
By (\ref{cvf}), we conclude that $\R(p,\tau)\le C_*$ for some
universal constant $C_*$.

Next we will show the boundedness of the $k$-flux
$\F[k](p,\tau)$. With the help of the projection tensor
\begin{equation*}
h^{\a\b}=\bg^{\a\b}+ \bT^{\a}\bT^{\b},
\end{equation*}
for any tensor field $U_{\a_1
\a_2\cdots\a_m}$ in $T\M$, we define $|U|$ as follows
\begin{align*}
|U|^2&=h^{IJ}U_I U_J=h^{\a_1 \b_1}\cdots h^{\a_m \b_m}
U_{\a_1 \a_2\cdots \a_m} U_{\b_1 \b_2\cdots \b_m}\\
h^{IJ}&=h^{\a_1 \b_1}\cdots h^{\a_m \b_m}, \quad\, U_I=U_{\a_1
\a_2\cdots \a_m},\quad\,\, U_J=U_{\b_1\b_2\cdots \b_m}.
\end{align*}
For any $\Sigma_t$-tangent tensor field $U$ in $\M_I$, we define
the energy momentum tensor $Q[U]_{\a\b}$ associated with the
covariant wave operator acting on tensors:
\begin{equation*}
Q[U]_{\a\b}:=h^{IJ}\bd_{\a}U_I \bd_\b U_J-\frac{1}{2}
\bg_{\a\b}h^{IJ}{\bg}^{\mu\nu}\bd_{\mu} U_I\bd_{\nu}U_J.
\end{equation*}
We have
\begin{align*}
\bd^{\b}Q[U]_{\a\b}&=h^{IJ}\bd_\a U_I \Box U_J
+h^{IJ}(\bd_\b\bd_\a
U_I-\bd_\a \bd_\b U_I) \bd^{\b} U_J  \\ 
&\quad \, +\bd^{\b}h^{IJ} (\bd_{\a}U_I \bd_{\b}U_J
-\frac{1}{2}\bg_{\a\b}\bg^{\mu\nu}\bd_{\mu}U_I
\bd_{\nu}U_J) 
\end{align*}
It is easy to see that the last term in the above equation can be
written symbolically as $\pi\cdot {\bf D} U\cdot {\bf D} U$.

Now we apply the above equation to $U=k$. Noting that $h^{0\a}=0$
and $h^{ij}=g^{ij}$, we have
\begin{align}\label{energy.02}
\bd^{\b}(Q[k]_{\a\b}\bT^{\a})& =\bd^{\b} \bT^{\a}Q[k]_{\a\b}+
\bd^{\b}Q[k]_{0\b}\nn\\
&= -k^{ij} Q[k]_{ij} -\pi^{0j} Q[k]_{0j} +\bd_0 k^{ij} \Box k_{ij}
\nn\\
&\quad\,  +[\bd_a, \bd_0] k_{ij} \nabla^a k^{ij} +\pi\cdot \bd
k\cdot \bd k.
\end{align}
In view of the commutation formula
\begin{equation*}
[\bd_m, \bd_0] k_{ij}=\bR_{i}{}^b{}_{m0} k_{bj}+\bR_{j}{}^b{}_{m0}
k_{ib}=-\ep_{ib}^s H_{sm} k^b_j-\ep_{jb}^s H_{sm} k_i^b,
\end{equation*}
we derive symbolically
\begin{align}\label{flen02}
\bd^\b ( Q[k]_{\a\b} \bT^\a)&=-k^{ij} Q[k]_{ij} -\pi^{0j}
Q[k]_{0j} +\bd_0 k^{ij} \Box k_{ij}\nn\\
&\quad\,  + H\cdot k\cdot \nabla k + \pi \cdot \bd k\cdot \bd k.
\end{align}
From the definition of $Q[k]$, it is easy to see that
\begin{align}
Q[k]_{00}&=\frac{1}{2}(|\bd_0 k|^2+|\nabla k|^2),\label{fluen01}\\
Q[k]_{0j}&=\bd_0 k_{pq} \nabla_j k^{pq},\\
Q[k]_{ij}&=\nabla_i k_{pq} \nabla_j k^{pq} -\frac{1}{2} g_{ij}
\left(-|\bd_0 k|^2+ |\nabla k|^2\right).
\end{align}
Therefore
\begin{align}\label{div.01}
\bd^{\b}(Q[k]_{\a\b}\bT^{\a})&=\frac{1}{2} \Tr k \left(-|\bd_0
k|^2+|\nabla k|^2\right) +k\cdot \nabla k \cdot
\nabla k\nn\\
&\quad \, + \bd_0 k\cdot \Box k+ H\cdot k\cdot \nabla k + \pi \cdot
\bd k \cdot \bd k.
\end{align}

We now apply Lemma \ref{recal.1} to $P^\b:=\bT^{\a}Q[k]_\a^\b$ and obtain
\begin{align}
\int_{\N^-(p,\tau)} Q[k](\bT, L) +\int_{\Sigma_{t(p)-\tau}\cap
\J^-(p)} Q[k]_{00}&=\int_{\J^-(p,\tau)}
\bd^{\b}(Q[k]_{\a\b}\bT^{\a}). \label{flux.00}
\end{align}
For the null pair $L$ and $\Lb$, it is easy to see that
$$
Q[k](L,L)=|\nab_L k|^2, \quad \quad Q[k](\Lb,L)=|\sn k|^2.
$$
Since $\bT=-\frac{1}{2}(a L+a^{-1} \Lb)$, we have
$$
Q[k](\bT, L)=-\frac{1}{2}\left(a Q[k](L, L) +a^{-1}
Q[k](\Lb,L)\right)=-\frac{1}{2}\left(a |\nabla_L k|^2+ a^{-1} |\sn
k|^2\right).
$$
Since $|a-1|\le 1/2$, the $k$-flux
defined in (\ref{dfk}) verifies the inequality
\begin{equation*}
-\int_{\N^-(p,\tau)} Q[k](\bT, L)\le \F[k](p,\tau)\le -4
\int_{\N^-(p,\tau)} Q[k](\bT, L).
\end{equation*}
Thus we derive from (\ref{flux.00}) and (\ref{fluen01}) that
\begin{equation}\label{flux.02}
\F[k](p,\tau)\le 4\left|\int_{\J^-(p,\tau)} \bd^\b(Q[k]_{\a\b}
\bT^\a) \right|+ 2 \int_{\Sigma_{t(p)-\tau}\cap \J^-(p)}
\left(|\bd_0 k|^2+|\nabla k|^2\right).
\end{equation}
In view of (\ref{1.29.2}), Lemma \ref{first}, Proposition
\ref{lapse1} and Proposition \ref{N}, we have
\begin{align}\label{1.30.1}
\int_{\Sigma_t} &\left(|\bd_0 k|^2+|\nabla
k|^2\right)\nn\\
&\les \|\nabla^2 n\|_{L^2(\Sigma_t)}^2 +\|Ric\|_{L^2(\Sigma_t)}^2
+\|k\|_{L^4(\Sigma_t)}^4 +\|\nabla k\|_{L^2(\Sigma_t)}^2 \le C.
\end{align}
Moreover, in view of (\ref{div.01}), ({\bf A1}),
Lemma \ref{first}, and the above inequality we have
\begin{align}
\left|\int_{\J^-(p,\tau)} \bd^\b(Q[k]_{\a\b} \bT^\a) \right| &\les
\int_{t(p)-\tau}^{t(p)} \|\bd_0 k\|_{L^2(\Sigma_\tt)}\|\Box
k\|_{L^2(\Sigma_{\tt})} d\tt\nn\\
&\quad\, +\int_{t(p)-\tau}^{t(p)}
\|\pi\|_{L^\infty(\Sigma_\tt)} \left(\|\bd_0
k\|_{L^2(\Sigma_\tt)}^2 +\|\nabla
k\|_{L^2(\Sigma_\tt)}^2\right) d\tt\nn\\
&\quad \, +\int_{t(p)-\tau}^{t(p)} \|k\|_{L^\infty(\Sigma_\tt)}
\|H\|_{L^2(\Sigma_\tt)} \|\nabla k\|_{L^2(\Sigma_\tt)} d\tt
\nn\\
&\les \int_{t(p)-\tau}^{t(p)} \|\Box k\|_{L^2(\Sigma_\tt)} d\tt
+\int_{t(p)-\tau}^{t(p)}
\|\pi\|_{L^\infty(\Sigma_\tt)} d\tt\nn\\
&\le C + C\int_{t(p)-\tau}^{t(p)} \|\Box k\|_{L^2(\Sigma_\tt)} d\tt.
\end{align}
Therefore
\begin{equation}\label{fluxk.2}
\F[k](p,\tau)\le C+ C\int_{t(p)-\tau}^{t(p)} \|\Box
k\|_{L^2(\Sigma_\tt)} d\tt.
\end{equation}

We now recall the formula for $\Box k$ given in Proposition
\ref{wavep} which symbolically can be written as
\begin{align*}
\Box k= -n^{-3} \dot n \nabla^2 n+ n^{-2} \nabla^2 \dot n + \pi \cdot
\pi \cdot \pi +k\cdot \nabla^2 n  +k\cdot Ric
+\pi \cdot \nabla k -n^{-1} k.
\end{align*}
Since $C^{-1}\le n\le C$, we obtain
\begin{align*}
\|\Box k\|_{L_t^1 L_x^2} &\les \|\dot n\|_{L_t^1 L_x^\infty}
\|\nabla^2 n \|_{L_t^\infty L_x^2} +\|\nabla^2 \dot n\|_{L_t^1
L_x^2} +\|\pi\|_{L_t^1L_x^\infty}
\|\pi\|_{L_t^\infty L_x^4}^2\\
&\quad\,  +\|k\|_{L_t^1 L_x^\infty} \|\nabla^2 n\|_{L_t^\infty
L_x^2} +\|k\|_{L_t^1 L_x^\infty}
\|Ric\|_{L_t^\infty L_x^2}\\
&\quad\,  +\|k\|_{L_t^1 L_x^2} +\|\pi \|_{L_t^1
L_x^\infty} \|\nabla k\|_{L_t^\infty L_x^2}.
\end{align*}
In view of the assumption ({\bf A1}), Lemma \ref{first}, Proposition
\ref{lapse1}, Proposition \ref{phit01} and (\ref{1.30.1}), it
follows
$$
\|\Box k\|_{L_t^1 L_x^2}\le C\left(1+\|\pi\|_{L_t^1 L_x^\infty}
+\tau\right) \le C.
$$
Combining the above inequality with (\ref{fluxk.2}) completes the
proof of Theorem \ref{flux.01}.

\section{\bf Trace estimates}\label{null}
\setcounter{equation}{0}

For a point $p\in \M_I$, let $s$ be the affine parameter on the null
cone $\N^{-}(p)$ and let $r$ be the radius of $S_t:=\N^{-}(p)\cap \Sigma_t$
which is defined by (\ref{3.9.1}). On each $S_t$ we introduce the ratio of
area elements
\begin{equation}\label{vt1}
v_t(\omega)=\frac{\sqrt{|\ga|}}{\sqrt{|\cga|}}, \qquad  \omega\in
{\Bbb S}^2.
\end{equation}
We will first show that all the quantities $s$, $r$, $v_t^{1/2}$ and $t(p)-t$
are comparable under the bootstrap assumptions ({\bf BA1})--({\bf BA3}).
Here we say two quantities $\varphi$ and $\psi$ are comparable in the sense that
$C^{-1} \psi\le \varphi\le C\psi$ for some universal constant $C>0$.

\begin{lemma}\label{comp1}
Under the bootstrap assumptions $({\bf BA1})$--$({\bf BA3})$, the
four quantities $s(t)$, $r(t)$, $v_t^{1/2}$ and $t(p)-t$
are comparable on the null cone $\N^{-}(p,\tau)$ with
$\tau\le\min\{i_*, \delta_*\}$, where $\delta_*>0$ is a
universal constant.
\end{lemma}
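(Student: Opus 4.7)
The plan is to establish the four-way comparability via three successive integrations along past null geodesics from $p$.

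First, I would show $s(t)$ and $t(p)-t$ are comparable. Along any past null geodesic $\Gamma_\omega$ from $p$ one has $\frac{ds}{dt}=-na$ (recorded in the introduction). The bootstrap assumption $({\bf BA1})$ yields $\tfrac12\le a\le\tfrac32$, while Section \ref{3D} gives universal bounds $C^{-1}\le n\le C$: the upper bound from $n\le 3/t^2\le 3/t_*^2$, and the lower bound from Proposition \ref{N}. Integrating from the vertex $s(t(p))=0$ therefore yields $C^{-1}(t(p)-t)\le s(t)\le C(t(p)-t)$.

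Second, I would prove $v_t\sim s^2$ uniformly in $\omega\in\mathbb{S}^2$. The first-variation identity for the induced area element on $S_t$ reads $\frac{d}{ds}\log\sqrt{|\gamma|}=\tr\chi$, and since $\sqrt{|\cga|}$ is independent of $s$ one obtains
\begin{equation*}
\frac{d}{ds}\log\!\left(\frac{v_t}{s^2}\right)=\tr\chi-\frac{2}{s}.
\end{equation*}
The bootstrap assumption $({\bf BA2})$ bounds the right-hand side pointwise by $\ee$, and the vertex normalization $\lim_{s\to 0^+}v_t/s^2=1$ (which holds because, to leading order, $\gamma$ matches the standard sphere of radius $s$ at the vertex, a consequence of the smoothness of $\fg_p$ at the origin) permits integration from $s=0$ to give $|\log(v_t/s^2)|\le \ee s\le \ee\tau\le 1$. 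Hence $v_t^{1/2}$ and $s$ are comparable with universal constants.

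Third, I would deduce $r\sim s$. Since $|S_t|=\int_{\mathbb{S}^2}v_t\,d\mu_{\cga}$ and $\int_{\mathbb{S}^2}d\mu_{\cga}=4\pi$, the pointwise comparability $v_t\sim s^2$ yields $|S_t|\sim 4\pi s^2$, whence $r=\sqrt{(4\pi)^{-1}|S_t|}\sim s$. Chaining the three comparabilities gives the lemma.

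I do not expect serious obstacles; the only mildly delicate point is the vertex limit $v_t/s^2\to 1$, which is a standard fact from the local behavior of the exponential map at $p$. The role of the universal constant $\delta_*$ in the statement is simply to guarantee the smallness $\ee\tau\le 1$ used in Step 2. Note that $({\bf BA3})$ on $\hat\chi$ is not used in this particular argument but is carried along as part of the running bootstrap hypothesis.
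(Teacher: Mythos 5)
Your argument is correct, and it is genuinely more economical than the paper's for this particular lemma. The paper handles Step~2 by citing the full metric estimate (\ref{3.10.1}), namely $|\gamma - s^2\cga|\le \tfrac12 s^2\cga$, which is obtained from the Gronwall argument with $\Theta=2|\chih|+|\tr\chi-2/s|$ and therefore genuinely requires $({\bf BA3})$ (the $L^2_t$-control on $\chih$) in addition to $({\bf BA2})$; comparability of $v_t$ and $s^2$ is then read off as a corollary. You instead work directly with the scalar ODE for the area density, $\frac{d}{ds}\log\sqrt{|\gamma|}=\tr\chi$, which only sees the trace — so $({\bf BA2})$ alone suffices, and you are right to flag that $({\bf BA3})$ is carried purely as a running hypothesis. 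The only small inaccuracies are cosmetic: (i) $s$ and $\tau$ are different parameters (the affine parameter versus the time-depth), so the chain $\ee s\le \ee\tau$ should really be $\ee s\le C\ee\tau\le C$ using Step~1, which still gives a universal bound; and (ii) the role of $\delta_*$ in the paper is to make the full metric bound $C(t(p)-t)^{1/2}\le 1/2$ hold, a smallness your route does not actually need beyond $\ee\tau\le 1$. The vertex normalization $v_t/s^2\to 1$ is indeed the same fact the paper cites from \cite{Qwang}. Both proofs are valid; the paper's stronger metric estimate is needed anyway to verify condition {\bf C2}, so its use here is not wasteful globally, but your route is the sharper one for the lemma in isolation.
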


\begin{proof} The comparability of $s$ and $t(p)-t$ follows from
the relation $\frac{d s}{d t}=-na$ and the bootstrap assumption ({\bf BA1}).
Similar to the derivation of (\ref{3.10.1}), we have under the bootstrap assumptions
({\bf BA1})--({\bf BA3}) that
\begin{equation}\label{e4}
|\gamma-s(t)^2 \stackrel{\circ}\ga|\le \frac{1}{2} s(t)^2\stackrel{\circ} \ga
\end{equation}
for all $t(p)-\min\{i_*, \tau, \delta_*\}\le t<t(p)$, where
$\delta_*$ is a universal constant. This implies immediately that
$\frac{1}{2}s(t)^2\le v_t\le \frac{3}{2} s(t)^2$. Consequently $v_t$
and $t(p)-t$ are comparable. Thus for the area $|S_t|$ of
$S_t$ there holds
$$
C^{-1} (t(p)-t)^2\le |S_t|\le C (t(p)-t)^2
$$
for some universal constant $C$. This together with the definition of $r$ gives the comparability
of $r$ and $t(p)-t$.
\end{proof}

\subsection{\bf Optical function}

In this section we give a brief review of the
construction of optical functions, one may see \cite{KC}
for more information.

For any point $p\in \M_I$, let $J^{-}(p)$ be the causal past and let
$\N^{-}(p)$ and $\I^{-}(p)$ denote respectively the null boundary
and the interior. For each $0<\tau<i_*$ with $i_*$ defined by
(\ref{3.8.4}), let $\J^{-}(p, \tau)$, $\N^{-}(p, \tau)$ and
$\I^{-}(p, \tau)$ denote the portions of $\J^{-}(p)$, $\N^{-}(p)$
and $\I^{-}(p)$ in the time slab $[t(p)-\tau, t(p)]$ respectively.
Let $\Phi$ be the integral curve of $\bT$ through $p$ with
$\Phi(t(p))=p$. According to the definition of $i_*$, all the null
cones $\N^{-}(\Phi(t), \tau+t-t(p))$, with $t(p)-\tau\le t\le t(p)$
and $\tau<i_*$, are disjoint and their union forms $\N^{-}(p,
\tau)$. We now define $u$ to be the function, constant on each
$\N^{-}(\Phi(t), t+\tau-t(p))$, such that
$$
u(\Phi(t))=\int_{t_0}^t n(\Phi(t')) d t'.
$$
Such $u$, which will be called an optical function, is a
well-defined smooth function on $\J^{-}(p, \tau)$ and satisfies the
eikonal equation
$$
\bg^{\a\b}\p_\a u\p_\b u=0.
$$
It is clear that the level sets $C_u$ of $u$ are the incoming null
cones in the time slab $[t(p)-\tau, t(p)]$ with vertices on $\Phi$, and
$\bT (u)=1$ on $\Phi$. Moreover, the null geodesic vector $L$
defined before can be written as
$$
L=\bg^{\a\b} \p_\b u\p_\a.
$$

For each $t\in [t(p)-\tau,t(p)]$, we define $u_M(t)$ and $u_m(t)$
respectively to be the largest and smallest values of $u$ for which
the part of the cone $C_u$ that lies in the future of $\Sigma_t$ is
contained in $\J^{-}(p)$, i.e.
$$
u_M(t)=u(p) \qquad \mbox{and}\qquad u_m(t)=u(\Phi(t)).
$$
For each $u(\Phi(t(p)-\tau))\le u\le u(p)$, we also define $t_M(u)$
and $t_m(u)$ to be the largest and smallest value of $t$ for which
$\Sigma_t$ intersects $C_u$ respectively. It is clear that $t_M(u)$
is the value of $t$ at the vertex of $C_u$ and $t_m(u)=t(p)-\tau$.
Note that both $u_M$ and $t_m$ are independent of $t$.

We set
\begin{equation*}
S_{t,u}:=C_u\cap \Sigma_t
\end{equation*}
which is a smooth surface for each $t(p)-\tau\le t\le t(p)$ and
$u_M\le u<u_m(t)$. The corresponding radius function is defined as
$$
r(t,u):=\sqrt{(4\pi)^{-1} |S_{t,u}|},
$$
where $|S_{t, u}|$ denotes the area of $S_{t,u}$ with respect to the metric $\gamma$.

The following result follows immediately from Lemma \ref{comp1}
and the definition of $u$.

\begin{proposition}\label{comr}
Under the bootstrap assumptions $({\bf BA1})$--$({\bf BA3})$ on
$\N^{-}(p,\tau)$ for all $p\in \M_I$, there hold
\begin{equation}\label{comr1}
C^{-1} \le \frac{t_M(u)-t}{r(t,u)} \le C
\end{equation}
and
\begin{equation}\label{commr1}
C^{-1}\le \frac{u-u_m(t)}{r(t,u)}\le C
\end{equation}
for all $t(p)-\min\{i_*, \tau, \delta_*\}<t<t(p)$, where $C$ and
$\delta_*$ are two positive universal constants.
\end{proposition}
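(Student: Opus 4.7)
The plan is to reduce both comparabilities to Lemma \ref{comp1} via the geometric structure of the incoming null cones $C_u$, together with the pointwise bounds on the lapse $n$.

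First I would handle \eqref{comr1}. Fix $u$ in the admissible range and observe that, by construction, $C_u$ is precisely the past null cone $\N^{-}(\Phi(t_M(u)))$, with vertex $\Phi(t_M(u))$ on the integral curve of $\bT$ through $p$. Because $t_M(u)\in[t(p)-\tau, t(p)]\subset I$, the point $\Phi(t_M(u))$ lies in $\M_I$, so the bootstrap assumptions ({\bf BA1})--({\bf BA3}) are in force on $C_u\cap \J^{-}(\Phi(t_M(u)))$. Applying Lemma \ref{comp1} with $\Phi(t_M(u))$ in place of $p$ yields that $r(t,u)$ and $t_M(u)-t$ are comparable on the appropriate subslab, which is \eqref{comr1}.

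Next I would derive \eqref{commr1} from \eqref{comr1} by exploiting the definition of $u$ along $\Phi$. Since $u$ is constant on each $C_u$ and $u(\Phi(t')) = \int_{t_0}^{t'} n(\Phi(s))\,ds$, we have
\begin{equation*}
u - u_m(t) \;=\; u(\Phi(t_M(u))) - u(\Phi(t)) \;=\; \int_{t}^{t_M(u)} n(\Phi(s))\, ds.
\end{equation*}
The uniform two-sided bound $C^{-1}\le n\le C$ on $\M_I$, established in Proposition \ref{N} under ({\bf A1}), then gives
\begin{equation*}
C^{-1}(t_M(u)-t) \;\le\; u-u_m(t) \;\le\; C(t_M(u)-t),
\end{equation*}
and combining this with \eqref{comr1} delivers \eqref{commr1}.

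I do not anticipate any serious obstacle here: the whole argument is a bookkeeping exercise transferring the comparability between $r$ and $t(p)-t$ on a single null cone (Lemma \ref{comp1}) to any of the null cones $C_u$ in the foliation, plus an elementary integration along $\Phi$. The only point requiring mild care is checking that the universal constant $\delta_*$ and the universality of the multiplicative constants survive when one applies Lemma \ref{comp1} with varying vertex $\Phi(t_M(u))\in\M_I$; this is automatic since the bootstrap assumptions and the bounds on $n$ are postulated uniformly over $\M_I$.
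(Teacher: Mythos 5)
Your proof is correct and is essentially the argument the paper has in mind: the paper merely remarks that the proposition "follows immediately from Lemma \ref{comp1} and the definition of $u$," and you have supplied exactly the right bookkeeping — applying Lemma \ref{comp1} at the moving vertex $\Phi(t_M(u))\in\M_I$ for \eqref{comr1}, and then integrating $n$ along $\Phi$ (using $C^{-1}\le n\le C$ from Proposition \ref{N} and \eqref{2.5.1}) to convert $u-u_m(t)$ into $t_M(u)-t$ for \eqref{commr1}.
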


In view of the above notations, it is clear that
$$
\N^{-}(p, \tau)=\bigcup_{t\in [t(p)-\tau, t(p)]} S_{t, u_M}.
$$
Let $\mbox{Int}(S_{t, u_M})$ be the interior of $S_{t, u_M}$ in $\Sigma_t$, then
$$
\mbox{Int}(S_{t, u_M})=\bigcup_{u\in [u_M, u_m(t)]} S_{t, u} \quad
\mbox{and} \quad \J^{-}(p,\tau)=\bigcup_{t\in [t(p)-\tau, t(p)]}
\mbox{Int}(S_{t, u_M}).
$$

The following simple result can be found in \cite{KC}.

\begin{lemma}\label{sint}
For any scalar $f$ satisfying
$$
\lim_{u \rightarrow
u_m(t)}\int_{S_{t,u}} f d\mu_\ga=0,
$$
there holds
\begin{equation*}
\int_{S_{t,u_M}}f d\mu_{\ga} =-\int_{u_m(t)}^{u_M}\int_{S_{t,
u}}(\nabla_N f+\tr{\theta} f) a d\mu_{\ga_u} du,
\end{equation*}
where $N$ denotes the unit inward normal to $S_{t,u}$ in $\Sigma_t$, and $\theta$ denotes
the corresponding second fundamental form.
\end{lemma}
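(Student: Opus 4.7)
The plan is to derive this identity by applying the divergence theorem in $\Sigma_t$ to the vector field $-fN$ on the annular region $\Omega_\ep := \{u_m(t)+\ep\le u\le u_M\}$, sending $\ep\to 0$ with the help of the stated hypothesis, and then converting the resulting volume integral into an iterated $u$-integral via the coarea formula.

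The geometric input is an identification of $\nab u|_{\Sigma_t}$. Since $L^\a=\bg^{\a\b}\p_\b u$, one has $\p_\b u = L_\b = -a^{-1}(\bT_\b + N_\b)$. The covector $\bT_\b$ has vanishing spatial components in the frame with $e_0=\bT$, so $\nab_i u = -a^{-1} N_i$ on $\Sigma_t$. In particular $|\nab u|_g = a^{-1}$ and $\nab u$ is collinear with $-N$, i.e.\ it points outward relative to each $S_{t,u}$. It follows that $-N$ is the outward normal to $\Omega_\ep$ at $S_{t,u_M}$ while $+N$ is the outward normal at the inner boundary $S_{t,u_m(t)+\ep}$, and that the coarea formula gives $d\mu_g = a\,d\mu_\ga\,du$ on $\mbox{Int}(S_{t,u_M})$.

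Using $\mbox{div}_{\Sigma_t}(N)=\tr\th$ (a direct computation in an orthonormal frame adapted to $S_{t,u}$, exploiting $\th_{AB}=\l\nab_A N,e_B\r$ and $\l\nab_N N, N\r=0$), one obtains $\mbox{div}_{\Sigma_t}(-fN)=-(\nab_N f + \tr\th\, f)$. The divergence theorem on $\Omega_\ep$ then yields
\begin{equation*}
-\int_{\Omega_\ep}(\nab_N f + \tr\th\, f)\,d\mu_g
 = \int_{S_{t,u_M}} f\,d\mu_\ga - \int_{S_{t,u_m(t)+\ep}} f\,d\mu_\ga.
\end{equation*}
Rewriting the left-hand side by the coarea formula as $-\int_{u_m(t)+\ep}^{u_M}\int_{S_{t,u}}(\nab_N f + \tr\th\,f)\,a\,d\mu_\ga\,du$ and sending $\ep\to 0$, the inner-boundary term drops out by the stated hypothesis, producing exactly the claimed identity. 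The only subtle point is the degeneration of the foliation at the vertex $\Phi(t)$, where $u=u_m(t)$ and $S_{t,u}$ collapses to a point; the hypothesis $\lim_{u\to u_m(t)}\int_{S_{t,u}} f\,d\mu_\ga=0$ is precisely what is needed to rule out a distributional contribution from $\Phi(t)$ and to render the limiting procedure rigorous.
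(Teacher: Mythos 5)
The paper does not actually supply a proof of this lemma; it cites it to Christodoulou--Klainerman (\cite{KC}), so there is no internal argument to compare yours against. That said, your proof is correct. The key geometric identities are all verified correctly: $\nab_i u = L_i = -a^{-1}(\bT_i+N_i) = -a^{-1}N_i$ on $\Sigma_t$ because $\bT$ has no spatial covariant components, hence $|\nab u|_g = a^{-1}$ and the coarea factor is $a$; $\div_{\Sigma_t} N = \tr\theta$ since $\l\nab_N N, N\r = 0$; and the outward-normal identifications at the two boundary spheres of $\Omega_\ep$ are correct given that $N$ is the \emph{inward} normal. The divergence theorem then produces the displayed identity up to the inner boundary term, which is killed in the limit $\ep\to 0$ by the vertex hypothesis. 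A more common route (and presumably the one in \cite{KC}) is to skip the divergence theorem and directly differentiate $\int_{S_{t,u}} f\,d\mu_\ga$ in $u$, using $\p_u = -aN$ and the first-variation-of-area formula $\p_u\log\sqrt{\ga} = -a\tr\theta$, then integrate from $u_m(t)$ to $u_M$; this is the fundamental theorem of calculus rather than Stokes, but it encodes exactly the same geometric content and your formulation is equally valid and rigorous.
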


\subsection{\bf Trace estimates}

We will rely on the following trace inequality.

\begin{lemma}[Trace inequality]\label{tr1}
Under the bootstrap assumptions $({\bf BA1})$--$({\bf BA3})$ on
$\N^{-}(p,\tau)$ with $\ee \tau\le 1$, for any $\Sigma_t$ tangent tensor field $F$ there
holds
\begin{equation*}
\|r^{-1/2} F\|_{L^2(S_{t})}\les  \|\nab F\|_{L^2(\Sigma_t)}
+\|F\|_{L^2(\Sigma_t)},
\end{equation*}
where $S_t:=\N^{-}(p,\tau)\cap \Sigma_t$ and $r:=\sqrt{(4\pi)^{-1}
|S_t|}$.
\end{lemma}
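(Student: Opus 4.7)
The plan is to apply Lemma \ref{sint} with $f = |F|^2$, which converts the surface integral on $S_t = S_{t,u_M}$ into a bulk integral over $\text{Int}(S_t)\subset\Sigma_t$. Since the spheres $S_{t,u}$ collapse to the vertex $\Phi(t)$ as $u\to u_m(t)^+$, the hypothesis of Lemma \ref{sint} holds (by density one may assume $F$ smooth, so $\int_{S_{t,u}}|F|^2\to 0$). Together with the coarea identity $a\,d\mu_{\gamma_u}\,du = d\mu_g$ on $\text{Int}(S_t)$, this produces
$$\int_{S_t}|F|^2\,d\mu_\gamma = -\int_{\text{Int}(S_t)}\bigl(2 F\cdot\nabla_N F + \tr\theta\,|F|^2\bigr)\,d\mu_g.$$

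To handle $\tr\theta$, I use the identity $L = -a^{-1}(\bT + N)$ together with the definitions of $\chi_{AB}$ and $\theta_{AB}$, which yields $\theta_{AB} = k_{AB} - a\chi_{AB}$ and hence $\tr\theta = (t - k_{NN}) - a\tr\chi$. Under (BA1)--(BA2) and the comparability $s\sim r(t,u)$ provided by Lemma \ref{comp1}, this gives the pointwise bound $|\tr\theta|\lesssim r(t,u)^{-1} + |k|$ on $\text{Int}(S_t)$. The proof then reduces to showing that each of the three resulting volume integrals is $\lesssim r\|F\|_{H^1(\Sigma_t)}^2$. For $\int 2|F||\nabla F|\,d\mu_g$ I apply Cauchy--Schwarz together with Sobolev $H^1\hookrightarrow L^6$ (Lemma \ref{sob.02}) and the volume bound $|\text{Int}(S_t)|\lesssim r^3$, which gives $\|F\|_{L^2(\text{Int})}\lesssim r\|F\|_{H^1}$. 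For the weighted term $\int r(t,u)^{-1}|F|^2\,d\mu_g$, I split it by H\"older as $\|r(t,u)^{-1}\|_{L^2(\text{Int})}\|F\|_{L^4(\text{Int})}^2$; the first factor is $\lesssim r^{1/2}$ by a direct polar-type computation using Lemma \ref{comp1}, and the second is $\lesssim r^{1/2}\|F\|_{H^1}^2$ by interpolation with $L^6$. Finally, $\int|k||F|^2\,d\mu_g \lesssim \|k\|_{L^2(\text{Int})}\|F\|_{L^4(\text{Int})}^2 \lesssim r^{3/2}\|F\|_{H^1}^2$ using $\|k\|_{L^6(\Sigma_t)}\lesssim 1$ from Lemma \ref{first}. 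Summing and dividing by $r$ yields the stated bound.

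The main technical obstacle is extracting the correct \emph{local} radius $r(t,u)$ in the bound on $\tr\theta$ (rather than the outer radius $r = r(t,u_M)$): this rests on the uniform comparability $s\sim r(t,u)$ furnished by Lemma \ref{comp1}, which in turn uses all three bootstrap assumptions (BA1)--(BA3). Once this is in hand, the remaining work is essentially a Hardy--Sobolev estimate on a small almost-Euclidean ball, with the $r^{1/2}\cdot r^{1/2} = r$ scaling being sharp for the weighted trace embedding.
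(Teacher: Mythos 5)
Your proof is correct, but it takes a genuinely different route from the paper's. The paper's Appendix proves Lemma \ref{tr1} by a radial averaging argument: for each point on a leaf it introduces the mean $m(x)$ of $F$ over a radial half-segment, controls the deviation $G=m-F$ by the fundamental theorem of calculus in the radial variable $u$ together with a one-dimensional Hardy--Littlewood estimate, and controls $m$ via an $L^6$ norm on the interior. You instead apply the co-area identity of Lemma \ref{sint} to $f=|F|^2$, use the pointwise bound $|\tr\theta|\lesssim r(t,u)^{-1}+|k|$, and close by H\"older and $H^1(\Sigma_t)\hookrightarrow L^6(\Sigma_t)$ together with $|\itt(S_t)|\lesssim r^3$, which yields $\|F\|_{L^2(\itt(S_t))}\lesssim r\|F\|_{H^1}$, $\|F\|_{L^4(\itt(S_t))}^2\lesssim r^{1/2}\|F\|_{H^1}^2$, and $\|r(t,u)^{-1}\|_{L^2(\itt(S_t))}\lesssim r^{1/2}$. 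This is the same framework the paper uses to prove the companion Proposition \ref{sint2}, but there the paper invokes Lemma \ref{tr1} itself to control the bulk terms; you avoid that circularity by deploying the volume bound and the Sobolev embedding directly, which is arguably cleaner since it makes the Hardy--Sobolev scaling $r^{1/2}\cdot r^{1/2}=r$ fully explicit, whereas the paper's argument is more self-contained (needing only a one-dimensional maximal estimate rather than the three-dimensional Sobolev embedding). Two points worth spelling out: in the bound $|\tr\theta|\lesssim r(t,u)^{-1}+|k|$ the contribution $a\,|\tr\chi-2/s|\lesssim\ee$ from (BA2) is absorbable only because $\ee\tau\le 1$ and $r(t,u)\lesssim\tau$ force $\ee\lesssim r(t,u)^{-1}$, and since $\ee$ is a bootstrap parameter rather than a universal constant this step merits a sentence; and the comparability $s\sim r(t,u)\sim t_M(u)-t$ on the interior spheres $S_{t,u}$ appeals to the bootstrap assumptions on the secondary cones $C_u$ with vertices $\Phi(t_M(u))$, which is legitimate because (BA1)--(BA3) are imposed at every vertex in $\M_I$; the precise reference for this uniform comparability is Proposition \ref{comr} rather than Lemma \ref{comp1}.
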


The proof of Lemma \ref{tr1} can be seen in Appendix.
Using Lemma \ref{tr1},  we are able to derive the following

\begin{proposition}\label{sint2}
Let the bootstrap assumptions $({\bf BA1})$--$({\bf BA3})$ hold on
$\N^{-}(p,\tau)$ with $\ee \tau\le 1$. Then for any $\Sigma_t$ tangent tensor field $F$
there hold
\begin{align}
&\|
F\|_{L^2(S_{t})}^2\les\|F\|_{H^1(\Sigma_t)}\|F\|_{L^2(\Sigma_t)},\label{sint3}\\
&\|F\|_{L^4(S_{t})}\les
\|F\|_{H^1(\Sigma_t)}\label{sint5}
\end{align}
for all $t(p)-\tau \le t<t(p)$.
\end{proposition}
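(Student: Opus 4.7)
The plan is to derive both inequalities from Lemma \ref{sint} applied to appropriate scalar functions $f$, converting each sphere integral $\int_{S_t}|F|^p$ into a bulk integral over $\itt(S_{t,u_M})$ that is controlled by Sobolev norms on $\Sigma_t$.

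For (\ref{sint3}), I would take $f=|F|^2$ in Lemma \ref{sint}. Since $S_{t,u}$ degenerates to the vertex $\Phi(t)$ as $u\to u_m(t)$, the boundary vanishing hypothesis holds; after using the coarea identity $a\,d\mu_{\ga_u}\,du=d\mu_g$ on $\Sigma_t$ (which follows from $|\nabla u|=a^{-1}$), one obtains
\[
\|F\|_{L^2(S_t)}^2 \le \int_{\itt(S_{t,u_M})}\!\bigl(2|F|\,|\nab F| + |\tr \theta|\,|F|^2\bigr)\,d\mu_g,
\]
where $\theta$ denotes the second fundamental form of $S_{t,u}$ in $\Sigma_t$. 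Cauchy--Schwarz bounds the first bulk term by $\|F\|_{L^2(\Sigma_t)}\|\nab F\|_{L^2(\Sigma_t)}$. For the second, using $N=-aL-\bT$ one derives $\theta_{AB}=-a\chi_{AB}+k_{AB}$; together with the bootstrap bounds (\textbf{BA1})--(\textbf{BA2}) and the comparabilities of Lemma \ref{comp1} and Proposition \ref{comr}, this gives $|\tr\theta|\les r^{-1}+|k|$, where $r(x)=r(t,u(x))$ is equivalent to the geodesic distance from $x$ to the vertex $\Phi(t)$ in $\Sigma_t$. A Hardy inequality on the geodesic ball-like region $\itt(S_{t,u_M})$, paired with Cauchy--Schwarz, then yields
\[
\int r^{-1}|F|^2\,d\mu_g \le \Bigl(\int r^{-2}|F|^2\Bigr)^{1/2}\|F\|_{L^2(\Sigma_t)} \les \|F\|_{H^1(\Sigma_t)}\|F\|_{L^2(\Sigma_t)},
\]
while the $|k|$ contribution is controlled via Lemma \ref{first} combined with Gagliardo--Nirenberg interpolation, also producing $\|F\|_{H^1}\|F\|_{L^2}$. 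This completes (\ref{sint3}).

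For (\ref{sint5}), apply Lemma \ref{sint} with $f=|F|^4$ to obtain
\[
\|F\|_{L^4(S_t)}^4 \le \int_{\itt(S_{t,u_M})}\!\bigl(4|F|^3|\nab F| + |\tr\theta|\,|F|^4\bigr)\,d\mu_g.
\]
By H\"older and Sobolev (Lemma \ref{sob.02}), $\int|F|^3|\nab F|\le\|F\|_{L^6}^3\|\nab F\|_{L^2}\les\|F\|_{H^1(\Sigma_t)}^4$. For the $r^{-1}|F|^4$ piece, split $r^{-1}|F|^4=(r^{-1}|F|)\,|F|^3$, apply Cauchy--Schwarz, and combine Hardy with the Sobolev bound $\|F\|_{L^6}\les\|F\|_{H^1}$:
\[
\int r^{-1}|F|^4 \le \Bigl(\int r^{-2}|F|^2\Bigr)^{1/2}\Bigl(\int|F|^6\Bigr)^{1/2} \les \|F\|_{H^1(\Sigma_t)}^4.
\]
The $|k||F|^4$ contribution is treated analogously by interpolation.

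The main technical point is the combined input of $|\tr\theta|\les r^{-1}+|k|$ with a Hardy inequality on $\itt(S_{t,u_M})$. The first follows from the bootstrap bounds (\textbf{BA1})--(\textbf{BA2}) and the decomposition $\theta_{AB}=-a\chi_{AB}+k_{AB}$; the second is available because Lemma \ref{comp1} and Proposition \ref{comr} ensure that $r$ is equivalent to geodesic distance from the vertex inside $\Sigma_t$, so the interior of the null cone is a geodesically ball-like region in which standard Hardy applies.
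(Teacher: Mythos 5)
Your proof is correct but takes a genuinely different route from the paper's. The paper inserts a cutoff $\phi(u)$ supported in $[(u_m+u_M)/2,\,u_M]$ into Lemma \ref{sint} \emph{before} expanding, so that the singular $r^{-1}$ piece of $\tr\theta$ is only ever integrated over the outer annulus, where $\int (u'-u_m)^{-1}\,du'=\log 2$ is scale-invariant, and then controls the leafwise integrals via the trace inequality of Lemma \ref{tr1}. You instead apply Lemma \ref{sint} directly to $|F|^2$ and $|F|^4$ over the full bulk $\itt(S_{t,u_M})$ and absorb the $r^{-1}$ singularity with a Hardy-type inequality $\int_{\itt(S_{t,u_M})} r^{-2}|F|^2 \les \|F\|_{H^1(\Sigma_t)}^2$, which is not a result of the paper. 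That estimate is in fact valid, but its uniformity as $t\to t(p)^-$ (when the ball-like region shrinks) deserves a word: the naive Hardy inequality on the shrinking ball $B_R$ (with $R\approx t(p)-t$) leaves either a boundary term $\approx R^{-1}\|F\|_{L^2(S_t)}^2$ (circular) or a lower-order term $R^{-2}\|F\|_{L^2(B_R)}^2$. The latter is absorbed only after invoking $H^1(\Sigma_t)\hookrightarrow L^6(\Sigma_t)$ (Lemma \ref{sob.02}) together with H\"older and $|B_R|\approx R^3$, giving $\|F\|_{L^2(B_R)}\les R\,\|F\|_{H^1(\Sigma_t)}$; one also needs the harmonic-radius bound of Proposition \ref{harcor} to transfer Euclidean Hardy to the geometry of $\Sigma_t$. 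So your route is clean and correct, but imports an auxiliary inequality whose uniform constant needs a brief justification of the kind just sketched, whereas the paper's cutoff sidesteps the issue entirely and is self-contained given Lemma \ref{tr1} (proved in the appendix).
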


\begin{proof}
Let $\phi(u)$ be a smooth cut-off function verifying $0\le \phi\le
1$, $\phi(u_M)=1$ and $\mbox{supp}(\phi)\subset[\frac{u_m+u_M}{2},
u_M]$. It then follows from Lemma \ref{sint} that
\begin{equation}\label{sint4}
\| F\|_{L^2(S_{t})}^2= -\int_{\itt(S_{t})} \left( \nabla_{N}|\phi F|^2 +
\tr{\theta} |\phi F|^2\right)  a d\mu_{\ga}du' =I_1+I_2,
\end{equation}
where
\begin{align*}
I_1&= -2\int_{\itt (S_{t})}\left( \phi^2 F\cdot\nab_N F
+\phi \nab_N \phi |F|^2\right)  a  d\mu_{\ga}du', \\
I_2&=-\int_{\itt (S_{t})} \tr{\theta} |\phi F|^2 a d\mu_{\ga}du'.
\end{align*}
Since the bootstrap assumption ({\bf BA1}) implies $1/2\le a\le
3/2$, it is easy to see that
\begin{equation*}
\left|\int_{\itt (S_{t})} \phi^2 F\cdot \nab_N F a d\mu_{\ga}du'
\right|\les \|\nab_N F\|_{L^2(\Sigma_t)} \|F\|_{L^2(\Sigma_t)}
\end{equation*}
and
\begin{align*}
\left|\int_{\itt (S_{t})} \phi \nab_N \phi |F|^2  a d \mu_\ga du'
\right|\les
 \frac{1}{u_M-u_m} \int_{\frac{u_m+u_M}{2}}^{u_M}
\int_{S_{t, u'}} |F|^2  d\mu_\gamma du'.
\end{align*}
It follows from Lemma \ref{tr1} that
\begin{align*}
\int_{S_{t, u'}} |F|^2  d\mu_\gamma &\les \|r^{-1/2}
F\|_{L^2(S_{t, u'})} \|F\|_{L^2(S_{t, u'})} r^{1/2}\\
&\les \|F\|_{H^1(\Sigma_t)} \|F\|_{L^2(S_{t, u'})} r^{1/2},
\end{align*}
where $r:=r(t, u')$. From Proposition \ref{comr} it follows
that $r(t, u')\les u'-u_m$. Thus
\begin{align*}
&\left|\int_{\itt (S_{t})} \phi \nab_N \phi |F|^2 a d \mu_\ga
du'\right| \\
&\qquad\quad \les \frac{1}{u_M-u_m} \|F\|_{H^1(\Sigma_t)}
\|F\|_{L^2(\Sigma_t)}
\left(\int_{\frac{u_m+u_M}{2}}^{u_M} (u'-u_m) du'\right)^{1/2}\\
&\qquad\quad \les \|F\|_{H^1(\Sigma_t)} \|F\|_{L^2(\Sigma_t)}.
\end{align*}
We therefore obtain
$$
|I_1|\les \|F\|_{H^1(\Sigma_t)} \|F\|_{L^2(\Sigma_t)}.
$$

In order to estimate the term $I_2$, we recall that $\tr \theta= - a
\tr \chi +\delta^{AB} k_{AB}$. Since the bootstrap assumption
({\bf BA2}) implies $|\tr \chi -2/s|\le \ee$ on each $S_{t, u'}$ and Proposition
\ref{comr} implies that $s$, $t(p)-t$ and $r$ are comparable, we have
\begin{align*}
|I_2|&\les \left(\ee \tau+1\right) \int_{u_m}^{u_M}\int_{S_{t,u'}}
r^{-1} |\phi F|^2 d\mu_\gamma d u'+\int_{u_m}^{u_M}\int_{S_{t,u'}}
|k| |\phi F|^2
d\mu_\gamma du'\\
&\les \int_{u_m}^{u_M}\int_{S_{t,u'}} r^{-1}
|\phi F|^2 d\mu_\gamma d u' + \|k\|_{L^3(\Sigma_t)}
\|F\|_{L^3(\Sigma_t)}^2 .
\end{align*}
Recall that $\|k\|_{L^3(\Sigma_t)}\le C$ from Lemma \ref{first}
and apply Lemma \ref{sob.02} to $\|F\|_{L^3(\Sigma_t)}^2$ we
obtain
$$
|I_2|\les \|F\|_{H^1(\Sigma_t)}\|F\|_{L^2(\Sigma_t)}+
\int_{\frac{u_m+u_M}{2}}^{u_M}\int_{S_{t,u'}} r^{-1}  |F|^2
d\mu_\gamma d u'.
$$
Now we use Lemma \ref{tr1} again and note that Proposition
\ref{comr} implies $r(t,u')^{-1}\les (u'-u_m)^{-1}$, we have
\begin{align*}
\int_{\frac{u_m+u_M}{2}}^{u_M} \int_{S_{t,u'}} r^{-1} |F|^2
d\mu_\gamma  du' &\les \|F\|_{H^1(\Sigma_t)} \|F\|_{L^2(\Sigma_t)}
\left(\int_{\frac{u_m+u_M}{2}}^{u_M} (u'-u_m)^{-1} du'\right)^{1/2}\\
&\les \|F\|_{H^1(\Sigma_t)} \|F\|_{L^2(\Sigma_t)}.
\end{align*}
Therefore
\begin{align*}
|I_2|\les  \|F\|_{H^1(\Sigma_t)} \|F\|_{L^2(\Sigma_t)}.
\end{align*}
The proof of (\ref{sint3}) is complete.

Applying (\ref{sint4}) with $|F|$ replaced by $|F|^2$, combined with
Sobolev embedding, we can obtain (\ref{sint5}) in the similar fashion.
\end{proof}

As a consequence, we obtain

\begin{proposition}\label{P1}
Let the bootstrap assumptions $({\bf BA1})$--$({\bf BA3})$ hold on
$\N^{-}(p, \tau)$ with $\ee \tau\le 1$. Let
$S_{t}:=\N^{-}(p,\tau)\cap \Sigma_t$ and let $r$ be defined by
(\ref{3.9.1}). Let $\pi_0$ denote the tensor $-\nabla \log n$.
\begin{enumerate}
\item[]
\begin{enumerate}
\item[$(a)$] Let $\pib$ denote either $k$ , $\pi_0$ or $\bd_0 \log n$,
then for $t(p)-\tau\le t\le t(p)$
\begin{align}
\|\pib\|_{L^4(S_{t})} &\le C, \label{l10}\\
\|r^{-1/2}\pib\|_{L^2(S_{t})} &\le C. \label{l21}
\end{align}

\item[$(b)$] Let $F$ denotes either $n^{-1}\nab^2 n$ or $n^{-2}\nab\nd$, then
\begin{equation}\label{l7}
\|F\|_{L^2(\N^{-}(p,\tau))}\le C.
\end{equation}

\item[$(c)$] For $\pi_0$, there holds
\begin{equation}\label{dlpi}
\|\nab_L \pi_0\|_{L^2(\N^{-}(p,\tau))}+\|\bd_0
\pi_0\|_{L^2(\N^{-}(p,\tau))}+\|\nab\pi_0\|_{L^2(\N^{-}(p,\tau))}\le C
\end{equation}
\end{enumerate}
\end{enumerate}
\end{proposition}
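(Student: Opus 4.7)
The proof of all three parts reduces to combining the surface Sobolev and trace-type inequalities of Proposition \ref{sint2} and Lemma \ref{tr1} with the elliptic and mixed-norm estimates on $n$, $n^{-1}$ and $\dot n$ established in Section \ref{3D}. No new wave-equation analysis is needed: every term is ultimately controlled by integrated-in-time norms compatible with the weaker assumption $({\bf A1})$.

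For part $(a)$, for each of the three choices $\pib\in\{k,\pi_0,\bd_0\log n\}$ the plan is the same: first establish a uniform bound $\|\pib\|_{H^1(\Sigma_t)}\le C$, then apply (\ref{sint5}) to obtain (\ref{l10}) and Lemma \ref{tr1} directly to obtain (\ref{l21}). For $\pib=k$, the required $H^1$ bound follows at once from Lemma \ref{first} together with (\ref{2.5.2}) (which yields $\|k\|_{L^2}\le\|k\|_{L^4}|\Sigma_t|^{1/4}\le C$). For $\pib=\pi_0=-n^{-1}\nab n$, one differentiates to get $\nab\pi_0=n^{-2}\nab n\otimes\nab n-n^{-1}\nab^2 n$ and uses Propositions \ref{lapse1} and \ref{N} together with the Sobolev embedding $H^1\hookrightarrow L^4$ from Lemma \ref{sob.02} applied to $\nab n$. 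For $\pib=\bd_0\log n=n^{-2}\dot n$, the scheme works identically, using Lemma \ref{L2.1.1} to bound $\dot n$ in $H^1(\Sigma_t)$ and Proposition \ref{N} for $n^{-1}$.

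For part $(b)$, the key identity is
\begin{equation*}
\|F\|_{L^2(\N^{-}(p,\tau))}^2=\int_{t(p)-\tau}^{t(p)}\|F\|_{L^2(S_t)}^2\,na\,dt\le C\,\sup_{t}\|F\|_{L^2(\Sigma_t)}\int_{t(p)-\tau}^{t(p)}\|F\|_{H^1(\Sigma_t)}\,dt,
\end{equation*}
where the inequality uses (\ref{sint3}) together with $({\bf BA1})$. The supremum is uniformly bounded using Propositions \ref{lapse1}, \ref{N} and Lemma \ref{L2.1.1}. The time integral of $\|F\|_{H^1}$ is controlled by Proposition \ref{Ba22} (giving $\|\nab^3 n\|_{L_t^1 L_x^2}\le C$ and $\|\nab n\|_{L_t^b L_x^\infty}\le C$ for $1\le b<2$) and Proposition \ref{phit01} (giving $\|\nab^2\dot n\|_{L_t^1 L_x^2}\le C$). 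Cross terms such as $n^{-2}\nab n\otimes\nab^2 n$ and $n^{-3}\nab n\otimes\nab\dot n$ arising in $\nab F$ are handled by pairing $\|\nab n\|_{L_t^1 L_x^\infty}$ with the uniformly bounded $\|\nab^2 n\|_{L_t^\infty L_x^2}$ or $\|\nab\dot n\|_{L_t^\infty L_x^2}$.

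For part $(c)$, the bound on $\|\nab\pi_0\|_{L^2(\N^-)}$ follows from the pointwise decomposition $\nab\pi_0=n^{-2}\nab n\otimes\nab n-n^{-1}\nab^2 n$: the second term is in $L^2(\N^-)$ by part $(b)$, and for the first, (\ref{sint5}) together with Proposition \ref{lapse1} gives
\begin{equation*}
\|n^{-2}|\nab n|^2\|_{L^2(\N^-(p,\tau))}^2\le C\int_{t(p)-\tau}^{t(p)}\|\nab n\|_{L^4(S_t)}^4\,dt\le C\int \|\nab n\|_{H^1(\Sigma_t)}^4\,dt\le C\tau.
\end{equation*}
For $\bd_0\pi_0$, a direct computation analogous to those in the proof of Proposition \ref{wavep} yields the schematic decomposition $\bd_0\pi_0\sim n^{-2}\nab\dot n+n^{-3}\dot n\,\nab n+\pi\cdot\pi_0$: the first piece reduces to part $(b)$ after multiplication by the bounded factor $n$, while the remaining pieces are controlled by pointwise-in-$S_t$ H\"older combined with (\ref{sint5}), using $\|\dot n\|_{L^4(S_t)}$, $\|\nab n\|_{L^4(S_t)}$ and $\|k\|_{L^4(S_t)}\le C$ from part $(a)$ and the $H^1(\Sigma_t)$ bounds of Section \ref{3D}. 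Finally, the identity $L=-a^{-1}(\bT+N)$ together with $({\bf BA1})$ gives
\begin{equation*}
\|\nab_L\pi_0\|_{L^2(\N^-(p,\tau))}\le 2\left(\|\bd_0\pi_0\|_{L^2(\N^-(p,\tau))}+\|\nab\pi_0\|_{L^2(\N^-(p,\tau))}\right)\le C,
\end{equation*}
completing the proof. There is no single hard step; the main subtlety lies in arranging the H\"older pairings so that only the integrable-in-time norms ($\|\nab^3 n\|_{L_t^1 L_x^2}$, $\|\nab^2\dot n\|_{L_t^1 L_x^2}$, $\|\nab n\|_{L_t^b L_x^\infty}$) appear with exponent $1$ in $t$ while the uniformly-bounded-in-$t$ quantities carry the remaining weight, so that the argument goes through under the weak hypothesis $({\bf A1})$ alone.
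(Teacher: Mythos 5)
Your proposal is correct and follows essentially the same route as the paper's proof: for part (a) one establishes $\|\pib\|_{H^1(\Sigma_t)}\le C$ from Lemma \ref{first}, Proposition \ref{lapse1}, Lemma \ref{L2.1.1} (and, as you rightly note, Proposition \ref{N} is needed to handle the $n^{-1}$ factors, a dependence the paper leaves implicit), then applies (\ref{sint5}) and Lemma \ref{tr1}; for part (b) one pairs the $L^1_t H^1_x$ norm of $F$ with its $L^\infty_t L^2_x$ norm via (\ref{sint3}), exactly the paper's $\|F\|^2_{L^2(\N^-)}\les\|F\|_{L^1_t H^1_x}\|F\|_{L^\infty_t L^2_x}$; and for part (c) one writes schematic formulas for $\nab\pi_0$, $\bd_0\pi_0$ and $\nab_L\pi_0$ and reduces to (a) and (b). The only place you are a hair too quick is the final bound $\|\nab_L\pi_0\|\le 2(\|\bd_0\pi_0\|+\|\nab\pi_0\|)$: since $\nab_L$ is the $\Sigma_t$-projected derivative, expressing it through $\bd_0$ and $\nab_N$ produces lower-order frame terms of the form $\underline{\pi}\cdot\pi_0$ (as in the paper's third symbolic identity), but those are controlled by the same $L^4(S_t)$ bounds from part (a), so the argument closes in any case.
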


\begin{proof}
(a) From Lemma \ref{first}, Proposition \ref{lapse1} and Lemma
\ref{L2.1.1} it follows that $\|\pib\|_{H^1(\Sigma_t)}\le C$.
Thus (\ref{l10}) follows from (\ref{sint5}) in Proposition
\ref{sint2} and (\ref{l21}) follows from Lemma \ref{tr1}.

(b) For $F=(n^{-1} \nabla^2 n, n^{-2} \nabla \dot{n})$ it follows from
Proposition \ref{lapse1}, Proposition \ref{Ba22}, Lemma \ref{L2.1.1}
and Proposition \ref{phit01} that
$$
\|\nab F\|_{L_t^{1} L_x^2(\M_*)}\le C \quad \mbox{and}\quad
\|F\|_{L_t^\infty L_x^2(\M_*)}\le C.
$$
Applying (\ref{sint3}) to $F$ yields
\begin{align*}
\|F\|^2_{L^2(\N^{-}(p,\tau))} \les \|F\|_{L_t^1 H_x^1(\M_*)}
\|F\|_{L_t^\infty L_x^2(\M_*)}\les C.
\end{align*}

(c) By straightforward calculation, symbolically we have
\begin{align*}
 {\bf D}_0 \pi_0 &=-n^{-2}\nab \nd+\underline{\pi}\c \pi_0,\\
 \nab\pi_0&=-n^{-1}\nab^2 n+\underline{\pi}\c \pi_0,\\
\nab_{L}\pi_0&=a^{-1} n^{-2}\nab \nd-a^{-1} \nab\pi_0-a^{-1}
\underline{\pi} \c \pi_0.
\end{align*}
Therefore, (\ref{dlpi}) follows immediately from (\ref{l10}) and (\ref{l7}).
\end{proof}

\section{\bf Estimates on the null cones }
\setcounter{equation}{0}

\subsection{\bf Structure equations on the null cones}

In Section 4 we introduced the null pair $L$, $\Lb$ on the null cone
$\N^{-}(p,\tau)$ and define the null second fundamental forms
$\chi$, $\chib$ and the Ricci coefficients $\zeta$ and $\zb$. For
the null frame $(e_A)_{A=1,2}$, $e_3=\Lb$, $e_4=L$, there hold
\begin{eqnarray}
\bd_A \Lb=\chib_{AB}e_B+\zeta_A \Lb, && \qquad \bd_A
L=\chi_{AB}e_B-\zeta_A L,\nn\\
\bd_\Lb \Lb=2\underline{\xi}_A e_A+2 \omega \Lb, && \qquad  \bd_\Lb
L=2\zeta_A e_A -2 \omega L, \nn\\
\bd_L \Lb=2\zb_A e_A, &&\qquad  \bd_L L=0 \label{llb}
\end{eqnarray}
and
\begin{align}
\bd_B e_A &=\sn_B e_A+\frac{1}{2}\chi_{AB}
e_3+\frac{1}{2}\chib_{AB} e_4, \label{angba}\\
\bd_4 e_A&=\sn_4 e_A +\zb_A e_4, \label{de4a} \\
\bd_3 e_A &= \sn_3 e_A+\zeta_A e_3+\xi_A e_4,\nn
\end{align}
where $\sn$ denotes the covariant differentiation on $S_t$.

Let $\a$, $\beta$, $\rho$ and $\sigma$ be the null components of
$\bR$ defined in (\ref{f14}). There hold the following structure
equations on null cones (see \cite[p.351--360]{KC}.)
\begin{align}
&\frac{d\tr \chi}{ds}+\frac{1}{2}(\tr\chi)^2=-|\chih|^2, \label{s1}\\
&\frac{d\chih_{AB}}{ds}+\tr\chi\chih_{AB}=\a_{AB}, \label{s2}\\
&\frac{d}{ds} \zeta_A=-\chi_{AB}\zeta_B+\chi_{AB}\zb_B-\b_A, \label{s3}\\
&\frac{d}{ds}\tr\chib+\frac{1}{2}\tr\chi\tr\chib=2\div\zb-\chih\c\chibh+2|\zb|^2+2\rho. \label{s5}
\end{align}
Moreover, $\zeta$ verifies the following Hodge system
\begin{align}
&\div\zeta=-\mu-\rho+\frac{1}{2} \chih \cdot \chibh-|\zeta|^2
-\frac{1}{2}a\delta \tr\chi -a\lambda \tr \chi,
\label{hdg1}\\
&\curl\zeta=\sigma-\frac{1}{2} \chih\wedge \chibh, \label{hdg2}
\end{align}
where $\mu$ and $\und\mu$ are the mass aspect functions defined by
\begin{align}
\mu&=-\frac{1}{2} \bd_3 \tr\chi+\frac{a^2}{4}(\tr\chi)^2-\omega
\tr\chi, \label{m1}\\
\und{\mu}&=\bd_4 \tr\chib+\frac{1}{2} \tr\chi\c \tr\chib, \label{m2}\\
\omega &=\frac{1}{2}(\bd_3\log a+a k_{NN}-a\pi_{0N}). \label{c20}
\end{align}

Let $N$ be the unit inward normal to $S_t$ in $\Sigma_t$ and let
$\theta$ be the second fundamental form of $S_t$, i.e. $\theta_{AB}=
g(\nabla_A N, e_B)$. Then there hold
\begin{align}
\nab_N e_A &=\sn_N e_A +a^{-1} \sn_A a N, \label{t5}\\
\nab_A N&=\theta_{AB} e_B,  \label{t6}\\
\nab_B e_A&=\sn_B e_A-\theta_{AB}N, \label{t7}\\
\nab_N N&=-a^{-1}\sn_A a e_A\label{t8}.
\end{align}

We introduce the new null pair $L':=\bT+N$, $\Lb':=\bT-N$. Then
$L=-a^{-1}  L'$ and $\Lb=-a \Lb'$.
Let $\chi', \chib', \zeta', \zb', \nu$ be the Ricci coefficients
corresponding to the null frame $(e_A)_{A=1,2}$, $e_3'=\Lb', e_4'=L'$.
Then
\begin{eqnarray*}
\chi=-a^{-1}\chi', \qquad \chib=-a\chib',\qquad
\zeta=\zeta', \qquad  \zb=\zb'
\end{eqnarray*}
and
\begin{align}
&\chi'_{AB}=\theta_{AB}-k_{AB}, \label{c1}\\
&\chib'_{AB}=-\theta_{AB}-k_{AB}, \label{c2}\\
&\zeta'_A=\sn_A \log a+\ep_A, \label{c7}\\
&\zb'_A=\sn_A \log n-\ep_A, \label{c4}\\
&\nu=-\sn_N\log n+\delta-\lambda. \label{c5}
\end{align}

\subsection{\bf Proof of Theorem \ref{d2laps}}

The main purpose of this subsection is to prove Theorem \ref{d2laps}
concerning the boundedness of $\N_1[\slashed \pi]$ under the
bootstrap assumptions ({\bf BA1})--({\bf BA3}) on $\N^{-}(p,\tau)$
with $0<\tau\le i_*$ and $\ee \tau\le 1$ for any $p\in \M_I$, where $\slashed \pi$ is
defined by (\ref{psl}) and the Sobolev norm $\N_1[F]$ for any $S_t$
tangent tensor field $F$ is defined by (\ref{n1sob}). We can restate
Theorem \ref{d2laps}  in the following form, since the estimates for $\lambda$
are trivial.

\begin{proposition}\label{1r}
Let $\slashed\pi$ be the $S_t$ tangent tensor field defined in (\ref{psl}), and
let $\bar\pi:=(k, -\nabla \log n)$. Then, under the bootstrap
assumptions $({\bf BA1})$--$({\bf BA4})$ with $\ee \tau\le 1$,
there hold
\begin{align}
\|r^{-1} \bar\pi\|_{L^2(\N^{-}(p,\tau))} &\le C, \label{1r1}\\
\|\sn \slashed \pi\|_{L^2(\N^{-}(p,\tau))} &\le C, \label{lrr}\\
\|\sn_L \sl{\pi}\|_{L^2(\N^{-}(p,\tau)) } &\le C. \label{lr2}
\end{align}
\end{proposition}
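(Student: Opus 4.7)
The plan is to deduce each of the three bounds in the proposition from estimates already in hand: the trace inequalities of Section~\ref{null} (Lemma~\ref{tr1} and Proposition~\ref{sint2}), the Ricci coefficient bounds of Proposition~\ref{P1}, the $k$-flux bound $\R(p,\tau)+\F[k](p,\tau)\le C_*$ of Theorem~\ref{flux.01}, and the elliptic estimates for $n$ from Section~\ref{3D}. The common thread is to decompose $\bar\pi$ and $\sl\pi$ along the null frame $(L,\Lb,e_A)$, to express their relevant covariant derivatives in terms of quantities already controlled in $L^2(\N^{-}(p,\tau))$, and to absorb the remaining quadratic pieces using the bootstrap estimates $({\bf BA1})$--$({\bf BA4})$ together with the $L^4(S_t)$ bounds of Proposition~\ref{P1}(a).

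I would first treat the more direct bounds (\ref{lrr}) and (\ref{lr2}). Each component of $\sl\pi=(\sl\hk,\sl\pi_0,\lambda)$ is an $S_t$-tangent tensor built from $k$, $\nabla\log n$, and the frame $N,e_A$; using (\ref{c1})--(\ref{c5}) together with (\ref{t5})--(\ref{t8}) one writes $\sn\sl\pi$ as a sum of $\sn k$, $\sn^2\log n$ and products of $\sl\pi$ with the null Ricci coefficients $\theta$, $\chi$, $\chib$, $\zeta$. The first two contributions are controlled, respectively, by $\F[k](p,\tau)\le C_*$ and by Proposition~\ref{P1}(b); the quadratic remainders are absorbed via H\"older together with $({\bf BA2})$, $({\bf BA3})$ and (\ref{l10}). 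Similarly $\nab_L\sl\pi$ decomposes via $L=-a^{-1}(\bT+N)$ into a time-derivative piece, which is handled using the evolution equations (\ref{bg}) and (\ref{intro04}) for $k$ and by Lemma~\ref{L2.1.1} and Proposition~\ref{phit01} for $\dot n$, and a radial piece, which is dominated by $\|\nab_L k\|_{L^2(\N^{-}(p,\tau))}\le C_*$ from the $k$-flux and by Proposition~\ref{P1}(c) for the $\pi_0$ part.

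The sensitive point is (\ref{1r1}). The trace estimate (\ref{l21}) of Proposition~\ref{P1}(a) only gives $\|r^{-1/2}\bar\pi\|_{L^2(S_t)}\le C$ uniformly in $t$, and integrating this in $t$ with $r\sim t(p)-t$ (Lemma~\ref{comp1}) leaves a factor $r^{-1}$ that is logarithmically divergent at the vertex. To close the argument I would decompose $\bar\pi=(k,-\nabla\log n)$ with respect to $(N,e_A)$. The scalar combination $\nu=k_{NN}-\nab_N\log n$ is directly controlled in $L^\infty_\omega L^2_t(\N^{-}(p,\tau))$ by $({\bf BA4})$, which in the $(t,\omega)$-parametrization contributes a bounded quantity once the $r^{-2}$ weight is absorbed into the sphere element $na\,d\mu_\gamma\sim r^2\,d\mu_{\cga}$. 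The remaining pieces are split into their spherical means and their zero-mean fluctuations on each $S_t$: the fluctuations are handled by the Poincar\'e-type inequality $\|F-\bar F\|_{L^2(S_t)}\les r\|\sn F\|_{L^2(S_t)}$ together with the bound (\ref{lrr}) just established, while the spherical means obey a transport equation along the null generators whose forcing is already controlled in $L^2_t$, so a Gronwall argument closes the estimate.

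The main obstacle will indeed be (\ref{1r1}), since the naive combination of the trace inequality and the $L^4(S_t)$ bound does not suffice; the proof genuinely requires, simultaneously, the angular/radial decomposition, the bootstrap control $({\bf BA4})$ on $\nu$, and the $\sn$-bound (\ref{lrr}). This dictates the natural execution order: first (\ref{lrr}) and (\ref{lr2}), and only then (\ref{1r1}).
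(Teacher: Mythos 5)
The proposed execution order is circular, and this is a genuine gap. You plan to prove (\ref{lrr}) first and then use it (via a Poincar\'e inequality for the zero-mean part) to prove (\ref{1r1}). However, the decomposition (\ref{kprj4}) gives $\sn\slashed\pi = \sn\bar\pi + \tr\theta\cdot\slashed\pi + \hat\theta\cdot\slashed\pi$, and the term $\tr\theta\cdot\slashed\pi$ contains $a\,\tr\chi\cdot\slashed\pi$. Near the vertex $\tr\chi\sim 2/s$, so you must bound $\|s^{-1}\slashed\pi\|_{L^2(\N^-(p,\tau))}$. The $L^4(S_t)$ bound (\ref{l10}) only gives $\|\slashed\pi\|_{L^2(S_t)}^2\les r$, hence
$\int_{t(p)-\tau}^{t(p)} s^{-2}\|\slashed\pi\|_{L^2(S_t)}^2\,dt \les \int s^{-1}\,dt$,
which is logarithmically divergent at the vertex. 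Thus (\ref{lrr}) cannot be established by H\"older, $({\bf BA2})$, $({\bf BA3})$ and (\ref{l10}) alone; the missing ingredient is precisely the weighted bound $\|r^{-1}\slashed\pi\|_{L^2}\le C$ of (\ref{1r1}), and the paper indeed uses the byproduct estimate $\|\tr\chi\,\bar\pi\|_{L^2}\le C$ (\ref{hk4}), obtained en route to (\ref{1r1}), to close (\ref{lrr}). So (\ref{1r1}) must come first, not last.

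Your strategy for (\ref{1r1}) is also problematic in two respects. First, using $({\bf BA4})$ for the $\nu$-part produces a bound of order $\ee^{1/2}$ rather than a universal constant; since $\ee\ge 1$ is not yet known to be universal at this stage of the bootstrap (it becomes universal only after Theorem~\ref{cltmin6} is proved), this would not feed correctly into (\ref{3.10.5}) and the subsequent improvement of the bootstrap assumptions. In fact the paper's proof of (\ref{1r1})--(\ref{lr2}) uses only $({\bf BA1})$--$({\bf BA3})$, consistent with the hypotheses of Theorem~\ref{d2laps}. Second, the "spherical mean obeys a transport equation, close by Gronwall" step is left vague, and it is not clear that the mean of an individual component such as $k_{NN}$ satisfies a transport equation with controlled forcing. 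The paper sidesteps all of this with a single transport-and-absorption identity: one computes
$\sn_L(s^{-1}|\bar\pi|^2)+\tr\chi\, s^{-1}|\bar\pi|^2 = s^{-2}|\bar\pi|^2 + s^{-1}(\tr\chi-2/s)|\bar\pi|^2 + 2s^{-1}\nab_L\bar\pi\cdot\bar\pi$,
integrates over $\N^-(p,\tau)$, observes that the left-hand side is a pure boundary term at $t(p)-\tau$ (controlled by the trace estimate (\ref{l21})), and then absorbs $\|s^{-1}\bar\pi\|_{L^2}^2$ from the right-hand side using $({\bf BA2})$ with $\ee\tau\le 1$, (\ref{l21}), and the $k$-flux bound (\ref{flxd}). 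This yields $\|s^{-1}\bar\pi\|_{L^2}^2 \le C + C\|s^{-1}\bar\pi\|_{L^2}$, hence (\ref{1r1}) and (\ref{hk4}), with no reference to $({\bf BA4})$ or to spherical-mean decompositions. After that, (\ref{lrr}) and (\ref{lr2}) follow by the straightforward frame decompositions you describe.
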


We have obtained in Theorem \ref{flux.01} and (\ref{dlpi}) that
\begin{equation}\label{flxd}
\|\sn \bar\pi\|_{L^2(C_u)}+\|\nab_L \bar \pi\|_{L^2(C_u)}\le C.
\end{equation}
In view of (\ref{t6}), (\ref{t7}) and (\ref{llb}), (\ref{de4a}), we can symbolically write
\begin{equation}\label{kprj4}
\sn \slashed{\pi}= \sn \bar\pi+\tr \theta\c
\slashed{\pi}+\hat\theta\c \slashed{\pi}
\end{equation}
and also in view of $\frac{dt}{ds}=-(an)^{-1}$,
\begin{equation}\label{lk1}
\sn_L \sl{\pi}=\nab_L \bar\pi+\sl{\pi}\c\zb + (an)^{-1}.
\end{equation}

In order to show Proposition \ref{1r}, we need three auxiliary
lemmas. We will use the following norms for $\Sigma_t$ tangent
tensor fields $F$ on null cones $\N^{-}(p,\tau)$
\begin{align*}
\|F\|_{L_x^q L_t^\infty(\N^{-}(p,\tau))}^q &:=\int_{{\Bbb S}^2}
\sup_{t\in \Ga_\omega} \left( v_t
|F|_g^q \right) d\mu_{{\Bbb S}^2},\\
\|F\|_{L_\omega^q L_t^\infty(\N^{-}(p,\tau))}^q &:= \int_{{\Bbb
S}^2} \sup_{t\in \Gamma_\omega} |F|_g^q d\mu_{{\Bbb S}^2}.
\end{align*}
where $v_t$ is defined by (\ref{vt1}), and $\Ga_\omega$, $\omega\in
{\Bbb S^2}$, denotes the portion of an incoming null geodesic
initiating from $p$ in the time slab $[t(p)-\tau, t(p)]$. In the
following argument we will suppress $\N^{-}(p,\tau)$ in these norms
for simplicity.

\begin{lemma}
For any $S_t$ tangent tensor field $F$, there hold the estimates
\begin{equation}\label{sob3}
\|r^{-1/2} F\|_{L_x^2 L_t^\infty}+\|F\|_{L_x^4 L_t^\infty}\les
\N_1[F],
\end{equation}
\begin{equation}\label{intp1}
\| F\|_{L_x^4 L_t^\infty}^2\les \left(\|\sn_L F\|_{L^2} + \|r^{-1}
F\|_{L^2}\right)\|F\|_{L_\omega^\infty L_t^2}.
\end{equation}
\end{lemma}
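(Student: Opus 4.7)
The plan is to prove both estimates by propagating weighted pointwise bounds on $|F|$ along each past null geodesic $\Ga_\omega$ via the fundamental theorem of calculus, then integrating (or taking the supremum) over $\omega\in {\Bbb S}^2$. The basic identities I will use are $\p_t=-na\, L$ (since $ds/dt=-na$), the transport law $L(v_t)=\tr\chi\cdot v_t$, and the observation that every weight of the form $v_t^{\a}|F|^{\b}$ vanishes at the vertex $t=t(p)$, so no boundary term survives.

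For (\ref{sob3}), I would apply the FTC to the scalar $v_{t'} r(t')^{-1}|F|^2$ along $\Ga_\omega$. A direct computation gives
\[
\tfrac{d}{dt'}\bigl(v_{t'}r^{-1}|F|^2\bigr)=-na\, v_{t'}\tr\chi\cdot r^{-1}|F|^2- v_{t'}r^{-2}r'(t')|F|^2-2na\, v_{t'}r^{-1}\l F,\sn_L F\r,
\]
and the bootstrap bound $|\tr\chi|\les 1/s$ from $({\bf BA2})$, combined with Lemma \ref{comp1} ($s\sim r\sim v_t^{1/2}$) and the elementary estimate $|r'(t)|\les 1$ (obtained by differentiating $r^2=(4\pi)^{-1}\int v_t\, d\omega$ and using $({\bf BA2})$), yields the pointwise (in $\omega$) bound
\[
\sup_t\bigl(v_t r^{-1}|F|^2\bigr)\;\les\;\int_{t(p)-\tau}^{t(p)}\bigl(|F|^2+r|F||\sn_L F|\bigr)\, dt'.
\]
Integrating over $\omega$, converting $v_{t'}\, dt'\,d\omega=(na)^{-1}d\mu_{\N^-}$, and applying Cauchy--Schwarz yields the $L_x^2 L_t^\infty$ half of (\ref{sob3}). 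The $L_x^4 L_t^\infty$ half is then obtained by a parallel FTC on $v_t|F|^4$, combined with the Gagliardo--Nirenberg inequality on each leaf $S_t$, $\|F\|_{L^4(S_t)}^2\les \|\sn F\|_{L^2(S_t)}\|F\|_{L^2(S_t)}+r^{-1}\|F\|_{L^2(S_t)}^2$, to absorb the resulting $|F|^3$ terms using the angular component $\|\sn F\|_{L^2(\N^-)}\le \N_1[F]$.

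For (\ref{intp1}), I would apply the same FTC identity to $\psi(t,\omega):=v_t^{1/2}|F|^2$; the bounds on $\tr\chi$ and $v_t^{1/2}\sim r$ yield, pointwise in $\omega$,
\[
\sup_t\psi(t,\omega)\;\les\;\int\bigl(|F|^2+r|F||\sn_L F|\bigr)\, dt'.
\]
Squaring, integrating over ${\Bbb S}^2$, splitting $(a+b)^2\le 2(a^2+b^2)$ and applying Cauchy--Schwarz in $t'$ to the cross term, then H\"older on ${\Bbb S}^2$ by keeping one factor as $\sup_\omega\int|F|^2\, dt'\les \|F\|_{L_\omega^\infty L_t^2}^2$ while converting the remaining $\omega$-integrals into $\N^-$-integrals via $v_t\sim r^2$, gives
\[
\int_{{\Bbb S}^2}(\sup_t\psi)^2\, d\omega\;\les\;\|F\|_{L_\omega^\infty L_t^2}^2\bigl(\|r^{-1}F\|_{L^2(\N^-)}^2+\|\sn_L F\|_{L^2(\N^-)}^2\bigr).
\]
Since $\sup_t(v_t|F|^4)=(\sup_t\psi)^2$ pointwise in $\omega$, taking the square root yields exactly (\ref{intp1}).

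The main obstacle I anticipate is the careful bookkeeping of weights and sphere/cone conversions to ensure the bootstrap parameter $\ee$ does not propagate into the final universal constant; in particular, the step $|r'(t)|\les 1$ requires that $\int na\,\tr\chi\, v_t\, d\omega$ be comparable to $r$ rather than larger, which leans on the \emph{uniform} (in $\omega$) comparability $v_t\sim r^2$ from Lemma \ref{comp1}, not merely its average. A secondary technical point is the justification of the FTC at the vertex $t=t(p)$; this is handled by first proving the estimates for $F$ smooth and bounded up to the vertex and then passing to the limit, since the right-hand sides of (\ref{sob3}) and (\ref{intp1}) involve only $\N_1$-type norms that are stable under approximation.
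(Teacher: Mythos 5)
Your proof of (\ref{intp1}) is correct and follows essentially the same strategy as the paper: a fundamental-theorem-of-calculus identity along each null geodesic using $\frac{d}{dt}v_t=-na\,\tr\chi\,v_t$, followed by H\"older with the $L_\omega^\infty L_t^2$ norm isolated on one factor. Your bookkeeping through $\psi=v_t^{1/2}|F|^2$ rather than $v_t|F|^4$ is a clean cosmetic variant: it avoids the paper's final step of carrying the unknown factor $\|F\|_{L_x^4 L_t^\infty}^2$ on both sides and dividing it out. The one place you deviate is the handling of the vertex limit. The paper inserts a cutoff $\varphi$ supported in $[t(p)-\tau/2,t(p)]$ so that the limit $\lim_{t\to t(p)}v_t|F|^4$ is rewritten as an integral, with the extra term $v_t|F|^4\varphi^3\dot\varphi$ controlled by $|\dot\varphi|\,v_t^{1/2}\les 1$; this is more robust than your approximation-by-bounded-$F$ argument, since the Ricci coefficients to which the lemma is applied are not a priori bounded at the vertex (one only knows, e.g., $r\chih\to 0$). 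This is a fixable technicality rather than a gap, but worth being aware of.

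For (\ref{sob3}) the paper simply cites \cite{KR1,Qwang}, so there is no in-paper proof to compare against. Your argument for the $L_x^2 L_t^\infty$ half is sound, including the step $|r'(t)|\les 1$ obtained from $8\pi r r'=-\int_{{\Bbb S}^2} na\,\tr\chi\,v_t\,d\mu_{\cga}$ together with ({\bf BA2}), Lemma \ref{comp1}, and $\ee\tau\le 1$; you are right that this uses the \emph{pointwise} comparability $v_t\sim r^2$. The $L_x^4 L_t^\infty$ half is only sketched: after the FTC on $v_t|F|^4$ you are left with terms like $\int_{\N^-} r^{-1}|F|^4$ and $\int_{\N^-}|F|^3|\sn_L F|$, and closing these requires, in addition to the 2D Gagliardo--Nirenberg inequality on each $S_t$, the already-proved $L_x^2 L_t^\infty$ bound to control $\sup_t\|r^{-1/2}F\|_{L^2(S_t)}$. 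As written the absorption of the cubic term is not spelled out and would need another line or two, but the outline is consistent with the arguments of \cite{KR1,Qwang}.
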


\begin{proof}
We refer to \cite{KR1,Qwang} for the proof of (\ref{sob3}). In the
following we will prove (\ref{intp1}). Let $v_t$ be defined by
(\ref{vt1}). We first integrate along any past null geodesic
initiating from $p$ to get
\begin{equation}\label{intp2}
v_t|F|^4=\lim_{t\rightarrow t(p)} (v_t |F|^4) -\int_t^{t(p)}
\frac{d}{dt'} (v_{t'} |F|^4) dt'.
\end{equation}
For the estimate of the first term on the right of (\ref{intp2}), we
proceed as follows. Let $\varphi$ be a smooth cut-off function defined
on $[t(p)-\tau, t(p)]$ verifying $0\le \varphi\le 1$, $\varphi(t(p))=1$ and
$\mbox{supp}\varphi \subset [t(p)-\tau/2, t(p)]$. Then
\begin{equation}\label{intp3}
\lim_{t\rightarrow t(p)} v_t |F|^4 = \int_{t(p)-\tau}^{t(p)}
\left(\frac{d}{dt}(v_t |F|^4)\varphi^4+ 4 v_t |F|^4
\varphi^3\frac{d}{dt} \varphi \right) dt.
\end{equation}
Since $|\frac{d}{dt} \varphi| \les (t(p)-t)^{-1}$, we have from
Lemma \ref{comp1} that $|\frac{d}{dt} \varphi| v_t^{\f12}\les 1$.
Using $0\le \varphi\le 1$, it then follows from (\ref{intp2}) and
(\ref{intp3}) that
\begin{equation}\label{3.12.1}
\|F\|_{L_x^4 L_t^\infty}^4 =\int_{{\Bbb S}^2} \sup_{t(p)-\tau\le
t\le t(p)}(v_t |F|^4) \les I+II,
\end{equation}
where
\begin{align*}
I=\int_{\Bbb S^2}  \int_{t(p)-\tau}^{t(p)} \left|\frac{d}{dt} (v_t
|F|^4)\right| dt,\qquad II=\int_{{\Bbb S}^2}\int_{t(p)-\tau}^{t(p)}
v_t^{1/2} |F|^4.
\end{align*}
Since
\begin{equation*}
\frac{d}{dt} (v_t |F|^4)=-na\left(\tr\chi v_t |F|^4+4 v_t |F|^2
\sn_L F\c F\right),
\end{equation*}
we have
\begin{align*}
I &\les \left(\|v_t^{1/2} \sn_L F\|_{L_\omega^2 L_t^2} +\|\tr\chi
v_t^{1/2} F\|_{L_\omega^2L_t^2} \right)\|F\|_{L_\omega^\infty
L_t^2}
\|v_t^{1/2} |F|^2\|_{L_\omega^2 L_t^\infty}\\
&\les \left(\|\sn_L F\|_{L^2} +\|\tr\chi F\|_{L^2} \right)
\|F\|_{L_\omega^\infty L_t^2} \|F\|_{L_x^4 L_t^\infty}^2.
\end{align*}
By the bootstrap assumption ({\bf BA2}) and Lemma \ref{comp1} we
have
\begin{align*}
\| \tr\chi F\|_{L^2}&\les
\left\|\tr\chi-\frac{2}{s}\right\|_{L^\infty}
\tau \|r^{-1} F\|_{L^2}+\|r^{-1} F\|_{L^2}\\
&\les (\ee \tau+1) \|r^{-1} F\|_{L^2}\lesssim \|r^{-1} F\|_{L^2}.
\end{align*}
Therefore
\begin{align*}
I &\les \left(\|\sn_L F\|_{L^2} +\|r^{-1} F\|_{L^2} \right)
\|F\|_{L_\omega^\infty L_t^2} \|F\|_{L_x^4 L_t^\infty}^2.
\end{align*}
It is easy to see that
\begin{align*}
|II| &\les \|F\|_{L_\omega^2 L_t^2}\|F\|_{L_\omega^\infty L_t^2}
\|v_t^{1/2} |F|^2\|_{L_\omega^2 L_t^\infty} \les \|r^{-1} F\|_{L^2}
\|F\|_{L_\omega^\infty L_t^2} \|F\|_{L_x^4 L_t^\infty}^{2}.
\end{align*}
Combining the estimates for $I$ and $II$ with (\ref{3.12.1}) gives
(\ref{intp1}).
\end{proof}

\begin{lemma}\label{tsp2}
For any $S_t$  tangent tensor field $F$ verifying
\begin{equation}\label{tsp1}
\sn_L F+\frac{m}{2} \tr\chi F= G\c F+ H
\end{equation}
with $m\ge 1$ an integer and $G$ a tensor field of suitable type, if
$\lim_{t\rightarrow t(p)} r(t)^m F=0$ and $\sup_{\omega \in {\Bbb
S}^2}\int_{t(p)-\tau}^{t(p)} na |G|^2 dt \le \Delta_0^2$, the
following estimates hold
\begin{align}
\|F\|_{L_\omega^2L_t^2}&\les e^{C \Delta_0
\tau^{1/2}}\|H\|_{L^2}, \label{tsp3}\\
\|r^{\f12} F\|_{L_\omega^2 L_t^\infty}&\les e^{C \Delta_0
\tau^{1/2}}\|H\|_{L^2}\label{tsp5}.
\end{align}
\end{lemma}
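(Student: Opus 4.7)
The plan is to treat (\ref{tsp1}) as an ODE along each past null geodesic $\Gamma_\omega$ and reduce the two desired estimates to a weighted one-dimensional Hardy inequality. First, multiplying by the integrating factor $v_t^{m/2}$ and using $L(v_t)=\tr\chi\,v_t$, the substitution $\Psi:=v_t^{m/2} F$ converts (\ref{tsp1}) into $\sn_L\Psi = G\cdot\Psi + v_t^{m/2} H$. Combining $\tfrac{dt}{ds}=-(na)^{-1}$ with the boundary condition $\lim_{t'\to t(p)}|\Psi|(t')=0$, which follows from the hypothesis $\lim r^m F=0$ together with the comparability $v_t^{1/2}\sim r$ from Lemma \ref{comp1}, and then applying Gronwall, I obtain the pointwise bound
\begin{equation*}
v_t^{m/2}|F|(t) \le \exp\!\left(\int_t^{t(p)} na|G|\,dt'\right)\int_t^{t(p)} na\,v_{t'}^{m/2}|H|(t')\,dt'.
\end{equation*}
By Cauchy--Schwarz, the hypothesis on $G$, and the bound $na\le C$ (from Proposition \ref{N} and ({\bf BA1})), the exponent is controlled by $C\tau^{1/2}\Delta_0$.

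Next I parametrize $\Gamma_\omega$ by $x:=t(p)-t$ and use $v_t\sim x^2$ (Lemma \ref{comp1}) to rewrite the pointwise estimate along each $\Gamma_\omega$ as
\begin{equation*}
x^m|F|(x)\lesssim e^{C\Delta_0\tau^{1/2}}\int_0^x y^m|H|(y)\,dy.
\end{equation*}
For (\ref{tsp3}) I divide by $x^m$ and invoke the weighted Hardy inequality
\begin{equation*}
\int_0^\tau x^{-2m}\!\left(\int_0^x y^m|H|\,dy\right)^{\!2} dx \le C_m\int_0^\tau x^2|H|^2\,dx,
\end{equation*}
which holds for $m>1/2$ via integration by parts against $\tfrac{d}{dx}\bigl(-x^{-2m+1}/(2m-1)\bigr)$ followed by Cauchy--Schwarz. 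Since $na\,d\mu_\gamma\,dt \sim x^2\,d\mu_{{\Bbb S}^2}\,dx$ along each $\Gamma_\omega$, integrating over ${\Bbb S}^2$ identifies the right hand side with $\|H\|^2_{L^2(\N^-(p,\tau))}$ and matches the left hand side with $\|F\|^2_{L_\omega^2 L_t^2}$, yielding (\ref{tsp3}).

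For (\ref{tsp5}), I split $y^m=y^{m-1}\cdot y$ and apply Cauchy--Schwarz:
\begin{equation*}
\left(\int_0^x y^m|H|\,dy\right)^{\!2} \le \left(\int_0^x y^{2(m-1)}\,dy\right)\!\left(\int_0^x y^2|H|^2\,dy\right)\lesssim x^{2m-1}\int_0^\tau y^2|H|^2\,dy,
\end{equation*}
where $m\ge 1$ is precisely what makes $y^{2(m-1)}$ integrable at $y=0$. Dividing by $x^{2m}$ gives $x|F|^2(x)\lesssim e^{2C\Delta_0\tau^{1/2}}\int_0^\tau y^2|H|^2\,dy$ uniformly in $x$; taking $\sup_t$ and integrating over ${\Bbb S}^2$, and using $r\sim x$ to identify $(r^{1/2}|F|)^2 = r|F|^2$, produces (\ref{tsp5}). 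The main technical point is the borderline weighted Hardy step: the weight $x^2$ on the right side is exactly what matches $d\mu_\gamma$, and the condition $2m-1>0$ required for Hardy's constant is precisely why the hypothesis $m\ge 1$ is the natural one.
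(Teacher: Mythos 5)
Your proof is correct and follows essentially the same route as the paper: an integrating-factor/Gronwall argument on the transport equation yields the pointwise bound $v_t^{m/2}|F|\les e^{C\Delta_0\tau^{1/2}}\int_t^{t(p)}na\,v_{t'}^{m/2}|H|\,dt'$, after which a Hardy-type inequality gives (\ref{tsp3}) and a Cauchy--Schwarz step gives (\ref{tsp5}). The only organizational difference is that the paper first uses $m\ge 1$ and $v_{t'}^{1/2}/v_t^{1/2}\les 1$ to reduce the exponent to $m=1$ (obtaining $|F|\les e^{C\Delta_0\tau^{1/2}}(t(p)-t)^{-1}\int_t^{t(p)}r|H|\,dt'$) and then invokes the classical Hardy--Littlewood inequality, whereas you keep the weight $x^m$ and apply a weighted Hardy inequality directly; these are equivalent.
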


\begin{proof}
In what follows, we will use Lemma \ref{comp1} to compare
$v_t^{1/2}$, $r$, $s$ and $t(p)-t$ if necessary. Since
$\frac{d}{dt}v_t = -na \tr\chi v_t$, along any past null
geodesic initiating from $p$ we have
\begin{align*}
\frac{d}{dt}( v_t^m |F|^2)= -2na v_t^m \l H+F\c G, F\r
\end{align*}
With the help of the $\lim_{t\rightarrow t(p)} r^{m}|F|=0$, it
follows for $t(p)-\tau\le t\le t(p)$ that
$$
v_t^m |F|^2 =2 \int_t^{t(p)} na v_{\tt}^m \l H+F\cdot G, F\r \le 2
\int_t^{t(p)} na v_{\tt}^m \left(|F||H| +|F|^2 |G|\right).
$$
By a simple argument we can derive
\begin{equation*}
v_t^{m/2} |F|\le  \exp\left(\int_t^{t(p)}  |G| na \right)
\int_t^{t(p)}  na v_{t'}^{m/2} |H| \exp\left(-\int_{t'}^{t(p)} n a
|G| \right) dt'.
\end{equation*}
In view of $\sup_{\omega \in {\Bbb S}^2}\int_{t(p)-\tau}^{t(p)} na
|G|^2 dt \le \Delta_0^2$, we have $\exp(\int_t^{t(p)} na |G|)\le
e^{C \Delta_0 \tau^{1/2}}$. Thus by using Lemma \ref{comp1} and $m\ge
1$, we have
\begin{align}\label{tran.2}
|F| &\le  e^{C \Delta_0 \tau^{1/2}} v_t^{-m/2}\int_t^{t(p)}  v_{t'}^{m/2} |H| na dt'\nn\\
&\les e^{C \Delta_0 \tau^{1/2}} (t(p)-t)^{-1} \int_t^{t(p)} r|H| dt'.
\end{align}

To derive (\ref{tsp3}), we integrate the above inequality along a
null geodesic initiating from vertex $p$. By the Hardy-Littlewood
inequality
$$
\left\|\frac{1}{s}\int_0^s |f|\right\|_{L_s^2}\les \|f\|_{L_s^2}
$$
it follows that
\begin{align}\label{l23}
\| F\|_{L_t^2} &\les  e^{C \Delta_0 \tau^{1/2}}
\left\|\frac{1}{t(p)-t} \int_t^{t(p)} r|H|\right\|_{L_t^2} \nn\\
&\les e^{C \Delta_0 \tau^{1/2}}  \|r H\|_{L_t^2}.
\end{align}
Integrating (\ref{l23}) with respect to the angular variable $\omega\in
{\Bbb S}^2$ yields (\ref{tsp3}).

Next we multiply (\ref{tran.2}) by $r^{\f12}$ to obtain
\begin{equation*}
\sup_{t(p)-\tau\le t\le t(p) } r^{\f12} |F|\les  e^{C \Delta_0
\tau^{1/2}} \|r H\|_{L_t^2} ,
\end{equation*}
which, by taking  the $L_\omega^2$ norm,  gives (\ref{tsp5}).
\end{proof}

In view of (\ref{s2}) and Lemma \ref{tsp2}, we are able to prove the
following estimates for $\chih$.

\begin{lemma}\label{L3.11.1}
For $\chih$ there hold the estimates
\begin{equation}\label{chih1}
\|r^{-1}\chih\|_{L^2}+\|r^{1/2}\chih\|_{L_\omega^2
L_t^\infty}+\|\sn_L \chih\|_{L^2}\le C,
\end{equation}
\begin{equation}\label{chih2}
\|\chih\|_{L_x^4 L_t^\infty}\le C \ee^{1/4}.
\end{equation}
\end{lemma}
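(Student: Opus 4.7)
The plan is to apply the transport estimate Lemma \ref{tsp2} to the structure equation (\ref{s2}). Rewriting (\ref{s2}) in the form (\ref{tsp1}) with $F=\chih$, $m=2$, $G=0$ and $H=\a$, the hypothesis on $G$ holds trivially with $\Delta_0=0$. Since the bootstrap assumption $({\bf BA1})$ gives $|a-1|\le 1/2$ on $\N^{-}(p,\tau)$, Theorem \ref{flux.01} applies and yields $\|\a\|_{L^2(\N^{-}(p,\tau))}^2\le \R(p,\tau)\le C$. Granting the vertex condition $\lim_{t\to t(p)} r^{2}|\chih|=0$, the estimates (\ref{tsp3}) and (\ref{tsp5}) then produce $\|\chih\|_{L_\omega^2 L_t^2}\les \|\a\|_{L^2}\le C$ and $\|r^{1/2}\chih\|_{L_\omega^2 L_t^\infty}\le C$.

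The first of these converts to the desired $\|r^{-1}\chih\|_{L^2(\N^{-}(p,\tau))}\le C$ once one uses Lemma \ref{comp1} to identify $v_t$ with $r^2$ up to a universal constant in the volume element $na\,d\mu_{\gamma}\,dt = na\,v_t\,d\mu_{\Bbb S^2}\,dt$, so that $r^{-2}v_t\sim 1$. To recover $\|\sn_L\chih\|_{L^2}\le C$, I plan to simply read it off from the structure equation itself: $\sn_L\chih=\a-\tr\chi\,\chih$, split $\tr\chi=(\tr\chi-2/s)+2/s$, and control the two pieces using $({\bf BA2})$ together with $\ee\tau\le 1$ and the bound on $\|r^{-1}\chih\|_{L^2}$ already obtained (noting that $r\sim s$ by Lemma \ref{comp1}).

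For the second estimate (\ref{chih2}), the plan is to invoke the interpolation inequality (\ref{intp1}) with $F=\chih$. This controls $\|\chih\|_{L_x^4 L_t^\infty}^2$ by $\bigl(\|\sn_L\chih\|_{L^2}+\|r^{-1}\chih\|_{L^2}\bigr)\,\|\chih\|_{L_\omega^\infty L_t^2}$. The first factor is bounded by a universal constant thanks to the first half of the lemma just proved, while the second factor is bounded by $\ee^{1/2}$ directly from the bootstrap assumption $({\bf BA3})$, so we arrive at $\|\chih\|_{L_x^4 L_t^\infty}\le C\ee^{1/4}$ as desired.

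The one point requiring genuine care is the vertex boundary condition $\lim_{t\to t(p)} r^{2}|\chih|=0$ demanded by Lemma \ref{tsp2}. Near $p$ the null cone is close to its Minkowski model (as reflected by (\ref{e4}) and the comparability statements of Lemma \ref{comp1}), so $\chih$ remains bounded while $r\to 0$, which suffices; making this rigorous under only the weak assumption $({\bf A1})$ may however require a separate short-time argument solving (\ref{s2}) iteratively on a neighborhood of the vertex. Once this boundary behavior is in hand, everything else is a routine packaging of Lemma \ref{tsp2}, Lemma \ref{comp1}, Theorem \ref{flux.01}, and the interpolation (\ref{intp1}) against the bootstrap assumptions.
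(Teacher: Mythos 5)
Your proof is correct and follows the same route as the paper: apply Lemma \ref{tsp2} to (\ref{s2}) with $m=2$, $G=0$, $H=\a$ (bounded via Theorem \ref{flux.01}), convert the resulting $L_\omega^2 L_t^2$ bound to $\|r^{-1}\chih\|_{L^2}\le C$ via Lemma \ref{comp1}, read off $\|\sn_L\chih\|_{L^2}$ directly from (\ref{s2}) after splitting $\tr\chi$ and using $({\bf BA2})$ with $\ee\tau\le 1$, and finish with the interpolation (\ref{intp1}) against $({\bf BA3})$. The vertex condition you rightly flag is handled in the paper simply by recalling the stronger fact $r\chih\to 0$ with a citation to \cite{Qwang}, so no separate short-time argument is constructed here.
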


\begin{proof}
We will use the transport equation (\ref{s2}), i.e.
\begin{equation}\label{2.8.1}
\sn_L \chih +\tr \chi \chih =\a.
\end{equation}
Recall that $r \chih\rightarrow 0$ as $t\rightarrow t(p)$, see
\cite{Qwang}. Recall also that $\|\a\|_{L^2} \le C$, see Theorem
\ref{flux.01}. It then follows from Lemma \ref{tsp2} that
$$
\|r^{1/2} \chih\|_{L_\omega^2 L_t^\infty} +\|\chih\|_{L_\omega^2 L_t^2}\le C.
$$
Next we use (\ref{2.8.1}) again to estimate $\|\sn_L \chih\|_{L^2}$.
With the help of the bootstrap assumption ({\bf BA2}) and the
comparability of $r$, $s$ and $t(p)-t$ given in Lemma \ref{comp1},
we have
\begin{align*}
\|\tr\chi \, \chih\|_{L^2}&\les
\left\|\tr\chi-\frac{2}{s}\right\|_{L^\infty} \|r \chih\|_{L_t^2
L_\omega^2} +\| r^{-1} \chih\|_{L^2} \le C.
\end{align*}
Thus, from (\ref{2.8.1}) it follows
$$
\|\sn_L \chih \|_{L^2} \les \|\tr \chi \chih\|_{L^2}
+\|\a\|_{L^2}\le C.
$$
We therefore complete the proof of (\ref{chih1}).

By making use of (\ref{intp1}) and (\ref{chih1}) together with the
bootstrap assumption ({\bf BA3}) we obtain
\begin{align*}
\|\chih\|_{L_x^4 L_t^\infty}&\les (\|\sn_L \chih\|_{L^2}
+\|r^{-1}\chih\|_{L^2})^{\f12}\|\chih\|_{L_\omega^\infty
L_t^2}^{\f12} \le C \ee^{1/4}
\end{align*}
which gives (\ref{chih2}).
\end{proof}

Now we are ready to complete the proof of Proposition \ref{1r}.

\begin{proof}[Proof of Proposition \ref{1r}]
We first prove (\ref{1r1}). Let $|\bar\pi|:=|\bar \pi|_g$. It is
easy to check
$$
\sn_L (s^{-1}|\bar\pi|^2) +\tr\chi s^{-1} |\bar\pi|^2
=s^{-1}(\tr\chi -\frac{2}{s})|\bar\pi|^2+s^{-2} |\bar\pi|_g^2+2
s^{-1} \nab_L \bar \pi\c \bar\pi.
$$
We integrate the above equation along the null cone $\N^-(p, \tau)$.
By Lemma \ref{comp1}, it is easy to see $\int_{S_t} s^{-1} |\bar \pi|^2
\rightarrow 0$ as $t\rightarrow t(p)$. Therefore, by integration by
parts we obtain
\begin{align}
\int_{\N^{-}(p,\tau)} \left(s^{-2} |\bar \pi|^2 +s^{-1}
(\tr\chi-\frac{2}{s})|\bar\pi|^2+2s^{-1} \nab_L \bar\pi\c \bar \pi\right)
na d\mu_\ga dt=\int_{S_{t(p)-\tau}}s^{-1}|\bar \pi|^2\nn.
\end{align}
By Lemma \ref{comp1} and (\ref{l21}) in Proposition \ref{P1} we have
\begin{equation*}
\left|\int_{S_{t(p)-\tau}} s^{-1} |\bar \pi|^2 \right| \les
 \|r^{-1/2} \bar \pi\|_{L^2(S_{t(p)-\tau})}^2 \le C.
\end{equation*}
By ({\bf BA2}), Lemma \ref{comp1} and (\ref{l21}),
\begin{equation*}
\left|\int_{\N^-(p,\tau)}na s^{-1} (\tr\chi-\frac{2}{s})|\bar\pi|^2
d\mu_\ga dt \right|\le C\ee \tau\le C.
\end{equation*}
By (\ref{flxd}) we have
\begin{align*}
\left|\int_{\N^{-}(p,\tau)}s^{-1}\nab_L \bar \pi\c \bar \pi n a
d\mu_\ga dt\right| & \les \|\nab_L \bar \pi\|_{L^2}\|s^{-1} \bar
\pi\|_{L^2}\le C\|s^{-1} \bar \pi\|_{L^2}.
\end{align*}
Therefore
\begin{equation}\label{chi9}
\|s^{-1} \bar\pi\|_{L^2}^2 \le C+C\|s^{-1} \bar
\pi\|_{L^2}
\end{equation}
which implies $\|s^{-1}\bar{\pi}\|_{L^2} \le C$.
Consequently, in view of Lemma \ref{comp1}, (\ref{1r1}) follows.
As a byproduct, we have from ({\bf BA2}) and Lemma \ref{comp1} that
\begin{equation}
\|\tr\chi \bar\pi\|_{L^2} \les
\|s^{-1}\bar\pi\|_{L^2}+\left\|\tr\chi-\frac{2}{s}\right\|_{L^\infty}\tau
\|s^{-1}\bar\pi\|_{L^2}
\le C(1+\ee \tau)\le C.\label{hk4}
\end{equation}

Next we will show (\ref{lrr}). we will use the equation
(\ref{kprj4}), i.e.
\begin{equation}\label{3.11.20}
 \sn \slashed{\pi}= \sn \bar \pi+\tr
\theta\c \slashed{\pi}+\hat\theta\c \slashed{\pi}.
\end{equation}
Using $\theta_{AB}=- a \chi_{AB} +k_{AB}$, we have from (\ref{l10})
and (\ref{chih2}) that
\begin{align*}
\|\hat\theta\c \slashed \pi\|_{L^2}&\les \|\slashed
\pi\|_{L^4}\left(\|k\|_{L^4}+\|\chih\|_{L^4}\right) \le C
\left(\ee^{1/4}+1\right)\tau^{1/2}\le C.
\end{align*}
Since $\tr\theta=-a\tr\chi+\delta^{AB}k_{AB}$,  we have from
(\ref{l10}) and (\ref{hk4}) that
\begin{equation*}
\|\tr\theta \slashed \pi\|_{L_t^2 L_x^2}\les\|k\|_{L^4}
\|\slashed\pi\|_{L^4}+ \|\tr\chi\slashed\pi\|_{L^2}\le C.
\end{equation*}
Consequently, in view of (\ref{flxd}) and (\ref{3.11.20}), (\ref{lrr})
follows immediately.

In view of (\ref{lk1}) and (\ref{c4}), (\ref{lr2}) follows
immediately from (\ref{flxd}) and (\ref{l10}).
\end{proof}

\subsection{\bf  Estimates for Ricci coeffients }

\begin{lemma} For the Ricci coefficient $\zeta$ and the null lapse $a$ there hold
\begin{align}
\|r^{\f12}\zeta\|_{L_\omega^2
L_t^\infty}+\|r^{-1}\zeta\|_{L^2}&+\|\sn_L \zeta\|_{L^2} \le C, \label{nta2}\\
\|r^{\f12} \sn\log a\|_{L_\omega^2 L_t^\infty}+\|r^{-1}\sn \log
a\|_{L^2}& +\|\sn_L \sn\log a\|_{L^2} \le C.\label{nta1}
\end{align}
\end{lemma}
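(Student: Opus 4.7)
The strategy is to treat $\zeta$ by means of its null transport equation and then to reduce the bounds for $\sn\log a$ to those for $\zeta$ and for $\slashed{\pi}$ via the algebraic identity $\sn_A\log a=\zeta_A-\ep_A$ coming from \eqref{c7}. First, I would rewrite the Ricci equation \eqref{s3} by splitting $\chi=\tfrac{1}{2}\tr\chi\,\gamma+\chih$ so as to put it in the form
\begin{equation*}
\sn_L \zeta_A+\tfrac{1}{2}\tr\chi\,\zeta_A
=-\chih_{AB}\zeta_B+\chi_{AB}\zb_B-\b_A,
\end{equation*}
which fits the hypothesis of Lemma \ref{tsp2} with $m=1$, $G=-\chih$ and $H=\chi\cdot\zb-\b$. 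Bootstrap assumption \textbf{BA3} gives $\sup_\omega\int na|\chih|^2\,dt\le\ee$, so $\Delta_0\le\ee^{1/2}$ and consequently $e^{C\Delta_0\tau^{1/2}}\le e^{C(\ee\tau)^{1/2}}\le e^C$ under the standing requirement $\ee\tau\le 1$. The vanishing condition $\lim_{t\to t(p)}r\,\zeta=0$ at the vertex is standard and follows from the geometric behavior of $\zeta$ at $p$, comparable to the analogous derivation in \cite{KR1,Qwang}.

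The main technical step is the bound $\|H\|_{L^2(\N^{-}(p,\tau))}\le C$. The curvature term obeys $\|\b\|_{L^2}\le\R(p,\tau)^{1/2}\le C$ by Theorem \ref{flux.01}. For $\chi\cdot\zb$, observe first that \eqref{c4} gives $\zb_A=\sn_A\log n-\ep_A$, so $\zb$ is built out of components of $\sl{\pi}$. Decomposing $\chi=\tfrac{1}{2}\tr\chi\,\gamma+\chih$, I estimate the trace part by \eqref{hk4}, namely $\|\tr\chi\,\sl{\pi}\|_{L^2}\le C$, and the traceless part by H\"older in $S_t$ combined with \eqref{chih2} and \eqref{l10}:
\begin{equation*}
\|\chih\cdot\zb\|_{L^2}\le\|\chih\|_{L_x^4L_t^\infty}\,\|\zb\|_{L_x^4L_t^2}
\le C\ee^{1/4}\tau^{1/2}\|\sl{\pi}\|_{L_x^4 L_t^\infty}\le C,
\end{equation*}
using $\ee\tau\le 1$ and Proposition \ref{1r} together with \eqref{sob3}. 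Applying Lemma \ref{tsp2} then delivers both $\|\zeta\|_{L_\omega^2 L_t^2}\le C$ and $\|r^{1/2}\zeta\|_{L_\omega^2 L_t^\infty}\le C$; the first of these together with Lemma \ref{comp1} is equivalent to $\|r^{-1}\zeta\|_{L^2}\le C$. The bound $\|\sn_L\zeta\|_{L^2}\le C$ is obtained by reading it off the transport equation after moving $\tfrac12\tr\chi\zeta$ to the right-hand side and applying \eqref{hk4} to $\|\tr\chi\,\zeta\|_{L^2}$ in the same manner as in Lemma \ref{L3.11.1}.

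For the second estimate, the identity $\sn_A\log a=\zeta_A-\ep_A$ from \eqref{c7}, together with the inclusion $\ep\in\sl{\hk}\subset\sl{\pi}$, reduces all three quantities $\|r^{1/2}\sn\log a\|_{L_\omega^2 L_t^\infty}$, $\|r^{-1}\sn\log a\|_{L^2}$, $\|\sn_L\sn\log a\|_{L^2}$ to the corresponding norms of $\zeta$ (just proved) and of $\ep$. The latter are controlled by Proposition \ref{1r}: \eqref{1r1} and Lemma \ref{comp1} give $\|r^{-1}\ep\|_{L^2}\le C$; \eqref{lr2} gives $\|\sn_L\ep\|_{L^2}\le C$; and \eqref{sob3} applied with $F=\ep$, combined with \eqref{1r1} and \eqref{lrr}, yields $\|r^{1/2}\ep\|_{L_\omega^2 L_t^\infty}\le C\N_1[\sl{\pi}]\le C$. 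Adding these bounds to the ones for $\zeta$ finishes the proof.

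The step I expect to be the main technical hurdle is the control of $\|\chi\cdot\zb\|_{L^2}$, since $\zb$ contains $\sn\log n$ and no better-than-$L^2$ estimate on $\sn\log n$ is available directly; the argument therefore rests on combining \eqref{hk4} with the $L_x^4 L_t^\infty$ Sobolev estimate \eqref{sob3} and the $L_x^4$ surface bound \eqref{l10} from Proposition \ref{P1}. The remaining verifications are bookkeeping once the transport equation is cast in the form above and Lemma \ref{tsp2} is in hand.
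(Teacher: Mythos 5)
Your proposal is correct and follows essentially the same route as the paper: rewrite \eqref{s3} in the transport form with $G=-\chih$, $H=\chi\cdot\zb-\b$, apply Lemma~\ref{tsp2} using \textbf{BA3} to control the exponential factor, bound the inhomogeneity via $\|\b\|_{L^2}\le C$, \eqref{hk4} and an $L_x^4L_t^\infty$--type H\"older for $\chih\cdot\zb$, and then obtain \eqref{nta1} from the relation $\sn\log a=\zeta-\ep$ together with Proposition~\ref{1r}. The only cosmetic difference is that in bounding $\|\chih\cdot\zb\|_{L^2}$ you invoke $\|\sl\pi\|_{L_x^4L_t^\infty}\les\N_1[\sl\pi]\le C$ via \eqref{sob3}, whereas the paper instead uses the fixed-time surface estimate $\|\zb\|_{L_t^\infty L_x^4}\le C$ from \eqref{l10}; both give the same $\tau^{1/2}\ee^{1/4}$ smallness and the argument is otherwise identical.
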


\begin{proof}
From the transport equation (\ref{s3}) we have
\begin{equation}\label{s3.new}
\sn_L \zeta +\frac{1}{2} \tr \chi \cdot \zeta =-\chih \cdot \zeta +\chi\cdot \zb-\beta.
\end{equation}
Since ({\bf BA3}) implies $\|\chih\|_{L_\omega^\infty L_t^2} \le
\ee^{1/2}$ with $\ee \tau\le 1$, it follows from Lemma \ref{tsp2} and
the relation $\chi=\chih +\frac{1}{2} \tr \chi \, \gamma$ that
\begin{align*}
\|r^{\frac{1}{2}}\zeta\|_{L_\omega^2
L_t^\infty}+\|r^{-1}\zeta\|_{L^2} &\les \|\beta\|_{L^2} + \|\chih \cdot \zb\|_{L^2}
+\|\tr\chi \c \zb\|_{L^2}
\end{align*}
From Theorem \ref{flux.01} we have $\|\beta\|_{L^2}\le C$. Recall
that $\zb=\sn \log n -\ep$ which is a combination of terms in
$\slashed \pi$. By (\ref{hk4}) we have $\|\tr \chi \, \zb\|_{L^2}
\le C$. Therefore
\begin{align*}
\|r^{1/2} \zeta\|_{L_\omega^2 L_t^\infty} +\|r^{-1} \zeta\|_{L^2}
&\le C\left(\ee \tau+1\right) +\|\chih \cdot \zb\|_{L^2}.
\end{align*}
In view of (\ref{l10}) in Proposition \ref{P1}, (\ref{chih2})
in Lemma \ref{L3.11.1}, and $\ee \tau\le 1$, we have
\begin{align*}
\|r^{\frac{1}{2}}\zeta\|_{L_\omega^2
L_t^\infty}+\|r^{-1}\zeta\|_{L^2}
&\le C+ \tau^{1/2}\|\chih\|_{L_x^4 L_t^\infty} \|\zb\|_{L_t^\infty L_x^4}
\le C.
\end{align*}
Consequently, it follows from (\ref{s3.new}), ({\bf BA2}) and ({\bf
BA3}) that $\|\sn_L \zeta\|_{L^2} \le C$. We thus
obtain (\ref{nta2}).

In order to show (\ref{nta1}), we use the relation $\zeta=\sn\log
a+\ep$. By Proposition \ref{1r},
$$
\|r^{\frac{1}{2}}\ep\|_{L_\omega^2 L_t^\infty}+\|\ep\|_{L_\omega^2
L_t^2}+\|\sn_L \ep\|_{L^2}\le C.
$$
Thus, the estimates for $\sn \log a $ follows.
\end{proof}

\begin{lemma}\label{mub}
For the $\underline{\mu}$ defined by (\ref{m2}) there holds
$\|\underline{\mu}\|_{L^2}\le C$ on $\N^-(p,\tau)$.
\end{lemma}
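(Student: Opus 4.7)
The plan is to reduce $\underline\mu$ to a sum of already-controlled quantities via the null structure equation (\ref{s5}) for $\tr\chib$. Since $\bd_4 = L = \frac{d}{ds}$ when acting on scalars, a direct comparison of the definition (\ref{m2}) with (\ref{s5}) will yield the identity
\begin{equation*}
\underline\mu = 2\div\zb - \chih\cdot\chibh + 2|\zb|^2 + 2\rho,
\end{equation*}
after which it suffices to bound each of the four terms in $L^2(\N^-(p,\tau))$.

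The three ``easy'' terms can be read off from previously established estimates. For $\rho$, I will simply invoke Theorem \ref{flux.01}, which gives $\|\rho\|_{L^2(\N^-(p,\tau))}\le C$. For $\div\zb$, I will use the identity (\ref{c4}), $\zb_A = \sn_A\log n - \ep_A$, which places $\zb$ among the components of $\slashed\pi$; then (\ref{lrr}) of Proposition \ref{1r} directly supplies $\|\sn\zb\|_{L^2(\N^-(p,\tau))}\le C$. For the quadratic $|\zb|^2$, I will combine the pointwise slice bound $\|\zb\|_{L^4(S_t)}\le C$, which follows from (\ref{l10}) of Proposition \ref{P1} applied to $\pi_0=-\nabla\log n$ and to the component $\ep$ of $\slashed{\hk}$, with integration in $t$ to obtain $\||\zb|^2\|_{L^2(\N^-(p,\tau))}^2\le C\tau\le C$.

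The main obstacle is the cross term $\chih\cdot\chibh$, because $\chibh$ is not among the Ricci coefficients for which transport or Hodge estimates have been developed at this stage of the paper. The key step I will use is purely algebraic: adding (\ref{c1}) and (\ref{c2}) gives $\chi'+\chib'=-2k$, and combining with $\chi=-a^{-1}\chi'$, $\chib=-a\chib'$ yields $\chibh = 2a\hat k - a^2\chih$. After this substitution,
\begin{equation*}
\chih\cdot\chibh = 2a\,\chih\cdot\hat k - a^2|\chih|^2,
\end{equation*}
and each piece is a product of objects already controlled. For the first, H\"older on $S_t$ bounds $\|\chih\cdot\hat k\|_{L^2(\N^-(p,\tau))}^2$ by $\int_t\|\chih\|_{L^4(S_t)}^2\|\hat k\|_{L^4(S_t)}^2\,dt$, and I will close this using $\|\hat k\|_{L^4(S_t)}\le C$ from Proposition \ref{P1} together with the $L^4_xL^\infty_t$ bound $\|\chih\|_{L^4_xL^\infty_t}^4\le C\ee$ from (\ref{chih2}), absorbing the remaining growth through the smallness $\ee\tau\le 1$. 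The term $\||\chih|^2\|_{L^2(\N^-(p,\tau))}^2\le \tau\sup_t\|\chih\|_{L^4(S_t)}^4\le C\ee\tau\le C$ is handled identically. Summing all four contributions completes the proof.
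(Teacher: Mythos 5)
Your proposal is correct and follows essentially the same route as the paper: both derive $\underline\mu = 2\div\zb - \chih\cdot\chibh + 2|\zb|^2 + 2\rho$ by comparing (\ref{m2}) with (\ref{s5}), bound $\rho$ by the curvature flux, $\sn\zb$ via $\N_1[\slashed\pi]$ (which is the content of Theorem \ref{d2laps}/Proposition \ref{1r}), and $|\zb|^2$ via the slicewise $L^4$ trace bound, and then resolve $\chibh$ algebraically in terms of $\chih$ and $k$ (the paper writes $\chibh' = -\chih' - 2\eh$, you write the equivalent $\chibh = 2a\,\hat k_{|S_t} - a^2\chih$) before closing with (\ref{chih2}) and the $L^4(S_t)$ bound on $k$. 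The one tiny imprecision is that the $\hat k$ appearing in your substitution is the traceless part of $k_{AB}$ on $S_t$, namely $\eh$, not the ambient traceless $\hk$, but since it is pointwise dominated by $|k|$ this does not affect the estimate.
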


\begin{proof}
Recall that by (\ref{s5}), $\underline{\mu}=2 \mbox{div} \zb
-\chih\cdot \underline{\chih} +2|\zb|^2 +2\rho$. We have from
Theorem \ref{flux.01}, Proposition \ref{P1} and Theorem \ref{d2laps} that
\begin{align*}
\|\underline{\mu}\|_{L^2} &\les \|\sn \zb\|_{L^2} +\|\zb\|_{L^4}^2
+\|\rho\|_{L^2} +\|\chih\cdot
\underline{\chih}\|_{L^2} \les C+\|\chih \cdot \underline{\chih}\|_{L^2}.
\end{align*}
Recall also the relation $\chibh'=-\chih' -2 \eh$, we have from
(\ref{chih2}) and Proposition \ref{P1} that
\begin{align*}
\|\underline{\mu}\|_{L^2} \les C+
\|\chih\|_{L^4}\left(\|\chih\|_{L^4}+ \|k\|_{L^4}\right)\le C.
\end{align*}
The proof is thus complete.
\end{proof}

In the following we summarize the estimates obtained so far in this
section.

\begin{proposition}\label{P3}
There exists universal constants $\delta_0>0$ and $C_*>0$ such that,
under the bootstrap assumptions $({\bf BA1})$--$({\bf BA3})$ with $\ee \tau\le 1$,
if $\tau<\min\{i_*, \delta_0\}$ then there hold
\begin{align}
\|r^{-\frac{1}{2}} \pib\|_{L^2(S_{t,u})} &\le  C, \label{rtr}\\
\|\pib\|_{L^4(S_{t,u})} & \le C, \label{r9} \\
\N_1[\slashed\pi](p,\tau) & \le C ,\label{r7}\\
\|n^{-1}\nab^2 n, n^{-2}\nab\nd\|_{L^2} &\le C,\label{r10}\\
\|r^{\f12}(\chih, \bpi,\zeta, \sn \log a, \hat \theta)\|_{L_\omega^2 L_t^\infty}
   &\le C, \label{c6}\\
\|(\chih,\bpi,\zeta,\sn \log a, \hat\theta)\|_{L_t^2 L_\omega^2}
   &\le C, \label{r4}\\
\|\sn_L (\chih, \zeta, \sn \log a, \hat \theta)\|_{L^2} & \le C, \label{r11}
\end{align}
where $\pib=(n^{-1}\p_t\log n, \bpi)$.
\end{proposition}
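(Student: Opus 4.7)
The plan is to observe that Proposition~\ref{P3} is a compendium that packages together the estimates established throughout Section~8 (with the key inputs of earlier sections). Each of the seven inequalities will be verified by pointing to the corresponding lemma or proposition already proved, and then, where needed, using the algebraic decompositions of the null-geometric quantities into components that have already been controlled.

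First, the bounds (\ref{rtr}), (\ref{r9}), and (\ref{r10}) restate Proposition~\ref{P1}(a)--(b), namely (\ref{l21}), (\ref{l10}), and (\ref{l7}). The only subtlety is that (\ref{rtr})--(\ref{r9}) are asserted on the general trace surfaces $S_{t,u}$ rather than on $S_t=S_{t,u_M}$; since the optical function $u$ foliates $\J^-(p,\tau)$ by null cones $C_u$ and the trace inequality Lemma~\ref{tr1} is valid uniformly on each $C_u$, the same proof as for Proposition~\ref{P1} applies verbatim. The estimate (\ref{r7}) is exactly Theorem~\ref{d2laps} = Proposition~\ref{1r}.

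For the $L_t^2 L_\omega^2$ bound (\ref{r4}) I would use Lemma~\ref{comp1} to identify $\|F\|_{L_t^2 L_\omega^2}\sim \|r^{-1}F\|_{L^2(\N^-(p,\tau))}$ (since $v_t\sim r^2$), after which each entry is covered: $\chih$ by (\ref{chih1}), $\bar\pi$ by (\ref{1r1}), $\zeta$ by (\ref{nta2}), $\sn\log a$ by (\ref{nta1}), and $\hat\theta$ via the identity $\hat\theta_{AB}=-a\chih_{AB}+\eh_{AB}$ together with $\|r^{-1}\eh\|_{L^2}\le C$ from (\ref{r7}). The weighted $L_\omega^2 L_t^\infty$ bound (\ref{c6}) is treated the same way: $\chih$ directly from (\ref{chih1}), $\zeta,\sn\log a$ from (\ref{nta2}), (\ref{nta1}), and for $\bar\pi$ I would apply the Sobolev estimate (\ref{sob3}) together with $\N_1[\bar\pi]\le C$ (which is contained in (\ref{flxd}) and (\ref{1r1})), exploiting once again $v_t^{1/2}\sim r$ to convert $\|r^{-1/2}\bar\pi\|_{L_x^2 L_t^\infty}$ into $\|r^{1/2}\bar\pi\|_{L_\omega^2 L_t^\infty}$. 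The bound for $\hat\theta$ again follows from the decomposition.

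Finally, the transport bounds (\ref{r11}) are immediate for $\chih$ from (\ref{chih1}), for $\zeta$ from (\ref{nta2}), and for $\sn\log a$ from (\ref{nta1}). The main new work is for $\sn_L\hat\theta$: differentiating $\hat\theta=-a\chih+\eh$ along $L$ yields $\sn_L\hat\theta=-L(a)\,\chih - a\sn_L\chih + \sn_L\eh$, and by (\ref{la}) we have $L(a)=\nu$. The second and third terms are in $L^2(\N^-(p,\tau))$ by (\ref{chih1}) and (\ref{r7}). The bilinear term $\nu\,\chih$ is the only estimate requiring genuine effort; I would control it by
\[
\|\nu\,\chih\|_{L^2(\N^-(p,\tau))} \les \|\nu\|_{L_\omega^\infty L_t^2}\cdot \|\chih\|_{L_\omega^2 L_t^\infty},
\]
using Fubini and once more $v_t^{1/2}\sim r$ so that the weight in (\ref{chih1}) absorbs the volume element; by the bootstrap assumption ({\bf BA4}) and (\ref{chih1}) this is at most $C\ee^{1/2}$, which is a universal constant since $\ee$ is. The only obstacle worth flagging is the careful bookkeeping of the various mixed $L_t$/$L_\omega$ norms on the null cone; beyond this the proof amounts to a clean reassembly of the estimates already proved in Lemmas~\ref{L3.11.1}, \ref{mub}, Propositions~\ref{P1} and \ref{1r}, and the $\zeta$, $\sn\log a$ transport estimates (\ref{nta2})--(\ref{nta1}).
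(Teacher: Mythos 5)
Your reassembly of the earlier estimates is the right strategy and matches the paper's implicit proof: Proposition~\ref{P3} is a compendium, and most of the seven inequalities are correctly traced to Proposition~\ref{P1}, Proposition~\ref{1r} (= Theorem~\ref{d2laps}), Lemma~\ref{L3.11.1}, and the $\zeta$, $\sn\log a$ transport lemma, with $\hat\theta$ handled through $\hat\theta=-a\chih+\eh$.

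There is, however, a genuine gap in your treatment of $\sn_L\hat\theta$ in (\ref{r11}). You control the commutator term $L(a)\,\chih=\nu\chih$ via
$\|\nu\chih\|_{L^2}\les\|\nu\|_{L_\omega^\infty L_t^2}\,\|\chih\|_{L_x^2 L_t^\infty}$
and then invoke ({\bf BA4}) to get $\le C\ee^{1/2}$. This fails on two counts. First, Proposition~\ref{P3} is stated under ({\bf BA1})--({\bf BA3}) only; ({\bf BA4}) is not among the hypotheses. Second, and more importantly, the bound $C\ee^{1/2}$ is not admissible inside the bootstrap: the whole point of Theorem~\ref{cltmin6} and Proposition~\ref{P3} is to produce improved constants that are \emph{independent} of $\ee$, so that $\ee$ can then be fixed universally via (\ref{3.10.10}). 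Declaring $\ee$ "universal" at this stage is circular, and since $\ee\tau\le1$ forces only $\ee^{1/2}\le\tau^{-1/2}$, your bound degenerates as $\tau\to0$ rather than stabilizing.

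The fix stays within ({\bf BA1})--({\bf BA3}). Observe that $\nu=-\sn_N\log n+\delta-\lambda$ is built from the components covered by $\underline\pi$ and the constant $\lambda$, so by (\ref{l10}) one has $\|\nu\|_{L^4(S_t)}\le C$ uniformly in $t$. Combined with (\ref{chih2}), which gives $\|\chih\|_{L^4(S_t)}\les\ee^{1/4}$ pointwise in $t$, H\"older on each sphere yields $\|\nu\chih\|_{L^2(S_t)}\le C\ee^{1/4}$. Integrating in $t$ over $[t(p)-\tau,t(p)]$ and using $\ee\tau\le1$ then gives
\begin{equation*}
\|\nu\chih\|_{L^2(\N^-(p,\tau))}^2\les\tau\,\ee^{1/2}=\tau^{1/2}(\ee\tau)^{1/2}\le\tau^{1/2}\le C,
\end{equation*}
which is a bona fide universal bound. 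With this replacement the remaining two terms $a\sn_L\chih$ and $\sn_L\eh$ are controlled by (\ref{chih1}) and (\ref{r7}) as you say, and (\ref{r11}) follows.

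One small notational point: in the H\"older step the norm appearing is $\|\chih\|_{L_x^2 L_t^\infty}$ (with the $v_t$ weight inside the supremum), not $\|\chih\|_{L_\omega^2 L_t^\infty}$; your surrounding remarks indicate you intend the former, but the displayed inequality as written drops the weight.
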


The above estimates provide the intermediate steps toward
the proof of Theorem \ref{cltmin6}. The complete proof however requires
more estimates on $\chih$, $\zeta$ and $\zb$ as follows. Since the arguments
are rather lengthy, we will report them in \cite{tqwang}.

\begin{proposition}\label{P4}
There exists universal constants $\delta_0>0$ and $C_*>0$ such that,
under the bootstrap assumptions $({\bf BA1})$--$({\bf BA4})$ with $\ee \tau\le 1$,
if $\tau<\min\{i_*, \delta_0\}$ then there hold
\begin{align}
\left\|\tr\chi-\frac{2}{s}\right\|_{L^\infty} &\le  C_*, \label{r1}\\
\|\chih\|_{L_\omega^\infty L_t^2}+ \|\zeta\|_{L_\omega^\infty
L_t^2} &\le C_*,\label{r14}\\
\|\nu\|_{L_\omega^\infty L_t^2}+\|\zb\|_{L_\omega^\infty
L_t^2} &\le C_*,\label{r2}\\
\N_1[\chih, \zeta,\sn\log a,\hat \theta](p,\tau) & \le C_*, \label{r3}\\
\|r^{\f12}(\sn\tr\chi,\mu)\|_{L_x^2
L_t^\infty}+\|(\sn\tr\chi, \mu)\|_{L^2} &\le C_*,\label{derim}
\end{align}
on the null cone $\N^{-}(p,\tau)$ for all $p\in \M_I$.
\end{proposition}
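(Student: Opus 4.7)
The plan is to close the bootstrap by sharpening each of the four quantities appearing in $({\bf BA1})$--$({\bf BA4})$, starting from the estimates already available on $\N^{-}(p,\tau)$: the curvature flux bound $\R(p,\tau) \le C_*$ from Theorem \ref{flux.01}, the $k$-flux bound $\F[k](p,\tau) \le C_*$, the bound $\N_1[\sl{\pi}] \le C$ from Theorem \ref{d2laps}, the bound $\|\underline{\mu}\|_{L^2} \le C$ from Lemma \ref{mub}, and the $\chih$-estimates \eqref{chih1}--\eqref{chih2}. The upgrade from Proposition \ref{P3} to the sharper estimates \eqref{r1}--\eqref{derim} requires $L_\omega^\infty L_t^2$ trace-type bounds on $\chih$, $\zeta$, $\zb$, $\nu$, together with pointwise control of $\tr\chi-2/s$ and mixed-norm control of $\sn\tr\chi$ and $\mu$; these are the natural time-foliation analogs of the Ricci coefficient estimates developed in \cite{KR1,KR4,KR3,Qwang} for geodesic foliations.

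First, the pointwise bound \eqref{r1} on $\tr\chi-2/s$ is obtained by setting $f := \tr\chi-\tfrac{2}{s}$ in \eqref{s1}, which yields along each null geodesic $\Gamma_\omega$ the scalar transport equation $\tfrac{df}{ds} + \tfrac{1}{s} f + \tfrac{1}{2} f^2 = -|\chih|^2$. Since $f \to 0$ as $s\to 0$, integration of $s^2 f$ along $\Gamma_\omega$ reduces matters to controlling $\sup_\omega \int_0^s s'|\chih|^2\, ds'$, which is supplied by $({\bf BA3})$ together with Lemma \ref{comp1}. Once \eqref{r1} is in hand, the mass aspect $\mu$ defined in \eqref{m1} is estimated via the Hodge system \eqref{hdg1}--\eqref{hdg2}: solving $\div \zeta = -\mu + \text{controlled terms}$ gives $\|\mu\|_{L^2}$ in terms of $\|\sn\zeta\|_{L^2}$, $\|\rho\|_{L^2}$, $\|\chih\cdot\chibh\|_{L^2}$ and $\|\zeta\|_{L^4}^2$, each of which has been or can be controlled through the flux bounds and Proposition \ref{P3}. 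The norm $\|r^{1/2}\sn\tr\chi\|_{L_x^2 L_t^\infty}$ in \eqref{derim} follows by running a transport equation for $\sn\tr\chi$ obtained by commuting $\sn$ with \eqref{s1}, whose forcing involves only quantities already bounded.

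For $\chih$, $\zeta$, $\sn\log a$ and $\hat\theta$ in \eqref{r14} and \eqref{r3}, the strategy is a sharp $L_\omega^\infty L_t^2$ trace inequality: for an $S_t$-tangent tensor $F$ satisfying $\sn_L F + \tfrac{m}{2}\tr\chi F = G$ one integrates along $\Gamma_\omega$ and passes to the $L^\infty_\omega$ norm via a spherical Sobolev embedding, provided $G$ admits a decomposition $G = \nab_L P + Q$ with $P, Q$ controlled in $L^2(\N^{-}(p,\tau))$. For $\chih$ the needed decomposition of $\alpha$ comes from the Bianchi pair involving $\nab_L \beta$; for $\zeta$ one uses \eqref{s3} together with the Hodge pair \eqref{hdg1}--\eqref{hdg2}. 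The bounds on $\sn\log a$ and $\hat\theta$ then follow from the algebraic relations $\zeta = \sn\log a + \ep$ in \eqref{c7} and $\hat\theta = -a\chih + \hk$ in \eqref{c1} combined with Proposition \ref{P3}, and the remaining $\N_1$-parts are obtained by combining these trace bounds with \eqref{r1} and the transport equations.

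The hardest step is \eqref{r2}, the $L_\omega^\infty L_t^2$ bound on $\nu = -\sn_N\log n + \delta - \lambda$ from \eqref{c5}. As outlined in Section \ref{intr}, controlling $\nu$ through the sharp trace inequality requires a decomposition $\sn\nu = \nab_L P + Q$ with $P, Q$ bounded in $L^2(\N^{-}(p,\tau))$. The $\sn_N\log n$ piece is handled via the elliptic estimates of Section \ref{3D} and the flux bound on $\pi_0$ from Proposition \ref{P1}. For the $k_{NN}$ piece, the difficulty noted in the introduction is essential: the constraints $\div k = 0$, $\curl k = H$ only give $\|k\|_{H^1(\Sigma)}\le C$, and the classical trace theorem would lose half a derivative on the cone, so no decomposition of the required form can be produced by elliptic estimates alone. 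Instead one uses the tensorial wave equation \eqref{boxkh} together with the $k$-flux bound of Theorem \ref{flux.01}, which supplies $\|\sn k\|_{L^2(\N^{-})} + \|\nab_L k\|_{L^2(\N^{-})} \le C$ and schematically produces the $L^2$ control of one derivative of $k_{NN}$ on the cone needed to realize the decomposition. The $\zb$ half of \eqref{r2} is then immediate from $\zb = \sn\log n - \ep$ and Proposition \ref{P3}. The sharp trace inequality in time foliation, the decomposition for $\sn\nu$, and the closure of the coupled bootstrap on $(\tr\chi, \chih, \zeta, \nu, a)$ are carried out in full in \cite{tqwang}.
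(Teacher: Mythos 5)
The paper does not actually prove Proposition~\ref{P4} here: immediately before its statement it says ``The complete proof however requires more estimates on $\chih$, $\zeta$ and $\zb$ \dots Since the arguments are rather lengthy, we will report them in \cite{tqwang}.'' Your proposal is therefore not being measured against a proof in this paper but against the strategy the paper outlines in Section~\ref{intr} (around \eqref{eq}, \eqref{eq3}, the discussion of the sharp trace inequality, and the role of the $k$-flux from the wave equation \eqref{hkk}). Against that outline your sketch is faithful: you correctly identify the transport equation \eqref{s1} for $\tr\chi-2/s$, the Hodge system \eqref{hdg1}--\eqref{hdg2} for $\mu$ and $\sn\tr\chi$, the sharp $L_\omega^\infty L_t^2$ trace inequality requiring a decomposition $G=\nab_L P+Q$ for $\chih$ and $\zeta$, the decomposition $\sn\nu=\nab_L P+Q$ as the central difficulty for \eqref{r2}, and the indispensability of the $k$-flux from Theorem~\ref{flux.01} (elliptic estimates plus the classical trace theorem would lose half a derivative), and you defer the full coupled bootstrap to \cite{tqwang}, exactly as the paper does.

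Two technical points are worth correcting. First, with $f:=\tr\chi-\tfrac{2}{s}$ the transport equation \eqref{s1} gives $\tfrac{df}{ds}+\tfrac{2}{s}f+\tfrac12 f^2=-|\chih|^2$, not $\tfrac{1}{s}f$; the coefficient $2$ is what makes the integrating factor $s^2$ and the subsequent Hardy-type estimate work. Second, and more substantively, the logical order you give for \eqref{r1} is circular as written: invoking only $({\bf BA3})$ to control $\sup_\omega\int_0^s s'|\chih|^2\,ds'$ yields a bound in terms of $\ee$, not a universal constant $C_*$, since $({\bf BA3})$ is merely a bootstrap hypothesis. One must \emph{first} establish the universal bound \eqref{r14} on $\|\chih\|_{L_\omega^\infty L_t^2}$ via the sharp trace argument (using the Bianchi pair for $\a$, as you mention later), and only \emph{then} insert it into the integrated transport identity for $s^2 f$; the remaining quadratic term $\tfrac12 s^{-2}\int_0^s s'^2 f^2\,ds'$ is absorbed using $|f|\le\ee$ and $\ee\tau\le 1$, which gives a factor $\tfrac16\ee\tau\sup|f|\le\tfrac16\sup|f|$ that can be moved to the left-hand side. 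Presenting \eqref{r1} as if it precedes \eqref{r14} would, if taken literally, only yield $|\tr\chi-2/s|\les\ee$ rather than the universal bound required to close the bootstrap.
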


The estimates in Proposition \ref{P3} and Proposition \ref{P4} gives Theorem
\ref{cltmin6}. Thus, we may use a bootstrap argument, as explained in Section 4, to conclude
that all the estimates in the above two propositions hold on the null cones
$\N^{-}(p, \tau)$ for all $p\in \M_I$ with $\tau=\min\{i_*, \delta_*\}$ for
some universal constant $\delta_*>0$.

We conclude this section with an application to estimate
$\|\pib\|_{L_u^2 L_\omega^2(\mbox{Int}(S_{t,u}))}$, where,
for any $\Sigma$ tangent tensor F,
$$
\|F\|_{L_u^2 L_\omega^2(\itt S_{t,u})}^2=\int_{u_m}^u
\int_{{S_{t,u'}}}{r'}^{-2} |F|_g^2 a d\mu_{\ga} du'
$$
with $r'=r(t,u')$.

\begin{proposition}\label{P2.6.10}
For $\pib=(n^{-1}\p_t\log n, \bpi)$, there holds
\begin{equation}\label{2phd}
\|\pib\|_{L_u^2 L_\omega^2(\mbox{Int}(S_{t,u}))}\le C.
\end{equation}
\end{proposition}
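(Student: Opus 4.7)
The approach is to view the left-hand side as a weighted bulk integral on $\Sigma_t$ and apply a Hardy-type inequality anchored at the vertex $\Phi(t)$. First, using the coarea decomposition $d\mu_g = a\,du'\,d\mu_\gamma$ adapted to the $u'$-foliation of $\itt S_{t,u}$ in $\Sigma_t$ (which follows from $|\nabla u'|_g = a^{-1}$, with $a$ uniformly bounded above and below by $({\bf BA1})$), I would rewrite
$$\|\pib\|_{L_u^2 L_\omega^2(\itt S_{t,u})}^2 = \int_{\itt S_{t,u}} r(t,u'(x))^{-2}\,|\pib(x)|^2\,d\mu_g(x).$$
By Proposition \ref{comr} the weight $r^{-2}$ is comparable to $d_{\Sigma_t}(x,\Phi(t))^{-2}$, so the claim reduces to a Hardy-type estimate on $\itt S_{t,u_M}$ centered at $\Phi(t)$.

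The core is the following weighted inequality, valid for any $\Sigma_t$-tangent tensor $F$:
$$\int_{\itt S_{t,u_M}} r^{-2}\,|F|^2\,d\mu_g \les \|\nabla F\|_{L^2(\Sigma_t)}^2 + \|r^{-1/2} F\|_{L^2(S_t)}^2.$$
I would prove it by the vector field method with $X^i = r^{-1}(-N^i)$, where $-N$ is the outward normal to $S_{t,u'}$ in $\Sigma_t$. Using $\div N = \tr\theta$, the identity $\nabla r = -(dr/du')a^{-1}N$, and the decomposition $\tr\theta = -a\tr\chi + \delta^{AB}k_{AB}$, one obtains
$$\div X = -r^{-2}\frac{dr}{du'}a^{-1} + r^{-1}\left(a\tr\chi - \delta^{AB}k_{AB}\right).$$
The estimate $\tr\chi = 2/s + O(1)$ from (\ref{r1}), combined with $s\sim r$ (Lemma \ref{comp1}) and the first-variation identity $8\pi r\,(dr/du') = \int_{S_{t,u'}}(a^2\tr\chi - a\delta^{AB}k_{AB})\,d\mu_\gamma$, yields $\div X \ge c_0/r^2$ modulo residuals of the form $r^{-1}|\tr\chi - 2/s|$ and $r^{-1}|k|$. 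The latter are absorbed by the $\epsilon$-Young splitting $r^{-1}|k| \le \epsilon r^{-2} + C|k|^2$, whose bulk contribution is controlled by $\|k\|_{L^4(\Sigma_t)}^2\|F\|_{L^4(\Sigma_t)}^2 \les 1$ via Lemma \ref{first} and Lemma \ref{sob.02}. Integration by parts, with a cutoff at $\Phi(t)$ that vanishes as the scale tends to $0$ thanks to $H^1\hookrightarrow L^6$, produces on the outer boundary $r(t,u_M)^{-1}\|F\|_{L^2(S_t)}^2 = \|r^{-1/2} F\|_{L^2(S_t)}^2$, while the bulk gradient term is handled by Cauchy-Schwarz and Young.

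Applying this to $F = \pib = (n^{-1}\p_t\log n, k, -\nabla\log n)$, the required inputs are already in hand: the bound $\|\nabla\pib\|_{L^2(\Sigma_t)} \le C$ combines Lemma \ref{first} (for $k$), Proposition \ref{lapse1} together with Proposition \ref{N} (for $\nabla\log n = n^{-1}\nabla n$ via Leibniz), and Lemma \ref{L2.1.1} with Proposition \ref{phit01} (for $n^{-2}\dot n$); while $\|r^{-1/2}\pib\|_{L^2(S_t)}\le C$ is exactly (\ref{l21}) of Proposition \ref{P1}. This gives (\ref{2phd}).

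The hard part is the Hardy step: securing the uniform lower bound $\div X \ge c_0/r^2$ on $\itt S_{t,u_M}$. The leading-order sign arises from the competition between $-(dr/du')a^{-1} \sim -1$ and $ar\tr\chi \sim 2$, which requires the precise comparability $r\sim s$ with universal constants and a careful bookkeeping of the first-variation identity at all scales; the residual $O(|k|/r)$ corrections, which are not controlled pointwise, must be absorbed via the $\epsilon$-Young splitting together with the globally bounded $L^4$-products of $k$ and $\pib$.
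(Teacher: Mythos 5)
Your vector-field Hardy approach with $X=-r^{-1}N$ is genuinely different from the paper's argument, and the reduction of (\ref{2phd}) to a weighted bulk estimate on $\itt(S_{t,u})$ is sound, as is the identification of the needed inputs (the $H^1(\Sigma_t)$ bounds for $\pib$ and the trace estimate (\ref{l21})). The paper instead multiplies the Riccati equation (\ref{ntrx}) for $\nabla_N\tr\chi'$ by $|\pib|^2$, rewrites the product as
\begin{equation*}
\nabla_N\bigl(\tr\chi'|\pib|^2\bigr) + \tr\theta\bigl(\tr\chi'|\pib|^2\bigr) - \frac{1}{2}(\tr\chi')^2|\pib|^2 = [\text{source}],
\end{equation*}
and integrates over $\itt(S_{t,u})$ via Lemma \ref{sint}. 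The bulk term $\frac{1}{2}\int_{\itt}(\tr\chi')^2|\pib|^2\,a$ then appears with a manifestly nonnegative sign, and since $\tr\chi'\approx -2/s\sim -2/r$ by $({\bf BA2})$ and Lemma \ref{comp1}, it controls $\|r^{-1}\pib\|_{L_u^2L_\omega^2(\itt S_{t,u})}^2$ directly; no divergence positivity has to be secured.

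That is precisely where your proposal has a gap. You flag $\div X\ge c_0/r^2$ as ``the hard part,'' but the chain you propose does not close it. Writing $\div X = -r^{-2}(dr/du')a^{-1}-r^{-1}\tr\chi'-r^{-1}\delta^{AB}k_{AB}$, the claimed positivity is read off from $\tr\chi=2/s+O(1)$, the comparability $s\sim r$, and the first-variation identity. However: (i) $s$ and $r$ are only comparable with unspecified universal constants, and $|2/s-2/r|$ is itself of order $r^{-1}$ rather than lower order, so the two competing coefficients cannot be pinned down with enough precision to guarantee a uniform sign; and (ii) the first-variation identity only determines $dr/du'$ through a surface average of $a^2\tr\chi - a\delta^{AB}k_{AB}$, hence roughly $dr/du'\approx \overline{a^2}$, while the pointwise factor $a^{-1}$ in your formula can exceed $\overline{a^2}^{\,-1}$ wherever $a$ dips below its mean, so $-r^{-2}(dr/du')a^{-1}+r^{-1}a\tr\chi$ need not be bounded below by $c_0 r^{-2}$ pointwise. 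Making the Hardy step rigorous would require quantitative oscillation bounds on $a$ and on $s/r$ over each $S_{t,u'}$, which the cited inputs do not supply. The paper's choice of the geometric multiplier $\tr\chi'$ rather than $r^{-1}$ is exactly what renders the positivity automatic and avoids this issue.
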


\begin{proof}From (\ref{s1}), (\ref{m1}), (\ref{c20}) and (\ref{hdg1}), we can
derive
\begin{equation}\label{ntrx}
\sn_N \tr\chi'+\frac{1}{2}(\tr\chi')^2
=-\frac{1}{2}\delta\tr\chi'+2\lambda
\tr\chi' -\chih'(\chih'+\eh)-(\div \zeta+|\zeta|^2+\rho),
\end{equation}
which, multiplied by $|\pib|:=|\pib|_g$, implies
\begin{align*}
\nab_N & (\tr\chi'{|\pib|_g}^2) +\tr\theta(\tr\chi'|\pib|_g^2)
-\frac{1}{2}|\tr\chi' \pib|_g^2\\
&=\left\{-\frac{3}{2}\delta\tr\chi'-\chih'(\chih' +\eh)-(\div
\zeta+|\zeta|^2+\rho) \right\} |\pib|^2+ 2 \tr\chi' \nab_N \pib\c \pib,
\end{align*}
 In view of
Lemma \ref{sint}, integrating the above equation over $\itt
(S_{t,u})$ gives
\begin{align}
\frac{1}{2} \int_{u_m}^{u}&\int_{S_{t,u'}} (\tr\chi')^2|\pib|^2 a d\mu_{\ga}d u' \nn\\
&=-\int_{S_{t, u}} \tr\chi'|\pib|^2+\int_{u_m}^{u}
\int_{S_{t,u'}} \left(-2 \nab_N \pib \c\tr\chi' \pib+\rho|\pib|^2\right) a d\mu_{\ga} du'\nn\\
&+\int_{u_m}^{u}\int_{S_{t,u'}} \left(\frac{3}{2}\delta
\tr\chi'+|\zeta|^2+\chih'(\chih'+\eh)\right) |\pib|^2 a d\mu_{\ga} du'\nn\\
&+\int_{u_m}^{u}\int_{S_{t,u'}} -\zeta \cdot \sn(|\pib|^2 a)
d\mu_{\ga}du'\label{lze}
\end{align}
By ({\bf BA2}), Lemma \ref{comp1} and (\ref{l21}),
\begin{align*}
\left|\int_{S_{t, u}} \tr\chi'|\pib|^2 d\mu_{\ga}\right|\les \|r^{-1/2}
\pib\|_{L^2(S_{t,u})}^2 \le C.
\end{align*}
By Lemma \ref{first},  Proposition \ref{lapse1} and (\ref{phdin}),
\begin{align*}
&\left|\int_{u_m}^{u} \int_{S_{t,u'}} \nab_N \pib\c \tr\chi' \pib a d\mu_\ga du'\right| \les
\|\nab_N \pib\|_{L^2(\Sigma_t)} \|\tr\chi' \pib\|_{L^2(\Sigma_t)}\le
C \|\tr\chi' \pib\|_{L^2(\Sigma_t)}
\end{align*}
and
\begin{align*}
\left|\int_{u_m}^{u'}\int_{S_{t,u}}\frac{3}{2}\delta \tr\chi'|\pib|^2
a d\mu_{\ga} d u' \right|
&\les\|k\|_{L^6(\Sigma_t)}\|\pib\|_{L^6(\Sigma_t)}^2
+\|\pib\|_{L^3(\itt (S_{t,u}))}^3\\
&\les (\|\nab k\|_{L^2(\Sigma_t)}+\|\pib\|_{H^1(\Sigma_t)})\|\pib\|_{H^1(\Sigma_t)}^2\\
& \le C.
\end{align*}
By Lemma \ref{es1} and (\ref{l10}),
\begin{align*}
&\left|\int_{u_m}^{u} \int_{S_{t,u'}}\rho |\pib|^2 a d\mu_{\ga} du'\right| \les
\|\rho\|_{L^2(\Sigma_t)} \|\pib\|_{L^4(\itt S_{t,u})}^2\le C
(u-u_m)^{1/2}.
\end{align*}
By (\ref{c7}) we have
\begin{align*}
&\left|\int_{u_m}^{u}\int_{S_{t,u'}} \zeta\sn(a |\pib|^2)d\mu_{\ga}d u'\right|
 =\left|\int_{u_m}^u \int_{S_{t,u'}} (\sn \log a |\pib|^2\zeta
+\sn|\pib|^2\zeta ) a d\mu_{\ga}du'\right|\\
&\qquad\qquad \les  \|\sn \pib\|_{L^2(\itt (S_{t,u}))} \sup_{u_m\le u'\le u}
\left(\|\pib\|_{L^4(S_{t,u'})} \|\zeta\|_{L^4(S_{t,u'})}\right)
(u-u_m)^{1/2} \\
&\qquad\qquad \quad\, +\int_{u_m}^{u}\int_{S_{t,u'}}
(|\zeta|^2|\pib|^2+|\zeta||\pib|^3) a d\mu_{\ga}d u'.
\end{align*}
In view of Lemma \ref{first}, Proposition \ref{lapse1} and
(\ref{phdin}) we derive
$$
\|\sn \pib\|_{L^2(\itt (S_{t,u}))}\le\|\nab \pib\|_{L^2(\Sigma_t)} \le C,
$$
while in view of (\ref{r3}), (\ref{sob3}) and (\ref{l10}) we have
$$
\sup_{u_m\le u'\le u} \|\zeta\|_{L^4(S_{t,u'})}\le C,
\qquad \sup_{u_m\le u'\le u} \|\pib\|_{L^4(S_{t,u'})}\le C.
$$
Consequently,
\begin{align*}
\int_{u_m}^{u}\int_{S_{t,u'}}
\left(|\zeta|^2|\pib|^2+|\zeta||\pib|^3\right)a d\mu_{\ga} du'
& \les \sup_{u_m\le u'\le u} \left(\|\zeta\|_{L^4(S_{t,u'})}^2\|\pib\|_{L^4(S_{t,u'})}^2\right) (u-u_m)\\
&+\sup_{u_m\le u'\le u} \left(\|\zeta\|_{L^4(S_{t,u'})} \|\pib\|_{L^4(S_{t,u'})}^3\right) (u-u_m)\\
&\le C(u-u_m).
\end{align*}
Therefore, we obtain
\begin{align*}
\left|\int_{u_m}^{u}  \int_{S_{t,u'}} \zeta\sn(a |\pib|_g
^2)d\mu_{\ga}d u'\right|\le C(1+(u-u_m)^{1/2})(u-u_m)^{1/2}.
\end{align*}
In view of (\ref{r3}), (\ref{sob3}) and (\ref{l10}), by a similar
argument we obtain
\begin{align*}
&\left|\int_{u_m}^{u}\int_{S_{t,u'}} (|\zeta|^2+\chih'(\chih'+\eh))|\pib|^2
a d\mu_{\ga} d u'\right| \\
&\qquad \qquad \les
\int_{u_m}^{u}\int_{S_{t,u'}}(|\pib|^2(|\chih|^2+|\zeta|^2)
+|\chih|\c|\pib|^3 )d\mu_{\ga}d u'\\\
& \qquad\qquad \le C (u-u_m)
\end{align*}
Combining all the above estimates with (\ref{lze}) and noting that
$u-u_m\les \tau\les 1$, it yields
\begin{equation*}
\|\tr\chi \pib\|_{L^2(\itt (S_{t,u}))}^2\le C+ C \|\tr\chi \pib\|_{L^2(\itt(S_{t,u}))}
\end{equation*}
which implies $\|tr \chi \pib\|_{L^2(\itt(S_{t,u}))}\le C$. This
together with ({\bf BA2}) implies the desired inequality.
\end{proof}

\section{\bf Proof of Theorem \ref{fina3}}
\setcounter{equation}{0}

In this section we will complete the proof of Theorem \ref{fina3}.
For any $p\in \M_I$, let $\Phi(t)$ be the integral curve of
$\bT$ through $p$ with $\Phi(t(p))=p$. For each $p_t:=\Phi(t)$, we will
represent $k(p_t)$ in terms of a Kirchoff-Sobolev formula over a past
null cone with vertex $p_t$. We then use the
estimates established in the previous section to obtain
$\int_{t(p)-\tau}^{t(p)} |k(\Phi(t))|^2 n dt \le C$ for some universal
constant $C$.

\subsection{\bf Derivation of Kirchoff Parametrix}

We first revisit the formulation of Kirchoff Parametrix in
\cite{Ksob}. We define $\bA$ to be a $\Sigma_t$ tangent 2-tensor
verifying
\begin{equation}\label{tran.1}
(\bd_L \bA)_{ij}+\frac{1}{2}\tr\chi \bA_{ij}=0\,\,\,\mbox{ on }
\N^-(p, \tau),  \qquad \lim_{t\rightarrow t(p)^{-}} (t(p)-t)
\bA_{ij}=J_{ij},
\end{equation}
where $J\in T_p \Sigma_{t(p)}$ and $|J|_g=1$. This ${\bf A}$ is
similar to the one defined in \cite{KR2} but with the modification
that ${\bf A}$ is $\Sigma_t$ tangent. Since we have obtained in
Propositions \ref{P3} and \ref{P4} the estimates on
\begin{align*}
\left\|\tr\chi-\frac{2}{s}\right\|_{L^\infty}, \,\,\|\sn
\tr\chi\|_{L^2}, \, \, \|r^{\f12}\sn \tr\chi\|_{L_x^2 L_t^\infty},
\,\, \|r^{-1}(\zeta+\zb)\|_{L^2}, \,\, \|\chih, \nu,
\zb\|_{L_\omega^\infty L_t^2}, \,\, \R(p,\tau)
\end{align*}
on the null cone $\N^{-}(p,\tau)$, we may adapt the proof in
\cite{KR2} to obtain the following estimates on ${\bf A}$.

\begin{proposition}\label{P2.6.1}
For the tensor ${\bf A}$ defined by (\ref{tran.1}) there hold
\begin{equation}\label{rec1}
\|\sn \bA\|_{L^2(\N^-(p,\tau))}+ \|r^{\f12} \sn \bA\|_{L_x^2
L_t^\infty(\N^{-}(p,\tau))} +\|r\bA\|_{L^\infty(\N^-(p, \tau))}\le C,
\end{equation}
where $C$ is a universal constant.
\end{proposition}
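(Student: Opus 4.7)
The plan is to follow the strategy of Klainerman--Rodnianski in \cite{KR2}, adapted to the present time foliation and to the fact that $\bA$ is $\Sigma_t$-tangent rather than $S_t$-tangent. The argument has two main steps: first obtain the pointwise bound $|r\bA|\le C$ directly from the transport equation (\ref{tran.1}), and then commute angular derivatives through the transport operator and apply Lemma \ref{tsp2} with $m=2$.

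For the $L^\infty$ estimate I would rescale by $s$. Since $L(s)=1$, the rescaled tensor $s\bA$ satisfies
\begin{equation*}
\bd_L(s\bA) \;=\; -\tfrac{s}{2}\bigl(\tr\chi-\tfrac{2}{s}\bigr)\bA.
\end{equation*}
Evaluating the pointwise norm in the Riemannian metric $g$ and handling the $\bd_L g$ contribution through terms linear in the deformation tensor $\pi$ (which is controlled by Proposition \ref{1r} and the hypothesis (A1)), one obtains a scalar differential inequality $|\bd_L\log|s\bA||\le C(|\tr\chi-2/s|+|\pi|)$ along each null geodesic $\Gamma_\omega$. The initial condition $\lim_{t\to t(p)^-} s\bA=J$ follows from the given limit $(t(p)-t)\bA\to J$ together with the comparability $s\sim t(p)-t$ of Lemma \ref{comp1}. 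Gronwall, combined with the bootstrap bound on $\tr\chi-2/s$ from Proposition \ref{P4} and with (A1), then yields $|s\bA|\le C$, hence $|r\bA|\le C$ by Lemma \ref{comp1}.

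For the angular derivative estimates I would commute $\sn_A$ with the transport operator $\bd_L+\tfrac12\tr\chi$. Using the standard commutation formula on $S_t$-tangent tensors together with the extra projection corrections produced by the $\Sigma_t$-tangent structure of $\bA$, this gives a transport equation
\begin{equation*}
\bd_L(\sn\bA) \;+\; \tr\chi\,\sn\bA \;=\; H \;+\; G\cdot\sn\bA,
\end{equation*}
where $G$ is a combination of Ricci coefficients $\chi,\zeta,\zb,k,\pi_0$ bounded in $L_\omega^\infty L_t^2$ by Proposition \ref{P4}, and
\begin{equation*}
H \;=\; -\tfrac12\sn\tr\chi\cdot\bA \;+\; \bR_{AL}\cdot\bA \;+\; (\text{Ricci coef.})^2\cdot\bA.
\end{equation*}
This fits the hypothesis of Lemma \ref{tsp2} with $m=2$. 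The crucial $L^2$ control $\|H\|_{L^2}\le C$ comes from pairing $\|\sn\tr\chi\|_{L^2}\le C$ (Proposition \ref{P4}) and $\|\b\|_{L^2}\le C$ (Theorem \ref{flux.01}) with $\|r\bA\|_{L^\infty}\le C$ from Step 1, the extra factor of $r$ being supplied by the $m=2$ weight in Lemma \ref{tsp2}. After verifying $r^2\sn\bA\to 0$ at the vertex by a short expansion of $\bA$ in $s$ near $p$, Lemma \ref{tsp2} delivers $\|\sn\bA\|_{L^2}\le C$ and $\|r^{1/2}\sn\bA\|_{L_\omega^2 L_t^\infty}\le C$; the latter, via the comparability $v_t\sim r^2$ in the definition of the $L_x^2 L_t^\infty$ norm, yields the stated bound $\|r^{1/2}\sn\bA\|_{L_x^2 L_t^\infty}\le C$.

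The main difficulty will be the bookkeeping of the commutator and projection corrections that are absent from the $S_t$-tangent setting of \cite{KR2}: because $\bA\in T\Sigma_t\otimes T\Sigma_t$, decomposing $\bA$ with respect to the frame $(e_A,N)$ produces coupled transport equations for the components $\bA_{AB},\bA_{AN},\bA_{NA},\bA_{NN}$, with coupling terms involving the second fundamental form $k$ and $\pi_0=-\nab\log n$ of the $\Sigma_t$ foliation. One must verify that each such term is integrable against the estimates already established in Propositions \ref{P3}--\ref{P4} and Proposition \ref{P1}, and that the coupled system has a triangular structure allowing Gronwall to close. This bookkeeping is the technical heart of the proof, and is the step at which the present argument genuinely differs from \cite{KR2}.
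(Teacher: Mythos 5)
Your overall plan matches what the paper intends by ``adapt the proof in \cite{KR2}'': first obtain $\|r\bA\|_{L^\infty}\le C$ by Gronwall on the $m=0$ transport equation, then commute $\sn$ through the transport operator and run the $m=2$ version of Lemma~\ref{tsp2}. However, there is a concrete gap in your treatment of the source term $H$ in the angular-derivative step.

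You assert that ``the crucial $L^2$ control $\|H\|_{L^2}\le C$ comes from pairing $\|\sn\tr\chi\|_{L^2}\le C$ with $\|r\bA\|_{L^\infty}\le C$, the extra factor of $r$ being supplied by the $m=2$ weight in Lemma~\ref{tsp2}.'' This does not go through as stated, for two reasons. First, $\|H\|_{L^2(\N^-(p,\tau))}$ is \emph{not} finite from the available estimates: $|\sn\tr\chi\cdot\bA|\sim r^{-1}|\sn\tr\chi|$ near the vertex, and neither $\|\sn\tr\chi\|_{L^2(\N^-)}\le C$ nor $\|r^{1/2}\sn\tr\chi\|_{L_x^2 L_t^\infty}\le C$ from Proposition~\ref{P4} controls $\|r^{-1}\sn\tr\chi\|_{L^2(\N^-)}$, whose square involves $\int r^{-3}\,dt$ near the vertex. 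Second, and more to the point, the conclusion of Lemma~\ref{tsp2} as stated is \emph{identical for every} $m\ge1$: its proof uses only the crude bound $v_t^{-m/2}v_{t'}^{m/2}\le (t(p)-t')/(t(p)-t)$, which throws away exactly the extra weight available when $m>1$. So the lemma, cited as a black box, supplies no ``extra factor of $r$.''

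The mechanism you are gesturing at is nevertheless correct and can be made rigorous in one of two ways, neither of which is a direct application of Lemma~\ref{tsp2}. Either refine the Gronwall step for $m=2$: from $v_t|\sn\bA|\les\int_t^{t(p)}v_{t'}|H|\,na\,dt'$ one obtains $r|\sn\bA|\les r^{-1}\int r'^2|H|$, and Hardy's inequality then yields
\begin{equation*}
\|r\sn\bA\|_{L_\omega^2 L_t^2}\;\les\;\|r^2H\|_{L_\omega^2 L_t^2}\;\sim\;\|rH\|_{L^2(\N^-)}\;\le\;\|\sn\tr\chi\|_{L^2(\N^-)}\|r\bA\|_{L^\infty}\le C,
\end{equation*}
which is exactly $\|\sn\bA\|_{L^2(\N^-)}\le C$, and an analogous keeping-the-weights argument gives the $L_x^2L_t^\infty$ bound. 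Or, equivalently and more cleanly, apply Lemma~\ref{tsp2} \emph{as stated} but to the rescaled tensor $F=v_t^{1/2}\sn\bA$: a short computation using $dv_t/ds=\tr\chi\,v_t$ shows that $F$ satisfies an equation of the $m=1$ form with source $\sim v_t^{1/2}\sn\tr\chi\cdot\bA\sim\sn\tr\chi\cdot(r\bA)$, which \emph{is} manifestly in $L^2(\N^-)$. You should fix this step; as written, the $L^2$ bound on the source is simply false, and the cited lemma does not bail you out.

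A minor remark on Step~1: since (\ref{tran.1}) is written for the $\Sigma_t$-tangent projection of $\bd_L\bA$ (in the same sense as the paper's definition of $\nabla_L k$), the computation $L(|s\bA|_g^2)=2h^{IJ}(\bd_L(s\bA))_I(s\bA)_J$ produces no terms linear in the deformation tensor at all --- the $\bT\otimes\bT$ part of $\bd_L h$ annihilates the $\Sigma_t$-tangent tensor $\bA$ on contraction. One gets the clean scalar ODE $L(\log|s\bA|^2)=-(\tr\chi-2/s)$, and the $L^\infty$ bound follows from the bootstrap bound on $\tr\chi-2/s$ alone; the caution about $\pi$-terms there is unnecessary.
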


Now we revisit the Kirchoff-Sobolev formula for any $\Sigma_t$
tangent 2-tensor $\Psi_{I}$, $I=\{\mu,\nu\}$, see \cite{Ksob,Shao}.
According to the definition of $\Box \Psi_{I}$, we have
under the null frame $(e_A)_{A=1,2}$, $e_3=\Lb$, $e_4=L$ that
\begin{equation*}
\Box\Psi_{I}=-\frac{1}{2}\bd_{43}\Psi_{I}-\frac{1}{2}\bd_{34}\Psi_{I}
+\delta^{AB}\bd_{AB}\Psi_{I}.
\end{equation*}
By (\ref{llb}),
\begin{equation}
\bd_{43}\Psi_{I}=\bd_4(\bd_3 \Psi)_{I}-2\zb^A \bd_A \Psi_{I}.
\end{equation}
It is easy to see
$$
\bd_{34}\Psi_{I}-\bd_{43}\Psi_{I}=\bR_{\mu}{}^\a{}_{34} \Psi_{\a
\nu} + \bR_{\nu}{}^\a{}_{34} \Psi_{\mu\a}.
$$
By (\ref{angba}), we obtain
\begin{equation*}
\delta^{AB}\bd_{AB}\Psi_{I}=\delta^{AB}\sn_A \sn_B \Psi_{I}-\f12\tr
\chib \bd_4 \Psi_{I} -\f12\tr \chi \bd_3 \Psi_{I}.
\end{equation*}
Therefore
\begin{align*}
\Box \Psi_{I}&=-\bd_4 ({\bd_3\Psi})_{I} +2 \zb^A\bd_A \Psi_{I}
-\frac{1}{2} \tr\chib \bd_4 \Psi_{I} -\frac{1}{2}\tr\chi \bd_3
\Psi_{I}\\
&\quad\,  +\delta^{AB}\sn_A \sn_B \Psi_{I}
-\frac{1}{2} \bR_{\mu}{}^\a{}_{34} \Psi_{\a \nu}
-\frac{1}{2} \bR_{\nu}{}^\a{}_{34} \Psi_{\mu\a}.
\end{align*}
We multiply the above equation by ${\bf A}_{I}$ and integrate over
$\N^{-}(p,\tau)$ to obtain
\begin{align}
\int_{\N^-(p,\tau)} \Box \Psi_{I} \bA^{I}&=\Xi_1+ \Xi_2
+\int_{\N^-(p,\tau)} \left( 2 \zb^A  \bd_A \Psi_{I} \c \bA^{I}
+\delta^{AB}\sn_A
\sn_B \Psi_{I} \c \bA^{I}\right) \nn\\
&\quad \, -\frac{1}{2} \int_{\N^{-}(p,\tau)}  \left(
\bR_{\mu}{}^\a{}_{34} \Psi_{\a \nu} +\bR_{\nu}{}^\a{}_{34}
\Psi_{\mu\a}\right){\bf A}^{\mu\nu}.   \label{kirl2}
\end{align}
where
\begin{align*}
\Xi_1&=\int_{\N^-(p,\tau)}\left(-\bd_4
(\bd_3 \Psi)_{I} \c \bA^{I}-\frac{1}{2}\tr\chi \bd_3 \Psi_{I} \c \bA^{I}\right),\\
\Xi_2&=-\frac{1}{2}\int_{\N^-(p,\tau)} \tr\chib \bd_4 \Psi_{I} \c
\bA^{I}.
\end{align*}
For $\Xi_1$, integrating  by parts gives
\begin{align*}
\Xi_1&=\int_{\N^-(p,\tau)}\left(-\bd_4 (\bd_3 \Psi)_{I} \c \bA^{I}
-\tr\chi \bd_3 \Psi_{I} \c \bA^{I} +\frac{1}{2}\tr\chi \bd_3 \Psi_{I} \c \bA^{I} \right)\\
&=-\int_{S_{t(p)-\tau}} \bd_3 \Psi_{I} \c \bA^{I} +
\lim_{t\rightarrow
t(p)} \int_{S_t} \bd_3 \Psi_{I} \c \bA^{I}\\
&\quad\,  +\int_{\N^-(p,\tau)}\left(\bd_4 \bA^{I}+\frac{1}{2}\tr\chi
\bA^{I}\right)\c\bd_3 \Psi_{I}
\end{align*}
Since $\lim_{t\rightarrow t(p)} (t(p)-t)^2\bA=0$, we have in view of (\ref{tran.1}) that
\begin{equation*}
\Xi_1= -\int_{S_{t(p)-\tau}} \bd_3 \Psi_I\c
\bA^I+\int_{\N^-(p,\tau)} \Omega_1(\Psi),
\end{equation*}
where
\begin{equation*}
\Omega_1(\Psi)=\bd_4 \bA^{0i}\c \bd_3 \Psi_{0i}+\bd_4 \bA^{i0}\c
\bd_3 \Psi_{i0}.
\end{equation*}

For the term $\Xi_2$, in view of  (\ref{tran.1}) and the fact
that $\Psi$ is  $\Sigma_t$ tangent, we first have
 \begin{align*}
-\frac{1}{2}\tr\chib\bd_4 \Psi_I\c
\bA^I&=-\frac{1}{2}\left(\bd_4(\Psi_I\c \bA^I \tr\chib)-\bd_4
\bA^I\c \tr\chib\c \Psi_I-\bd_4 \tr\chib \c \Psi_I\c \bA^I\right)\\
&=-\frac{1}{2}\left(\bd_4(\Psi_I\c \bA^I \tr\chib)
+\frac{1}{2}\tr\chi\tr\chib \bA^I\c \Psi_I
-\bd_4 \tr\chib\c \Psi_I\c \bA^I\right),
\end{align*}
thus integration by parts yields
\begin{align*}
\Xi_2&=\int_{\N^-(p,\tau)}\frac{1}{2}\underline\mu \bA^I\c \Psi_I
-\frac{1}{2}\left(\int_{S_{t(p)-\tau}}\Psi_I\c \bA^I
\tr\chib-\lim_{t\rightarrow t(p)} \int_{S_t} \Psi_I \c
\bA^I\tr\chib\right),
\end{align*}
where  $\underline\mu$ is  defined in (\ref{m2}).

In view of $\tr\chib'=-\tr\chi'-2\delta^{AB}k_{AB}$, we have
\begin{align*}
\lim_{t\rightarrow t(p)} \frac{1}{2}\int_{S_t} \Psi_I\c \bA^I
\tr\chib=-4\pi n(p) \l \Psi,J\r,
\end{align*}
Hence
\begin{align*}
\Xi_2=\int_{\N^{-}(p,\tau)} \frac{1}{2} \underline\mu \bA^I\c
\Psi_I-\frac{1}{2}\int_{S_{t(p)-\tau}} \Psi_I\c \bA^I \tr\chib-4\pi
n(p) \l\Psi, J\r.
\end{align*}
Therefore we derive
\begin{align}
4\pi n(p) \langle \Psi, J\rangle &=\int_{\N^{-}(p, \tau)}\left(-\Box \Psi_I \c
\bA^I+\frac{1}{2}\underline \mu \Psi_I\c\bA^I +\Omega_1(\Psi)\right)\nn\\
&\quad\, -\int_{S_{t(p)-\tau}} \left(\bd_3 \Psi_I \c \bA^I
+\frac{1}{2}\tr\chib\Psi_I\c \bA^I\right) \nn\\
& \quad \, +\int_{\N^{-}(p,\tau)} \left(2 \zb^B \bd_B \Psi_I \c
\bA^I-\sn_B\Psi_I \c
\sn^B \bA^I\right)\nn\\
&\quad\, -\frac{1}{2} \int_{\N^-(p,\tau)} \left(\bR_{i}{}^\a{}_{34}
\Psi_{\a j}
 + \bR_{j}{}^\a{}_{34} \Psi_{i \a}\right) {\bf A}^{ij}.
\label{kirchoff}
\end{align}
We apply (\ref{kirchoff}) to the tensor
field $\Psi=k$ and obtain

\begin{theorem}\label{T2.6.1}
Let $p\in \M_I$, let $\Phi(t)$ be the integral curve of ${\bf T}$
through $p$ with $\Phi(t(p))=p$, and let $p_t=\Phi(t)$. Let ${\bf
A}$ be a $\Sigma_t$ tangent 2-tensor on $\J^{-}(p,\tau)$ verifying
(\ref{tran.1}) on each null cone $C_u:=\N^-(p_t, t-t(p)+\tau)$,
where $u=u(t)=\int_{t_0}^t n|_\Phi dt$ for $t_m:=t(p)-\tau\le t\le
t(p)$. Then there holds
\begin{align}\label{kirch}
4 \pi n(p_t) \l k(p_t), J\r =I(p_t)+J(p_t)+K(p_t)+L(p_t)+\mathfrak{E}(p_t)+\int_{C_u}
\Omega_1(k),
\end{align}
where
\begin{align*}
I(p_t)&=-\int_{C_u} {\bf A} \c\Box k, \nn\\
J(p_t)&=-\frac{1}{2} \int_{C_u}{\bf A} \c {\bf R}(\cdot, \cdot, \Lb, L)\c
k, \nn\\
K(p_t)&=\int_{C_u} \left(-\sn^B {\bf A} \c \sn_B k+2\zb^B\c \sn_B k\c {\bf A}\right), \nn\\
L(p_t)&=\frac{1}{2}\int_{C_u}\underline\mu {\bf A} \c k, \nn\\
{\mathfrak{E}(p_t)}&=-\int_{S_{t_m,u}} \left({\bf D}_3 k \c {\bf
A}+\frac{1}{2}\tr\chib k\c {\bf A}\right).
\end{align*}
\end{theorem}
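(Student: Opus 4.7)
The plan is to derive Theorem \ref{T2.6.1} as a direct specialization of the general Kirchoff-Sobolev identity (\ref{kirchoff}), which was already established above for an arbitrary $\Sigma_t$-tangent $2$-tensor $\Psi$. Since the extension $k_{\a\b} = h_\a^\mu h_\b^\nu \pi_{\mu\nu}$ is by construction $\Sigma_t$-tangent (with $k_{0\a} = k_{\a 0} = 0$), the hypothesis of (\ref{kirchoff}) is met, and the statement will follow by substituting $\Psi = k$ and relabeling each of the resulting terms.

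First I would fix $p \in \M_I$ and, for each $t \in [t_m, t(p)]$ with $t_m = t(p) - \tau$, consider the past null cone $C_u = \N^{-}(p_t, t - t(p) + \tau)$ with vertex $p_t = \Phi(t)$. On each such $C_u$, the tensor $\bA$ is obtained as the unique solution of the linear transport equation (\ref{tran.1}) along the null generators, subject to the vertex normalization $\lim_{t' \to t(p_t)^-}(t(p_t)-t')\bA = J$. Existence is automatic since (\ref{tran.1}) is a first-order linear ODE along bicharacteristics, and the quantitative control needed to justify integration by parts near the vertex is furnished by Proposition \ref{P2.6.1}.

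The central step is the term-by-term matching. Substituting $\Psi = k$ into (\ref{kirchoff}): the $\Box k$ integral is $I(p_t)$; the curvature contraction $\bR_i{}^\a{}_{34} k_{\a j}\bA^{ij} + \bR_j{}^\a{}_{34} k_{i\a}\bA^{ij}$ assembles into $J(p_t) = -\frac{1}{2} \int_{C_u} \bA \cdot \bR(\cdot,\cdot,\Lb,L) \cdot k$; the tangential derivative terms $2 \zb^B \bd_B k \cdot \bA - \sn_B k \cdot \sn^B \bA$ combine into $K(p_t)$; the mass aspect piece $\frac{1}{2} \underline{\mu} \, k \cdot \bA$ is $L(p_t)$; the boundary integral over $S_{t_m,u}$ is $\mathfrak{E}(p_t)$; and the remainder $\int_{C_u} \Omega_1(k)$, which encodes the mixed time-space index contributions from $\bd_4 \bA$ and $\bd_3 k$, appears unchanged. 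The transport equation (\ref{tran.1}) is precisely what cancels the middle term in $\Xi_1$ of the general derivation above, which is why it was postulated with that specific form.

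The only subtlety that is not purely notational is the vertex limit producing the coefficient $4\pi n(p_t) \langle k(p_t), J\rangle$ on the left-hand side. This uses $(t(p_t)-t') \bA \to J$ together with the asymptotics $\tr \chib' \sim -2/s$ near the vertex (via $\tr\chib' = -\tr\chi' - 2\delta^{AB} k_{AB}$) and the relation $ds/dt = -na$ with $a(p_t) = 1$, so that $s \sim n(p_t)(t(p_t)-t')$ and small geodesic surfaces $S_{t'}$ have area asymptotic to $4\pi s^2/n(p_t)^2$ relative to the round metric on $\mathbb{S}^2$. This calculation has already been executed inside the derivation of (\ref{kirchoff}), so no new estimate is required and the theorem is essentially a repackaging. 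There is no serious obstacle at this stage; the real difficulty lies downstream, in combining this representation with the null-cone estimates of Proposition \ref{P3} and Proposition \ref{P4} to obtain the pointwise control of $k(p_t)$ that will furnish (\ref{MB}).
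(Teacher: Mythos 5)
Your proposal is correct and matches the paper's own treatment: the paper obtains Theorem~\ref{T2.6.1} precisely by substituting $\Psi=k$ into the general identity (\ref{kirchoff}) and relabeling the resulting terms as $I, J, K, L, \mathfrak{E}$, and $\int_{C_u}\Omega_1(k)$. Your remark on the vertex limit producing $4\pi n(p_t)\langle k(p_t), J\rangle$ is also consistent with how that coefficient was extracted in the derivation of (\ref{kirchoff}), so no new argument is needed.
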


\subsection{\bf Main estimates}

In the following we will use the representation formula given in
Theorem \ref{T2.6.1} to show that
$$
\int^{t(p)}_{t(p)-\tau} |k(p_t)|^2 n dt \le C
$$
for some universal constant $C$. We proceed as follows.

$\bullet$ {\it Estimate on $I(p_t)$:} We will use the expression of
$\Box k$ given in Proposition \ref{wavep}, which symbolically can be
written as
$$
\Box k=-n^{-3} \dot n \nabla^2 n + n^{-2} \nabla^2 \dot n +\pi \cdot \pi \cdot
\pi + k\cdot \nabla^2 n + k\cdot Ric +\pi \cdot \nabla k -n^{-1}
k.
$$
It then follows from Proposition \ref{P2.6.1} that
\begin{align*}
|I(p_t)| &\les \int_{C_u} r^{-1} \left(|\dot n \nabla^2 n|
+|\nabla^2 \dot n|+|\pi|^3 +|k||\nabla^2 n| +|k||Ric|
+|\pi||\nabla k| +|k|\right) \\
&\les \|\nabla^2 n\|_{L^2(C_u)} \|r^{-1} \dot n\|_{L^2(C_u)}+
\|r^{-1} \nabla^2 \dot n\|_{L^1(C_u)} +\int_{C_u}
r^{-1} |\pi|^3\\
&\quad \, + \|r^{-1} k\|_{L^2(C_u)} \|\nabla^2  n\|_{L^2(C_u)}
+\|Ric\|_{L^2(C_u)} \|r^{-1} k\|_{L^2(C_u)} \\
&\quad\, + \|r^{-1} \pi \|_{L^2(C_u)} \|\nabla k\|_{L^2(C_u)}
+\|r^{-1} k\|_{L^1(C_u)}.
\end{align*}
Therefore, with the help of Proposition \ref{P1} and Proposition
\ref{1r}, we have
\begin{align*}
|I(p_t)| &\les \|r^{-1} \dot n\|_{L^2(C_u)}+\|r^{-1} \nabla^2 \dot
n\|_{L^1(C_u)} +\int_{C_u} r^{-1} |\pi|^3  \\
&\quad \, +\|Ric\|_{L^2(C_u)} + \|\nabla k\|_{L^2(C_u)} + C.
\end{align*}

Now we consider $\int_{t_m}^{t(p)} |I(p_t)|^2 dt$. Using $\frac{d
u}{d t}=n$, we have from Proposition \ref{P2.6.10}  that
\begin{align*}
\int_{t_m}^{t(p)} \|r^{-1} \dot n\|_{L^2(C_{u(t)})}^2 n dt
&=\int_{u(t_m)}^{u(t(p))} \|r^{-1} \dot n\|_{L^2(C_u)}^2 du\\
&=\int_{u(t_m)}^{u(t(p))} \int_{t_m}^{t_M(u)} \int_{S_{\tt, u}}
r^{-2} |\dot n|^2 na d\mu_\gamma d\tt du\\
&=\int_{t_m}^{t(p)} \int_{u(\tt)}^{u(t(p))} \int_{S_{\tt, u}}
r^{-2} |\dot n|^2 na d\mu_\gamma du d\tt\\
&\les\int_{t_m}^{t(p)} \|r^{-1} \dot n\|_{L^2(Int(S_{\tt,
u(t(p))}))}^2 d\tt\\
&\le C\tau.
\end{align*}
By similar argument, we have from Lemma \ref{first} that
$$
\int_{t_m}^{t(p)} \left(\|Ric\|_{L^2(C_u)}^2 + \|\nabla
k\|_{L^2(C_u)}^2\right) n dt\le C\tau.
$$
Therefore
$$
\int_{t_m}^{t(p)} |I(p_t)|^2 n dt\les C\tau + \int_{t_m}^{t(p)}
\|r^{-1} \nabla^2 \dot n\|_{L^1(C_u)}^2 n dt + \int_{t_m}^{t(p)}
\left(\int_{C_u} r^{-1} |\pi|^3\right)^2 n dt.
$$
By using the Minkowski inequality and Proposition  \ref{phit01} we
have
\begin{align*}
&\left(\int_{t_m}^{t(p)} \|r^{-1} \nabla^2 \dot n\|_{L^1(C_u)}^2  n
dt\right)^{1/2}\\
&\qquad\qquad =\left(\int_{u(t_m)}^{u(t(p))}
\left(\int_{t_m}^{t_M(u)} r^{-1} \|an \nabla^2 \dot n\|_{L^1(S_{\tt,
u})} d\tt\right)^2 du\right)^{1/2} \\
&\qquad\qquad \le \int_{t_m}^{t(p)} \left(\int_{u(\tt)}^{u(t(p))} r^{-2}
\|an \nabla^2\dot n\|_{L^1(S_{\tt, u})}^2 du\right)^{1/2} d\tt\\
&\qquad\qquad  \les \int_{t_m}^{t(p)} \|\nabla^2 \dot n\|_{L^2(Int(S_{\tt,
u(t(p))}))} d\tt  \le C.
\end{align*}
Finally, we have from Proposition \ref{P1} and (\ref{1r1}) that
$$
\int_{C_u} r^{-1} |\pi|^3 \les \int_{t_m}^{t_M(u)} \|r^{-1} \pi
\|_{L^2(S_{\tt, u})}\|\pi \|_{L^4(S_{\tt, u})}^2 d\tt \le
C(t_M(u)-t_m)^{1/2}.
$$
Thus, by Lemma \ref{comp1} we obtain
\begin{equation*}
\int_{t_m}^{t(p)} \left(\int_{C_u} r^{-1}
|\pi|^3\right)^2 n dt \le C\tau^2.
\end{equation*}
Combining the above estimates we therefore obtain
$$
\int_{t_m}^{t(p)} |I(p_t)|^2 n dt \le C+C \tau^2\les C.
$$

$\bullet$ {\it Estimate on $J(p_t)$:} It follows from Proposition
\ref{P2.6.1}, Theorem \ref{flux.01} and Proposition \ref{1r} that
$$
|J(p_t)|\les \|r{\bf A}\|_{L^\infty(C_u)} \|r^{-1} k\|_{L^2(C_u)}
\R(p_t, \tau+t-t(p))\le C.
$$
Thus
$$
\int_{t_m}^{t(p)} |J(p_t)|^2 n dt \le C(t(p)-t_m)\le C\tau\le C.
$$

$\bullet$ {\it Estimate on $K(p_t)$:} It follows from the
H\"{o}lder inequality that
$$
|K(p_t)|\les \|\sn {\bf A}\|_{L^2(C_u)} \|\sn k\|_{L^2(C_u)} + \|r
{\bf A}\|_{L^\infty(C_u)}\|r^{-1} \zb\|_{L^2(C_u)} \|\sn k
\|_{L^2(C_u)}.
$$
Thus, we obtain from Proposition \ref{P2.6.1}, Theorem
\ref{flux.01},  and Proposition \ref{1r} that
$|K(p_t)|\le C$ which gives
$$
\int_{t_m}^{t(p)} |K(p_t)|^2 n dt \le C(t(p)-t_m)\le C\tau\le C.
$$

$\bullet$ {\it Estimate on $L(p_t)$:} It follows from Proposition
\ref{P2.6.1} and Proposition \ref{1r} that
$$
|L(p_t)|\le \|r{\bf A}\|_{L^\infty(C_u)} \|r^{-1} \bar\pi
\|_{L^2(C_u)} \|\underline{\mu}\|_{L^2(C_u)}\les
\|\underline{\mu}\|_{L^2(C_u)}.
$$
From Lemma \ref{mub} we then obtain $|L(p_t)|\le C$. Therefore
$$
\int_{t_m}^{t(p)} |L(p_t)|^2 n dt \le C(t(p)-t_m)\le C\tau\le C.
$$

$\bullet$ {\it Estimate on $\mathfrak{E}(p_t)$:} We first have
from Proposition \ref{P2.6.1} that
\begin{align*}
|\mathfrak{E}(p_t)|\les r^{-1} \| {\bf D}_3 k\|_{L^1(S_{t_m, u})}
+ r^{-1} \| \tr \chib k\|_{L^1(S_{t_m, u})}.
\end{align*}
Using the definition of $r$ we then obtain
$$
|\mathfrak{E}(p_t)|\les \|{\bf D}_3 k\|_{L^2(S_{t_m, u})} + r^{-1}
\|\tr \chib k\|_{L^1(S_{t_m, u})}.
$$
Recall
$$\tr\chib'=-\tr\chi'-2\delta^{AB}k_{AB}.$$
 Since
({\bf BA1}) implies $1/2\le a\le 3/2$. Thus, with the help of
({\bf BA2}), it yields
\begin{align*}
\|\tr\chib k\|_{L^1(S_{t_m, u})} &\les \left\|\tr \chi
-\frac{2}{s}\right\|_{L^\infty(C_u)} \|k\|_{L^1(S_{t_m,u})} +
r^{-1} \|k\|_{L^1(S_{t_m,u})} +\|k\|_{L^2(S_{t_m,u})}^2\\
&\les r^{-1} \|k\|_{L^1(S_{t_m,u})} +\|k\|_{L^2(S_{t_m, u})}^2\\
&\les \|k\|_{L^2(S_{t_m,u})} +r \|k\|_{L^4(S_{t_m,u})}^2.
\end{align*}
Consequently
$$
|\mathfrak{E}(p_t)|\les \|{\bf D}_3 k\|_{L^2(S_{t_m, u})} +r^{-1}
\|k\|_{L^2(S_{t_m, u})} +\|k\|_{L^4(S_{t_m, u})}^2.
$$
Therefore, using $\frac{d u}{d t} =n$, we have
\begin{align*}
\int_{t_m}^{t(p)} |\mathfrak{E}(p_t)|^2 dt  &\les
\int_{u(t_m)}^{u(t(p))} |\mathfrak{E}(p_t)|^2 du \\
&\les \|{\bf D}_3 k\|_{L^2(\Sigma_{t_m})}^2 + \|r^{-1}
k\|_{L^2(Int(S_{t_m, u}))}^2 +\|k\|_{L^4(\Sigma_{t_m})}^4
\end{align*}
It follows from Lemma \ref{first} and
Proposition \ref{P2.6.10} that
\begin{align*}
\int_{t_m}^{t(p)} |\mathfrak{E}(p_t)|^2 dt \les \|{\bf D}_3
k\|_{L^2(\Sigma_{t_m})}^2 + C.
\end{align*}
Recall that $\Lb=-a(\bT-N)$. So ${\bf D}_3 k=-a ({\bf D}_0
k-\nabla_N k)$. Recall also that ${\bf D}_0 k=-n^{-1} \nabla^2 n
+Ric +k\Tr k $. Thus
\begin{align*}
\|{\bf D}_3 k\|_{L^2(\Sigma_{t_m})}\les \|\nabla^2
n\|_{L^2(\Sigma_{t_m})} +\|Ric\|_{L^2(\Sigma_{t_m})}
+\|k\|_{L^4(\Sigma_{t_m})}^2 +\|\nabla k\|_{L^2(\Sigma_{t_m})}.
\end{align*}
It follows from Lemma \ref{first} and Proposition \ref{lapse1}
that $\|{\bf D}_3 k\|_{L^2(\Sigma_{t_m})}\le C$. Therefore
$$
\int_{t_m}^{t(p)} |\mathfrak{E}(p_t)|^2 n dt \le C.
$$

$\bullet$ {\it Estimate on $\int_{C_u} \Omega_1(k)$:}  By straightforward
calculation we have $\Omega_1(k)={\bf A}\c \bpi\c \bpi\c \bpi$. It follows from
Proposition \ref{P2.6.1} that
$$
|\Omega_1(k)|\les \int_{C_u} r^{-1} |\bpi|^3.
$$ Therefore, one can use the similar argument in the
estimate of $I(p_t)$ to get
$$
\int_{t_m}^{t(p)} |\Omega_1(k)|^2 n dt \les \int_{t_m}^{t(p)}
\left|\int_{C_u} r^{-1} |\bpi|^3\right|^2 n dt \le C\tau^2 \le C.
$$

\section{\bf Proof of main theorem I}
\setcounter{equation}{0}

In this section, based on Theorem \ref{thm4}, we will follow the idea in \cite{KR2} to give
the proof of Theorem \ref{thm3}. According to the local existence
theorem given in \cite[Proposition 6.1]{KR2}, see also \cite[Theorem 10.2.1]{KC},
it suffices to show that the quantity
\begin{align}\label{R0}
\R_*: =\|Ric\|_{H^2(\Sigma_t)} +\|k\|_{H^3(\Sigma_t)}
\end{align}
on each slice $\Sigma_t$ with $t_0\le t<t_*$ is uniformly bounded.

Since $(\bM, \bg)$ is a vacuum space-time, by virtue of the Bianchi identity ${\bf R}$
verifies a wave equation of the form
\begin{equation}\label{boxr}
\Box {\bf R}={\bf R} \star {\bf R},
\end{equation}
Based on higher energy estimates it is standard to show that
\begin{equation}\label{R1}
\|\bd \bR(t)\|_{L^2}^2\les \|\bd \bR(t_1)\|_{L^2}^2 + \int_{t_1}^t \|\bR(t')\|_{L^\infty}^2 d t'
\end{equation}
and
\begin{equation}\label{R2}
\|\bd^2 \bR(t)\|_{L^2}^2 \les \|\bd^2 \bR(t_1)\|_{L^2}^2
+\int_{t_1}^t \|\bd \bR(t')\|_{L^2}^2 \|\bR(t')\|_{L^\infty}^2 d t'
\end{equation}
for all $t_0\le t_1\le t<t_*$.
The derivation has been given in \cite{KR2} under the assumption (\ref{tn1}),
the argument however depends only on the condition ({\bf A1}).

Thus, the derivation of the $L^\infty$ bound of ${\bf R}$ is a
crucial step. As in \cite{Ksob} one can represent ${\bf R}(p)$, for
each $p\in \M_*$, by a Kirchoff-Sobolev formula over the null cone
$\N^{-}(p,\tau)$, where $\tau>0$ is a universal constant such that
$i_*(p,t)\ge \tau$ whose existence is guaranteed by Theorem
\ref{thm4}. One can then follow the delicate argument in \cite{KR2}
to derive that
\begin{equation}\label{R3}
\|\bR(t)\|_{L^\infty}\les \tau^{-1} \sup_{t'\in [t-2\tau, t-\tau/2]}
\left(\|\bR(t')\|_{L^2} +\|\bd \bR(t')\|_{L^2} +\|\bd^2
\bR(t')\|_{L^2}\right).
\end{equation}
The derivation of (\ref{R3}) requires the estimates on
\begin{align*}
\R(p,\tau),&\quad \left\|\tr \chi
-\frac{2}{s}\right\|_{L^\infty(\N^{-}(p,\tau))},
\quad \|\chih, \nu, \zeta, \zb\|_{L_\omega^\infty L_t^2(\N^{-}(p,\tau))},\\
\|\mu, \sn \tr\chi\|_{L^2(\N^{-}(p,\tau))},& \quad \|r^{1/2} \sn \tr
\chi\|_{L_x^2 L_t^\infty(\N^{-}(p,\tau))}, \quad
\|r^{-1}(\zeta+\zb)\|_{L^2(\N^{-}(p,\tau))}
\end{align*}
which are provided by Proposition \ref{P3} and Proposition \ref{P4} under the condition ({\bf A1}).
Combining the estimates (\ref{R1})--(\ref{R3}) gives
$$
\|\bR(t)\|_{H^2}\les \tau^{-1} \sup_{t'\in [t-\tau, t-\tau/2]}
\|\bR(t')\|_{H^2}.
$$
Iterating this estimate as many times as needed, in steps of size
$\tau/2$, yields
\begin{equation}\label{R4}
\sup_{t\in [t_0,t_*)} \|\bR(t)\|_{H^2}\le C,
\end{equation}
where $C$ is a positive constant depending only on $Q_0$, $\k$, $|\Sigma_0|$, $t_*$, $I_0$
and the initial data $\|\bR(t_0)\|_{H^2}$.

Now we are ready to show that the quantity $\R_*$ defined by (\ref{R0}) is uniformly bounded
for all $t_0\le t<t_*$. Although the argument is standard, we will include the details
for completeness.

We have defined in (\ref{em1}) the electric and magnetic parts $E$, $H$ of the
curvature tensor $\bR$. It is known that
\begin{align}
\nabla_i k_{jm}-\nabla_j k_{im} ={\ep_{ij}}^l H_{lm},\label{3.14.1}\\
R_{ij}-k_{ia}k^{aj}+\Tr k \, k_{ij}=E_{ij}.\label{3.14.2}
\end{align}
From Lemma \ref{es1} and Lemma \ref{first} it follows that
\begin{equation}\label{Rkeh1}
\|Ric\|_{L^2}+\|k\|_{H^1}+\|E\|_{L^2}+\|H\|_{L^2}\le C,
\end{equation}
where and in the following all the norms are taken over a fixed slice
$\Sigma_t$ which is suppressed for simplicity.

In order to obtain the derivative estimates, by straightforward calculation we have
symbolically
\begin{align}
&\nab_m E_{ij}=\bd_m \bR_{0i0j}-k\cdot H, \label{nabe1}\\
&\nab_m H_{ij}=\bd_m {}^\star\bR_{0i0j}-k\cdot E,\label{nabe2}\\
\nab^2_{mn} E_{ij}&=\bd^2_{mn}\bR_{0i0j}-k_{mn} \bd_0
\bR_{0i0j}-\nab(k\cdot H), \label{2de}\\
\nab^2_{mn} H_{ij}&=\bd^2_{mn}{}^\star \bR_{0i0j}-k_{mn} \bd_0
{}^\star \bR_{0i0j}-\nab(k\cdot E).\label{2dh}
\end{align}
From (\ref{nabe1}) and (\ref{nabe2}) it follows that
\begin{align*}
\|\nab E\|_{L^2}&\le
\|\bd\bR\|_{L^2}+\|k\|_{L^6}\|H\|_{L^3}\\
\|\nab H\|_{L^2}&\le
\|\bd\bR\|_{L^2}+\|k\|_{L^6}\|E\|_{L^3}
\end{align*}
Applying Lemma \ref{sob.02} to $\|E\|_{L^3}$ and $\|H\|_{L^3}$,
and using (\ref{R4}) and (\ref{Rkeh1}), we obtain
$$
\|\nabla E\|_{L^2} +\|\nabla H\|_{L^2}
\le C+ C \left(\|\nabla E\|_{L^2}^{1/2} +\|\nabla H\|_{L^2}^{1/2}\right)
$$
which implies
\begin{equation}\label{lex1}
\|\nab E\|_{L^2}+\|\nab H\|_{L^2}\le C.
\end{equation}

Next we will derive the estimate for $\|\nabla^2 k\|_{L^2}$.
It follows from $\mbox{div} k=0$ and (\ref{3.14.1}) that
$\triangle k= Ric \cdot k+\nabla H$. Differentiating it and commuting
$\nabla$ with $\Delta$ gives
$$
\triangle \nab k=Ric \cdot \nab k +\nab Ric \cdot k +\nabla^2 H
$$
which together with (\ref{3.14.2}) implies
\begin{equation}\label{dfhk2}
\triangle \nabla k=k\cdot k \cdot \nabla k +E\cdot \nabla k + \nabla E\cdot k +\nabla^2 H.
\end{equation}
Multiplying (\ref{dfhk2}) by $\nab k$ and integrating over $\Sigma_t$ yields
\begin{align*}
\int_{\Sigma_t} |\nabla^2 k|^2 &\les \int_{\Sigma_t} \left(|k|^2 |\nabla k|^2
+|E||\nabla k|^2 +|\nabla E| |k| |\nabla k|+ |\nabla H| |\nabla^2 k|\right) \nn \\
&\les \|k\|_{L^6}^2 \|\nabla k\|_{L^3}^2 + \|E\|_{L^6} \|\nabla
k\|_{L^{12/5}}^2
+\|\nabla E\|_{L^2} \|\nabla k\|_{L^3} \|k\|_{L^6} \nn \\
&\quad \,  +\|\nabla H\|_{L^2} \|\nabla^2 k\|_{L^2}.
\end{align*}
With the help of Lemma \ref{sob.02}, (\ref{Rkeh1}) and (\ref{lex1}), we have
$$
\|\nabla^2 k\|_{L^2}^2 \le C\left(\|\nabla^2 k\|_{L^2} +\|\nab^2 k\|_{L^2}^{1/2}\right)
$$
which implies $\|\nabla^2 k \|_{L^2} \le C$. By the Sobolev embedding we obtain
\begin{equation}\label{k2}
\|k\|_{L^\infty}+\|k\|_{H^2}\le C.
\end{equation}
Using (\ref{k2}) and (\ref{R4}), it follows easily from (\ref{3.14.2}), (\ref{2de})
and (\ref{2dh}) that
\begin{equation*}
\|\nabla Ric\|_{L^2}+\|\nab^2 Ric\|_{L^2} + \|\nab^2
E\|_{L^2}+\|\nab^2 H\|_{L^2}\le C.
\end{equation*}

Finally we derive the estimate on $\|\nabla^3 k\|_{L^2}$. By
differentiating (\ref{dfhk2}), commuting $\nabla$ with $\triangle$
and using (\ref{3.14.2}) we obtain
\begin{align*}
\triangle \nabla^2 k&= k\cdot k\cdot \nabla^2 k + k\cdot \nabla k
\cdot \nabla k +E\cdot \nabla^2 k +\nabla E\cdot \nabla k + \nabla^2
E\cdot k +\nabla^3 H.
\end{align*}
Multiplying this equation by $\nabla^2k$ and integrating over $\Sigma_t$ it follows
\begin{align*}
\int_{\Sigma_t}  |\nabla^3 k|^2&\les \int_{\Sigma_t} \left(|k|^2 |\nabla^2 k|^2
+ |k||\nabla k|^2 |\nabla^2 k|+ |E||\nabla^2 k|^2 +|\nabla E| |\nabla k| |\nabla ^2 k|\right)\\
&\quad +\int_{\Sigma_t} \left(|k||\nabla^2 E| |\nabla^2 k|+|\nabla^2 H| |\nabla^3 k|\right)\\
&\les \|k\|_{L^\infty}^2 \|\nabla^2 k\|_{L^2}^2
+\|k\|_{L^\infty}\|\nabla k\|_{L^4}^2 \|\nabla^2 k\|_{L^2}
+\|E\|_{L^\infty} \|\nabla^2 k\|_{L^2}^2\\
&\quad\, + \|\nabla E\|_{L^4}\|\nabla k\|_{L^4} \|\nabla^2 k\|_{L^2}
+\|k\|_{L^\infty} \|\nabla^2 E\|_{L^2} \|\nabla^2 k\|_{L^2} \\
&\quad \, +\|\nabla^2 H\|_{L^2} \|\nabla^3 k\|_{L^2}\\
&\le C+ C\|\nabla^3 k\|_{L^2}.
\end{align*}
Therefore $\|\nabla^3 k\|_{L^2}\le C$. The proof is thus complete.

\section{\bf Appendix}\label{ap}

\setcounter{equation}{0}

In this appendix we give the proof of Lemma \ref{tr1}. It suffices
to consider the case that $F$ is an arbitrary smooth function on
$\Sigma_t.$

On $\itt S_{t,u_M(t)}=\cup_{u_m(t)\le u\le
u_M(t)}S_{t,u}$,  we have the family of diffeomorphisms
$$\Psi_{u,t}: {\Bbb S}^2\rightarrow S_{t,u},\,
\Psi_{u,t}(\omega)={\mathcal G}_{\Phi(t_M(u))}(t,\omega), $$ where
${\mathcal G}$ is defined in (\ref{gp}).
Relative to this radial foliation,
the metric $g$ on $\itt S_{t,u_M}$ can
be written as
$$a^2 du^2+\ga_{AB} d\omega_A d\omega_B,
$$
where $\ga$ is the restriction of $g$ on $S_{t,u}$.
By  Lemma \ref{comp1}, $\ga_{S_{t,u}}\approx r(t,u)^2 \ga_{{\Bbb S}^2}$.
Moreover, $F(x)$, $x\in S_{t,u}$, can be reparametrized by $$F(x)=F(u,
\omega):=F\circ \Psi_{u,t}( \omega), \, \omega\in {\Bbb S}^2.$$
Due to Lemma \ref{comp1} and (\ref{commr1}), for a
scalar function $f$,
\begin{equation}\label{el2}
\|f\|_{L^2(S_{t, u})}^2\approx \int_{{\Bbb S}^2} |f|^2 (u-u_m)^2
d\mu_{{\Bbb S}^2}.
\end{equation}

For a fixed leaf $S_{t,u_0}$ with $u_m(t)<u_0\le u_M(t)$ and any $x
\in S_{t, u_0}$, $F(x)=F(u_0, \omega)$ with $\omega\in {\Bbb S}^2$, we define
with $u_m:=u_m(t)$
\begin{align*}
m(x)&:=\frac{2}{u_0-u_m}\int_0^{\frac{u_0-u_m}{2}} F(-z+u_0,
\omega)
dz\\
G(x)&:=m(x)-F(x).
\end{align*}
Lemma \ref{tr1} can be proved by
establishing the following estimates
\begin{align}
&r^{-1/2}\|G\|_{L^2(S_{t,u_0})}\les \|\nab
F\|_{L^2(\Sigma_t)}\label{ftr1}\\& r^{-1/2}
\|m\|_{L^2(S_{t,u_0})}\les \|F\|_{H^1(\Sigma_t)}\label{ftr2},
\end{align}
where $r=r(t,u_0)$.

To see (\ref{ftr1}), according to definition, we have
\begin{align}
G(x)
&=\frac{2}{{u_0-u_m}}\int_0^{\frac{u_0-u_m}{2}} \int_0^1
\frac{d}{d\ell} F( u_0-\ell z, \omega) d\ell dz.\label{el3}
\end{align}
It is easy to see
\begin{equation*}
\frac{d}{d\ell} F(u_0-\ell z,\omega)=-z\c\p_u F(u_0-\ell z,\omega).
\end{equation*}
In view of  ({\bf BA1}) and $N=-a^{-1}\p_u$,  it follows that
\begin{equation}
\left|\frac{d}{d\ell} F(u_0-\ell z,\omega)\right|\les z|\nab_N
 F|(u_0-\ell z,\omega) .\label{el1}
\end{equation}
Since $0<z\le \frac{u_0-u_m}{2}$, we have from (\ref{commr1}) that
$z\les r'$, where $r'=r(t,u_0-\ell z).$ Thus, by combining (\ref{el3}) with (\ref{el1})
and setting $v(y):=\|r'\nab F(-y+u_0, \cdot)\|_{L^2_\omega}$ it yields
\begin{align*}
r^{-1/2}\|G\|_{L^2(S_{t,u_0})}&\les r^{-1/2} \int_0^1 d\ell
\int_0^{\frac{u_0-u_m}{2}}r'\||\nab F(-\ell z+u_0, \cdot)|_g\|_{L^2_\omega}d z\\
&\les r^{-1/2} \int_0^1 \ell^{-1} \int_0^{\frac{\ell(u_0-u_m)}{2}}
v(y) dy d\ell\\
&\les r^{-1/2}\int_0^{
\frac{u_0-u_m}{2}}\int_{\frac{2 y}{u_0-u_m}}^1 \ell^{-1} d\ell v(y) d y\nn\\
&\les  (u_0-u_m)^{-\frac{1}{2}}\int_0^{\frac{u_0-u_m}{2}}
\ln\left(\frac{u_0-u_m}{2y}\right) v(y) dy.
\end{align*}
By H\"older inequality,
\begin{align*}
r^{-1/2} \|G\|_{L^2(S_{t,u_0})}&\les \left(\int_0^1 (\ln \s)^2
d\s\right)^{1/2} \left(\int_0^{\frac{u_0-u_m}{2}} |v(y)|^2 dy\right)^{1/2}\\
& \les
\|\nab F\|_{L^2(\itt S_{t ,u_0})}.
\end{align*}
This proves (\ref{ftr1}). Using (\ref{commr1}), with
$r'':=r(t,u_0-z)\approx u_0-u_m-z$,
\begin{align*}
\|m\|_{L^2(S_{t,u_0})}&\les  \int_0^{\frac{u_0-u_m}{2}}
\|{r''}^{1/3}F\|_{L_\omega^6} {r''}^{-\frac{1}{3}} du'\les r^{\f12}
\|F\|_{L^6(\itt S_{t,u_0})}.
\end{align*}
By Sobolev embedding, (\ref{ftr2}) follows.





\end{document}